\documentclass{article}
\usepackage[utf8]{inputenc}
\usepackage[sort&compress,numbers]{natbib}
\usepackage{float}
\usepackage{comment}
\usepackage{amsmath,amsthm,amssymb,mathtools,booktabs}
\usepackage{hyperref}
\usepackage{bm}
\usepackage{subcaption}
\usepackage[colorinlistoftodos]{todonotes}
\usepackage[a4paper, total={6in, 9in}]{geometry}
\usepackage{cleveref}
\theoremstyle{plain}
\newtheorem{theorem}{Theorem}[section]
\newtheorem{proposition}[theorem]{Proposition}
\newtheorem{corollary}[theorem]{Corollary}
\newtheorem{lemma}[theorem]{Lemma}
\newtheorem{definition}[theorem]{Definition}
\newtheorem{example}[theorem]{Example}
\newtheorem{remark}[theorem]{Remark}
\newtheorem{conjecture}[theorem]{Conjecture}

\newcount\rowc

\makeatletter
\def\ttabular{
\hbox\bgroup
\let\\\cr
\def\rulea{\ifnum\rowc=\@ne \hrule height 1.5pt \fi}
\def\ruleb{
\ifnum\rowc=1\hrule height2pt \else
\ifnum\rowc=6\hrule height \heavyrulewidth 
   \else \hrule height \lightrulewidth\fi\fi}
\valign\bgroup
\global\rowc\@ne
\rulea
\hbox to 4em{\strut \vline \hfill##\hfill \vline }%
\ruleb
&&%
\global\advance\rowc\@ne
\hbox to 4em{\vline \strut\hfill##\hfill \vline}%
\ruleb
\cr}
\def\endttabular{%
\crcr\egroup\egroup}
\DeclareMathOperator{\Aut}{Aut}
\DeclareMathOperator{\PSL}{PSL}
\DeclareMathOperator{\Pow}{Pow}
\DeclareMathOperator{\stab}{stab}
\DeclareMathOperator{\vf}{vf}

\DeclareMathOperator{\Sym}{Sym}

\DeclareMathOperator{\GL}{GL}
\DeclareMathOperator{\F}{\mathcal{F}}
\DeclareMathOperator{\G}{\mathcal{G}}
\DeclareMathOperator{\C}{\mathcal{C}}

\DeclareMathOperator{\E}{\mathcal{E}}

\DeclareMathOperator{\tetrahedron}{\mathcal{T}}

\pagestyle{empty}

\usepackage{amsmath, amssymb, amsfonts} 

\usepackage{ifthen}

\usepackage{tikz}
\usetikzlibrary{calc}
\usetikzlibrary{positioning}
\usetikzlibrary{shapes}
\usetikzlibrary{patterns}

\makeatletter
\edef\texforht{TT\noexpand\fi
  \@ifpackageloaded{tex4ht}
    {\noexpand\iftrue}
    {\noexpand\iffalse}}
\makeatother

\makeatletter
\newif\iftikz@node@phantom
\tikzset{
  phantom/.is if=tikz@node@phantom,
  text/.code=%
    \edef\tikz@temp{#1}%
    \ifx\tikz@temp\tikz@nonetext
      \tikz@node@phantomtrue
    \else
      \tikz@node@phantomfalse
      \let\tikz@textcolor\tikz@temp
    \fi
}
\usepackage{etoolbox}
\patchcmd\tikz@fig@continue{\tikz@node@transformations}{%
  \iftikz@node@phantom
    \setbox\pgfnodeparttextbox\hbox{}
  \fi\tikz@node@transformations}{}{}
\makeatother

\newcommand{\tikzAngleOfLine}{\tikz@AngleOfLine}
\def\tikz@AngleOfLine(#1)(#2)#3{%
  \pgfmathanglebetweenpoints{%
    \pgfpointanchor{#1}{center}}{%
    \pgfpointanchor{#2}{center}}
  \pgfmathsetmacro{#3}{\pgfmathresult}%
}

%
%
%

\tikzset{ 
    vertexNodePlain/.style = {fill=#1, shape=circle, inner sep=0pt, minimum size=2pt, text=none},
    vertexNodePlain/.default=gray,
    vertexPlain/labels/.style = {
        vertexNode/.style={vertexNodePlain=##1},
        vertexLabel/.style={gray}
    },
    vertexPlain/nolabels/.style = {
        vertexNode/.style={vertexNodePlain=##1},
        vertexLabel/.style={text=none}
    },
    vertexPlain/.style = vertexPlain/#1,
    vertexPlain/.default=labels
}
\tikzset{
    vertexNodeNormal/.style = {fill=#1, shape=circle, inner sep=0pt, minimum size=4pt, text=none},
    vertexNodeNormal/.default = blue,
    vertexNormal/labels/.style = {
        vertexNode/.style={vertexNodeNormal=##1},
        vertexLabel/.style={blue}
    },
    vertexNormal/nolabels/.style = {
        vertexNode/.style={vertexNodeNormal=##1},
        vertexLabel/.style={text=none}
    },
    vertexNormal/.style = vertexNormal/#1,
    vertexNormal/.default=labels
}
\tikzset{
    vertexNodeBallShading/pdf/.style = {ball color=#1},
    vertexNodeBallShading/svg/.style = {fill=#1},
    vertexNodeBallShading/.code = {
        \if\texforht
            \tikzset{vertexNodeBallShading/svg=#1!90!black}
        \else
            \tikzset{vertexNodeBallShading/pdf=#1}
        \fi
    },
    vertexNodeBall/.style = {shape=circle, vertexNodeBallShading=#1, inner sep=2pt, outer sep=0pt, minimum size=3pt, font=\tiny},
    vertexNodeBall/.default = white,
    vertexBall/labels/.style = {
        vertexNode/.style={vertexNodeBall=##1, text=black},
        vertexLabel/.style={text=none}
    },
    vertexBall/nolabels/.style = {
        vertexNode/.style={vertexNodeBall=##1, text=none},
        vertexLabel/.style={text=none}
    },
    vertexBall/.style = vertexBall/#1,
    vertexBall/.default=labels
}
\tikzset{ 
    vertexStyle/.style={vertexNormal=#1},
    vertexStyle/.default = labels
}

\newcommand{\vertexLabelR}[4][]{
    \ifthenelse{ \equal{#1}{} }
        { \node[vertexNode] at (#2) {#4}; }
        { \node[vertexNode=#1] at (#2) {#4}; }
    \node[vertexLabel, #3] at (#2) {#4};
}
\newcommand{\vertexLabelA}[4][]{
    \ifthenelse{ \equal{#1}{} }
        { \node[vertexNode] at (#2) {#4}; }
        { \node[vertexNode=#1] at (#2) {#4}; }
    \node[vertexLabel] at (#3) {#4};
}

\newcommand{\edgeLabelColor}{blue!20!white}
\tikzset{
    edgeLineNone/.style = {draw=none},
    edgeLineNone/.default=black,
    edgeNone/labels/.style = {
        edge/.style = {edgeLineNone=##1},
        edgeLabel/.style = {fill=\edgeLabelColor,font=\small}
    },
    edgeNone/nolabels/.style = {
        edge/.style = {edgeLineNone=##1},
        edgeLabel/.style = {text=none}
    },
    edgeNone/.style = edgeNone/#1,
    edgeNone/.default = labels
}
\tikzset{
    edgeLinePlain/.style={line join=round, draw=#1},
    edgeLinePlain/.default=black,
    edgePlain/labels/.style = {
        edge/.style={edgeLinePlain=##1},
        edgeLabel/.style={fill=\edgeLabelColor,font=\small}
    },
    edgePlain/nolabels/.style = {
        edge/.style={edgeLinePlain=##1},
        edgeLabel/.style={text=none}
    },
    edgePlain/.style = edgePlain/#1,
    edgePlain/.default = labels
}
\tikzset{
    edgeLineDouble/.style = {very thin, double=#1, double distance=.8pt, line join=round},
    edgeLineDouble/.default=gray!90!white,
    edgeDouble/labels/.style = {
        edge/.style = {edgeLineDouble=##1},
        edgeLabel/.style = {fill=\edgeLabelColor,font=\small}
    },
    edgeDouble/nolabels/.style = {
        edge/.style = {edgeLineDouble=##1},
        edgeLabel/.style = {text=none}
    },
    edgeDouble/.style = edgeDouble/#1,
    edgeDouble/.default = labels
}
\tikzset{
    edgeStyle/.style = {edgePlain=#1},
    edgeStyle/.default = labels
}

%
\newcommand{\faceColorY}{yellow!60!white}   
\newcommand{\faceColorB}{blue!60!white}     
\newcommand{\faceColorC}{cyan!60}           
\newcommand{\faceColorR}{red!60!white}      
\newcommand{\faceColorG}{green!60!white}    
\newcommand{\faceColorO}{orange!50!yellow!70!white} 

\newcommand{\faceColor}{\faceColorY}
\newcommand{\faceColorSwap}{\faceColorC}


\tikzset{
    face/.style = {fill=#1},
    face/.default = \faceColor,
    faceY/.style = {face=\faceColorY},
    faceB/.style = {face=\faceColorB},
    faceC/.style = {face=\faceColorC},
    faceR/.style = {face=\faceColorR},
    faceG/.style = {face=\faceColorG},
    faceO/.style = {face=\faceColorO}
}
\tikzset{
    faceStyle/labels/.style = {
        faceLabel/.style = {}
    },
    faceStyle/nolabels/.style = {
        faceLabel/.style = {text=none}
    },
    faceStyle/.style = faceStyle/#1,
    faceStyle/.default = labels
}
\tikzset{ face/.style={fill=#1} }
\tikzset{ faceSwap/.code=
    \ifdefined\swapColors
        \tikzset{face=\faceColorSwap}
    \else
        \tikzset{face=\faceColor}
    \fi
}

\usepackage{hyperref}

\title{A census of face-transitive surfaces}
\author{Reymond Akpanya and Jonathan Spreer}
\date{}

\begin{document}
\maketitle
\pagestyle{plain} 
\begin{abstract}
A face-transitive surface is a triangulated $2$-dimensional manifold whose automorphism group acts transitively on its set of triangles. In this paper, we investigate this class of highly symmetric surface triangulations. We identify  seven types of such face-transitive surfaces, splitting up further into a total of thirteen sub-types, distinguished by how their automorphism groups act on them. We use these theoretical results to compute a census of face-transitive surfaces with up to $1280$ faces by constructing suitable cycle double covers of cubic node-transitive graphs.
\end{abstract}

\section{Introduction} \label{section:Intro}

Combinatorial objects arise across many mathematical disciplines.
One example of such an object is the notion of a {\bf simplicial surface}, defined as the incidence relations between the vertices, edges and faces of a given triangulated $2$-dimensional manifold. Here, we only consider simplicial surfaces whose underlying triangulation is a simplicial complex. For further research related to the theory of simplicial surfaces, we refer to 
\cite{automorphism,Altmann24,onetriangle,icosahedron,burton2018pachner, isosceles,NEGAMI1994225,simplicialsurfacesbook}, to name only a few. 

This paper examines the class of {\bf face-transitive surfaces}, that is, simplicial surfaces (with underlying triangulations being simplicial complexes, and) with automorphism groups acting transitively on their faces. We prove the following statement.

\begin{theorem}
    \label{thm:main}
    There are exactly $86\, 802$ isomorphism classes of face-transitive surfaces with up to $1280$ faces.
\end{theorem}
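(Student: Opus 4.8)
The plan is to convert the classification problem into a finite combinatorial enumeration via duality, and then to carry out that enumeration by computer. First I would pass from a triangulated surface to its \textbf{dual graph}, whose nodes are the triangles and whose edges record adjacency across the edges of the surface. Since every triangle has exactly three edges, this dual graph is cubic, and a face-transitive symmetry of the surface is precisely an automorphism of the dual graph acting transitively on its nodes; hence the dual graph is node-transitive. The surface is recovered from its dual graph together with the cyclic arrangement of triangles around each vertex, and because every edge of the surface lies in exactly two such vertex-cycles, these cycles constitute a \emph{cycle double cover} of the cubic graph. The first step, therefore, is to establish a correspondence between face-transitive surfaces and cubic node-transitive graphs equipped with a cycle double cover that is invariant under a node-transitive group of automorphisms, subject to the manifold conditions (each vertex-link a single cycle, the incidence structure being a genuine simplicial complex, and so on).

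Second, I would invoke the structural classification underpinning this paper, namely the decomposition of face-transitive surfaces into the seven types and thirteen sub-types distinguished by the action of their automorphism groups. This result restricts, for each cubic node-transitive graph, both the admissible symmetry patterns and the shape of the invariant cycle double covers that can arise, so that only finitely many candidate covers must be examined per graph and the search can be organised sub-type by sub-type.

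Third, I would translate the size bound. Since the number of faces of the surface equals the number of nodes of its dual graph, the restriction to at most $1280$ faces is exactly the restriction to cubic node-transitive graphs on at most $1280$ nodes. Such graphs have been completely enumerated in the literature, so I would iterate over the entire existing census of cubic node-transitive graphs in this range, and for each one run the sub-type-guided search for invariant cycle double covers, implemented in a computer algebra system, collecting every face-transitive surface produced.

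The hard part will be the bookkeeping needed to make the count exact: guaranteeing completeness, so that no admissible cover on any of the listed graphs is missed, and, above all, correct \textbf{deduplication up to isomorphism}. The same surface can arise from different cycle double covers on one graph, and occasionally from covers on different graphs, so reliable isomorphism testing of the resulting surfaces, together with a verification that the distinct combinatorial data that survive really do yield pairwise non-isomorphic surfaces, is the crucial and delicate step that finally produces the exact figure $86\,802$.
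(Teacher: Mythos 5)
Your proposal follows essentially the same route as the paper: pass to the cubic node-transitive face graph, use the classification into seven vertex-face types and thirteen sub-types to restrict the invariant vertex-faithful cycle double covers that must be searched, iterate over the existing census of cubic node-transitive graphs on up to $1280$ nodes (Poto\v{c}nik et al.), and deduplicate the resulting surfaces up to isomorphism --- including the correct observation that a single graph (even a single graph-subgroup pair) can yield several non-isomorphic surfaces. The one ingredient you omit is feasibility: the census contains $480$ graphs whose automorphism groups have order at least $10^{11}$, for which the naive sub-type-guided subgroup search stalls; the paper disposes of most of these theoretically, by proving that generalised $m$-gon graphs with $m \geq 2^k$ cannot be face graphs of vertex-transitive simplicial surfaces (\Cref{lem:genmgons}), and handles the ten leftover graphs with a modified algorithm using limited conjugacy tests, so your plan as stated would need this extra step to actually terminate and produce the exact count.
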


\paragraph{Related work and background.}

Many examples of face-transitive surfaces, such as chiral and regular maps, have been described in the literature \cite{kuehnel,CONDER2001224,DATTA20183296,DATTA2022112652}. Arguably, one of the most famous such examples is the {\bf simplicial tetrahedron} $\tetrahedron$. This simplicial surface, comprising of four vertices, six edges, and four faces, emerges from the incidence relations between the vertices, edges, and faces of the regular tetrahedron, also known as the Platonic solid with four triangular faces. By labelling the vertices and faces of the simplicial tetrahedron with natural numbers, we obtain the folding plan illustrated in \Cref{fig:tetrahedron} on the left.
\begin{figure}[H]
  \centering
  \begin{subfigure}{.45\textwidth}
   \centering
   \scalebox{1.}{\begin{tikzpicture}[vertexBall, edgeDouble=nolabels, faceStyle, scale=1.5]

\coordinate (V1_1) at (0., 0.);
\coordinate (V2_1) at (1., 0.);
\coordinate (V3_1) at (0.4999999999999999, 0.8660254037844386);
\coordinate (V4_1) at (-0.9999999999999996, -1.732050807568877);
\coordinate (V4_1) at (0.5000000000000001, -0.8660254037844386);
\coordinate (V5_1) at (1.5, 0.8660254037844388);
\coordinate (V5_2) at (1.5, -0.8660254037844385);
\coordinate (V5_3) at (0., -1.732050807568877);
\coordinate (V6_1) at (-0.4999999999999999, 0.8660254037844385);
\coordinate (V6_2) at (-0.4999999999999998, -0.8660254037844388);

\fill[face]  (V2_1) -- (V3_1) -- (V1_1) -- cycle;
\node[faceLabel] at (barycentric cs:V2_1=1,V3_1=1,V1_1=1) {$2$};
\fill[face]  (V1_1) -- (V4_1) -- (V2_1) -- cycle;
\node[faceLabel] at (barycentric cs:V1_1=1,V4_1=1,V2_1=1) {$1$};
\fill[face]  (V1_1) -- (V6_2) -- (V4_1) -- cycle;
\node[faceLabel] at (barycentric cs:V1_1=1,V6_2=1,V4_1=1) {$3$};
\fill[face]  (V4_1) -- (V5_2) -- (V2_1) -- cycle;
\node[faceLabel] at (barycentric cs:V4_1=1,V5_2=1,V2_1=1) {$4$};

\draw[edge] (V2_1) -- node[edgeLabel] {$1$} (V1_1);
\draw[edge] (V3_1) -- node[edgeLabel] {$2$} (V2_1);
\draw[edge] (V1_1) -- node[edgeLabel=blue] {$3$} (V3_1);
\draw[edge] (V4_1) -- node[edgeLabel] {$4$} (V1_1);
\draw[edge] (V2_1) -- node[edgeLabel] {$5$} (V4_1);
\draw[edge] (V4_1) -- node[edgeLabel] {$7$} (V6_2);
\draw[edge] (V5_2) -- node[edgeLabel] {$8$} (V4_1);
\draw[edge] (V2_1) -- node[edgeLabel] {$10$} (V5_2);
\draw[edge] (V6_2) -- node[edgeLabel] {$12$} (V1_1);

\vertexLabelR{V1_1}{left}{$1$}
\vertexLabelR{V2_1}{left}{$2$}
\vertexLabelR{V3_1}{left}{$4$}
\vertexLabelR{V4_1}{left}{$3$}
\vertexLabelR{V5_2}{left}{$4$}
\vertexLabelR{V6_2}{left}{$4$}

\end{tikzpicture}}
  \caption{}
  \label{fig:tetrahedron}  
  \end{subfigure}
  \begin{subfigure}{.45\textwidth}
   \centering
   \scalebox{.8}{\begin{tikzpicture}[vertexBall, edgeDouble=nolabels, faceStyle=nolabels, scale=3]

\coordinate (V1) at (0,0);
\coordinate (V2) at (1 , 0);
\coordinate (V3) at (0.5 , 0.8660);

\coordinate (V4) at (0.5,0.38*0.8660);

\draw[edge] (V1) --  (V2);
\draw[edge] (V3) --  (V2);
\draw[edge] (V1) --  (V3);
\draw[edge] (V4) --  (V1);
\draw[edge] (V2) --  (V4);
\draw[edge] (V3) --  (V4);
\vertexLabelR[]{V1}{left}{$ $}
\vertexLabelR[]{V2}{left}{$ $}
\vertexLabelR[]{V3}{left}{$ $}
\vertexLabelR[]{V4}{left}{$ $}

\node at (-0.1,.0) {$3$};
\node at (1.1 , 0) {$4$};
\node at (0.5,1.) {$2$};
\node at (0.5,0.2) {$1$};

\end{tikzpicture}}
  \caption{}
  \label{fig:facegraphtetrahedron}  
  \end{subfigure}
  \caption{(a) The simplicial tetrahedron $\tetrahedron$ and (b) its face graph $\F(X)$}
\end{figure}
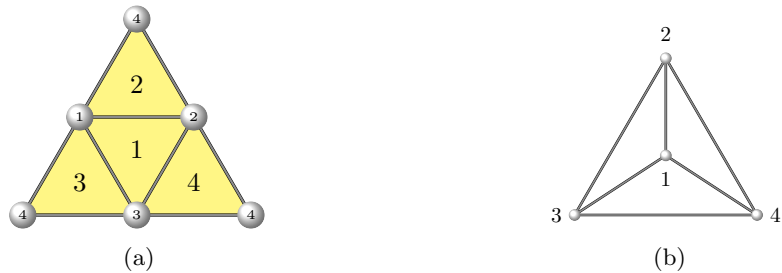
The incidence relations between the faces and edges of a simplicial surface yield a cubic graph, known as its {\bf face graph}, or {\bf dual} or {\bf face pairing graph}. This graph can be used to study the simplicial surface itself. The face graph of the simplicial tetrahedron is shown in \Cref{fig:facegraphtetrahedron} on the right. In this particular example, the face graph is isomorphic to the complete graph on four vertices and hence to the graph describing the incidences between vertices and edges of the simplicial tetrahedron. In general, this is not the case.

The construction of a cubic graph from a simplicial surface is a simple procedure. In contrast, the task of constructing a simplicial surface from a cubic graph is more complicated and may result in many distinct simplicial surfaces (or none at all). 

Instead, a simplicial surface is determined by a unique {\bf cycle double cover} on a cubic graph. A cycle double cover of a cubic graph is a set of cycles such that every edge of the graph is contained in exactly two cycles, see \cite{szekeres}. The cycles in the cubic graph contain the information of the vertices in the arising simplicial surface. As an example, the face graph of the simplicial tetrahedron has, up to isomorphism, exactly two cycle double covers. Only one results in a face transitive simplicial surface, see Section \ref{subsection:facegraphs}.

\paragraph{Summary of contributions.}

Our classification of small face-transitive surfaces rests on a distinguishing invariant, which we call the {\bf vertex-face type} of a face-transitive surface (see \Cref{definitionvertexfacetype}). There are exactly seven such vertex-face types of face-transitive surfaces, as is detailed in  \Cref{theorem:invariants}. For some of them, multiple different sub-types of face-transitive surfaces arise, resulting in a total of thirteen sub-types. We provide an explicit example surface for each sub-type in \Cref{example31} to \ref{example114}.

The face graph of a face-transitive surface is node-transitive, that is, the automorphism group of the resulting cubic graph acts transitively on the nodes of the graph, see \Cref{lemma:nodetransitive}. We obtain our classification of face-transitive surfaces with up to $1280$ faces from the census of cubic node-transitive graphs with up to $1280$ nodes due to Poto\v{c}nik et al.\cite{cubicvertextransitive}: For each cubic node-transitive graph $\G$ from the census, we classify all cycle double covers on $\G$ defining (simplicial) face-transitive surfaces. This entails ensuring that the corresponding cycles are chordless and that any two cycles intersect in at most two vertices. Since, in general, the construction of such collections of cycles is a task of high complexity \cite{NPcomplete}, we resort to a more direct approach. 

Namely, we exploit that a face-transitive surface with face graph $\G$ is defined by one of finitely many types of actions of a finite number of types of subgroups of $\G$. This classification is the heart of this paper and given in \Cref{section:construction}. We then classify all suitable subgroups $H\leq \Aut(\G)$ and use their properties to explicitly compute cycle double covers and hence face-transitive surfaces of a given vertex-face type. 
We describe the conditions for our subgroups for all vertex-face types separately in \Cref{vf31,vf22,vf21,vf11,vf12,vf13,vf16}. 
Note that in this setting, even with $\G$ and $H \leq \Aut(\G)$ fixed, we may still encounter multiple, non-isomorphic face-transitive surfaces, see \Cref{rem:multiple}.

\paragraph{Structure of the Paper}

In \Cref{section:TheoreticalBackground}, we formally present the essential notions and definitions that are necessary for conducting our investigations. In \Cref{subsection:SimplicialSurfaces}, we introduce the theory of simplicial surfaces and in \Cref{subsection:ColouringsofSimplicialSurfaces} we examine different colourings of simplicial surfaces. In  \Cref{subsection:facegraphs} we present the face graph of a simplicial surface and its properties. \Cref{sec:groupactions} introduces group actions of the automorphism group of face-transitive surfaces and their face graphs. In \Cref{section:facetransitivesurfaces}, we discuss face-transitive surfaces. We elaborate on the possible orders of their automorphism groups and examine the actions of their automorphism groups on its vertices, edges, and faces. As a result, we present the vertex-face type of a face-transitive surface and prove that there are precisely seven of them. In \Cref{section:construction} we analyse the different types of face-transitive surfaces and provide procedures to construct them from their face graphs and their automorphism group. We translate these results into an algorithm to enumerate all face-transitive surfaces with a given node-transitive cubic graph as face graph. The algorithm is practical, except for cubic graphs with excessively large automorphism groups. For a large class of such graphs we present theoretical obstructions in \Cref{section:problems} that make a complete classification up to $1280$ faces feasible. We present our results and discuss our implementations in \Cref{section:implementation}.

The census of face-transitive surfaces with up to $1280$ faces is available from \cite{facetransitivesurfaces}. We use the computer algebra systems GAP \cite{GAP4} and Magma \cite{magma} to implement our classification algorithm. In particular, we make use of the GAP-packages \texttt{simpcomp} \cite{simpcomp}, \texttt{SimplicialSurfaces} \cite{simplicialsurfacegap}, \texttt{GraphSym} \cite{cubicvertextransitive} and \texttt{Digraphs} \cite{DeBeule2024aa} to examine and construct face-transitive surfaces and cubic graphs. We further employ Magma in our studies to compute subgroups of automorphism groups with prescribed group orders. The implementation is available from \cite{facetransitivesurfaces}.

\subsection*{Acknowledgements}

R.\ Akpanya acknowledges funding by the Deutsche Forschungsgemeinschaft (DFG, German Research Foundation) in the framework of the Collaborative Research Centre CRC/TRR 280 ``Design Strategies for Material-Minimized Carbon Reinforced Concrete Structures – Principles of a New Approach to Construction'' (project ID 417002380). This project was started while R.\ Akpanya was visiting J.\ Spreer at the University of Sydney. The research visit as well as research by J.\ Spreer was partially funded by the Australian Research Council under the Discovery Project Scheme, grant number DP220102588.

\section{Preliminaries}
\label{section:TheoreticalBackground}
This section contains notions and definitions that are necessary to study simplicial, and in particular, face-transitive surfaces.
\subsection{Simplicial surfaces}
\label{subsection:SimplicialSurfaces}
All simplicial surfaces considered in this paper are simplicial complexes. Hence, we use their language to formally define simplicial surfaces. First, we give the definition of a simplicial complex.
\begin{definition}
Let $P$ be a non-empty finite set. A \emph{\textbf{simplicial complex}} $X$ is a subset $\emptyset \neq X\subseteq \Pow(P)$ which is closed under taking subsets. If $s\subseteq x$ holds for $s,x\in X$, we say that {\em \bf $s$ is incident to $x$}.
\end{definition}
In particular, a simplicial complex requires every subset of an element in the complex to also be contained in the simplicial complex. It follows that a simplicial complex is uniquely determined by the set of its inclusion maximal elements, its {\bf facets}. 
To further simplify dealing with the incidence structure of a simplicial complex, we introduce the following notation.
\begin{definition}
  For a simplicial complex $X,$ we define the set containing all $x\in X$ satisfying $\vert x\vert =i+1$ as $X_{i}$.
  Moreover, for $x\in X$ we define the set $X_i(x)$ as 
  \begin{align*}
       X_i(x):=
       \begin{cases}
    \{s \mid s\subseteq x, \text{ and } s\in X_i\}, & i \leq \vert x\vert \\
    \{s \mid x\subseteq s\in X_i\}, & i>\vert x\vert.
  \end{cases}
  \end{align*}
\end{definition}
For simplicity, we denote the one-element subsets $\{v\}$ of a simplicial complex $X$ by $v$. Thus, we identify the set $X_0$ with a subset of the underlying finite set $P$ of the complex $X$. 
We refer to the elements of $X_0$ as {\bf vertices}, the elements of $X_1$ as {\bf edges}, and the elements of $X_2$ as {\bf triangles} or {\bf faces} of $X$. The {\bf dimension} of $X$ is defined as the cardinality of the largest facet minus one, and it is said to be {\bf pure}, if all its facets have equal cardinality. 

The simplicial complexes of interest in this paper are what we call {\bf simplicial surfaces}, i.e., complexes that simplicially decompose 2-dimensional manifolds. Their formal definition below is tailored to our needs. A more general definition of these surfaces can be found in \cite{simplicialsurfacesbook}.
\begin{definition}
  Let $X$ be a simplicial complex. The complex $X$ is a \emph{\textbf{simplicial surface}}, if it satisfies the following conditions:
  \begin{itemize}
    \item $X$ is pure and of dimension $2$,
    \item every edge of $X$ is contained in exactly two triangles, that is, $\vert X_2(e)\vert  = 2$ for all $e \in X_1$,
    \item for all $v\in X_0$ there exists an $n>0$ such that $X_2(v)=\{F_1,\ldots,F_n\}$ holds. These faces can be arranged in a sequence $(F_1,\ldots,F_n)$ such that $\vert F_i\cap F_{i+1}\vert =2$ for $i=1,\ldots, n,$ where we read the subscripts modulo $n$. We call $\deg_X(v)=n$ the \emph{\bf degree} of $v$. We denote the sequence $(F_1,\ldots,F_n)$ by $u(v)$ and call it the \emph{\bf umbrella} of $v$, or a \emph{\bf vertex-defining umbrella} of $X$.
  \end{itemize}
\end{definition}
As an example, we refer the reader to the simplicial tetrahedron shown in \Cref{fig:tetrahedron}, which can be described by facets $\{\{1,2,3\},\{1,2,4\},\{1,3,4\},\{2,3,4\}\}$.
The simplicial surfaces in this paper are (strongly) connected, i.e.\ for any two faces $F_1,F_n$ of a simplicial surface $X$ there exists a path of faces $(F_1,e_1,\ldots,e_{n-1},F_n)$ with $e_i\in X_1(F_i) \cap X_1(F_{i+1})$ for $i=1,\ldots,n-1$. The {\bf Euler-Characteristic} of a simplicial surface is given by the alternating sum $\chi (X) = |X_0| - |X_1| + |X_2|$. It is a topological invariant of the underlying surface $X$. We have the following notion of a homomorphism of simplicial surfaces.
{\begin{definition}
Let $X$ and $Y$ be simplicial surfaces. A map $\phi:X\to Y$ is called a \emph{\textbf{homomorphism}} between the two simplicial surfaces, if the incidence $s\subseteq x$ in $X$ implies the incidence $\phi(s) \subseteq \phi(x)$ in $Y$. 
We define iso-, mono- and epimorphisms as usual.  An isomorphism $\phi:X\to X$ is called an \textbf{\emph{automorphism}} of  $X$. We denote the set of all automorphisms of $X$ by $\Aut(X)$, which is a group with the composition of maps as group multiplication. 
\end{definition}
Note that a homomorphism of simplicial complexes is sometimes referred to as a simplicial map. We observe that, given a simplicial surface $X$, there exists a natural action of $\Aut(X)$ on $X$ as a set via $\Aut(X)\times X\to X,(\phi,x)\mapsto \phi(x)$. With respect to this action, the \textbf{\emph{orbit}} of $x\in X$ under a subgroup $H\leq \Aut(X)$ is denoted by $x^H$. 

An automorphism of a simplicial surface is uniquely determined by the images of the vertices of an arbitrary face of the surface. More precisely, we have the following statement.

\begin{lemma}\label{lemma:autunique}
  Let $X$ be a simplicial surface, $F=\{v_1,v_2,v_3\}\in X_2$ a face and $\phi$ an automorphism of $X$. If there exists another automorphism $\psi\in \Aut(X)$ with $\phi(v_i)=\psi(v_i)$ for $i=1,2,3,$ then $\phi=\psi$.
\end{lemma}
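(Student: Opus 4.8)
The plan is to reduce the statement to a rigidity property of a single automorphism. Setting $\theta = \psi^{-1}\phi \in \Aut(X)$, the hypothesis $\phi(v_i) = \psi(v_i)$ becomes $\theta(v_i) = v_i$ for $i = 1,2,3$, so it suffices to show that an automorphism of $X$ fixing the three vertices of a single face $F$ must be the identity; then $\phi = \psi$ follows by composing with $\psi$.

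The key local step I would establish is a propagation principle along shared edges. Suppose $\theta$ fixes all three vertices of some face $G = \{a,b,c\}$, and let $G'$ be the unique other face sharing the edge $e = \{a,b\}$ with $G$; this face exists and is unique precisely because $\vert X_2(e)\vert = 2$ by the defining property of a simplicial surface. Since $\theta$ fixes $a$ and $b$, it fixes $e$, so $\theta(G')$ is again a face containing $e$, i.e. an element of $X_2(e) = \{G, G'\}$. As $\theta(G) = G$ and $\theta$ acts bijectively on this two-element set, we are forced to have $\theta(G') = G'$. Writing $G' = \{a,b,d\}$ and using that $\theta$ fixes $a$ and $b$ while permuting the vertex set of $G'$, the remaining vertex must be fixed as well, so $\theta(d) = d$ and $\theta$ fixes all vertices of $G'$.

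I would then globalise this using strong connectivity. Any face of $X$ is joined to $F$ by a path of faces in which consecutive faces share an edge, so a straightforward induction along such a path, applying the propagation principle at each step, shows that $\theta$ fixes every vertex of every face. Since $X$ is pure of dimension $2$, every vertex lies in some face, and therefore $\theta$ restricts to the identity on $X_0$.

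Finally, I would upgrade ``$\theta$ fixes all vertices'' to ``$\theta = \mathrm{id}$''. Because $X$ is a simplicial complex, every simplex $x \in X$ is determined by its vertex set; and since $\theta$ preserves incidence bijectively, the vertex set of $\theta(x)$ is $\{\theta(v) : v \in x\} = \{v : v \in x\}$, whence $\theta(x) = x$ for all $x \in X$. Thus $\theta = \mathrm{id}$ and $\phi = \psi$. The main obstacle is the local propagation step, which is where the surface axioms genuinely enter: the conclusion $\theta(G') = G'$ rests essentially on each edge lying in exactly two faces, and the passage to all of $X$ rests on strong connectivity. The remaining arguments are routine bookkeeping about simplicial complexes.
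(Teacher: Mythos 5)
Your proposal is correct and follows the same route as the paper, whose proof is just a one-line sketch of exactly this argument: strong connectivity plus the fact that fixing the vertices of one face forces the vertices of each edge-adjacent face to be fixed (via $\vert X_2(e)\vert = 2$), iterated along a face path. Your write-up merely supplies the details the paper leaves implicit — the reduction to $\theta=\psi^{-1}\phi$, the two-element-set argument for $\theta(G')=G'$, and the final passage from fixing all vertices to $\theta=\mathrm{id}$ — all of which are sound.
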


\begin{proof}
    This is a direct consequence of the fact that $X$ is strongly connected: fixing the images of the vertices of one face fixes the images of the adjacent faces, and iterating this argument proves the lemma.
\end{proof}

\subsection{Colourings of simplicial surfaces}
\label{subsection:ColouringsofSimplicialSurfaces}
Throughout this article, we make frequent use of colouring the vertices and edges of a simplicial surface. In particular, we have the following definitions.
\begin{definition}
  Let $X$ be a simplicial surface. A map $c:X_0\to \{1,\ldots,k\}$ is called a \emph{\textbf{vertex-$k$-colouring}} of $X,$ if $c$ is surjective, and $c(v) \neq c(v')$ holds for all edges $\{v,v'\}\in X_1$.
\end{definition}
\begin{definition}[\cite{grunbaum1969conjecture}]
  Let $X$ be a simplicial surface. A map $\omega:X_1 \to \{1,2,3\}$ is called an \emph{\textbf{edge-colouring}}. We say that $\omega$ is a \emph{\textbf{Gr\"unbaum-colouring}} of $X,$ if the restriction of $\omega$ to the edges of a face is always a bijection.
\end{definition} 
In the figures of this paper, we illustrate the colours $1$, $2$, and $3$ of Gr\"unbaum and vertex-colourings with the colours red, green and blue, respectively. 
For an example, consider the Gr\"unbaum-colouring of the edges of the simplicial tetrahedron $\tetrahedron$ as indicated in \Cref{fig:wildtetrahedron}. A vertex-$4$-colouring of the simplicial tetrahedron $\tetrahedron$ is shown in \Cref{fig:vcdt}.
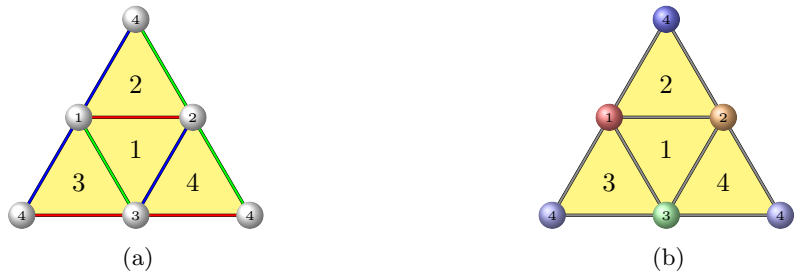
\begin{figure}[H]
  \centering
  \begin{subfigure}{.45\textwidth}
   \centering
   \scalebox{1.}{\begin{tikzpicture}[vertexBall, edgeDouble=nolabels, faceStyle, scale=1.5]

\coordinate (V1_1) at (0., 0.);
\coordinate (V2_1) at (1., 0.);
\coordinate (V3_1) at (0.4999999999999999, 0.8660254037844386);
\coordinate (V4_1) at (-0.9999999999999996, -1.732050807568877);
\coordinate (V4_1) at (0.5000000000000001, -0.8660254037844386);
\coordinate (V5_1) at (1.5, 0.8660254037844388);
\coordinate (V5_2) at (1.5, -0.8660254037844385);
\coordinate (V5_3) at (0., -1.732050807568877);
\coordinate (V6_1) at (-0.4999999999999999, 0.8660254037844385);
\coordinate (V6_2) at (-0.4999999999999998, -0.8660254037844388);

\fill[face]  (V2_1) -- (V3_1) -- (V1_1) -- cycle;
\node[faceLabel] at (barycentric cs:V2_1=1,V3_1=1,V1_1=1) {$2$};
\fill[face]  (V1_1) -- (V4_1) -- (V2_1) -- cycle;
\node[faceLabel] at (barycentric cs:V1_1=1,V4_1=1,V2_1=1) {$1$};
\fill[face]  (V1_1) -- (V6_2) -- (V4_1) -- cycle;
\node[faceLabel] at (barycentric cs:V1_1=1,V6_2=1,V4_1=1) {$3$};
\fill[face]  (V4_1) -- (V5_2) -- (V2_1) -- cycle;
\node[faceLabel] at (barycentric cs:V4_1=1,V5_2=1,V2_1=1) {$4$};

\draw[edge=red] (V2_1) -- node[edgeLabel] {$1$} (V1_1);
\draw[edge=green] (V3_1) -- node[edgeLabel] {$2$} (V2_1);
\draw[edge=blue] (V1_1) -- node[edgeLabel=blue] {$3$} (V3_1);
\draw[edge=green] (V4_1) -- node[edgeLabel] {$4$} (V1_1);
\draw[edge=blue] (V2_1) -- node[edgeLabel] {$5$} (V4_1);
\draw[edge=red] (V4_1) -- node[edgeLabel] {$7$} (V6_2);
\draw[edge=red] (V5_2) -- node[edgeLabel] {$8$} (V4_1);
\draw[edge=green] (V2_1) -- node[edgeLabel] {$10$} (V5_2);
\draw[edge=blue] (V6_2) -- node[edgeLabel] {$12$} (V1_1);

\vertexLabelR{V1_1}{left}{$1$}
\vertexLabelR{V2_1}{left}{$2$}
\vertexLabelR{V3_1}{left}{$4$}
\vertexLabelR{V4_1}{left}{$3$}
\vertexLabelR{V5_2}{left}{$4$}
\vertexLabelR{V6_2}{left}{$4$}

\end{tikzpicture}}
  \caption{}
  \label{fig:wildtetrahedron}  
  \end{subfigure}
  \begin{subfigure}{.45\textwidth}
   \centering
   \scalebox{1.}{\begin{tikzpicture}[vertexBall, edgeDouble=nolabels, faceStyle, scale=1.5]

\coordinate (V1_1) at (0., 0.);
\coordinate (V2_1) at (1., 0.);
\coordinate (V3_1) at (0.4999999999999999, 0.8660254037844386);
\coordinate (V4_1) at (-0.9999999999999996, -1.732050807568877);
\coordinate (V4_1) at (0.5000000000000001, -0.8660254037844386);
\coordinate (V5_1) at (1.5, 0.8660254037844388);
\coordinate (V5_2) at (1.5, -0.8660254037844385);
\coordinate (V5_3) at (0., -1.732050807568877);
\coordinate (V6_1) at (-0.4999999999999999, 0.8660254037844385);
\coordinate (V6_2) at (-0.4999999999999998, -0.8660254037844388);

\fill[face]  (V2_1) -- (V3_1) -- (V1_1) -- cycle;
\node[faceLabel] at (barycentric cs:V2_1=1,V3_1=1,V1_1=1) {$2$};
\fill[face]  (V1_1) -- (V4_1) -- (V2_1) -- cycle;
\node[faceLabel] at (barycentric cs:V1_1=1,V4_1=1,V2_1=1) {$1$};
\fill[face]  (V1_1) -- (V6_2) -- (V4_1) -- cycle;
\node[faceLabel] at (barycentric cs:V1_1=1,V6_2=1,V4_1=1) {$3$};
\fill[face]  (V4_1) -- (V5_2) -- (V2_1) -- cycle;
\node[faceLabel] at (barycentric cs:V4_1=1,V5_2=1,V2_1=1) {$4$};

\draw[edge] (V2_1) -- node[edgeLabel] {$1$} (V1_1);
\draw[edge] (V3_1) -- node[edgeLabel] {$2$} (V2_1);
\draw[edge] (V1_1) -- node[edgeLabel] {$3$} (V3_1);
\draw[edge] (V4_1) -- node[edgeLabel] {$4$} (V1_1);
\draw[edge] (V2_1) -- node[edgeLabel] {$5$} (V4_1);
\draw[edge] (V4_1) -- node[edgeLabel] {$7$} (V6_2);
\draw[edge] (V5_2) -- node[edgeLabel] {$8$} (V4_1);
\draw[edge] (V2_1) -- node[edgeLabel] {$10$} (V5_2);
\draw[edge] (V6_2) -- node[edgeLabel] {$12$} (V1_1);

\vertexLabelR[red!50]{V1_1}{left}{$1$}
\vertexLabelR[orange!50]{V2_1}{left}{$2$}
\vertexLabelR[blue!50]{V3_1}{left}{$4$}
\vertexLabelR[green!30]{V4_1}{left}{$3$}
\vertexLabelR[blue!30]{V5_2}{left}{$4$}
\vertexLabelR[blue!30]{V6_2}{left}{$4$}

\end{tikzpicture}}
  \caption{}
  \label{fig:vcdt}  
  \end{subfigure}
  \caption{(a) Gr\"unbaum-colouring and (b) vertex-$4$-colouring of the simplicial tetrahedron $\tetrahedron$}
\end{figure}

There are two types of edges in a Gr\"unbaum-colouring of a surface.
\begin{definition}
  Let $X$ be a simplicial surface, $e\in X_1$, $\omega:X_1\to \{1,2,3\}$ a Gr\"unbaum-colouring of $X$, and let $F_1,F_2\in X_2$ be faces with $X_2(e)=\{F_1,F_2\}$, and $e_i\in X_1(F_i)$, $i=1,2$, such that $e \neq e_1 \neq e_2 \neq e$ and $X_0(e)\cap X_0(e_1)\cap X_0(e_2)\neq \emptyset$. If $\omega(e_1)\neq \omega(e_2)$ we call $e$ a \textbf{\emph{rotational edge}}, see \Cref{remark:rotational}, if $\omega(e_1)=\omega(e_2)$ we call $e$ a \textbf{\emph{mirror edge}}, see \Cref{remark:mirror}.
\end{definition} 
\begin{figure}[H]
  \centering
  \begin{subfigure}{0.45\textwidth}
\centering\begin{tikzpicture}[vertexBall, edgeDouble, faceStyle, scale=2]

\coordinate (V1) at (0., 0.);
\coordinate (V2) at (0, 1.);
\coordinate (V3) at ( 0.8660254037844384,0.5);
\coordinate (V4) at ( -0.8660254037844384,0.5);

\fill[face]  (V1) -- (V2) -- (V3) -- cycle;
\node[faceLabel] at (barycentric cs:V1=1,V2=1,V3=1) {$F_2$};
\fill[face]  (V1) -- (V2) -- (V4) -- cycle;
\node[faceLabel] at (barycentric cs:V1=1,V4=1,V2=1) {$F_1$};

\draw[edge=red] (V1) -- node[edgeLabel] {$e$} (V2);
\draw[edge=blue] (V3) -- (V1);
\draw[edge=green] (V4) -- (V1);
\draw[edge=green] (V3) --node[edgeLabel] {$e_2$} (V2);
\draw[edge=blue] (V4) -- node[edgeLabel] {$e_1$}(V2);

\vertexLabelR{V1}{left}{$ $}
\vertexLabelR{V2}{left}{$ $}
\vertexLabelR{V3}{left}{$ $}
\vertexLabelR{V4}{left}{$ $}
\end{tikzpicture}
    \caption{}
    \label{remark:rotational}
  \end{subfigure}
  \begin{subfigure}{0.45\textwidth}
\centering
\begin{tikzpicture}[vertexBall, edgeDouble, faceStyle, scale=2]

\coordinate (V1) at (0., 0.);
\coordinate (V2) at (0, 1.);
\coordinate (V3) at ( 0.8660254037844384,0.5);
\coordinate (V4) at ( -0.8660254037844384,0.5);

\fill[face]  (V1) -- (V2) -- (V3) -- cycle;
\node[faceLabel] at (barycentric cs:V1=1,V2=1,V3=1) {$F_2$};
\fill[face]  (V1) -- (V2) -- (V4) -- cycle;
\node[faceLabel] at (barycentric cs:V1=1,V4=1,V2=1) {$F_1$};

\draw[edge=red] (V1) -- node[edgeLabel] {$e$} (V2);
\draw[edge=green] (V3) -- (V1);
\draw[edge=green] (V4) -- (V1);
\draw[edge=blue] (V3) -- node[edgeLabel] {$e_2$}(V2);
\draw[edge=blue] (V4) -- node[edgeLabel] {$e_1$}(V2);

\vertexLabelR{V1}{left}{$ $}
\vertexLabelR{V2}{left}{$ $}
\vertexLabelR{V3}{left}{$ $}
\vertexLabelR{V4}{left}{$ $}

\end{tikzpicture}
    \caption{}
        \label{remark:mirror}
  \end{subfigure}
  \caption{(a) Rotational edge and (b) mirror edge of a Gr\"unbaum-coloured simplicial surface}
  \label{fig:wildedges}
\end{figure}
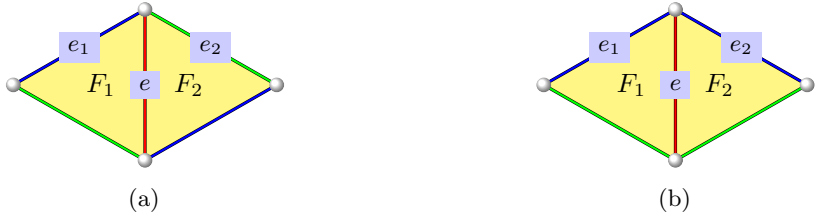

\begin{remark}
    A simplicial surface with a Gr\"unbaum colouring without rotational edges forms a {\bf graph encoding}, or {\bf balanced triangulation} \cite{novik}. In this article we sometimes implicitly describe simplicial surfaces by graph encodings without explicitly referring to them as such.
\end{remark}

Although the existence of a Gr\"unbaum-colouring on the edges of a simplicial surface is a combinatorial property, it can originate from geometric properties: A simplicial surface which can be embedded into Euclidean $3$-space as a polyhedron whose faces form congruent triangles with three distinct edge lengths admits a Gr\"unbaum colouring by using the lengths of the edges as colours. For more on this and related constructions, see  \cite{automorphism,icosahedron,coolsaet,isosceles}.

\subsection{Face graphs of simplicial surfaces}
\label{subsection:facegraphs}

All graphs in this paper are undirected, connected, simple and finite. We refer to the elements of a graph as {\bf nodes} and {\bf arcs} to distinguish them from the vertices and edges of a simplicial surface.

\begin{definition}\label{def:facegraph}
  Let $X$ be a simplicial surface. The \textbf{\emph{face graph}}, or \textbf{\emph{dual graph}}, $\F(X)$ of $X$ is defined as the graph $\F(X)=(V,E)$ with set of nodes $V=X_2$ and set of arcs $E=\{X_2(e)\mid e\in X_1\}$. Hence, two nodes $F_1,F_2\in V$ are connected in $\F(X)$ if and only if the intersection of the two faces $F_1 \cap F_2$ is an edge in $X$.
\end{definition}

Since simplicial surfaces consist of faces that are incident to three edges, the face graphs of simplicial surfaces are cubic.
We define the automorphism group $\Aut(\G)$ of a cubic graph $\G=(V,E)$ as the set of all automorphisms of $\G,$ i.e.\ the set of all bijective functions $\sigma:V\mapsto V$ satisfying $\{\sigma(\{x,y\})\mid \{x,y\}\in E\}=E$.
The automorphism group of the graph $\G$ acts on the nodes and arcs of $\G$ via $\Aut(X)\times V \to V, (\sigma,x)\mapsto \sigma(x)$ and $\Aut(X)\times E \to E, (\sigma,e)\mapsto \sigma(e)$. We denote the orbit of $x\in V$ ($e \in E)$ under a subgroup $H\leq \Aut(\G)$ by $x^H$ ($e^H$). We say that the graph $\G$ is \textbf{\emph{node-transitive}}, if $v^{\Aut(\G)}=V$. That is, the automorphism group $\Aut (\G)$ acts transitively on the nodes $V$. The face graph of the simplicial tetrahedron $\tetrahedron$ forms an example of a node-transitive face graph, see \Cref{fig:facegraphtetrahedron}.
Let $X$ be a simplicial surface and let $X_2=\{F_1,\ldots,F_n\}$ be its faces. In figures, we label the node $F_i$ of a face graph $\F(X)$ by $i$, and use the labels $1,\ldots,n$ to describe the umbrellas of $X$ and the automorphism group $\Aut(\F(X))$.

The faces and edges of a simplicial surface translate directly into the nodes and arcs of the corresponding face graph. Computing the vertices of a surface from its face graph, however, is more involved, and usually not uniquely determined: The vertices, or, more precisely, the umbrellas corresponding to the vertices of a surface appear in the face graph as what is called a {\bf cycle double cover}.

\begin{definition}[Szekeres \cite{szekeres}, Seymour \cite{seymour}]
Let $\G=(V,E)$ be a (cubic) graph and $\C$ a set of cycles of $\G$. The set $\C$ is a \emph{\textbf{cycle double cover}} of $\G,$ if every arc in $\G$ is contained in exactly two cycles in $\C$. 
\end{definition}

Note that cycles in this definition need to be embedded, i.e., they must use every arc at most once. Hence, a given cubic graph has to be bridgeless in order to have a cycle double cover. 

\begin{example}
\label{ex:cdc}
Up to isomorphism there exist exactly two cycle double covers of the face graph of the simplicial tetrahedron $\tetrahedron$ shown in \Cref{fig:facegraphtetrahedron}: 
\begin{align*}
  \C_1:=\{(1,2,3),(1,3,4),(1,2,4),(2,3,4)\}, \quad
\C_2:=\{(1,2,3,4),(1,3,2,4),(1,2,4,3)\}.
\end{align*}
\end{example}

Vertex-defining umbrellas of a simplicial surface $X$ naturally biject to the set of cycles of a cycle double cover of $\F(X)$. If $C$ is a cycle in $\F(X)$ describing an umbrella of a vertex of $X,$ we say that $C$ is a \textbf{ \emph{vertex-defining cycle} }of $X$. 
It follows that, to construct a simplicial surface from a cubic graph, we have to compute a cycle double cover such that any two cycles of this cycle double cover intersect in at most one arc.

In \Cref{ex:cdc}, the cycles in $\C_2$ are not chordless and pairwise intersect in more than two arcs, while $\C_1$ contains the vertex-defining cycles of the simplicial tetrahedron $\tetrahedron$.
We refer to a cycle double cover satisfying the two properties above, such as $\C_1$, as \emph{\bf vertex-faithful}. 
Note that there are bridgeless cubic graphs that are not isomorphic to a face graph of a simplicial surface: The generalised Petersen graph $G(9,3)$ is a cubic graph with $140$ cycle double covers, but none of them is vertex-faithful. This can be checked using the GAP-package \texttt{SimplicialSurfaces} \cite{simplicialsurfacegap}.

More generally, even the existence of arbitrary cycle double covers is not established for cubic graphs. In fact, this is the subject of a famous conjecture due to Szekeres and Seymour.
\begin{conjecture}[Cycle double cover conjecture \cite{szekeres,seymour}]  \label{cdcconjecture}
Every bridgeless (cubic) graph has a cycle double cover.
\end{conjecture}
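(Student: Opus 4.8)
The plan is to recognise at the outset that \Cref{cdcconjecture} is the celebrated \emph{cycle double cover conjecture}, one of the central open problems of graph theory, so that any honest proof proposal can only carry out the standard reductions and then pinpoint where every known attempt stalls. The natural first move is to dispose of the easy case: if a bridgeless cubic graph $\G$ is $3$-edge-colourable, then a proper edge-colouring $\omega \colon E \to \{1,2,3\}$ yields a cycle double cover for free. Indeed, for each pair of colours $\{i,j\}$ the edges coloured $i$ or $j$ form a $2$-regular subgraph, since each node meets exactly one edge of each colour, and hence a disjoint union of cycles; the three subgraphs associated to $\{1,2\}$, $\{1,3\}$, and $\{2,3\}$ together cover each edge exactly twice. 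Thus the conjecture reduces to the non-$3$-edge-colourable bridgeless cubic graphs, the \emph{snarks}.

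The second step is a minimal-counterexample argument. Suppose $\G$ is a counterexample on the fewest nodes. One argues that $\G$ must be cyclically $4$-edge-connected and of girth at least $5$: any small edge cut or short cycle lets one contract or suppress a local piece, invoke minimality to obtain a cycle double cover of the strictly smaller graph, and then lift it back, contradicting the choice of $\G$. Combined with the first step, a minimal counterexample would therefore be a cyclically $4$-edge-connected snark of girth at least $5$ --- the Petersen graph being the smallest such object, which, however, \emph{does} admit a cycle double cover.

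The hard part --- and the reason the statement is a conjecture rather than a theorem --- is precisely the class of snarks that survives these reductions. There is no structure theorem decomposing an arbitrary cyclically $4$-edge-connected snark into smaller pieces in a way that lets the induction proceed, and the known partial results (for graphs of small genus, with a Hamiltonian path, with bounded oddness, or of small order) all exploit extra structure that a general snark need not possess. So while the reductions above can be carried out cleanly, I expect the genuine obstacle to be unavoidable here: producing a cycle double cover for an arbitrary snark, or ruling out a minimal counterexample, is exactly the content that has resisted all attempts, and I would not expect to surmount it.
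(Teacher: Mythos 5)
The paper gives no proof of this statement: it is stated as \Cref{cdcconjecture} precisely because it is the famous open cycle double cover conjecture of Szekeres and Seymour, which the paper only cites as background. Your proposal is therefore correct in the only sense available --- you rightly decline to prove it, your reduction of the $3$-edge-colourable case (the three pairs of colour classes each form $2$-regular subgraphs covering every edge exactly twice) is sound, your minimal-counterexample reductions to cyclically $4$-edge-connected snarks of girth at least $5$ are the standard ones, and your identification of the surviving snark case as the genuine open obstacle matches the paper's treatment of the statement as an unproven conjecture.
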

For more work and background on cycle double covers of cubic graphs, we refer to \cite{Hoffman1991,JAEGER19851,Evencycles}.

\subsection{Group actions on simplicial surfaces and their face graphs}
\label{sec:groupactions}

We can relate the automorphism group of a simplicial surface $X$ to the automorphism group of its face graph as follows:
Let $\phi \in \Aut(X)$ be an automorphism and $F = \{v_1,v_2,v_3\} \in X_2$. The map $\phi$ naturally acts on $X_2$ by $\phi \cdot F = \phi(F)=\{\phi(v_1),\phi(v_2),\phi(v_3)\} \in X_2$. Hence, this action naturally defines a homomorphism $\Aut(X) \to \Sym(X_2)$, associating the automorphism $\phi$ to the permutation it induces on $X_2$. Since $\phi$ preserves the face, edge incidences of $X$, the image of this homomorphism can be interpreted as an automorphism of $\F(X)$ defining a homomorphism
\begin{equation}
\label{eq:lambdaX}
\lambda_X : \Aut(X) \to \Aut(\F(X)).
\end{equation}
Due to \Cref{lemma:autunique}, every automorphism of $X$ leaving a triangle and its neighbours fixed must be the identity. Hence, $\lambda_X$ is a monomorphism, and a natural way to embed the automorphism group of a simplicial surface into the automorphism group of its face graph.

In what follows we describe a group action of the automorphism group of a cubic graph on its cycles and an action of the automorphism group of a simplicial surface on its vertex-defining umbrellas.
These group actions are used in \Cref{section:construction} to construct cycle double covers of cubic node-transitive graphs that result in face-transitive surfaces. 
\begin{definition}
\label{rem:b}
Let $\G=(V,E)$ be a cubic graph, and $\Gamma$ the set of all cycles in $\G$. The group action of $\Aut(\G)$ on $V$ defines a group action of $\Aut(\G)$ on $\Gamma$ by 
\[
\Aut(\G) \times \Gamma\to \Gamma, \quad (\sigma,(F_1,\ldots,F_n))\mapsto \sigma((F_1,\ldots,F_n)):=(\sigma(F_1),\ldots,\sigma(F_n)).
\]
Furthermore, let $X$ be a simplicial surface. The action of $\Aut(X)$ on $X_2$ extends to an action on the vertex-defining umbrellas $u(X):=\{u(v)\mid v\in X_0\}$ of $X$ by the map
\[
\Aut(X) \times u(X)\to u(X),\quad(\phi,(F_1,\ldots,F_n))\mapsto \phi((F_1,\ldots,F_n)):=(\phi(F_1),\ldots,\phi(F_n)).
\]
\end{definition}

\begin{example}
Consider the simplicial tetrahedron $\tetrahedron$ with labels for vertices and faces as in \Cref{fig:tetrahedron}. A vertex-defining umbrella of the vertex labelled by $1$ is given by $u:=(1,2,3)$. Thus, the orbit of $u$ under $\Aut(\tetrahedron)$ is given by 
\begin{align*}
u^{\Aut(\tetrahedron)}=u^{\Aut(\F(\tetrahedron))}
=\{(1,2,3),(1,2,4),(1,3,4), (2,3,4)\}.
\end{align*}
This set of cycles is exactly the set of vertex-defining umbrellas (or vertex-defining cycles) of $\mathcal{T}$.
\end{example}

If $X$ is a simplicial surface, then $H:=\lambda_X(\Aut(X))$ is maximal as a subgroup of $\Aut(\F(X))$ with the property that the action of $H$ on the cycles of $\F(X)$ leaves the cycle double cover of the vertex-defining cycles of $X$ invariant. Thus, $\Aut(X)\cong \Aut(\F(X))$ holds if and only if for every vertex-defining cycle $(F_1,\ldots,F_n)$ of $X$ and every automorphism $\sigma \in \Aut(\F(X))$ the image $\sigma((F_1,\dots,F_n))=(\sigma(F_1),\dots,\sigma(F_n))$ is again a vertex-defining cycle of $X$.
Additionally, by interpreting an umbrella $u(v)$ of a vertex $v$ of $X$ as a cycle in $\F(X)$, the set
 \[\bigcup_{v\in X_0}u(v)^H\]
describes the vertex-faithful cycle double cover of the vertex-defining cycles (umbrellas) of $X$. In particular, we have the following lemma.

\begin{lemma}\label{lemma:vertexisoumbrella}
Let $X$ be a simplicial surface, and let $v_1,v_2\in X_0$ vertices with umbrellas $u(v_1)$ and $u(v_2)$ and vertex-defining cycles $C_1$ and $C_2$ in $\F(X)$, respectively. Then $v_1\in v_2^{\Aut(X)}$ if and only if $C_1 \in C_2^{\lambda_X(\Aut(X))}$.
\end{lemma}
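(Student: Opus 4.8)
The plan is to prove the claimed equivalence by transporting automorphisms through the monomorphism $\lambda_X$ and using the compatibility of the two group actions introduced in \Cref{rem:b}. The key observation is that $\lambda_X$ intertwines the action of $\Aut(X)$ on $u(X)$ with the action of $\lambda_X(\Aut(X))$ on the corresponding cycles of $\F(X)$: for any $\phi \in \Aut(X)$ and any vertex $v$, applying $\phi$ to the umbrella $u(v)=(F_1,\ldots,F_n)$ and then reading off the resulting face sequence as a cycle of $\F(X)$ gives exactly $\lambda_X(\phi)$ applied to the cycle $C$ associated with $u(v)$. This is essentially immediate from the definition of $\lambda_X$, since $\lambda_X(\phi)$ acts on the nodes $X_2$ of $\F(X)$ in the same way $\phi$ acts on the faces of $X$; I would state this intertwining relation first as the backbone of the argument.

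For the forward direction, I would assume $v_1 \in v_2^{\Aut(X)}$, so there is some $\phi \in \Aut(X)$ with $\phi(v_2)=v_1$. Since an automorphism sends the umbrella of a vertex to the umbrella of its image, we have $\phi(u(v_2)) = u(v_1)$ as vertex-defining umbrellas (possibly up to the cyclic/reversal ambiguity inherent in writing an umbrella as a sequence, which I will address below). Translating through the intertwining relation, $\lambda_X(\phi)$ sends the cycle $C_2$ to the cycle $C_1$, so $C_1 \in C_2^{\lambda_X(\Aut(X))}$. For the converse, I would reverse this: given $\sigma \in \lambda_X(\Aut(X))$ with $\sigma(C_2)=C_1$, write $\sigma = \lambda_X(\phi)$ for a unique $\phi$ (uniqueness by \Cref{lemma:autunique}, injectivity of $\lambda_X$), and then the same intertwining relation forces $\phi(u(v_2)) = u(v_1)$, whence $\phi(v_2)=v_1$ because an umbrella determines its centre vertex (the centre is the unique vertex common to all faces of the umbrella).

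The step I expect to require the most care is the bookkeeping around exactly what it means for two umbrellas, or two cycles, to be equal. An umbrella is defined as a sequence $(F_1,\ldots,F_n)$, but it genuinely represents a vertex only up to cyclic rotation and reversal, and likewise a cycle in $\F(X)$ is an unoriented closed walk. I would make the convention explicit that ``$C_1 \in C_2^{\lambda_X(\Aut(X))}$'' and ``$\phi(u(v_2))=u(v_1)$'' are both understood modulo this rotation/reversal equivalence, and check that the bijection between vertex-defining umbrellas of $X$ and vertex-defining cycles of $\F(X)$ respects this equivalence. Once this identification is pinned down, the two directions are symmetric and the proof closes. The remaining routine point, that the centre of an umbrella is recovered as $\bigcap_i F_i$, follows directly from the definition of the umbrella as the cyclically-ordered faces around $v$ satisfying $|F_i \cap F_{i+1}|=2$, so I would invoke it briefly rather than belabour it.
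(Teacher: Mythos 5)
Your argument is correct and matches the paper's proof in substance: the paper disposes of the lemma in one sentence by noting that automorphisms of $X$ and of $\F(X)$ preserve the face--edge and node--arc incidences, which is precisely your intertwining relation for $\lambda_X$ together with the observation that an umbrella determines its centre vertex. Your write-up simply makes explicit the details the paper leaves implicit (the rotation/reversal convention for umbrellas versus cycles, and recovering the centre as $\bigcap_i F_i$, which is valid since vertex degrees are at least $3$), so no further changes are needed.
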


\begin{proof}
Note that automorphisms of a simplicial surface as well as automorphisms of its face graph preserve the face-edge incidences and the node-arc incidences, respectively.
\end{proof}

\section{Face-transitive surfaces}
\label{section:facetransitivesurfaces}

The goal of this section is to derive the {\bf vertex-face type} of a face-transitive surface, an important tool for our constructions in \Cref{section:construction}.

\begin{definition}
  \label{def:fts}
Let $X$ be a simplicial surface. We say that $X$ is \textbf{\emph{face-transitive}}, if $\Aut(X)$ acts transitively on the set of faces $X_2$ of $X$.
\end{definition}

The simplicial tetrahedron $\tetrahedron$ from \Cref{section:TheoreticalBackground} is an example of a face-transitive surface. We have the following fundamental observation.

\begin{proposition}\label{lemma:nodetransitive}
  The face graph $\F(X)$ of a face-transitive surface $X$ is node-transitive. 
\end{proposition}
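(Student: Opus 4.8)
The plan is to exploit the monomorphism $\lambda_X : \Aut(X) \to \Aut(\F(X))$ established in \eqref{eq:lambdaX} together with the fact that the nodes of $\F(X)$ are, by definition, exactly the faces $X_2$ of $X$. Concretely, node-transitivity of $\F(X)$ means that $\Aut(\F(X))$ acts transitively on $V = X_2$, and I want to deduce this from the transitivity of $\Aut(X)$ on $X_2$.

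First I would fix two arbitrary nodes $F_1, F_2$ of $\F(X)$; since $V = X_2$, these are two faces of $X$. By the hypothesis that $X$ is face-transitive (\Cref{def:fts}), there exists an automorphism $\phi \in \Aut(X)$ with $\phi(F_1) = F_2$, where $\phi$ acts on faces via $\phi\cdot F = \{\phi(v_1),\phi(v_2),\phi(v_3)\}$ as described in \Cref{sec:groupactions}. The key observation is that the image $\sigma := \lambda_X(\phi) \in \Aut(\F(X))$ is precisely the permutation of $X_2$ induced by $\phi$, so by construction $\sigma(F_1) = \phi(F_1) = F_2$. This exhibits a graph automorphism carrying $F_1$ to $F_2$, and since $F_1, F_2$ were arbitrary, $\Aut(\F(X))$ acts transitively on $V$, i.e.\ $\F(X)$ is node-transitive.

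The only point that requires care is verifying that $\lambda_X(\phi)$ genuinely acts on the nodes as $\phi$ does on the faces — but this is immediate from the definition of $\lambda_X$, which sends $\phi$ to the automorphism of $\F(X)$ induced by the permutation $\phi$ gives on $X_2 = V$. Because $\phi$ preserves the edge–face incidences of $X$, it preserves adjacency in $\F(X)$ (two nodes are adjacent iff the corresponding faces share an edge), so $\lambda_X(\phi)$ is a bona fide graph automorphism, as already recorded in the excerpt.

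I expect no serious obstacle here: the statement is essentially a transport-of-transitivity argument through the homomorphism $\lambda_X$, and the matching of nodes with faces makes the correspondence between the two transitive actions direct. The mildest subtlety is simply being precise that ``node of $\F(X)$'' and ``face of $X$'' are the same object, so that the surjectivity of the $\Aut(X)$-action on faces transfers verbatim to an action of a subgroup of $\Aut(\F(X))$ on nodes, which then certifies transitivity of the full group $\Aut(\F(X))$.
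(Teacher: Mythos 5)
Your argument is correct and is precisely the paper's proof: the paper simply notes that $\lambda_X(\Aut(X))$ is a node-transitive subgroup of $\Aut(\F(X))$, which is exactly your transport-of-transitivity through $\lambda_X$, spelled out in detail. No gaps --- your care in checking that $\lambda_X(\phi)$ acts on nodes as $\phi$ does on faces is the only content needed, and it is already guaranteed by the construction of $\lambda_X$ in \Cref{sec:groupactions}.
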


\begin{proof}
  This follows from noting that $\lambda_X(\Aut(X))$ is a node-transitive subgroup of $\Aut(\F(X))$.
\end{proof}

The stabiliser of an element of a simplicial surface $X$ forms a subgroup of $\Aut(X)$. We have the following lemma.

\begin{lemma}\label{lemma:conjugate}
Let $X$ be a simplicial surface, and let $x_1,x_2 \in X_i$ be two elements of $X$. If $x_1\in {x_2}^{\Aut(X)}$ holds, then the stabilisers of $x_1$ and $x_2$ are conjugate in $\Aut(X)$.
\end{lemma}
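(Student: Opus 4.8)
The plan is to invoke the elementary orbit–stabiliser principle, namely that points lying in a common orbit of a group action always have conjugate stabilisers. Concretely, since $x_1 \in x_2^{\Aut(X)}$, there is by definition an automorphism $\phi \in \Aut(X)$ with $\phi(x_2) = x_1$, using the natural action of $\Aut(X)$ on $X$ introduced after the definition of automorphisms. I would then claim that conjugation by $\phi$ carries $\stab(x_2)$ exactly onto $\stab(x_1)$.

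The verification is a one-line computation in each direction. For the forward inclusion, take any $\psi \in \stab(x_2)$, so that $\psi(x_2) = x_2$, and compute
\[
(\phi \psi \phi^{-1})(x_1) = \phi\bigl(\psi(\phi^{-1}(x_1))\bigr) = \phi\bigl(\psi(x_2)\bigr) = \phi(x_2) = x_1,
\]
where I used $\phi^{-1}(x_1) = x_2$. This shows $\phi\, \stab(x_2)\, \phi^{-1} \subseteq \stab(x_1)$. The reverse inclusion follows by the symmetric argument applied to $\phi^{-1}$, which satisfies $\phi^{-1}(x_1) = x_2$, giving $\phi^{-1}\stab(x_1)\phi \subseteq \stab(x_2)$ and hence $\stab(x_1) \subseteq \phi\, \stab(x_2)\, \phi^{-1}$. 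Combining the two inclusions yields $\stab(x_1) = \phi\, \stab(x_2)\, \phi^{-1}$, so the two stabilisers are conjugate in $\Aut(X)$.

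There is essentially no obstacle here: the statement is the standard fact that stabilisers of points in the same orbit are conjugate, and the only ingredient specific to the present setting is that $\Aut(X)$ genuinely acts on each $X_i$. This holds because automorphisms are incidence-preserving bijections, so they preserve the rank of an element in the inclusion poset and therefore map $X_i$ to $X_i$ setwise. Since the underlying action of $\Aut(X)$ on $X$ was already fixed earlier in the paper, the proof reduces entirely to the group-theoretic computation above, with no appeal to connectivity, induction, or any further structural properties of $X$.
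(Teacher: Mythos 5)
Your proof is correct and follows essentially the same route as the paper: both arguments pick an automorphism $\phi$ carrying one element to the other and verify directly that conjugation by $\phi$ maps one stabiliser onto the other (the paper packages this as checking that $\psi \mapsto \phi \circ \psi \circ \phi^{-1}$ is a well-defined group isomorphism between the stabilisers, which is the same computation as your two inclusions). If anything, your explicit two-inclusion phrasing states the conjugacy conclusion slightly more transparently, but there is no substantive difference.
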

\begin{proof}
Let $S_i=\stab_{\Aut(X)}(x_i)$ be the $\Aut(X)$-stabiliser of the element $x_i,$ $i=1,2$. Since $x_1$ and $x_2$ are contained in the same $\Aut(X)$-orbit, there exists an automorphism $\phi \in \Aut(X)$ mapping $x_1$ onto $x_2$. We show that
\[
f: S_1 \to S_2, \psi\mapsto \phi \circ \psi \circ \phi^{-1}
\]
is a group isomorphism. First, for $\psi \in S_1$, we have $f(\psi)(x_2)=x_2$, and the above map is well-defined.
With 
\[
f^{-1}: S_2 \to S_1, \psi\mapsto \phi^{-1} \circ \psi \circ \phi
\]
and $f (\psi_1\circ \psi_2)=\phi \circ \psi_1 \circ \phi^{-1}\circ\phi \circ \psi_2 \circ \phi^{-1}=f(\psi_1)\circ f(\psi_2)$ for all $\psi_1,\psi_2\in S_1$
it follows that $f$ is a group isomorphism. Thus $S_1$ and $S_2$ are conjugate.
\end{proof}

The following definition is central to this paper.

\begin{definition}\label{definitionvertexfacetype}
  Let $X$ be a face-transitive surface, $F\in X_2$, and let $S=\stab_{\Aut(X)}(F)$ be the stabiliser of $F$ in $\Aut(X)$. We define the \emph{\textbf{vertex-face type}} $\vf(X)$ of $X$ as the tuple 
  \[
  \vf(X):=(\vert {X_0}^{\Aut(X)}\vert, \vert S \vert ),
  \]
where ${X_0}^{\Aut(X)}:=\{v^{\Aut(X)}\mid v\in X_0\}$.

\end{definition}

Since all the face-stabilisers are conjugate by \Cref{lemma:conjugate}, they all have the same order. Thus, $\vf(X)$ does not depend on the choice of $F$ and is therefore well-defined. 

\begin{lemma}\label{lemma:3vertexorbits}
  Let $X$ be a face-transitive surface. Then $\Aut(X)$ has at most three orbits on~$X_0$.
\end{lemma}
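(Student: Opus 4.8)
The plan is to fix a single face and show that every vertex of $X$ must lie in the $\Aut(X)$-orbit of one of that face's three vertices. Since a triangle has only three vertices, this immediately bounds the number of vertex-orbits by three.

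First I would fix an arbitrary face $F=\{v_1,v_2,v_3\}\in X_2$ and let $w\in X_0$ be an arbitrary vertex. By the surface axioms, $w$ is incident to at least one face: the umbrella condition guarantees the existence of $n=\deg_X(w)>0$ faces in $X_2(w)$, so we may pick some $F'\in X_2(w)$. Next I would invoke face-transitivity: since $\Aut(X)$ acts transitively on $X_2$, there is an automorphism $\phi\in\Aut(X)$ with $\phi(F)=F'$. As $\phi$ is a bijection on $X_0$ preserving incidences, it carries the three vertices of $F$ bijectively onto the three vertices of $F'$. Because $w$ is one of the vertices of $F'$, we conclude $w=\phi(v_i)$ for some $i\in\{1,2,3\}$, and hence $w\in v_i^{\Aut(X)}$.

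Since $w$ was arbitrary, this shows $X_0=v_1^{\Aut(X)}\cup v_2^{\Aut(X)}\cup v_3^{\Aut(X)}$, so $\Aut(X)$ has at most three orbits on $X_0$. (The three orbits need not be distinct — for instance for $\tetrahedron$ they all coincide — which only decreases the count and is consistent with the claimed bound.)

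The argument is short, and the only point requiring genuine care is the claim that every vertex of $X$ actually belongs to some face; this is precisely what the umbrella condition in the definition of a simplicial surface supplies. I do not expect any serious obstacle beyond making this incidence explicit, since everything else follows directly from the transitivity of $\Aut(X)$ on $X_2$ together with the fact that automorphisms act as incidence-preserving bijections on vertices.
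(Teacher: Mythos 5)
Your proposal is correct and follows essentially the same argument as the paper: fix a face $F$, pick for each vertex $w$ a face $F'$ containing it, and use face-transitivity to map $F$ onto $F'$, so that $w$ lands in the orbit of one of the three vertices of $F$. Your extra remark that the umbrella condition guarantees every vertex lies on some face is a valid (and slightly more careful) spelling-out of a step the paper leaves implicit.
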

\begin{proof}
  Let $F=\{v_1,v_2,v_3\} \in X_2$. We show that an arbitrary vertex $v \in X_0$ must lie in the $\Aut(X)$-orbit of one of these three vertices. For this, let $F' \in X_2$ be a face containing $v$. Because $X$ is face-transitive, there exists an automorphism $\phi \in \Aut(X)$ mapping $F$ to $F'$. Since $\phi$ respects the incidences of $X$, the vertices $\phi(v_1),\phi(v_2)$ and $\phi(v_3)$ must be the vertices of the face $F'=\phi(F),$ and thus $\phi(v_i) = v$ for $i \in \{1,2,3\}$ follows, which concludes the proof.
\end{proof}
\begin{corollary}\label{lemma:3edgeorbits}
  Let $X$ be a face-transitive surface. Then $\Aut(X)$ has at most three orbits on $X_1$.
\end{corollary}
It follows from \Cref{lemma:3edgeorbits}  and the orbit-stabiliser theorem, that the orbit of an edge $e\in X_1$ of a face-transitive surface $X$ satisfies $\vert e^{\Aut(X)} \vert \in \{\tfrac{1}{2}\vert X_2\vert,\vert X_2\vert,\tfrac{3}{2}\vert X_2\vert\}$.
Moreover, we know that two vertices of a simplicial surface $X$ that are contained in the same orbit under the action of $\Aut(X)$ on $X_0$ must have the same face degree. 
This allows us to compute the Euler characteristic $\chi(X)$ of a face-transitive surface. For instance, let $X$ be a face-transitive surface with exactly one vertex-orbit and $v\in X_0$ a vertex with $\deg (v)=d$. Then the Euler-Characteristic of $X$ is given by 
\[
\chi(X)=\vert X_0 \vert-\vert X_1\vert+\vert X_2 \vert=\vert X_0 \vert-\tfrac{1}{2}\vert X_2 \vert = \vert X_0 \vert-\tfrac{d}{6}\vert X_0 \vert =\tfrac{6-d}{6}\vert X_0 \vert.
\]
This implies that a simplicial surface $X$ whose automorphism group acts transitively on the vertices $X_0$ is a simplicial torus or a simplicial Klein bottle if and only if $\deg(v)=6$ for all $v\in X_0$. See \cite{kuehnel} for a classification of such triangulations of tori.
Next, we formulate a restriction to the order of a face-stabiliser of a simplicial surface.
\begin{lemma}
  \label{lem:subgroupS3}
  Let $X$ be a simplicial surface and let $F=\{v_1,v_2,v_3\}\in X_2$ be a face.
  Then the stabiliser $\stab_{\Aut(X)}(F)$ is isomorphic to a 
  subgroup of the symmetry group $\Sym(\{v_1,v_2,v_3\})$.
\end{lemma}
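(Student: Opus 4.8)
The plan is to build a homomorphism from the stabiliser $S=\stab_{\Aut(X)}(F)$ into the symmetric group $\Sym(\{v_1,v_2,v_3\})$ on the three vertices of $F$, and show it is injective. First I would observe that any automorphism $\phi\in S$ fixes the face $F$ setwise, meaning $\phi(F)=F$. Since $\phi$ respects incidences and $F=\{v_1,v_2,v_3\}$, the image $\{\phi(v_1),\phi(v_2),\phi(v_3)\}$ must equal $F$ again; hence $\phi$ permutes the three vertices $\{v_1,v_2,v_3\}$. This yields a well-defined restriction map
\[
\rho : S \to \Sym(\{v_1,v_2,v_3\}), \quad \phi \mapsto \phi|_{\{v_1,v_2,v_3\}}.
\]

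Next I would check that $\rho$ is a group homomorphism, which is immediate since restriction of a composition of maps to an invariant set is the composition of the restrictions. The key step is injectivity: if $\phi\in S$ lies in the kernel of $\rho$, then $\phi$ fixes each of $v_1,v_2,v_3$ individually, that is $\phi(v_i)=v_i$ for $i=1,2,3$. Here I would invoke \Cref{lemma:autunique}, which states that an automorphism of a simplicial surface is uniquely determined by the images of the vertices of a single face. Since the identity automorphism also fixes $v_1,v_2,v_3$, the lemma forces $\phi=\mathrm{id}$. Therefore $\ker(\rho)$ is trivial and $\rho$ is a monomorphism.

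Concluding, $S$ embeds into $\Sym(\{v_1,v_2,v_3\})$ via $\rho$, so $S$ is isomorphic to its image, a subgroup of $\Sym(\{v_1,v_2,v_3\})$, which is exactly the claim. I do not expect any serious obstacle here: the only nontrivial ingredient is the injectivity, and that is handed to us directly by \Cref{lemma:autunique}. The main thing to be careful about is verifying that $\rho$ is genuinely well-defined, i.e. that $\phi$ does map $F$ to itself as a set (rather than merely preserving incidences in the abstract) so that the restriction indeed lands in $\Sym(\{v_1,v_2,v_3\})$; this follows cleanly from $\phi$ being in the stabiliser of $F$ together with $\phi$ preserving the incidence structure.
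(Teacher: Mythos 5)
Your proposal is correct and matches the paper's own argument: both restrict a stabilising automorphism to the vertex set of $F$ (using incidence-preservation to see that $v_1,v_2,v_3$ are permuted) and then invoke \Cref{lemma:autunique} to get injectivity of this restriction map. You merely spell out the homomorphism $\rho$ and its trivial kernel more explicitly than the paper's terse version, which is a presentational difference, not a mathematical one.
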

\begin{proof}
Since an automorphism $\phi\in \stab_{\Aut(X)}(F)$ respects the incidence structure of $X,$ the vertices $\phi(v_1),\phi(v_2)$ and $\phi(v_3)$ must be incident to the face $\phi(F)=F$. Therefore, the vertices $v_1,v_2,v_3$ are permuted. By \Cref{lemma:autunique} the automorphism $\phi$ is uniquely identified by the images of $v_1,v_2$ and $v_3$ under $\phi$ and thus we conclude that $ \stab_{\Aut(X)}(F)$ is isomorphic to a subgroup of the symmetric group on 3 elements.
\end{proof}

Hence, by \Cref{lem:subgroupS3}, face-stabilisers in simplicial surfaces have order a divisor of six, and, by the orbit stabiliser theorem, $\vert \Aut(X) \vert = s \vert X_2 \vert,$ where $s \in \{1,2,3,6\}$, and $\vert \Aut(X)\vert \leq 6\vert X_2\vert$. This is the well-known upper bound on the number of automorphisms of a triangulated surface, closely related to Hurwitz's automorphism theorem \cite{hurwitz_ueber_1892}.

\Cref{lemma:3vertexorbits,lem:subgroupS3} combined imply that there are $4\cdot3=12$ possible vertex-face types for a face-transitive surface. In the following theorem we prove that out of these $12$ possibilities $5$ can be excluded.

\begin{theorem}\label{theorem:invariants}
Let $X$ be a face-transitive surface. Then the vertex-face type of $X$ satisfies
\[
\vf(X)\in \{(3,1),(2,1),(2,2),(1,1),(1,2),(1,3),(1,6)\}.
\]
\end{theorem}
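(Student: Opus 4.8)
The plan is to reduce the entire statement to a single observation about how the face-stabiliser $S=\stab_{\Aut(X)}(F)$ acts on the three vertices of a fixed face $F=\{v_1,v_2,v_3\}$. By \Cref{lemma:3vertexorbits} the number of vertex-orbits $k:=\vert X_0^{\Aut(X)}\vert$ lies in $\{1,2,3\}$, and by \Cref{lem:subgroupS3} the order $\vert S\vert$ is a divisor of $6$, so there are indeed $4\cdot 3=12$ candidate tuples; the goal is to show that five of them cannot occur, namely $(2,3),(2,6),(3,2),(3,3),(3,6)$.

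First I would record, reusing the argument in the proof of \Cref{lemma:3vertexorbits}, that every vertex of $X$ lies in the $\Aut(X)$-orbit of one of $v_1,v_2,v_3$. Consequently $k$ equals the number of distinct $\Aut(X)$-orbits among $\{v_1,v_2,v_3\}$; equivalently, it is the number of classes of the equivalence relation on $\{v_1,v_2,v_3\}$ given by ``lying in a common $\Aut(X)$-orbit''.

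The key step is then to compare this relation with the action of $S$. Since $S\leq \Aut(X)$, two vertices of $F$ in a common $S$-orbit certainly lie in a common $\Aut(X)$-orbit, so the partition of $\{v_1,v_2,v_3\}$ into $S$-orbits refines the partition into $\Aut(X)$-orbits; hence the number of $S$-orbits on the vertices of $F$ is at least $k$. By \Cref{lem:subgroupS3}, $S$ is isomorphic to a subgroup of $\Sym(\{v_1,v_2,v_3\})$ acting in the natural way, and since all subgroups of that group of a given order are conjugate, the number of orbits depends only on $\vert S\vert$: order $1$ yields three orbits, order $2$ (a transposition) yields two, and orders $3$ and $6$ each yield a single orbit. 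Combining this with the refinement inequality gives $k\leq 3$ when $\vert S\vert=1$, $k\leq 2$ when $\vert S\vert=2$, and $k\leq 1$ when $\vert S\vert\in\{3,6\}$. These bounds exclude exactly the five tuples above and are consistent with the remaining seven, proving the claim.

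I expect the only genuine subtlety to be the refinement argument in the third step: one must be careful that the relevant comparison is between $S$-orbits and $\Aut(X)$-orbits restricted to the three vertices of the single face $F$, rather than orbits on all of $X_0$, and that the resulting inequality bounds $k$ \emph{from above} by the $S$-orbit count. Once this is set up correctly, everything else is the routine enumeration of subgroups of the symmetric group on three letters together with their orbit numbers.
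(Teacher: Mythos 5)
Your proof is correct, and it uses the same two ingredients as the paper (\Cref{lemma:3vertexorbits} and \Cref{lem:subgroupS3}) but organises the exclusion step along a genuinely different axis. The paper runs a case analysis on the number $n$ of vertex orbits: for $n=3$ it argues that any face-stabilising automorphism must fix all three vertices of $F$ and hence be the identity by \Cref{lemma:autunique}, and for $n=2$ it argues that a stabiliser of order $3$ or $6$ contains a $3$-cycle on $\{v_1,v_2,v_3\}$, merging all three vertices into one orbit. Your single refinement inequality --- the $S$-orbit partition of $\{v_1,v_2,v_3\}$ refines the $\Aut(X)$-orbit partition, so $k$ is at most the number of $S$-orbits, which is $3,2,1,1$ for $\vert S\vert = 1,2,3,6$ --- subsumes both of the paper's cases in one stroke and excludes exactly the same five tuples; it is arguably cleaner, since it makes transparent that the admissible pairs are precisely those with $k$ bounded by the $S$-orbit count, whereas the paper's $n=3$ and $n=2$ arguments look ad hoc by comparison. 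Two small points: first, you need slightly more than the literal statement of \Cref{lem:subgroupS3}, namely that the isomorphism is induced by restriction to $\{v_1,v_2,v_3\}$ (i.e., $S$ acts \emph{faithfully} on the vertices of $F$), since an abstract isomorphism alone would not rule out an order-$2$ stabiliser acting trivially on $F$ and producing three orbits; this equivariant form is exactly what the proof of \Cref{lem:subgroupS3} establishes via \Cref{lemma:autunique}, and you do flag the natural action, so the argument stands. Second, your appeal to conjugacy of equal-order subgroups of $\Sym(\{v_1,v_2,v_3\})$ is correct but unnecessary: every element of order $2$ in $\Sym(3)$ is a transposition, and every subgroup of order $3$ or $6$ is transitive, which gives the orbit counts directly.
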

\begin{proof}
Let $F=\{v_1,v_2,v_3\}\in X_2$ be a face of simplicial surface $X$. Using \Cref{lemma:3vertexorbits}, we give a case analysis on $n=\vert X_0^{\Aut (X)}\vert \leq 3$, the number of vertex orbits under the action of $\Aut(X)$.

\noindent
{\bf Case $n=3$.} The vertices $v_1,v_2$ and $v_3$ are contained in three pairwise disjoint $\Aut(X)$-orbits. Hence, an automorphism $\phi\in \Aut(X)$ stabilising the face $F$ has to satisfy $\phi(v_i)=v_i$ for $i=1,2,3$. By \Cref{lemma:autunique} we know that $\phi$ must be the identity in $\Aut(X)$. So, $\stab_{\Aut(X)}(F)=\{id\}$ and $\vf(X)=(3,1)$ follows.

\noindent
{\bf Case $n=2$.} If $\vert \stab_{\Aut(X)}(F)\vert =3$ holds, then $\stab_{\Aut(X)}(F)$ is cyclic. Thus, $\stab_{\Aut(X)}(F)$ contains an automorphism $\phi$ satisfying 
\[
\phi_{\mid \{v_1,v_2,v_3\}}=(v_1,v_2,v_3)\in \Sym(\{v_1,v_2,v_3\}).
\]
This means that the vertices $v_1,v_2$ and $v_3$ are all contained in the same orbit. Thus, $\vert {X_0}^{\Aut(X)}\vert=1$ which contradicts $n=2$. With similar arguments we can exclude the case $\vert \stab_{\Aut(X)}(F)\vert =6$.
\end{proof}

\section{Construction of face-transitive surfaces}\label{section:construction}
This section describes procedures to classify face-transitive surfaces, based on constructing double cycle covers of cubic node-transitive graphs. \Cref{theorem:invariants} is the key ingredient to achieve this goal: In a case analysis by vertex-face type, we examine the structure of face-transitive surfaces and describe algorithms to enumerate vertex-faithful cycle double covers in a cubic graph yielding the vertex-defining umbrellas of the corresponding simplicial surfaces. 

More precisely, given a cubic node-transitive graph $\G=(V,E)$, we first compute subgroups $H\leq \Aut(\G)$ satisfying $\vert H \vert =s\vert V\vert,$ where $s\in \{1,2,3,6\}$. Depending on the properties of these subgroups $H$, we then classify actions of $H$ on $\G$ giving rise to vertex-faithful cycle double covers of $\G$ and hence face-transitive surfaces $X$. The vertex-face type of $X$ is directly determined through the type of action of $H$ on $\G$. Note that any given graph-subgroup pair $(\G,H)$ can produce multiple non-isomorphic face-transitive surfaces. An example is given in \Cref{rem:nonisomorphic}. We start with the following definition.

\begin{definition}\label{def:cycleColours}
  Let $X$ be a simplicial surface, $\omega:X_1 \to \{1,\ldots,k\}$, $k \leq 3$, an edge-colouring of $X$ and $u(v)=(F_1,\ldots, F_n)$ the umbrella of the vertex $v\in X_0$. For simplicity, we read all subscripts modulo $n$. Furthermore, let $e_1,\ldots,e_n$ be the edges in $X$ satisfying $X_2(e_i)=\{F_i,F_{i+1}\}$.
  \begin{itemize}
    \item The umbrella $u(v)$ is called \textbf{\emph{mono-coloured}}, if 
$ \vert \{\omega(e_i)\mid i=1,\ldots, n\}\vert =1$.
  \item We say that $u(v)$ is \textbf{\emph{bi-coloured}}, if
  $ \vert \{\omega(e_i)\mid i=1 ,\ldots, n\}\vert =2$
 and the colouring satisfies $\omega(e_{i})\neq \omega(e_{i+1})$ for $i=1,\ldots, n$. 
\item We refer to a cycle as \textbf{\emph{tri-coloured}} if 
 $\vert \{\omega(e_i)\mid i=1 ,\ldots, n\}\vert =3$
 and the colouring satisfies $\omega(e_{i})\neq \omega(e_{i+1})$ for $i=1,\ldots, n$.
  \end{itemize}
  
  Moreover, we define the \emph{\bf colour of the umbrella of $v$} by
  $\omega(v):=\omega(u(v)):=(\omega(e_{1}),\ldots,\omega(e_{n}))$.
\end{definition}
An edge-colouring $\omega$ of a simplicial surface $X$ can be translated into an arc-colouring $\kappa$ of the corresponding face graph $\F(X)$ via $\kappa(\{F,F'\}):=\omega(e)$ for $e\in X_1$ with $X_2(e)=\{F,F'\}$. This way we extend the notion of mono-, bi- and tri-coloured cycles to cubic graphs. Recall that edge colours $1,2,3$ are illustrated in figures as colours red, green and blue, respectively. Moreover, recall that the automorphism group of a simplicial surface $X$ can be embedded into $\Aut(\F(X))$ by the map $\lambda_X$, see \Cref{eq:lambdaX} in \Cref{sec:groupactions}. We introduce the following refinements of  coloured umbrellas (cycles) that are essential to study the umbrellas of the face-transitive surfaces in this section.
\begin{definition}\label{def:typesOfColours}
   Let $X$ be a simplicial surface and $v\in X_0$ be a vertex of $X$ with corresponding umbrella $u(v)$.  Furthermore, let $\omega$ be a Gr\"unbaum-colouring of $X$. We call the umbrella of $v$
    \begin{itemize}
        \item a \emph{\bf$(1,2)$-umbrella of $X$}, if $\omega(u(v))=(1,2,\ldots,1,2),$
        \item a \emph{\bf$(1,2,3)$-umbrella of $X$}, if $\omega(u(v))=(1,2,3,\ldots,1,2,3)$,
        \item a \emph{\bf$(1,1,2)$-umbrella of $X$}, if $\omega(u(v))=(1,1,2,\ldots,1,1,2)$,
        \item a \emph{\bf$(1,2,3,1,3,2)$-umbrella of $X$}, if $\omega(u(v))=(1,2,3,1,3,2,\ldots,1,2,3,1,3,2)$,  
        \item a \emph{\bf$(3,1,3,2)$-umbrella of $X$}, if $\omega(u(v))=(3,1,3,2,\ldots,3,1,3,2)$.
    \end{itemize}
\end{definition}
Again, the coloured umbrellas of a simplicial surface from \Cref{def:typesOfColours} can be translated into corresponding coloured cycles of the underlying face graph.
We conclude the introduction of this section with the definition of an automorphism induced cycle of a cubic graph -- another essential tool to describe the umbrellas of face-transitive surfaces.
\begin{definition}\label{def:alpha}
Let $\G=(V,E)$ be a cubic node-transitive graph and $F_1,\ldots,F_n\in V$ nodes such that $\{F_i,F_{i+1}\}\in E$ for $i=1,\ldots,n-1$. Furthermore, let $\sigma\in \Aut(\G)$ be an automorphism of order $\ell$ satisfying $\sigma(F_1)=F_n$ and $\Sigma:= \{\sigma^i(F_j)\mid i=0,\ldots, \ell-1,\, j=1,\ldots,n-1\}$. If $\sigma$ exists, we define a \emph{\bf$\sigma$-induced $\alpha$-cycle} as
    \[\alpha(\sigma,F_1,\ldots,F_n):=(\sigma(F_1),\ldots,\sigma(F_{n-1}),\ldots,\sigma^{\ell}(F_1),\ldots,\sigma^{\ell}(F_{n-1}))\]
    if $\vert \Sigma \vert =(n-1)\ell$, and as the empty cycle $()$ otherwise.
\end{definition}

\subsection{Face-transitive surfaces with vertex-face type (3,1)}\label{vf31}

In this section we examine face-transitive surfaces with three vertex orbits.

\begin{definition}
  Let $\G=(V,E)$ be a cubic node-transitive graph and $H\leq \Aut(\G)$ such that
  \begin{enumerate}
    \item $H$ acts transitively on the nodes $V$ with $\vert H\vert =\vert V\vert,$ and
    \item the action of $H$ on $E$ results in exactly three orbits $E_1$, $E_2$ and $E_3$.
  \end{enumerate}
  The partition of $E = E_1 \cup E_2 \cup E_3$ induces an arc-colouring 
  $\kappa:E\to \{1,2,3\},$ with $\kappa(\{F,F'\}):=i$ for $\{F,F'\}\in E_i$. 
  If the set of all cycles that are bi-coloured with respect to $\kappa$ is a 
  vertex-faithful cycle double cover of $\G$, we call $H$ a \emph{\bf$(3,1)$-group} of 
  $\F(X)$ and we denote the above cycle double cover by 
  $\C^{(3,1)}(H)$. 
\end{definition}

The following theorem classifies face-transitive surfaces $X$ of vertex-face $(3,1)$.

\begin{theorem}
    \label{thm:31}
    A simplicial surface $X$ is a face-transitive surface with 
    $\vf(X)=(3,1)$ if and only if $H:=\lambda_X(\Aut(X))$ is a $(3,1)$-group of 
    $\F(X)$ and the cycle double cover $\C^{(3,1)}(H)$ contains exactly the vertex-defining cycles of $X$.
\end{theorem}

\begin{proof}
Let $X$ be a face-transitive surface with $\vf(X)=(3,1),$ and let $X_0 = V_1 \uplus V_2 \uplus V_3$
be the three vertex orbits of $X$ under $\Aut(X)$. These orbits define a partition of the
edges of $X$ into
\begin{align*}
  &\E_1:=\{\{v,v'\} \in X_1 \mid v\in V_2 \text{ and }v'\in V_3\},\\
  &\E_2:=\{\{v,v'\} \in X_1 \mid v\in V_1 \text{ and }v'\in V_3\},\\
  &\E_3:=\{\{v,v'\} \in X_1 \mid v\in V_1 \text{ and }v'\in V_2\}.
\end{align*}
Note that this is the partition of $X_1$ into $\Aut(X)$-orbits. Since every face is incident 
to exactly one edge of each class, this defines a Gr\"unbaum-colouring of $X$.
Even more, the Gr\"unbaum-colouring $\omega:X_1\to \{1,2,3\}$ defined by $\omega(e):=i$ for 
$e\in \E_i$, forces all edges of $X$ to be mirror edges with respect to $\omega,$ cf. \Cref{remark:mirror}. 
It follows that the umbrellas of $X$ must all be bi-coloured, see \Cref{fig:vf31}. 
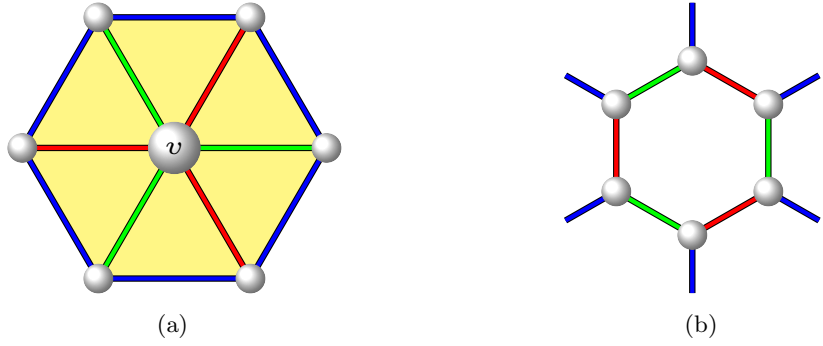
\begin{figure}[H]
  \centering
  \begin{subfigure}{0.45\textwidth}
   \centering \scalebox{2.}{\begin{tikzpicture}[vertexBall, edgeDouble, faceStyle, scale=1]

\coordinate (V) at (0., 0.);
\coordinate (V1) at (1., 0.);
\coordinate (V2) at (0.4999999999999999, 0.8660254037844386);
\coordinate (V3) at (-0.5000000000000001, 0.8660254037844386);
\coordinate (V4) at (-1, 0.);
\coordinate (V5) at (-0.5, -0.8660);
\coordinate (V6) at (0.5, -0.8660);

\fill[face]  (V) -- (V1) -- (V2) -- cycle;
\fill[face]  (V) -- (V3) -- (V2) -- cycle;
\fill[face]  (V) -- (V3) -- (V4) -- cycle;
\fill[face]  (V) -- (V4) -- (V5) -- cycle;
\fill[face]  (V) -- (V5) -- (V6) -- cycle;
\fill[face]  (V) -- (V1) -- (V6) -- cycle;
\fill[face]  (V2) -- (V3) -- (V5) -- cycle;

\draw[edge=green] (V) --(V1);
\draw[edge=red] (V) --(V2);
\draw[edge=green] (V) --(V3);
\draw[edge=red] (V) --(V4);
\draw[edge=green] (V) --(V5);
\draw[edge=red] (V) --(V6);

\draw[edge=blue] (V1) --(V2);
\draw[edge=blue] (V3) --(V2);
\draw[edge=blue] (V3) --(V4);
\draw[edge=blue] (V4) --(V5);
\draw[edge=blue] (V5) --(V6);
\draw[edge=blue] (V1) --(V6);
\vertexLabelR{V1}{left}{$ $}
\vertexLabelR{V2}{left}{$ $}
\vertexLabelR{V3}{left}{$ $}
\vertexLabelR{V4}{left}{$ $}
\vertexLabelR{V5}{left}{$ $}
\vertexLabelR{V6}{left}{$ $}
\vertexLabelR{V}{left}{$v$}

\end{tikzpicture}}
    \caption{}
  \label{fig:vf31}
  \end{subfigure}
  \hspace{1cm}
    \begin{subfigure}{0.3\textwidth}
    \centering\scalebox{2.}{\begin{tikzpicture}[vertexBall, edgeDouble, faceStyle, scale=1]

\coordinate (V) at (0., 0.);
\coordinate (V1) at (1., 0.);
\coordinate (V2) at (0.4999999999999999, 0.8660254037844386);
\coordinate (V3) at (-0.5000000000000001, 0.8660254037844386);
\coordinate (V4) at (-1, 0.);
\coordinate (V5) at (-0.5, -0.8660);
\coordinate (V6) at (0.5, -0.8660);

\coordinate (f1) at (barycentric cs:V=1,V1=1,V2=1);
\coordinate (f2) at (barycentric cs:V=1,V2=1,V3=1);
\coordinate (f3) at (barycentric cs:V=1,V3=1,V4=1);
\coordinate (f4) at (barycentric cs:V=1,V4=1,V5=1);
\coordinate (f5) at (barycentric cs:V=1,V5=1,V6=1);
\coordinate (f6) at (barycentric cs:V=1,V1=1,V6=1);

\coordinate(w1) at (barycentric cs:V=1,f1=-2.5);
\coordinate(w2) at (barycentric cs:V=1,f2=-2.5);
\coordinate(w3) at (barycentric cs:V=1,f3=-2.5);
\coordinate(w4) at (barycentric cs:V=1,f4=-2.5);
\coordinate(w5) at (barycentric cs:V=1,f5=-2.5);
\coordinate(w6) at (barycentric cs:V=1,f6=-2.5);

\draw[edge=red] (f1) --(f2);
\draw[edge=green] (f3) --(f2);
\draw[edge=red] (f4) --(f3);
\draw[edge=green] (f5) --(f4);
\draw[edge=red] (f6) --(f5);
\draw[edge=green] (f1) --(f6);
\draw[edge=blue] (f1) --(w1);
\draw[edge=blue] (f2) --(w2);
\draw[edge=blue] (f3) --(w3);
\draw[edge=blue] (f4) --(w4);
\draw[edge=blue] (f5) --(w5);
\draw[edge=blue] (f6) --(w6);

\vertexLabelR{f1}{left}{$ $}
\vertexLabelR{f2}{left}{$ $}
\vertexLabelR{f3}{left}{$ $}
\vertexLabelR{f4}{left}{$ $}
\vertexLabelR{f5}{left}{$ $}
\vertexLabelR{f6}{left}{$ $}

\end{tikzpicture}}
    \caption{}\label{bicoloredfgvf31}
  \end{subfigure}
  \caption{(a) Vertex-defining umbrella of $X$ with $\vf(X)=(3,1)$, (b) corresponding arc-coloured subgraph in $\F(X)$}
\end{figure}
The Gr\"unbaum-colouring of $X$ corresponds to an arc-colouring $\kappa$ of $\F(X)$ via $\kappa(\{F,F'\}):=\omega(e)$ where $e\in X_1$ such that $X_2(e)=\{F,F'\}$. Note that the colour classes of $\kappa$ are exactly the $H$-orbits of the arcs of $\F(X)$. Thus, the bi-coloured umbrellas of $X$ (with respect to $\omega$) directly translate into bi-coloured cycles of $\F(X)$ (with respect to $\kappa$), see \Cref{bicoloredfgvf31}.
Moreover, since the vertex-defining cycles of $X$ are exactly the bi-coloured cycles in $\F(X)$ with respect to $\kappa,$ we conclude that $H$ is a $(3,1)$-group of $\F(X)$ and $\C^{(3,1)}(H)=\{u(v)\mid v\in X_0\},$ where we interpret an umbrella of a vertex as a cycle in the face graph $\F(X)$.

Now, let $H = \lambda_X(\Aut(X))$ be a $(3,1)$-group of $\F(X)$ and let $\kappa$ be the arc-colouring 
of $\F(X)$ resulting in the cycle double cover $\C:=\C^{(3,1)}(H)$. Since $H = \lambda_X(\Aut(X))$, and $H$ acts transitively on the nodes of $\F(X)$ with trivial node-stabiliser (according to the orbit-stabiliser theorem), we have that $X$ is face-transitive with trivial face stabiliser.
Moreover, by \Cref{lemma:vertexisoumbrella} it follows that vertices $v_1$ and $v_2$ of $X$ satisfy $v_1\in {v_2}^{\Aut(X)}$ if and only if the corresponding vertex-defining cycles $C_1$ and $C_2$ in $\C$ satisfy $C_1\in C_2^H$. Since colour classes of $\kappa$ coincide with $H$-orbits on the arcs of $\F(X),$ $u(v_1)\in u(v_2)^H$ is equivalent to $\kappa(C_1)=\kappa(C_2)$.
This implies that the action of ${\Aut(X)}$ on the vertices $X_0$  produces three orbits, and $X$ is a face-transitive surface with $\vf(X)=(3,1)$.  
\end{proof}

Note that, in \Cref{thm:31} we fix the automorphism group of $X$ to be isomorphic to the $(3,1)$-group $H$. In general, a $(3,1)$-group can construct a simplicial surface with larger automorphism group and hence a different vertex-face type. An analogous fact is true for the groups defined in the \Cref{vf31,vf22,vf21,vf11,vf12,vf13}.

\begin{corollary}\label{corollary:constructionvf31}
    Let $\G$ be a cubic node-transitive graph and $H\leq \Aut(\G)$ a $(3,1)$-group of $\G$, then $\G$ is the face graph of a face-transitive surface.
\end{corollary}

An example of a simplicial surface with vertex-face type $(3,1)$ is given in \Cref{example31}. This surface, denoted by $X^{(3,1)}$, is a simplicial real projective plane with $13$ vertices, $36$ edges and $24$ faces. Its automorphism group $\Aut(X^{(3,1)})$ and the automorphism group of its face graph are both isomorphic to the symmetric group $S_4$.
\subsection{Face-transitive surfaces with vertex-face type (2,2)}\label{vf22}

In this section we classify face-transitive surfaces with order two face stabilisers and two vertex~orbits.

\begin{definition}
  \label{def:22}
  Let $\G=(V,E)$ be a cubic node-transitive graph and $H\leq \Aut(\G)$ such that
  \begin{enumerate}
    \item $H$ acts transitively on the nodes $V$ with $\vert  H\vert =2\cdot\vert V\vert,$
    \item $E$ partitions into two $H$-orbits $E_1$ and $E_2$ with 
    $\vert E_1\vert =\vert V \vert $ and $\vert E_2\vert =\tfrac{1}{2}\vert V \vert,$ and
    \item there exist nodes $F_m',F_1',F_2'\in V$ satisfying $\{F_1',F_2'\}\in E_1$ and 
    $\{F_m',F_1'\}\in E_2,$ and an automorphism $\sigma \in H$ satisfying 
    $\sigma(F_m')=F_2'$.
  \end{enumerate}
  The partition $E = E_1 \cup E_2$ defines an arc-colouring $\kappa:E\to \{1,2\},$ by 
  setting $\kappa(\{F,F'\}):=i$ for $\{F,F'\}\in E_i$. Let $u$ be a mono-coloured cycle in
  $\G$ with respect to $\kappa$. If $\C:=u^H \cup {\alpha(\sigma,F_m',F_1',F_2')}^H$ 
  forms a vertex-faithful cycle double 
  cover of $\G$, we say that $H$ is a \emph{\bf$(2,2)$-group} of $\G$. We 
  denote the above cycle double cover by $\C^{(2,2)}(H)$. 
\end{definition}

With \Cref{def:22} in place, the structure of face-transitive surfaces with vertex-face type $(2,2)$ can be described as follows.

\begin{theorem}\label{theorem22}
 Let $X$ be a simplicial surface. Then $X$ is face-transitive and the vertex-face type of $X$ satisfies $\vf(X)=(2,2)$ if and only if $H:=\lambda_X(\Aut(X))$ is a $(2,2)$-group of $\F(X)$ and the cycle double cover $\C^{(2,2)}(H)$ contains exactly the vertex-defining cycles of $X$.
\end{theorem}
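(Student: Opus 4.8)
The plan is to prove both directions of the biconditional by relating the combinatorial data of the face-transitive surface $X$ directly to the group-theoretic data encoded in \Cref{def:22}. Throughout, I would work via the monomorphism $\lambda_X$ from \Cref{eq:lambdaX}, identifying $\Aut(X)$ with its image $H = \lambda_X(\Aut(X)) \leq \Aut(\F(X))$, and I would freely use \Cref{lemma:vertexisoumbrella} to translate between vertex-orbits of $X$ and orbits of vertex-defining cycles in $\F(X)$ under $H$.

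For the forward direction, assume $X$ is face-transitive with $\vf(X) = (2,2)$. First I would establish condition (1): by \Cref{lemma:nodetransitive}, $H$ acts transitively on $V = X_2$, and by the orbit-stabiliser theorem together with $\vert S \vert = 2$ (the face-stabiliser order recorded in the vertex-face type), $\vert H \vert = 2\vert V\vert$. Next, for condition (2), I would invoke \Cref{lemma:3edgeorbits} and the remark following it: since $\vf(X)$ records two vertex-orbits, I would argue that $\Aut(X)$ has exactly two edge-orbits $E_1, E_2$, of sizes $\vert V\vert$ and $\tfrac{1}{2}\vert V\vert$ respectively (the size-$\tfrac{1}{2}\vert V\vert$ orbit arising because a nontrivial face-stabiliser must fix an edge of the face, swapping the two vertices not on it). Here I would use that a generator of $S$ acts on a face $F = \{v_1,v_2,v_3\}$ as a transposition, which both pins down the two edge-orbit sizes and identifies the two vertex-orbits (the swapped pair versus the fixed apex). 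The arc-colouring $\kappa$ is then exactly the $H$-orbit partition of $E$. For condition (3) and the statement about the cycle double cover, I would take the umbrella of the apex vertex (fixed by the transposition) and show it is mono-coloured in colour $2$—giving the orbit $u^H$—while the umbrella of a vertex in the swapped pair is described by an $\alpha$-cycle $\alpha(\sigma, F_m', F_1', F_2')$ for a suitable $\sigma \in H$ and adjacent nodes as in condition (3). Since the vertex-defining umbrellas of $X$ biject with a vertex-faithful cycle double cover of $\F(X)$, the union of these two orbits is precisely $\C^{(2,2)}(H)$ and equals the set of vertex-defining cycles, so $H$ is a $(2,2)$-group.

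For the converse, assume $H = \lambda_X(\Aut(X))$ is a $(2,2)$-group and $\C^{(2,2)}(H)$ consists exactly of the vertex-defining cycles of $X$. Then the cycles of $\C^{(2,2)}(H)$ are, by hypothesis, the umbrellas of the vertices of $X$, and they split into the two $H$-orbits $u^H$ and $\alpha(\sigma,F_m',F_1',F_2')^H$. By \Cref{lemma:vertexisoumbrella}, these two cycle-orbits correspond to exactly two vertex-orbits of $X$ under $\Aut(X)$, so $\vert X_0^{\Aut(X)}\vert = 2$; I would need to check the two orbits are genuinely distinct, which follows from the orbits being mono-coloured versus $\alpha$-induced (hence of different colour-pattern, so not $H$-conjugate). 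Node-transitivity of the action gives face-transitivity of $X$, and from $\vert H\vert = 2\vert V\vert = 2\vert X_2\vert$ with the orbit-stabiliser theorem I would conclude $\vert S\vert = 2$. Hence $\vf(X) = (2,2)$.

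The main obstacle I anticipate is the precise bookkeeping in the forward direction tying the abstract quantities in conditions (2) and (3) to the concrete geometry of umbrellas: one must verify that the nontrivial face-stabiliser acts as a transposition (not, say, forcing a third edge-orbit or collapsing the two vertex-orbits into one, which is exactly what \Cref{theorem:invariants} rules out for the other cases), that this yields edge-orbits of the stated sizes $\vert V\vert$ and $\tfrac{1}{2}\vert V\vert$, and that the apex umbrella is genuinely mono-coloured while the remaining umbrellas assemble into the claimed $\alpha$-cycle orbit with $\vert \Sigma\vert = (n-1)\ell$ so the $\alpha$-cycle is non-empty. Once the transposition structure of $S$ is pinned down, the colouring and $\alpha$-cycle descriptions should follow, but this identification is where the real work lies and where I would spend most of the detailed argument.
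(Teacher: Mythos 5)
Your proposal is correct and takes essentially the same approach as the paper's proof: split the vertices into two orbits, derive the two edge orbits of sizes $\vert V\vert$ and $\tfrac12\vert V\vert$ from the order-two face stabiliser acting as a transposition, show the apex umbrellas are mono-coloured while the remaining umbrellas are automorphism-induced $\alpha$-cycles (the paper builds $\sigma$ exactly as you anticipate, taking $\phi$ with $\phi(F_m')=F_2'$ and correcting by the non-trivial element of $\stab_{\Aut(X)}(F_2')$ so that the composite fixes $v_2$), and in the converse combine node-transitivity, the orbit--stabiliser theorem and \Cref{lemma:vertexisoumbrella} to recover $\vf(X)=(2,2)$ from the two colour-distinguished cycle orbits. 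One minor slip: the apex umbrella is mono-coloured in colour $1$, not colour $2$, since the spokes at a $V_1$-vertex all lie in the large orbit $E_1$, whereas the colour-$2$ arcs form a perfect matching of $\F(X)$ and hence contain no cycles at all.
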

\begin{proof}
Let $X$ be a face-transitive surface satisfying $\vf(X)=(2,2)$. Then, 
$\vert\Aut(X)\vert = 2\cdot \vert X_2\vert,$ and the vertices of $X$ partition into two 
$\Aut(X)$-orbits $V_1$ and $V_2$. Without loss of generality, assume that every face of 
$X$ is incident to exactly one vertex in $V_1$ and exactly two vertices in $V_2$. This 
defines a partition of $X_1$ into 
\begin{align*}
  \E_1:=\{\{v,v'\} \in X_1 \mid v\in V_1, \ v'\in V_2\}, \text{ and } \E_2:=\{\{v,v'\} \in X_1 \mid v\in V_2, \ v'\in V_2\}.
\end{align*}
Note that, since the face stabilisers are of order two, these are precisely the $\Aut(X)$-orbits of $X_1$, and that 
$\vert \E_1\vert =\vert X_2 \vert$ and $\vert \E_2\vert =\tfrac{1}{2}\vert X_2 \vert $. 
Denote the corresponding edge colouring by $\omega$, with edges in $\E_1$ coloured red, 
and edges in $\E_2$ coloured green. 
Let $v_i \in V_i$, $i=1,2$, and let $u(v_1)=(F_1,\ldots ,F_n)$ and 
$u(v_2)=(F_1',\ldots,F_m')$ be their umbrellas in $X$. The edges in $u(v_1)$ are all in 
$\E_1$ and hence the umbrella is mono-coloured with respect to $\omega$, see 
\Cref{fig:vf222m}.
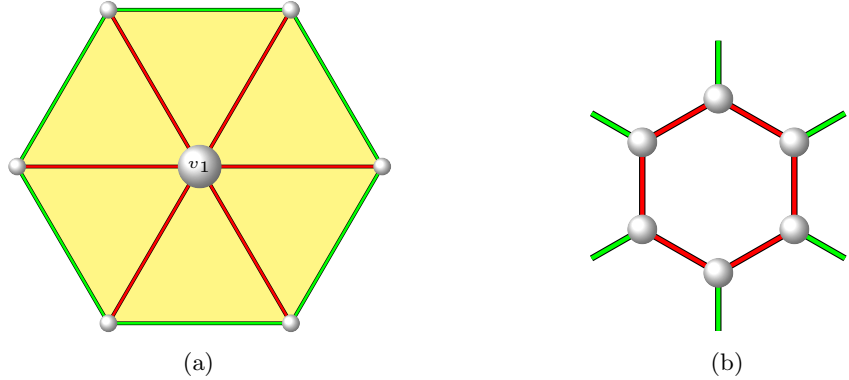
\begin{figure}[H]
  \centering
\begin{subfigure}{.45\textwidth}
   \centering\scalebox{1.2}{\begin{tikzpicture}[vertexBall, edgeDouble, faceStyle, scale=2]

\coordinate (V) at (0., 0.);
\coordinate (V1) at (1., 0.);
\coordinate (V2) at (0.4999999999999999, 0.8660254037844386);
\coordinate (V3) at (-0.5000000000000001, 0.8660254037844386);
\coordinate (V4) at (-1, 0.);
\coordinate (V5) at (-0.5, -0.8660);
\coordinate (V6) at (0.5, -0.8660);

\fill[face]  (V) -- (V1) -- (V2) -- cycle;
\fill[face]  (V) -- (V3) -- (V2) -- cycle;
\fill[face]  (V) -- (V3) -- (V4) -- cycle;
\fill[face]  (V) -- (V4) -- (V5) -- cycle;
\fill[face]  (V) -- (V5) -- (V6) -- cycle;
\fill[face]  (V) -- (V1) -- (V6) -- cycle;
\fill[face]  (V2) -- (V3) -- (V5) -- cycle;

\draw[edge=red] (V) --(V1);
\draw[edge=red] (V) --(V2);
\draw[edge=red] (V) --(V3);
\draw[edge=red] (V) --(V4);
\draw[edge=red] (V) --(V5);
\draw[edge=red] (V) --(V6);

\draw[edge=green] (V1) --(V2);
\draw[edge=green] (V3) --(V2);
\draw[edge=green] (V3) --(V4);
\draw[edge=green] (V4) --(V5);
\draw[edge=green] (V5) --(V6);
\draw[edge=green] (V1) --(V6);
\vertexLabelR{V1}{left}{$ $}
\vertexLabelR{V2}{left}{$ $}
\vertexLabelR{V3}{left}{$ $}
\vertexLabelR{V4}{left}{$ $}
\vertexLabelR{V5}{left}{$ $}
\vertexLabelR{V6}{left}{$ $}
\vertexLabelR{V}{left}{$v_1$}

\end{tikzpicture}}
  \caption{}
  \label{fig:vf222m}
\end{subfigure}
\hspace{1cm}
\begin{subfigure}{.3\textwidth}
\centering  \scalebox{2}{\begin{tikzpicture}[vertexBall, edgeDouble, faceStyle, scale=1]

\coordinate (V) at (0., 0.);
\coordinate (V1) at (1., 0.);
\coordinate (V2) at (0.4999999999999999, 0.8660254037844386);
\coordinate (V3) at (-0.5000000000000001, 0.8660254037844386);
\coordinate (V4) at (-1, 0.);
\coordinate (V5) at (-0.5, -0.8660);
\coordinate (V6) at (0.5, -0.8660);

\coordinate (f1) at (barycentric cs:V=1,V1=1,V2=1);
\coordinate (f2) at (barycentric cs:V=1,V2=1,V3=1);
\coordinate (f3) at (barycentric cs:V=1,V3=1,V4=1);
\coordinate (f4) at (barycentric cs:V=1,V4=1,V5=1);
\coordinate (f5) at (barycentric cs:V=1,V5=1,V6=1);
\coordinate (f6) at (barycentric cs:V=1,V1=1,V6=1);

\coordinate(w1) at (barycentric cs:V=1,f1=-2.5);
\coordinate(w2) at (barycentric cs:V=1,f2=-2.5);
\coordinate(w3) at (barycentric cs:V=1,f3=-2.5);
\coordinate(w4) at (barycentric cs:V=1,f4=-2.5);
\coordinate(w5) at (barycentric cs:V=1,f5=-2.5);
\coordinate(w6) at (barycentric cs:V=1,f6=-2.5);

\draw[edge=red] (f1) --(f2);
\draw[edge=red] (f3) --(f2);
\draw[edge=red] (f4) --(f3);
\draw[edge=red] (f5) --(f4);
\draw[edge=red] (f6) --(f5);
\draw[edge=red] (f1) --(f6);
\draw[edge=green] (f1) --(w1);
\draw[edge=green] (f2) --(w2);
\draw[edge=green] (f3) --(w3);
\draw[edge=green] (f4) --(w4);
\draw[edge=green] (f5) --(w5);
\draw[edge=green] (f6) --(w6);

\vertexLabelR{f1}{left}{$ $}
\vertexLabelR{f2}{left}{$ $}
\vertexLabelR{f3}{left}{$ $}
\vertexLabelR{f4}{left}{$ $}
\vertexLabelR{f5}{left}{$ $}
\vertexLabelR{f6}{left}{$ $}

\end{tikzpicture}}
  \caption{}
  \label{fig:fgvf221}
\end{subfigure}
\caption{(a) A mono-coloured vertex-defining umbrella of a simplicial surface $X$ with 
$\vf(X)=(2,2)$, (b) the corresponding arc-coloured subgraph in the face graph 
$\F(X)$}
  \label{fig:221}
\end{figure}
The edges in $u(v_2)$ alternate between $\E_1$ and $\E_2$ and the umbrella is bi-coloured 
with respect to $\omega$, see \Cref{fig:vf222}.

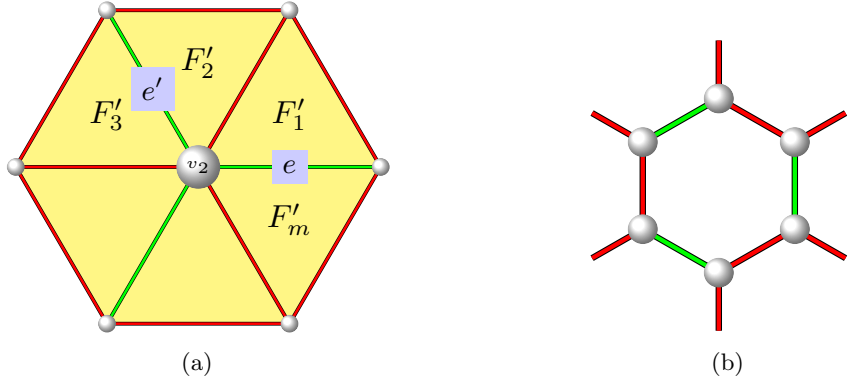
\begin{figure}[H]
  \centering
\begin{subfigure}{.45\textwidth}
\centering  \scalebox{1.2}{\begin{tikzpicture}[vertexBall, edgeDouble, faceStyle, scale=2]

\coordinate (V) at (0., 0.);
\coordinate (V1) at (1., 0.);
\coordinate (V2) at (0.4999999999999999, 0.8660254037844386);
\coordinate (V3) at (-0.5000000000000001, 0.8660254037844386);
\coordinate (V4) at (-1, 0.);
\coordinate (V5) at (-0.5, -0.8660);
\coordinate (V6) at (0.5, -0.8660);

\fill[face]  (V) -- (V1) -- (V2) -- cycle;
\fill[face]  (V) -- (V3) -- (V2) -- cycle;
\fill[face]  (V) -- (V3) -- (V4) -- cycle;
\fill[face]  (V) -- (V4) -- (V5) -- cycle;
\fill[face]  (V) -- (V5) -- (V6) -- cycle;
\fill[face]  (V) -- (V1) -- (V6) -- cycle;
\fill[face]  (V2) -- (V3) -- (V5) -- cycle;
\node[faceLabel] at (barycentric cs:V=1,V1=1,V2=1) {$F_1'$};
\node[faceLabel] at (barycentric cs:V=1,V3=1,V2=1) {$F_2'$};
\node[faceLabel] at (barycentric cs:V=1,V1=1,V6=1) {$F_m'$};

\node[faceLabel] at (barycentric cs:V=1,V3=1,V4=1) {$F_3'$};
\draw[edge=green] (V) -- node[edgeLabel] {$e$}  (V1);
\draw[edge=red] (V) --(V2);
\draw[edge=green] (V) -- node[edgeLabel] {$e'$}(V3);
\draw[edge=red] (V) --(V4);
\draw[edge=green] (V) --(V5);
\draw[edge=red] (V) --(V6);
\draw[edge=red] (V1) --(V2);
\draw[edge=red] (V3) --(V2);
\draw[edge=red] (V3) --(V4);
\draw[edge=red] (V4) --(V5);
\draw[edge=red] (V5) --(V6);
\draw[edge=red] (V1) --(V6);
\vertexLabelR{V1}{left}{$ $}
\vertexLabelR{V2}{left}{$ $}
\vertexLabelR{V3}{left}{$ $}
\vertexLabelR{V4}{left}{$ $}
\vertexLabelR{V5}{left}{$ $}
\vertexLabelR{V6}{left}{$ $}
\vertexLabelR{V}{left}{$v_2$}

\end{tikzpicture}}
  \caption{}
  \label{fig:vf222}
\end{subfigure}
\begin{subfigure}{.45\textwidth}
\centering  \scalebox{2}{\begin{tikzpicture}[vertexBall, edgeDouble, faceStyle, scale=1]

\coordinate (V) at (0., 0.);
\coordinate (V1) at (1., 0.);
\coordinate (V2) at (0.4999999999999999, 0.8660254037844386);
\coordinate (V3) at (-0.5000000000000001, 0.8660254037844386);
\coordinate (V4) at (-1, 0.);
\coordinate (V5) at (-0.5, -0.8660);
\coordinate (V6) at (0.5, -0.8660);

\coordinate (f1) at (barycentric cs:V=1,V1=1,V2=1);
\coordinate (f2) at (barycentric cs:V=1,V2=1,V3=1);
\coordinate (f3) at (barycentric cs:V=1,V3=1,V4=1);
\coordinate (f4) at (barycentric cs:V=1,V4=1,V5=1);
\coordinate (f5) at (barycentric cs:V=1,V5=1,V6=1);
\coordinate (f6) at (barycentric cs:V=1,V1=1,V6=1);

\coordinate(w1) at (barycentric cs:V=1,f1=-2.5);
\coordinate(w2) at (barycentric cs:V=1,f2=-2.5);
\coordinate(w3) at (barycentric cs:V=1,f3=-2.5);
\coordinate(w4) at (barycentric cs:V=1,f4=-2.5);
\coordinate(w5) at (barycentric cs:V=1,f5=-2.5);
\coordinate(w6) at (barycentric cs:V=1,f6=-2.5);

\draw[edge=red] (f1) --(f2);
\draw[edge=green] (f3) --(f2);
\draw[edge=red] (f4) --(f3);
\draw[edge=green] (f5) --(f4);
\draw[edge=red] (f6) --(f5);
\draw[edge=green] (f1) --(f6);
\draw[edge=red] (f1) --(w1);
\draw[edge=red] (f2) --(w2);
\draw[edge=red] (f3) --(w3);
\draw[edge=red] (f4) --(w4);
\draw[edge=red] (f5) --(w5);
\draw[edge=red] (f6) --(w6);

\vertexLabelR{f1}{left}{$ $}
\vertexLabelR{f2}{left}{$ $}
\vertexLabelR{f3}{left}{$ $}
\vertexLabelR{f4}{left}{$ $}
\vertexLabelR{f5}{left}{$ $}
\vertexLabelR{f6}{left}{$ $}

\end{tikzpicture}}
  \caption{}
  \label{fig:fgvf222}
\end{subfigure}
\caption{(a) Bi-coloured vertex-defining umbrella of a simplicial surface $X$ with 
  $\vf(X)=(2,2)$, (b) the corresponding arc-coloured subgraph in the face graph 
  $\F(X)$}
  \label{fig:22}
\end{figure}
 The edge-colouring $\omega$ yields an arc-colouring $\kappa$ of $\F(X)$ defined by 
 $\kappa(\{F,F'\}):=\omega(e),$ where $e\in X_1$ with $X_2(e)=\{F,F'\}$. Hence, the colour
 classes of $\kappa$ form $H$-orbits on the arcs of $\F(X)$.
Since $\kappa^{-1}(2)=\{X_2(e')\mid e'\in\E_2\}$ forms a perfect matching of $\F(X)$, the deletion of these arcs results in a disjoint union of (mono-coloured) cycles in $\F(X)$ corresponding to the 
 vertex-defining umbrellas in ${v_1}^{\Aut(X)}$ in $X$.
 
Further, to describe the cycles in $\F(X)$ corresponding to the umbrellas of $v_2$, assume 
without loss that $e \in X_1$ with $X_2(e)=\{F_m',F_1'\}$ is in $\E_2$.
Since $X$ is face-transitive there exists an automorphism $\phi\in \Aut(X)$ mapping
 $F_m'$ onto $F_2'$. Further, the stabiliser of the face $F_2'$ in $\Aut(X)$ has order 
 two (due to the orbit-stabiliser theorem). Thus, there exists a non-trivial 
 $\psi\in \stab_{\Aut(X)}(F_2')$ that stabilises the edge $e'=\phi(e)$ with 
 $X_2(e')=\{\phi(F_m'),\phi(F_1')\}=\{F_2',F_3'\}$. It follows, that one of
 $\phi$ or $\psi \circ \phi$ fixes $v_2,$  cf. the green edges in \Cref{fig:vf222}. 
 For simplicity, we assume it is $\phi$. Thus, by defining $\ell$ to be the order of $\phi,$ 
 we conclude $\phi^{\ell-1}(F'_2)=F_m'$ and the umbrella of $v_2$ can be written as 
\[
(F_1',\ldots,F_m')=(\phi(F_m'),\phi(F_1'),\ldots,\phi^{\ell}(F'_m),\phi^{\ell}(F_1')).
\]
Let $\sigma:=\lambda_X(\phi)$. The cycle 
$(F_1',\ldots,F_m')=(\sigma(F_m'),\sigma(F_1'),\ldots,\sigma^{\ell}(F'_m),\sigma^{\ell}(F_1'))=\alpha(\sigma,F_m',F_1',F_2')$
is a $\sigma$-induced $\alpha$-cycle in $\F(X)$. This implies that every vertex-defining 
cycle in $v_2^{\Aut(X)}$ is an automorphism induced $\alpha$-cycle. Altogether, the set of 
vertex-defining cycles of $X$ is given by 
\[
\C:=(F_1,\ldots,F_n)^H \cup {\alpha(\sigma,F_m',F_1',F_2')}^H.\] 
Since $H$ is acting transitively on the nodes $X_2$ with 
$\vert H\vert =\vert \Aut(X)\vert =2\cdot\vert X_2\vert, $ we conclude that $H$ is a 
$(2,2)$-group of $\F(X)$ with $\C=\C^{(2,2)}(H)$.

Now, let $H$ be a $(2,2)$-group of $\F(X)$ with vertex-faithful cycle double cover 
$\C^{(2,2)}(H)$. The arc-colouring of $\F(X)$ corresponds to an edge-colouring $\omega$ 
of $X$ via $\omega(e):=\kappa(\{F,F'\}),$ where $e\in X_1$ with $X_2(e)=\{F,F'\}$. 
Since $H = \lambda_X(\Aut(X))$, and $H$ acts transitively on the nodes of $\F(X)$ with 
$\vert H \vert = 2\cdot \vert V \vert$, we have that 
$X$ is face-transitive with face stabiliser of order two (according to the orbit-stabiliser theorem).
Moreover, due to \Cref{lemma:vertexisoumbrella}, two vertices $v_1$ and $v_2$ of $X$ lie 
in the same $\Aut(X)$-orbit if and only if the corresponding cycles 
$C_1,C_2 \in \C^{(2,2)}(H)$ are contained in the same $H$-orbit. This is equivalent to saying that the 
corresponding vertex-defining cycles of $X$ have the same colour with respect to $\kappa$. 
We therefore know that there exist exactly two $\Aut(X)$-orbits on $X_0$, namely 
the set of vertices with mono-coloured umbrella and the set of bi-coloured umbrellas, each with respect to $\omega$, respectively. Thus, $\vf(X)=(2,2)$ follows.
\end{proof}

\Cref{theorem22} yields the following immediate corollary.

\begin{corollary}\label{corollary:constructionvf22}
   Let $\G$ be a cubic node-transitive graph and $H\leq \Aut(\G)$ a $(2,2)$-group of $\G$, then $\G$ is the face graph of a face-transitive 
   surface.
\end{corollary}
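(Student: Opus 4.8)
The plan is to extract the surface directly from the vertex-faithful cycle double cover supplied by the hypothesis, and then read off face-transitivity from the transitive action of $H$ on the nodes of $\G$. Since $H$ is a $(2,2)$-group of $\G$, \Cref{def:22} guarantees that $\C := \C^{(2,2)}(H) = u^H \cup \alpha(\sigma,F_m',F_1',F_2')^H$ is a vertex-faithful cycle double cover of $\G$. As explained in \Cref{subsection:facegraphs}, a vertex-faithful cycle double cover of a bridgeless cubic graph determines a simplicial surface, and I would build $X$ accordingly: the faces of $X$ are the nodes $V$, the edges of $X$ are the arcs $E$, and the vertices of $X$ are the cycles in $\C$, with incidences inherited from membership of a node (resp.\ arc) on a cycle. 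The chordlessness and pairwise-intersection conditions bundled into the word \emph{vertex-faithful} are precisely what ensures that $X$ is a simplicial complex satisfying the surface axioms, and by construction $\F(X)\cong\G$ with the cycles of $\C$ as its vertex-defining cycles.

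It then remains to verify that $X$ is face-transitive. First I would observe that $H$ preserves $\C$: by construction $\C$ is a union of $H$-orbits of cycles, so every $\sigma\in H$ permutes the vertex-defining cycles of $X$ among themselves. By the maximality statement recorded just before \Cref{lemma:vertexisoumbrella} --- namely that $\lambda_X(\Aut(X))$ is the maximal subgroup of $\Aut(\F(X))$ leaving the cycle double cover of the vertex-defining cycles of $X$ invariant --- this forces $H\leq\lambda_X(\Aut(X))$. Since $\lambda_X$ is a monomorphism (see \Cref{eq:lambdaX}) whose action on $V=X_2$ agrees with the action of the corresponding automorphisms of $X$ on the faces, and since $H$ acts transitively on $V$ with $\vert H\vert = 2\vert V\vert$, the group $\lambda_X^{-1}(H)\leq\Aut(X)$ acts transitively on $X_2$. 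Hence $X$ is face-transitive, and $\G=\F(X)$ is the face graph of a face-transitive surface, as claimed.

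The argument is essentially the reverse implication of \Cref{theorem22} stripped of the bookkeeping needed to pin down the exact vertex-face type: for the corollary one only needs transitivity on the faces, so it suffices that $H$ embeds into $\lambda_X(\Aut(X))$ and is node-transitive, rather than to identify $H$ with all of $\lambda_X(\Aut(X))$. I expect the only genuinely delicate step to be the passage from ``$H$ preserves the cycle double cover of $\F(X)$'' to ``$H$ lifts to a subgroup of $\Aut(X)$''. This is exactly where the correspondence between automorphisms of $X$ and cycle-double-cover-preserving automorphisms of $\F(X)$, set up in \Cref{sec:groupactions}, does the real work; everything else is a matter of unwinding \Cref{def:22}.
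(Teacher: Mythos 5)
Your proposal is correct, and it follows the route the paper intends: the paper offers no separate proof of \Cref{corollary:constructionvf22}, presenting it as immediate from \Cref{theorem22}, whose reverse direction contains exactly the argument you give (build $X$ from the vertex-faithful cycle double cover $\C^{(2,2)}(H)$, then convert node-transitivity of $H$ into face-transitivity of $X$ via $\lambda_X$). Where you go beyond the paper is in handling a genuine mismatch that the word ``immediate'' glosses over: \Cref{theorem22} is stated under the hypothesis $H=\lambda_X(\Aut(X))$, whereas the corollary only assumes $H$ is \emph{some} $(2,2)$-group of $\G$, and after constructing $X$ from the cover one knows a priori only that $H$ preserves the cover. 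Your use of the maximality statement preceding \Cref{lemma:vertexisoumbrella} --- $\C$ is a union of $H$-orbits, hence $H$-invariant, hence $H\leq\lambda_X(\Aut(X))$, and the node-transitive subgroup $\lambda_X^{-1}(H)\leq\Aut(X)$ already suffices for face-transitivity --- is precisely the bridging step, and your observation that one thereby obtains face-transitivity without pinning down $\vf(X)=(2,2)$ (which could fail if $\lambda_X(\Aut(X))$ strictly contains $H$) is the right reading of why the corollary's conclusion is deliberately weaker than the theorem's. In short: same approach as the paper, with the one delicate step made explicit rather than elided.
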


For instance, the simplicial surface $X^{(2,2)}$ given in \Cref{example22} is an example of a face-transitive surface of vertex-face type $(2,2)$. It is a simplicial sphere with $7$ vertices, $15$ edges and $10$ faces. 
Its automorphism group $\Aut(X^{(2,2)})$ is isomorphic to the automorphism group of its face graph with $\Aut(X^{(2,2)})\cong D_{2\cdot 10}$.

\subsection{Face-transitive surfaces with vertex-face type (1,6)}
\label{vf16}

Here, we discuss the structure of a face-transitive surface with vertex-face type $(1,6)$. 
\begin{definition}
  Let $\G=(V,E)$ be a cubic node-transitive graph and $H\leq \Aut(\G)$ such that
  \begin{enumerate}
    \item $H$ acts transitively on the nodes $V$ with $\vert  H\vert =6\cdot\vert V\vert,$ and
    \item for $F_1,F_2\in V$, $\{F_1,F_2\} \in E,$ there exists an automorphism $\sigma\in H$ such that $\sigma(F_1) = F_2$.
  \end{enumerate}
  If $\alpha(\sigma,F_1,F_2)^{H}$ is a vertex-faithful cycle double cover of $\G$, then we say that $H$ is a \emph{\bf $(1,6)$-group of} $\G$. We denote the above cycle double cover by $\C^{(1,6)}(H)$.
\end{definition}
We give the following theorem classifying face-transitive surfaces with order six face-stabilisers.
\begin{theorem}
 Let $X$ be a simplicial surface. Then $X$ is a face-transitive surface satisfying $\vf(X)=(1,6)$ if and only if $H:=\lambda_X(\Aut(X))$ is a $(1,6)$-group of $\F(X)$ and the cycle double cover $\C:=\C^{(1,6)}(H)$ contains exactly the vertex-defining cycles of $X$.
\end{theorem}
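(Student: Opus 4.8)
The plan is to mirror the proof of \Cref{theorem22}, replacing the mono-/bi-coloured umbrella analysis by the simpler fact that in the $(1,6)$ case every umbrella is a single rotation orbit of an automorphism about one of its vertices. I would prove the two implications separately, and in both of them the arithmetic $\vert\Aut(X)\vert = 6\vert X_2\vert$ together with \Cref{lemma:vertexisoumbrella} will do most of the work.

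For the forward direction, suppose $X$ is face-transitive with $\vf(X)=(1,6)$. Then the face stabiliser has order six, so by \Cref{lem:subgroupS3} it is the full symmetric group $S_3$, and the orbit--stabiliser theorem gives $\vert\Aut(X)\vert = 6\vert X_2\vert$. Set $H:=\lambda_X(\Aut(X))$; since $\lambda_X$ is a monomorphism and $H$ is node-transitive on $V=X_2$, condition (1) of a $(1,6)$-group holds. The core of the argument is to produce the generating automorphism. Fix a vertex $w$ with $\deg(w)=d$ and umbrella $u(w)=(F_1,\ldots,F_d)$, where $F_1,F_2$ share an edge through $w$, so $\{F_1,F_2\}$ is an arc of $\F(X)$. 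Because $X$ has a single vertex orbit, every vertex has degree $d$, whence $3\vert X_2\vert = d\vert X_0\vert$, and the orbit--stabiliser theorem yields $\vert\stab_{\Aut(X)}(w)\vert = 6\vert X_2\vert/\vert X_0\vert = 2d$.

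I would then show that $\stab_{\Aut(X)}(w)$ acts faithfully on the cyclic sequence $u(w)$: an automorphism fixing $w$ and every umbrella face setwise cannot swap the two neighbours of $F_1$, hence fixes all vertices of $F_1$, and so equals the identity by \Cref{lemma:autunique}. Consequently $\stab_{\Aut(X)}(w)$ embeds into the symmetry group $D_{2d}$ of the $d$-gon $u(w)$, and equality of orders forces $\stab_{\Aut(X)}(w)\cong D_{2d}$. In particular it contains a rotation $\phi$ of order $d$ with $\phi(F_i)=F_{i+1}$ (indices mod $d$); putting $\sigma:=\lambda_X(\phi)$ gives $\sigma(F_1)=F_2$, establishing condition (2), and $\alpha(\sigma,F_1,F_2) = (\sigma(F_1),\ldots,\sigma^{d}(F_1)) = (F_2,\ldots,F_d,F_1)$ is exactly the vertex-defining cycle of $w$ (here $\vert\Sigma\vert = d$ as the $F_i$ are distinct, matching the nonemptiness condition of \Cref{def:alpha}). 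Finally, since $X$ has a single vertex orbit, \Cref{lemma:vertexisoumbrella} shows all vertex-defining cycles lie in one $H$-orbit, namely $\alpha(\sigma,F_1,F_2)^H$; as the umbrellas always form a vertex-faithful cycle double cover, $H$ is a $(1,6)$-group with $\C^{(1,6)}(H)$ equal to the set of vertex-defining cycles of $X$. For the converse, node-transitivity of $H$ transfers through $\lambda_X$ to face-transitivity of $X$, the identity $\vert\Aut(X)\vert = \vert H\vert = 6\vert V\vert$ gives face stabilisers of order six (so the second coordinate of $\vf(X)$ is $6$ by \Cref{lem:subgroupS3}), and since $\C^{(1,6)}(H)=\alpha(\sigma,F_1,F_2)^H$ is a single $H$-orbit, \Cref{lemma:vertexisoumbrella} forces all vertices into one $\Aut(X)$-orbit, so $\vf(X)=(1,6)$.

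The main obstacle I anticipate is the structural claim in the forward direction that the vertex stabiliser is dihedral of order $2d$ and therefore supplies a single-step rotation $\phi$ whose induced $\alpha$-cycle recovers the umbrella verbatim; once that is in place, everything else reduces to the order count and \Cref{lemma:vertexisoumbrella}, exactly as in the $(2,2)$ case.
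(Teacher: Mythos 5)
Your proposal is correct, and both implications land in the same place as the paper's proof: a vertex-stabilising automorphism $\phi$ with $\phi(F_i)=F_{i+1}$, the identification of the umbrella with the $\alpha$-cycle $\alpha(\lambda_X(\phi),F_1,F_2)$, and \Cref{lemma:vertexisoumbrella} plus the orbit--stabiliser theorem to finish. However, your mechanism for producing the rotation $\phi$ is genuinely different. The paper works locally and explicitly: it first deduces that edge stabilisers have order $4$, picks the unique ``flip'' $\psi\in\stab_{\Aut(X)}(\{v,v_2\})$ with $(\psi(v),\psi(v_1),\psi(v_2),\psi(v_3))=(v_2,v_3,v,v_1)$, composes it with a suitable rotation $\phi'\in\stab_{\Aut(X)}(F_2)$, and verifies by hand that $\phi=\phi'\circ\psi$ fixes $v$ and shifts the umbrella by one. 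You instead argue globally: the double count $3\vert X_2\vert=d\vert X_0\vert$ gives $\vert\stab_{\Aut(X)}(w)\vert=2d$, the faithfulness of the action of $\stab_{\Aut(X)}(w)$ on the umbrella (via \Cref{lemma:autunique}) embeds it into the order-$2d$ symmetry group of the $d$-cycle, and equality of orders forces $\stab_{\Aut(X)}(w)\cong D_{2d}$, which hands you the single-step rotation for free and with its order equal to $d$ on the nose --- a point the paper's composition argument asserts but does not isolate. Your converse also differs slightly: the paper derives vertex-transitivity from the order-six face stabiliser acting transitively on $X_0(F)$, whereas you read it off from the hypothesis that $\C^{(1,6)}(H)$ is a single $H$-orbit of vertex-defining cycles via \Cref{lemma:vertexisoumbrella}; both are valid. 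The only spot worth tightening is your one-line faithfulness claim: spell out that an element fixing $w$ and every umbrella face setwise cannot swap $w_1$ and $w_2$, because it would then send $F_2$ (the second face at the edge $\{w,w_2\}$) to the face $F_d$ at $\{w,w_1\}$, contradicting $\phi(F_2)=F_2$ since $d\geq 3$ guarantees $F_d\neq F_2$. On balance, your counting argument is shorter and arguably cleaner, at the cost of one structural lemma about the vertex stabiliser; the paper's construction is more hands-on and mirrors the style of its other vertex-face-type proofs, where the relevant edge- and face-stabiliser elements are needed explicitly.
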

\begin{proof} 
Let $X$ be a face-transitive surface with $\vf(X)=(1,6)$. Since all stabilisers of faces of $X$ have order $6$, we know that the automorphism group $\Aut(X)$ acts transitively on the edges and on the vertices of the surface $X$. Additionally, if $e\in X_1$ is an edge, then the stabiliser of $e$ satisfies $\vert \stab_{\Aut(X)}(e)\vert=4$ (due to the orbit-stabiliser theorem). 

Let $v\in X_0$ be a vertex and $u(v)=(F_1,\ldots,F_n)$ be the umbrella of $v$. Moreover, let $F_i$, $1\leq i \leq n$, be given by $F_i=\{v,v_i,v_{i+1}\},$ where we read the subscripts modulo $n,$ see \Cref{fig:vf1622}.
\begin{figure}[H]
    \centering
\scalebox{1.5}{\begin{tikzpicture}[vertexBall, edgeDouble, faceStyle, scale=2]

\coordinate (V) at (0., 0.);
\coordinate (V1) at (1., 0.);
\coordinate (V2) at (0.4999999999999999, 0.8660254037844386);
\coordinate (V3) at (-0.5000000000000001, 0.8660254037844386);
\coordinate (V4) at (-1, 0.);
\coordinate (V5) at (-0.5, -0.8660);
\coordinate (V6) at (0.5, -0.8660);

\fill[face]  (V) -- (V1) -- (V2) -- cycle;
\fill[face]  (V) -- (V3) -- (V2) -- cycle;
\fill[face]  (V) -- (V3) -- (V4) -- cycle;
\fill[face]  (V) -- (V4) -- (V5) -- cycle;
\fill[face]  (V) -- (V5) -- (V6) -- cycle;
\fill[face]  (V) -- (V1) -- (V6) -- cycle;
\fill[face]  (V2) -- (V3) -- (V5) -- cycle;
\node[faceLabel] at (barycentric cs:V=1,V1=1,V2=1) {$F_1$};
\node[faceLabel] at (barycentric cs:V=1,V2=1,V3=1) {$F_2$};
\node[faceLabel] at (barycentric cs:V=1,V3=1,V4=1) {$F_3$};
\node[faceLabel] at (barycentric cs:V=1,V4=1,V5=1) {$F_4$};
\node[faceLabel] at (barycentric cs:V=1,V5=1,V6=1) {$F_5$};
\node[faceLabel] at (barycentric cs:V=1,V1=1,V6=1) {$F_6$};

\draw[edge] (V) --(V1);
\draw[edge] (V) --(V2);
\draw[edge] (V) --(V3);
\draw[edge] (V) --(V4);
\draw[edge] (V) --(V5);
\draw[edge] (V) --(V6);

\draw[edge] (V1) --(V2);
\draw[edge] (V3) --(V2);
\draw[edge] (V3) --(V4);
\draw[edge] (V4) --(V5);
\draw[edge] (V5) --(V6);
\draw[edge] (V1) --(V6);
\vertexLabelR{V1}{left}{$v_1$}
\vertexLabelR{V2}{left}{$v_2$}
\vertexLabelR{V3}{left}{$v_3$}
\vertexLabelR{V4}{left}{$v_4$}
\vertexLabelR{V5}{left}{$v_5$}
\vertexLabelR{V6}{left}{$v_6$}
\vertexLabelR{V}{left}{$v$}

\end{tikzpicture}}
    \caption{An umbrella of a face-transitive surface $X$ }
    \label{fig:vf1622}
\end{figure}
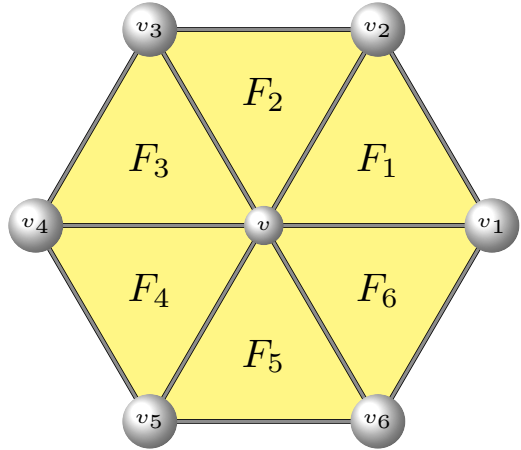

Since the stabiliser $e'=\{v,v_2\}$ has order $4$, 
there exists a unique automorphism $\psi\in \stab_{\Aut(X)} (e')$ that satisfies $(\psi(v),\psi(v_1),\psi(v_2),\psi(v_3))=(v_2,v_3,v,v_1)$. Moreover, we know that the stabiliser $\stab_{\Aut(X)}(F_2)$ contains a unique automorphism $\phi'$ with $(\phi'(v),\phi'(v_2),\phi'(v_3)) = (v_3,v,v_2)$.
Hence, the automorphism $\phi:=\phi'\circ \psi$ satisfies $\phi(v)=v$ and $\phi(F_i)=F_{i+1}$ for $i=1,\ldots,n$. We conclude that the order of $\phi $ is $n$ and the umbrella of $v$ can be written as 
\[ (F_1,\ldots,F_n)=(\phi(F_1),\ldots,\phi^{n}(F_1)).\]
We know that the umbrella $(F_1,\ldots,F_n)$ can be interpreted as a cycle of $\F(X)$. Hence, by defining $\sigma:=\lambda_X(\phi)$ we observe that  $(F_1,\ldots,F_n)=\alpha(\sigma,F_1,F_2)$ is a $\sigma$-induced $\alpha$-cycle. Since all the vertices lie in exactly one $\Aut(X)$-orbit, the cycle double cover consisting of all vertex-defining umbrellas of $X$ is given by $\alpha(\sigma,F_1,F_2)^{H}$. Thus, $H$ forms a $(1,6)$-group of $\F(X)$.

To complete the proof, assume that $H$ is a $(1,6)$-group of $\F(X)$. Let $F\in X_2$ be an arbitrary face. Since $\Aut(X)\cong H$, $X$ is face-transitive, and the stabiliser $\stab_H(F)$ of the node $F$ is isomorphic to $\stab_{\Aut(X)}(F)$, and has order six. We therefore conclude that the stabiliser $\stab_{\Aut(X)}(F)$ is transitive on the set $X_0(F)$ which implies that the automorphism group $\Aut(X)$ is transitive on the vertices of $X$. That means that $X$ is a face-transitive surface with $\vf(X)=(1,6)$.
\end{proof}

\begin{corollary}\label{corollary:constructionvf16}
    Let $\G$ be a cubic node-transitive graph and $H\leq \Aut(\G)$ a $(1,6)$-group of $\G$, then $\G$ is the face graph of a face-transitive surface.
\end{corollary}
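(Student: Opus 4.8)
The plan is to prove this in exact parallel to its $(2,2)$-counterpart \Cref{corollary:constructionvf22}, which follows from \Cref{theorem22}: namely, to observe that the defining data of a $(1,6)$-group already hands us a vertex-faithful cycle double cover, from which a simplicial surface can be reconstructed, and then to invoke the preceding theorem to conclude face-transitivity. The key point is that, by definition, a $(1,6)$-group $H$ comes equipped with the vertex-faithful cycle double cover $\C^{(1,6)}(H)=\alpha(\sigma,F_1,F_2)^H$ of $\G$.

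First I would use the correspondence established in \Cref{subsection:facegraphs} between vertex-faithful cycle double covers of a cubic graph and simplicial surfaces: since $\C^{(1,6)}(H)$ is vertex-faithful, its cycles are chordless and pairwise share at most one arc, so they serve as the vertex-defining umbrellas of a well-defined simplicial surface $X$ with $\F(X)=\G$. In particular $\G$ is already exhibited as the face graph of a simplicial surface; it only remains to verify that this $X$ is face-transitive.

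Second I would transfer the symmetry of $\G$ to $X$. Since $\C^{(1,6)}(H)$ is an $H$-orbit, it is $H$-invariant, so $H$ permutes the vertex-defining cycles of $X$ among themselves. By the maximality of $\lambda_X(\Aut(X))$ among the subgroups of $\Aut(\F(X))$ that leave this cycle double cover invariant (cf.\ the discussion preceding \Cref{lemma:vertexisoumbrella}), we obtain $H\leq \lambda_X(\Aut(X))$. As $H$ acts transitively on $V=X_2$ by hypothesis, so does $\lambda_X(\Aut(X))$, and since $\lambda_X$ is a monomorphism, $\Aut(X)$ acts transitively on $X_2$. Hence $X$ is face-transitive with $\F(X)=\G$, as claimed; applying the preceding theorem then even identifies $\vf(X)=(1,6)$.

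I expect no genuine obstacle here: the hard work (producing an $\alpha$-cycle whose $H$-orbit is a vertex-faithful cover, and recognizing it as the family of umbrellas) is precisely what the definition of a $(1,6)$-group and the preceding theorem already carry out. The only step requiring care is the passage from $H$-invariance of the cover to face-transitivity of $X$, and this is handled cleanly by the embedding $\lambda_X$ together with its maximality property.
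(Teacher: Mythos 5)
Your proposal is correct and takes essentially the paper's route: the paper presents this corollary as immediate from the preceding theorem, exactly as you do, since the vertex-faithful cycle double cover $\C^{(1,6)}(H)$ built into the definition of a $(1,6)$-group yields a simplicial surface $X$ with $\F(X)=\G$, and $H$-invariance of the cover gives $H\leq \lambda_X(\Aut(X))$, whence face-transitivity. Your closing remark that the theorem ``even identifies $\vf(X)=(1,6)$'' is slightly loose as stated, because the theorem's hypothesis is $H=\lambda_X(\Aut(X))$ while you only derived containment; however, equality is in fact forced here, since $\vert H\vert = 6\vert X_2\vert$ together with the bound $\vert \Aut(X)\vert \leq 6\vert X_2\vert$ leaves no room for a larger group.
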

An example of a face-transitive surface with vertex-face type $(1,6)$, namely $X^{(1,6)}$, is given in \Cref{example16}.
This surface is the well-known vertex-minimal triangulation of the projective plane consisting of $6$ vertices, $15$ edges and $10$ faces. The face graph of this surface is isomorphic to the Petersen graph, and the automorphism group satisfies
\[
A_5\cong \Aut(X^{(1,6)})\hookrightarrow \Aut(\F(X^{(1,6)})) \cong S_5.
\]
In the literature face-transitive surfaces of vertex-face type $(1,6)$ are called {\em regular maps}. For more details on regular maps we refer to \cite{CONDER2001224}.

\subsection{Face-transitive surfaces with vertex-face type (1,2)}
\label{vf12}

In this section we describe the face-transitive surfaces of vertex-face type $(1,2)$. Again, we show that the vertex-defining umbrellas of these surfaces correspond to automorphism induced $\alpha$-cycles. 
\begin{definition}
  Let $\G=(V,E)$ be a cubic node-transitive graph and $H\leq \Aut(\G)$ such that
  \begin{enumerate}
    \item $H$ acts transitively on the nodes $V$ with $\vert  H\vert =2\cdot\vert V\vert,$
    \item $E$ partitions into $E_1$ and $E_2$ with $\vert E_1 \vert=\vert V \vert$ and 
    $\vert E_2 \vert=\tfrac{1}{2}\vert V \vert$, and
    \item for $F_1,F_2,F_3,F_4\in V$ with $\{F_1,F_2\}\in E_2$ and $\{F_2,F_3\},\{F_3,F_4\}\in E_1$, there exists an automorphism $\sigma\in \Aut(\G)$ such that $\sigma(F_1)=F_4$.
  \end{enumerate}
 If $\alpha(\sigma,F_1,F_2,F_3,F_4)^H$ forms a vertex-faithful cycle double cover of $\G$, then $H$ is said to be a \textbf{\emph{$(1,2)$-group of}} $\G$. We denote the above cycle double cover by $\C^{(1,2)}(H)$.
\end{definition}
The following theorem classifies face-transitive surface $X$ with $\vf(X)=(2,1)$.
\begin{theorem}\label{thm:12}
 Let $X$ be a simplicial surface. Then $X$ is face-transitive with $\vf(X)=(1,2)$ if and only if $H:=\lambda_X(\Aut(X))$ is a $(1,2)$-group of $\F(X)$
 and the cycle double cover $\C^{(1,2)}(H)$ contains the vertex-defining cycles of $X$.
 
\end{theorem}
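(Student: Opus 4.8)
The plan is to mirror the structure of the proof of \Cref{theorem22}, since vertex-face type $(1,2)$ is the ``companion'' case to $(2,2)$: in both cases the face stabiliser has order two and the edge set splits into two $H$-orbits of sizes $\vert V\vert$ and $\tfrac12\vert V\vert$, but now there is a single vertex orbit instead of two. First I would prove the forward direction. Assume $X$ is face-transitive with $\vf(X)=(1,2)$, so $\vert\Aut(X)\vert=2\vert X_2\vert$, there is exactly one $\Aut(X)$-orbit on $X_0$, and the face stabiliser has order two. Because the stabiliser of a face $F=\{v_1,v_2,v_3\}$ is a subgroup of $\Sym(\{v_1,v_2,v_3\})$ of order two (by \Cref{lem:subgroupS3}), it is generated by a transposition, say swapping two of the three vertices and fixing the third. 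This fixes a single incident edge of $F$ while swapping the other two. Propagating this across all faces via face-transitivity partitions $X_1$ into two $\Aut(X)$-orbits: the ``mirror'' edges $\E_2$ that are each stabilised (setwise, with their endpoints swapped) by some order-two automorphism, and the remaining edges $\E_1$. By the orbit-stabiliser theorem $\vert\E_2\vert=\tfrac12\vert X_2\vert$ and $\vert\E_1\vert=\vert X_2\vert$, matching conditions (1)--(2) of the $(1,2)$-group definition.

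Next I would analyse the umbrella of a vertex $v$, writing $u(v)=(F_1,\ldots,F_n)$. The key geometric observation is that a $\E_2$-edge at $v$ is a mirror edge, so the two faces $F_i,F_{i+1}$ sharing it are exchanged by a reflection stabilising $v$; the adjacent $\E_1$-edges then come in the pattern needed to realise the umbrella as an $\alpha$-cycle. Concretely, given the configuration $\{F_1,F_2\}\in E_2$ and $\{F_2,F_3\},\{F_3,F_4\}\in E_1$, I would build an automorphism $\phi\in\Aut(X)$ fixing $v$ and acting as a rotation-by-three-faces on the umbrella by composing the order-two reflection at the $\E_2$-edge with a face-stabiliser element, exactly as $\phi:=\phi'\circ\psi$ was constructed in the $(1,6)$ proof. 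Setting $\sigma:=\lambda_X(\phi)$, the orbit of the opening walk $(F_1,F_2,F_3,F_4)$ under powers of $\sigma$ reproduces the full umbrella, so $u(v)=\alpha(\sigma,F_1,F_2,F_3,F_4)$. Since there is a single vertex orbit, all vertex-defining cycles form one $H$-orbit $\alpha(\sigma,F_1,F_2,F_3,F_4)^H$, which is therefore the vertex-faithful cycle double cover $\C^{(1,2)}(H)$, establishing that $H$ is a $(1,2)$-group.

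For the converse I would assume $H:=\lambda_X(\Aut(X))$ is a $(1,2)$-group of $\F(X)$ with $\C^{(1,2)}(H)$ equal to the vertex-defining cycles of $X$. Since $H\cong\Aut(X)$ acts transitively on the nodes with $\vert H\vert=2\vert V\vert$, the orbit-stabiliser theorem gives that $X$ is face-transitive with face stabilisers of order two, so by \Cref{theorem:invariants} the vertex-face type is $(2,1)$, $(2,2)$, or $(1,2)$. To pin it down to $(1,2)$ I would invoke \Cref{lemma:vertexisoumbrella}: two vertices lie in the same $\Aut(X)$-orbit iff their vertex-defining cycles lie in the same $H$-orbit. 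Because $\C^{(1,2)}(H)$ is by construction the single $H$-orbit $\alpha(\sigma,F_1,F_2,F_3,F_4)^H$, all vertices are equivalent, so there is exactly one vertex orbit and $\vf(X)=(1,2)$ follows.

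The main obstacle I anticipate is the explicit construction of the umbrella-rotating automorphism $\phi$ and the verification that the umbrella indeed has the alternating edge-colour pattern forcing it to be a single $\alpha$-cycle of step length three (rather than, say, degenerating or producing a $(2,2)$- or $(2,1)$-pattern). In particular I must check that composing the mirror-edge reflection with the appropriate face stabiliser yields an automorphism that fixes $v$ and advances the umbrella consistently, and that the resulting $\Sigma$ in \Cref{def:alpha} has the full cardinality $(n-1)\ell$ so that $\alpha$ is non-empty; the single-vertex-orbit hypothesis is exactly what rules out the $(2,2)$ splitting and must be used carefully to keep every umbrella in one $H$-orbit.
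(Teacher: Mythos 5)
Your overall architecture matches the paper's proof of this theorem: the forward direction derives the two edge orbits $\E_1$, $\E_2$ of sizes $\vert X_2\vert$ and $\tfrac12\vert X_2\vert$, shows every umbrella is a $(1,1,2)$-umbrella, builds a shift-by-three automorphism fixing $v$ so that the umbrella becomes a $\sigma$-induced $\alpha$-cycle whose $H$-orbit is $\C^{(1,2)}(H)$, and the converse uses the orbit--stabiliser theorem together with \Cref{lemma:vertexisoumbrella}, exactly as the paper does. Two local differences are worth recording. First, you obtain the edge partition constructively from the face-stabiliser transposition (each face stabiliser fixes one edge setwise while swapping its endpoints; one still has to check that this distinguished edge of $F$ is also the distinguished edge of the second face containing it, which does hold, so these edges form a single orbit of size $\tfrac12\vert X_2\vert$), whereas the paper gets the same partition by orbit-length arithmetic from $\vert \Aut(X)\vert = 2\vert X_2\vert$. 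Second, your rotation $\phi$ is a product of two involutions both fixing $v$ -- the face-swapping element of the order-four stabiliser of the $\E_2$-edge between $F_1,F_2$, composed with the stabiliser generator of $F_3$, whose distinguished edge is opposite $v$ -- while the paper instead corrects an arbitrary automorphism $\phi_1$ with $\phi_1(F_1)=F_4$ by an element $\phi_2$ of an order-four edge stabiliser. Your variant is legitimate: a short dihedral computation (reflection through the axis between positions $1$ and $2$, followed by reflection through position $3$, is the shift $i \mapsto i+3$) confirms it advances the umbrella as claimed, and it is closer in style to the paper's $(1,6)$ and $(1,3)$ arguments.

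The one place you stop short is exactly where the paper does its real work: proving the umbrellas are $(1,1,2)$-umbrellas. You flag this as the anticipated obstacle but only gesture at mirror symmetry. The paper's argument is a dichotomy: for $e_1 \in \E_1$, the nontrivial element of its order-two stabiliser either fixes both endpoints or swaps them; in the fixing case, \Cref{lemma:conjugate} forces every $\E_1$-stabiliser to fix endpoints, making the umbrella of $v_1$ mono-coloured and hence $v_1$ inequivalent to $v$, contradicting the single vertex orbit. Your symmetry idea can be completed instead: the involutions at $\E_2$-edges that fix both endpoints and swap the two incident faces (these exist since $\E_2$-stabilisers have order four) reflect the umbrella of $v$, so the gaps of $\E_1$-edges between consecutive $\E_2$-edges around $v$ are all equal, say $g$; since each face contains exactly one $\E_2$-edge, counting incidences gives $\vert X_0\vert\, k = 2\vert \E_2\vert = \vert X_2\vert$ and $\vert X_0\vert\, k(g+1) = 3\vert X_2\vert$, where $k\geq 1$ is the common number of $\E_2$-edges per vertex (vertex-transitivity), forcing $g = 2$. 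Either route works, but as written your claim is asserted rather than proved. Two smaller points: you omit the degenerate case $\deg(v) = 3$, which the paper rules out as the tetrahedron (of type $(1,6)$) and which is needed so that $F_1,\ldots,F_4$ are distinct nodes and the $\alpha$-cycle is non-degenerate; and in the converse your candidate list should read $(2,2)$ and $(1,2)$ only, since $(2,1)$ has trivial face stabilisers -- an immaterial slip, as you settle the vertex-orbit count via \Cref{lemma:vertexisoumbrella} anyway.
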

\begin{proof}
Let $X$ be face-transitive with $\vf(X)=(1,2)$. Thus,
for an edge $e\in X_1$ the following holds:
\begin{align*}\label{equality}
\vert \Aut(X)\vert = 2 \cdot \vert X_2 \vert = \vert \stab_{\Aut(X)}(e)\vert\cdot\vert e^{\Aut(X)} \vert.
\end{align*}
It follows that edge orbit lengths must either be $\frac12 \vert X_2 \vert$ or $\vert X_2 \vert$. This means that one edge orbit, say $\E_2$, must be of order $\frac12 \vert X_2 \vert$ with $\vert \stab_{\Aut(X)}(e)\vert = 4$ for all $e \in \E_2$. 
The stabiliser of an edge $e\in \mathcal{E}_2$ having order $4$ directly implies that $\mathcal{E}_1:=X_1\setminus \mathcal{E}_2$ forms an $\Aut(X)$-orbit of cardinality $\vert X_2 \vert$.

As usual, we obtain an edge-colouring $\omega$ of $X$ via $\omega(e):=i$ for $e\in \E_i$. This edge-colouring yields an arc-colouring $\kappa$ of $\F(X)$ via $\kappa(\{F,F'\}):=\omega(e)$, where $e\in X_1$ is an edge with $X_2(e)=\{F,F'\}$.

Let $v\in X_0$ be a vertex and $u(v)=(F_1,\ldots,F_n)$ be its umbrella. Furthermore, let the face $F_i$, $1 \leq i \leq n$, be given by $\{v,v_i,v_{i+1}\},$ where we read the subscripts modulo $n$. Without loss of generality, assume that $e_1:=\{v,v_1\} \in \E_1,$ $e_2:=\{v,v_2\}\in \E_2$ and $e_3:=\{v_1,v_2\}\in \E_1$. Recall that we have $\vert\stab_{\Aut(X)}(e_1)\vert=2$ and $\vert\stab_{\Aut(X)}(e_2)\vert =4$.

Assume that the non-trivial element $\phi\in \stab_{\Aut(X)}(e_1)$ fixes $v$ and also $v_1$. It then follows from \Cref{lemma:conjugate} that for every other $e\in \E_1$, its non-trivial automorphism $\psi\in \stab_{\Aut(X)}(e)$ fixes the endpoints of $e$ as well.
Since $v_1$ is incident to both $e_1,e_3\in \E_1,$ the above argument enforces the umbrella of $v_1$ to be mono-coloured in $X$ with $\omega(u(v_1))=(1,\ldots,1)$. This means that there cannot be an automorphism $\phi'\in \Aut(X)$ mapping $v$ onto $v_1,$ which contradicts $\vf(X)=(1,2)$.

Hence, we conclude that the automorphism $\phi$ interchanges the vertices $v$ and $v_1$.
We observe that the umbrella of $v$ must follow the pattern $\omega(u(v))=(1,1,2,\ldots,1,1,2)$. That means the umbrella of $v$ is a $(1,1,2)$-umbrella, see \Cref{fig:vf12}. 
\begin{figure}[H]
  \centering
\begin{subfigure}{.45\textwidth}
  \centering\scalebox{1.5}{\begin{tikzpicture}[vertexBall, edgeDouble, faceStyle, scale=1.5]

\coordinate (V) at (0., 0.);
\coordinate (V1) at (1., 0.);
\coordinate (V2) at (0.4999999999999999, 0.8660254037844386);
\coordinate (V3) at (-0.5000000000000001, 0.8660254037844386);
\coordinate (V4) at (-1, 0.);
\coordinate (V5) at (-0.5, -0.8660);
\coordinate (V6) at (0.5, -0.8660);

\fill[face]  (V) -- (V1) -- (V2) -- cycle;
\fill[face]  (V) -- (V3) -- (V2) -- cycle;
\fill[face]  (V) -- (V3) -- (V4) -- cycle;
\fill[face]  (V) -- (V4) -- (V5) -- cycle;
\fill[face]  (V) -- (V5) -- (V6) -- cycle;
\fill[face]  (V) -- (V1) -- (V6) -- cycle;
\fill[face]  (V2) -- (V3) -- (V5) -- cycle;
\node[faceLabel] at (barycentric cs:V=1,V1=1,V2=1) {$F_2$};
\node[faceLabel] at (barycentric cs:V=1,V2=1,V3=1) {$F_3$};
\node[faceLabel] at (barycentric cs:V=1,V3=1,V4=1) {$F_4$};
\node[faceLabel] at (barycentric cs:V=1,V4=1,V5=1) {$F_5$};
\node[faceLabel] at (barycentric cs:V=1,V5=1,V6=1) {$F_6$};
\node[faceLabel] at (barycentric cs:V=1,V6=1,V1=1) {$F_1$};

\draw[edge=green] (V) --(V1);
\draw[edge=red] (V) --(V2);
\draw[edge=red] (V) --(V3);
\draw[edge=green] (V) --(V4);
\draw[edge=red] (V) --(V5);
\draw[edge=red] (V) --(V6);

\draw[edge=red] (V1) --(V2);
\draw[edge=green] (V3) --(V2);
\draw[edge=red] (V3) --(V4);
\draw[edge=red] (V4) --(V5);
\draw[edge=green] (V5) --(V6);
\draw[edge=red] (V1) --(V6);
\vertexLabelR{V1}{left}{$v_2$}
\vertexLabelR{V2}{left}{$v_3$}
\vertexLabelR{V3}{left}{$v_4 $}
\vertexLabelR{V4}{left}{$v_5 $}
\vertexLabelR{V5}{left}{$v_6$}
\vertexLabelR{V6}{left}{$v_1$}
\vertexLabelR{V}{left}{$v$}

\end{tikzpicture}}
  \caption{}
  \label{fig:vf12}
\end{subfigure}
\hspace{2cm}
\begin{subfigure}{.3\textwidth}
 \centering \scalebox{2}{\begin{tikzpicture}[vertexBall, edgeDouble, faceStyle, scale=1]

\coordinate (V) at (0., 0.);
\coordinate (V1) at (1., 0.);
\coordinate (V2) at (0.4999999999999999, 0.8660254037844386);
\coordinate (V3) at (-0.5000000000000001, 0.8660254037844386);
\coordinate (V4) at (-1, 0.);
\coordinate (V5) at (-0.5, -0.8660);
\coordinate (V6) at (0.5, -0.8660);

\coordinate (f1) at (barycentric cs:V=1,V1=1,V2=1);
\coordinate (f2) at (barycentric cs:V=1,V2=1,V3=1);
\coordinate (f3) at (barycentric cs:V=1,V3=1,V4=1);
\coordinate (f4) at (barycentric cs:V=1,V4=1,V5=1);
\coordinate (f5) at (barycentric cs:V=1,V5=1,V6=1);
\coordinate (f6) at (barycentric cs:V=1,V1=1,V6=1);

\coordinate(w1) at (barycentric cs:V=1,f1=-2.5);
\coordinate(w2) at (barycentric cs:V=1,f2=-2.5);
\coordinate(w3) at (barycentric cs:V=1,f3=-2.5);
\coordinate(w4) at (barycentric cs:V=1,f4=-2.5);
\coordinate(w5) at (barycentric cs:V=1,f5=-2.5);
\coordinate(w6) at (barycentric cs:V=1,f6=-2.5);

\draw[edge=red] (f1) --(f2);
\draw[edge=red] (f3) --(f2);
\draw[edge=green] (f4) --(f3);
\draw[edge=red] (f5) --(f4);
\draw[edge=red] (f6) --(f5);
\draw[edge=green] (f1) --(f6);
\draw[edge=red] (f1) --(w1);
\draw[edge=green] (f2) --(w2);
\draw[edge=red] (f3) --(w3);
\draw[edge=red] (f4) --(w4);
\draw[edge=green] (f5) --(w5);
\draw[edge=red] (f6) --(w6);

\vertexLabelR{f1}{left}{$ $}
\vertexLabelR{f2}{left}{$ $}
\vertexLabelR{f3}{left}{$ $}
\vertexLabelR{f4}{left}{$ $}
\vertexLabelR{f5}{left}{$ $}
\vertexLabelR{f6}{left}{$ $}

\end{tikzpicture}}
  \caption{}
  \label{fig:fgvf12}
\end{subfigure}
\caption{(a) A coloured vertex-defining umbrella of a simplicial surface $X$ with $\vf(X)=(1,2)$ (b) corresponding arc-coloured subgraph in the face graph $\F(X)$}
  \label{fig:12}
\end{figure}
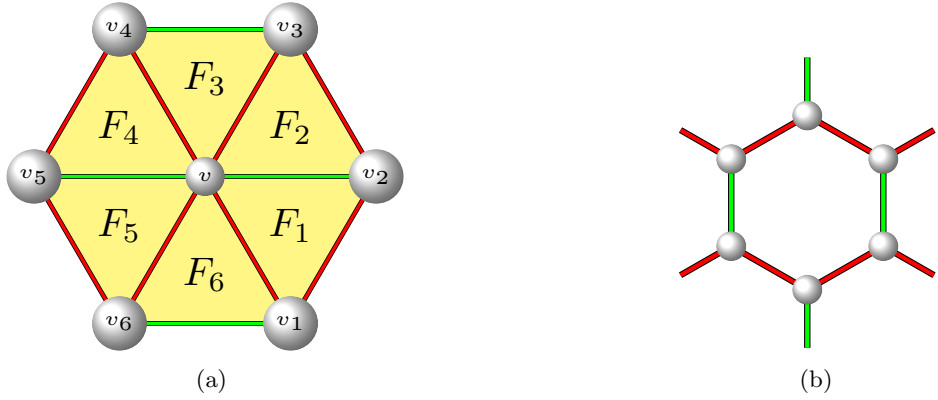
Since ${\Aut(X)}$ acts transitively on the vertices of $X$, all the umbrellas of $X$ form $(1,1,2)$-umbrellas and must be of the same degree a multiple of $3$. If the degree is $3$, then $X$ must be the tetrahedron, which has vertex-face type $(1,6)$, a contradiction. Hence, vertex degrees of $X$ must be at least $6$.

Let $\phi_1$ be the automorphism mapping $F_1$ to $F_4$ in $u(v)$. Since $\stab_{\Aut(X)}(\{v,v_5\})$ has order $4,$ there exists $\phi_2\in \stab_{\Aut(X)}(\{v,v_5\})$ such that $\overline{\phi}=\phi_2\circ \phi_1$ stabilises the vertex $v$ and satisfies $\overline{\phi}(F_1)=F_{4},$ and hence $\overline{\phi}(F_i)=F_{i+3}$.
Thus, the umbrella of $v$ can be written as 
\[
u(v)=(F_1,\ldots,F_n)=(\overline{\phi}(F_1),\overline{\phi}(F_2),\overline{\phi}(F_3),\ldots,\overline{\phi}^\ell(F_1),\overline{\phi}^\ell(F_2),\overline{\phi}^\ell(F_3)),
\]
where $\ell$ denotes the order of $\overline{\phi}$.

Since the faces $F_1,\ldots,F_n$ can also be seen as nodes of the cubic node-transitive graph $\F(X),$ the automorphism $\sigma:=\lambda_X(\Bar{\phi})$ enforces the cycle $(F_1,\ldots,F_n)=\alpha(\sigma,F_1,F_2,F_3,F_4)$ to be a $\sigma$-induced $\alpha$-cycle of $\F(X)$. Since the set of vertex-defining cycles of $X$ can be obtained from the orbit $\alpha(F_1,F_2,F_3,F_4)^H$, it follows that $H$ forms a $(1,2)$-group of $\F(X)$.

Let $H$ be a $(1,2)$-group of the face graph $\F(X)$. We know that ${\Aut(X)}$ is isomorphic to $H$. 
Thus, $\vert {\Aut(X)}\vert =\vert H\vert=2\cdot \vert X_2\vert$ holds and the node-transitivity of $H$ translates into the face-transitivity of ${\Aut(X)}$. That means that $X$ is face-transitive, with face stabilisers of order two, as can be followed from the orbit stabiliser theorem. Moreover, by the construction of $\C^{(1,2)}(H)$, it directly follows that $H$ acts transitively on the vertex-defining umbrellas, and hence ${\Aut(X)}$ must act transitively on $X_0$. It follows that $H$ uniquely determines a face-transitive surface $X$ with $\vf(X)=(1,2)$.
\end{proof}
We have the following corollary.

\begin{corollary}\label{corollary:construction_vf12}
    Let $\G$ be a cubic node-transitive graph and $H\leq \Aut(\G)$ a subgroup. If $H$ is a $(1,2)$-group of $\G$, then $\G$ is the face graph of a face-transitive surface.
\end{corollary}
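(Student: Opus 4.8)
The plan is to obtain the corollary from the backward direction of \Cref{thm:12} together with the correspondence between vertex-faithful cycle double covers and simplicial surfaces set up in \Cref{subsection:facegraphs}. First I would unpack the hypothesis: since $H$ is a $(1,2)$-group of $\G$, the set $\C^{(1,2)}(H)=\alpha(\sigma,F_1,F_2,F_3,F_4)^H$ is, by definition, a vertex-faithful cycle double cover of $\G$. Invoking the general correspondence of \Cref{subsection:facegraphs}, such a cover determines a simplicial surface $X$ whose face graph $\F(X)$ is isomorphic to $\G$, and whose vertex-defining cycles are exactly the cycles of $\C^{(1,2)}(H)$.

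Next I would exhibit automorphisms of $X$ that witness face-transitivity. The crucial point is that $\C^{(1,2)}(H)$ is a union of $H$-orbits of cycles and is therefore invariant under $H$. By the maximality statement recorded just after \Cref{eq:lambdaX}, the image $\lambda_X(\Aut(X))$ is precisely the subgroup of $\Aut(\F(X))$ that leaves the vertex-defining cycle double cover of $X$ invariant; since $H$ leaves $\C^{(1,2)}(H)$ invariant, this gives $H\leq \lambda_X(\Aut(X))$. Hence every element of $H$ corresponds, via $\lambda_X^{-1}$, to an automorphism of $X$.

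Finally I would conclude. As $H$ acts transitively on the nodes $V=X_2$ and $H\leq \lambda_X(\Aut(X))$, the larger group $\lambda_X(\Aut(X))$ also acts transitively on $X_2$; since $\lambda_X$ is an isomorphism onto its image, $\Aut(X)$ acts transitively on the faces of $X$. Thus $X$ is a face-transitive surface with $\F(X)\cong\G$, which is exactly the assertion. One could additionally verify $\vf(X)=(1,2)$ by checking $H=\lambda_X(\Aut(X))$, as in \Cref{thm:12}, but this refinement is not needed here.

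The step I expect to be the main obstacle is the middle one: one must know that an automorphism of $\G$ preserving the cover genuinely lifts to an automorphism of the reconstructed surface, i.e.\ that $\lambda_X$ surjects onto the cover-preserving subgroup of $\Aut(\F(X))$. This is supplied by the maximality remark after \Cref{eq:lambdaX}, so no additional argument is required; the remaining ingredients (chordlessness and the limited pairwise overlap of the cycles) are already built into the hypothesis that $\C^{(1,2)}(H)$ is vertex-faithful.
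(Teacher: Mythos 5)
Your proof is correct and matches the paper's intended derivation: the paper states \Cref{corollary:construction_vf12} without a separate proof, as an immediate consequence of \Cref{thm:12}, whose backward direction runs exactly through your steps --- reconstructing $X$ from the vertex-faithful cycle double cover $\C^{(1,2)}(H)$ and converting the node-transitivity of $H$ into face-transitivity of $X$ via $\lambda_X$. Your explicit appeal to the maximality remark after \Cref{eq:lambdaX} to obtain $H\leq\lambda_X(\Aut(X))$ is in fact a careful touch worth keeping, since the theorem's backward direction presupposes $H=\lambda_X(\Aut(X))$, an identity not guaranteed a priori for the reconstructed surface, whereas the containment you establish already suffices for the face-transitivity claimed by the corollary.
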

An example of a face-transitive surface with vertex-face type $(1,2)$, namely $X^{(1,2)}$, is shown in \Cref{example12}.
The surface has $8$ vertices, $24$ edges and $16$ triangles. Since $X^{(1,2)}$ is orientable, this simplicial surface forms a triangulation of the torus that satisfies $\Aut(X^{(1,2)})\cong C_8\rtimes(C_2\times C_2)$ and $\Aut(\F(X^{(1,2)}))\cong \GL(2,3)\rtimes C_2$.

\subsection{Face-transitive surfaces with vertex-face type (1,3)}
\label{vf13}
In this section we present the structure of face-transitive surfaces of vertex-face type $(1,3)$. We prove that the automorphism group of such a surface falls into one of two types. For both types, we observe that the vertex-defining cycles of a face-transitive surface $X$ with $\vf(X)=(1,3)$ form automorphism induced $\alpha$-cycles of the face graph $\F(X)$.
\begin{definition}
  \label{def:13groupII}
  Let $\G=(V,E)$ be a cubic node-transitive graph and $H\leq \Aut(\G)$ such that
  \begin{enumerate}
    \item $H$ acts transitively on the nodes $V$ with $\vert  H\vert =3\cdot\vert V\vert,$
    \item for $F_1,F_2,F_3\in V$ with $\{F_1,F_2\},\{F_2,F_3\} \in E,$ there exists an automorphism $\sigma\in H$ such that $\sigma (F_1) = F_3$.
  \end{enumerate}
  Furthermore, let the orbit $\alpha(\sigma,F_1,F_2,F_3)^{H}$ be a vertex-faithful cycle double cover of $\G$. We say that 
  $H$ is a \emph{\bf$(1,3)$-group of $\G$ of type $1$} if there exists an automorphism $\pi\in \Aut(\G)$ with $\alpha(\pi,F_1,F_2)=\alpha(\sigma,F_1,F_2,F_3)$, we denote the above cycle double 
  cover by $\C_1^{(1,3)}(H)$. If there exists no such automorphism $\pi$, then we say that 
  $H$ is a \emph{\bf$(1,3)$-group of $\G$ of type $2$} and denote the above cycle double 
  cover by $\C_2^{(1,3)}(H)$.
\end{definition}

\Cref{def:13groupII} allows us to classify face-transitive surfaces $X$ of vertex-face type $\vf(X)=(1,3)$.
\begin{theorem}
 Let $X$ be a simplicial surface. Then $X$ is face-transitive with $\vf(X)=(1,3)$ if and only if there exists an $i=1,2$ such that $H:=\lambda_X(\Aut(X))$ is a $(1,3)$-group of $\F(X)$ and the cycle double cover $\C_i^{(1,3)}(H)$ contains the vertex-defining cycles of $X$.
\end{theorem}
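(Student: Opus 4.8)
The plan is to prove the biconditional exactly as in the treatments of the types $(1,6)$ and $(1,2)$, the genuinely new feature being that the umbrella-rotation now shifts by \emph{two} faces, which is what splits these surfaces into the two flavours of \Cref{def:13groupII}. Throughout I fix a vertex $v\in X_0$ with umbrella $u(v)=(F_1,\dots,F_n)$, writing $F_i=\{v,v_i,v_{i+1}\}$ with indices read modulo $n$, and set $H:=\lambda_X(\Aut(X))$. For the forward implication, assume $\vf(X)=(1,3)$. Then $\vert\Aut(X)\vert=3\vert X_2\vert$, there is a single vertex orbit, and by \Cref{lem:subgroupS3} the face stabiliser is the unique order-$3$ subgroup of $\Sym(\{v_1,v_2,v_3\})$, hence a cyclic group $C_3$ permuting the vertices, and so the three edges, of each face in a single $3$-cycle.

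Consequently all edges lie in one $\Aut(X)$-orbit, and since $\vert X_1\vert=\tfrac32\vert X_2\vert$ the orbit–stabiliser theorem gives $\vert\stab_{\Aut(X)}(e)\vert=2$ for every edge $e$. Counting incidences, the common degree $n$ satisfies $\vert X_0\vert\, n=3\vert X_2\vert$, so $\vert\stab_{\Aut(X)}(v)\vert=n$; and because each order-$3$ face stabiliser meets $\stab_{\Aut(X)}(v)$ trivially (it fixes no vertex of its face), $\stab_{\Aut(X)}(v)$ acts \emph{regularly} on the $n$ faces of the umbrella. Thus $\stab_{\Aut(X)}(v)$ is a fixed-point-free order-$n$ subgroup of the dihedral symmetry group $D_{2n}$ of the face-cycle of $u(v)$, so it is either the full cyclic rotation group or a reflection-containing dihedral group.

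Using the order-$2$ involution stabilising a spoke $\{v,v_i\}$ (which always swaps the two faces incident to that spoke) together with an order-$3$ face-stabiliser element, I then construct $\bar\phi\in\stab_{\Aut(X)}(v)$ with $\bar\phi(F_i)=F_{i+2}$, paralleling the compositions used for types $(1,6)$ and $(1,2)$. Setting $\sigma:=\lambda_X(\bar\phi)$, the relations $\sigma(F_1)=F_3$ and $\{F_1,F_2\},\{F_2,F_3\}\in E$ exhibit $u(v)$ as the $\sigma$-induced cycle $\alpha(\sigma,F_1,F_2,F_3)$, which by \Cref{def:alpha} has length $2\,\ord(\sigma)$, so that $n=2\,\ord(\sigma)$ is even. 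Since $X$ has a single vertex orbit, \Cref{lemma:vertexisoumbrella} identifies the $H$-orbit $\alpha(\sigma,F_1,F_2,F_3)^H$ with the full set of vertex-defining umbrellas, which is a vertex-faithful cycle double cover; hence $H$ is a $(1,3)$-group, classified as type $1$ or type $2$ by \Cref{def:13groupII} according to whether $\F(X)$ admits a one-step rotation $\pi$ with $\alpha(\pi,F_1,F_2)=\alpha(\sigma,F_1,F_2,F_3)$. No extra work is needed here, as the dichotomy is built into the definition; conceptually, type $1$ is the rotary case, where $\stab_{\Aut(X)}(v)$ is cyclic and $\pi$ descends from a surface automorphism, while type $2$ is the reflexible case, where $\stab_{\Aut(X)}(v)$ is dihedral and only the two-step rotation lives in $\Aut(X)$. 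For the converse, let $H$ be a $(1,3)$-group of type $i$ with $\C_i^{(1,3)}(H)=\alpha(\sigma,F_1,F_2,F_3)^H$ equal to the vertex-defining cycles of $X$, so that $H\cong\Aut(X)$; node-transitivity with $\vert H\vert=3\vert V\vert=3\vert X_2\vert$ makes $X$ face-transitive with order-$3$ face stabilisers, and, the cover being a single $H$-orbit, \Cref{lemma:vertexisoumbrella} yields one $\Aut(X)$-orbit on $X_0$, whence $\vf(X)=(1,3)$ for both types.

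I expect the genuine obstacle to be the construction of $\bar\phi$ and, with it, the proof that $\ord(\sigma)=n/2$, equivalently that $\deg(v)$ is even. This is where simpliciality and the regular action of $\stab_{\Aut(X)}(v)$ must be used carefully: a reflection of an odd face-cycle would fix a face, contradicting regularity, which settles the dihedral (type $2$) case, whereas the cyclic (type $1$) case must be handled by showing that the relevant two-step rotation genuinely has order $n/2$ rather than $n$, thereby ruling out the odd-degree rotary configurations that would otherwise fail the length condition of \Cref{def:alpha}. The only remaining delicacy is organisational, namely verifying that the two alternatives of \Cref{def:13groupII} are exhaustive and that the forward direction lands in one of them irrespective of whether the local structure at $v$ is rotary or reflexible.
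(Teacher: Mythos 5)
Your converse direction and the overall skeleton of the forward direction are faithful to the paper: you correctly deduce edge-transitivity and order-two edge stabilisers from $\vf(X)=(1,3)$, you aim to exhibit the vertex-defining umbrellas as an $H$-orbit of $\sigma$-induced $\alpha$-cycles, and your observation that the literal statement only requires \emph{some} $i\in\{1,2\}$ — so that the type dichotomy of \Cref{def:13groupII} is automatic once the base $(1,3)$-group conditions hold — is legitimate (the paper proves more, namely which local structure yields which type, including the genuine argument that in the second case no $\pi \in \Aut(\F(X))$ exists because a third non-trivial element would inflate an edge stabiliser past order two; but that extra precision is not needed for the statement as phrased). Your counting argument that $\vert\stab_{\Aut(X)}(v)\vert = n$ and that this group acts regularly on the umbrella, hence is cyclic or dihedral inside the symmetry group of the face-cycle, is a correct structural addition not present in the paper, and your reflection argument forcing even degree in the dihedral case is sound.

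The genuine gap is the step you yourself flag as the ``genuine obstacle'': the construction of $\bar\phi \in \stab_{\Aut(X)}(v)$ with $\bar\phi(F_i)=F_{i+2}$, together with the claim $\ord(\sigma)=n/2$. You propose to obtain $\bar\phi$ ``paralleling the compositions used for types $(1,6)$ and $(1,2)$'', but those compositions exploit order-four edge stabilisers, which are unavailable here since $\vert\stab_{\Aut(X)}(e)\vert = 2$. The paper's proof instead rests on a case distinction your uniform two-step plan bypasses: by \Cref{lemma:conjugate} the non-trivial involution $\phi_i \in \stab_{\Aut(X)}(e_i)$ either swaps the endpoints of every spoke or fixes them for every spoke. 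In the endpoint-fixing case, $\phi_3 \circ \phi_2$ is the desired two-step rotation, and there your odd-$n$ reflection argument indeed closes the parity question. But in the endpoint-swapping case the natural composition of a spoke involution with a face-stabiliser rotation produces a \emph{one-step} rotation $\phi$ with $\phi(F_i)=F_{i+1}$, so $\stab_{\Aut(X)}(v)=\langle\phi\rangle \cong C_n$ is cyclic, the only candidate for your $\bar\phi$ is $\phi^2$, and $\ord(\phi^2)=n/2$ holds only if $n$ is even — precisely the case you admit you cannot settle, and for which your dihedral argument gives nothing. Your proposal therefore stalls at the heart of the forward direction. The paper routes around this by first exhibiting $u(v)$ as the one-step cycle $\alpha(\sigma,F_1,F_2)$ with $\sigma=\lambda_X(\phi)$, which is valid for every $n$ under \Cref{def:alpha}, and only then identifying it with $\alpha(\sigma^2,F_1,F_2,F_3)$ to match \Cref{def:13groupII} (an identification which itself quietly presumes the length condition, i.e.\ even $n$ — so your parity worry is real, but your route makes it fatal rather than cosmetic, since without even degree your $\alpha(\sigma,F_1,F_2,F_3)$ is the empty cycle and no $(1,3)$-group is produced at all). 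To repair the proposal you would need to reinstate the swap-versus-fix case distinction and, in the cyclic case, either construct the one-step rotation as the paper does or supply an independent proof that the degree is even.
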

\begin{proof}
If $X$ is a face-transitive surface with $\vf(X)=(1,3),$ then $\vert \Aut(X) \vert =3\cdot \vert X_2 \vert$, and face stabilisers are cyclic of order $3$. In particular, for all $F\in X_2$, $\stab_{\Aut(X)}(F)$ acts transitively on the edges $X_1(F)$, thus ${\Aut(X)}$ acts transitively on $X_1$, and all edge-stabilisers are conjugate in ${\Aut(X)}$, see \Cref{lemma:conjugate}.
Let $v\in X_0$ be a vertex of $X$ with umbrella $u(v)=(F_1,\ldots,F_n)$. Moreover, let $F_i=\{v,v_i,v_{i+1}\}$ and $e_i=\{v,v_i\}$, $1 \leq i \leq n$, where the subscripts are read modulo $n$. 
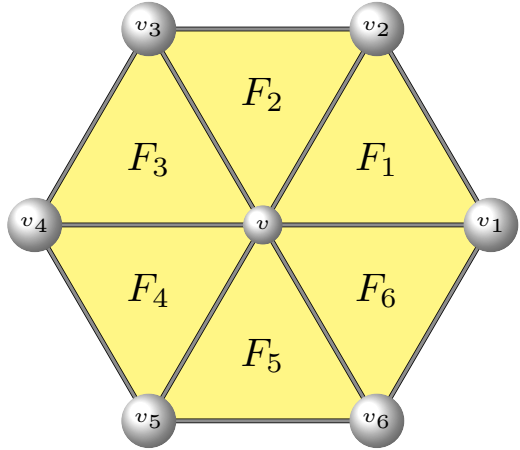
\begin{figure}[H]
    \centering
\scalebox{1.5}{\begin{tikzpicture}[vertexBall, edgeDouble, faceStyle, scale=2]

\coordinate (V) at (0., 0.);
\coordinate (V1) at (1., 0.);
\coordinate (V2) at (0.4999999999999999, 0.8660254037844386);
\coordinate (V3) at (-0.5000000000000001, 0.8660254037844386);
\coordinate (V4) at (-1, 0.);
\coordinate (V5) at (-0.5, -0.8660);
\coordinate (V6) at (0.5, -0.8660);

\fill[face]  (V) -- (V1) -- (V2) -- cycle;
\fill[face]  (V) -- (V3) -- (V2) -- cycle;
\fill[face]  (V) -- (V3) -- (V4) -- cycle;
\fill[face]  (V) -- (V4) -- (V5) -- cycle;
\fill[face]  (V) -- (V5) -- (V6) -- cycle;
\fill[face]  (V) -- (V1) -- (V6) -- cycle;
\fill[face]  (V2) -- (V3) -- (V5) -- cycle;
\node[faceLabel] at (barycentric cs:V=1,V1=1,V2=1) {$F_1$};
\node[faceLabel] at (barycentric cs:V=1,V2=1,V3=1) {$F_2$};
\node[faceLabel] at (barycentric cs:V=1,V3=1,V4=1) {$F_3$};
\node[faceLabel] at (barycentric cs:V=1,V4=1,V5=1) {$F_4$};
\node[faceLabel] at (barycentric cs:V=1,V5=1,V6=1) {$F_5$};
\node[faceLabel] at (barycentric cs:V=1,V1=1,V6=1) {$F_6$};

\draw[edge] (V) --(V1);
\draw[edge] (V) --(V2);
\draw[edge] (V) --(V3);
\draw[edge] (V) --(V4);
\draw[edge] (V) --(V5);
\draw[edge] (V) --(V6);

\draw[edge] (V1) --(V2);
\draw[edge] (V3) --(V2);
\draw[edge] (V3) --(V4);
\draw[edge] (V4) --(V5);
\draw[edge] (V5) --(V6);
\draw[edge] (V1) --(V6);
\vertexLabelR{V1}{left}{$v_1$}
\vertexLabelR{V2}{left}{$v_2$}
\vertexLabelR{V3}{left}{$v_3$}
\vertexLabelR{V4}{left}{$v_4$}
\vertexLabelR{V5}{left}{$v_5$}
\vertexLabelR{V6}{left}{$v_6$}
\vertexLabelR{V}{left}{$v$}

\end{tikzpicture}}
    \caption{An umbrella of a face-transitive surface $X$ }
    \label{fig:vf162}
\end{figure}
Note that, by the orbit stabiliser theorem, $\vert \stab_{\Aut(X)}(e_i)\vert=2$. Let $\phi_i$ be the non-trivial automorphism of the edge-stabiliser $\stab_{\Aut(X)}(e_i)$.  It follows, that either {\bf (1)} $\phi_i(v)=v_i$ and $\phi_i(v_i)=v$, or {\bf (2)} $\phi_i(v)=v$ and $\phi_i(v_i)=v_i$ for all $1 \leq i \leq n$.

\medskip

\noindent
{\bf Case 1:} Note that because of the stabiliser of $F_1$ being cyclic of order $3$, the automorphism $\phi_2$ cannot fix $F_2$. We therefore know $\phi_2(v_1)=v_3$ and $\phi_2(v_3)=v_1$. With this, we compute an automorphism of $X$ that applies a cyclic shift to the faces $F_i$ incident to $v,$ where $1\leq i \leq n$. Let $\psi\in \stab_{\Aut(X)}(F_2)$ be the unique automorphism satisfying $\psi(v)=v_3$, $\psi(v_2)=v$ and $\psi(v_3)=v_2$. Then $\phi:=\psi \circ \phi_2$ satisfies $\phi(v)=v,\phi(v_1)=v_2$ and $\phi(v_2)=v_3$. Since $\phi$ respects the incidences of the surface $X,$ we know that $\phi$ is an automorphism stabilising the vertex $v$ and mapping $F_i$ onto $F_{i+1}$. It follows that the umbrella of $v$ can be written as 
\[u(v)=(F_1,\ldots,F_n)=(\phi(F_1),\ldots,\phi^n(F_1)).\]

Thus, by considering $F_1,\ldots,F_n$ as nodes of the face graph $\F(X)$ and defining $\sigma:=\lambda_X(\phi)$ we observe that $(F_1,\ldots,F_n)=\alpha(\sigma,F_1,F_2)=\alpha(\sigma^2,F_1,F_2,F_3)$ is a $\sigma$-induced $\alpha$-cycle of $\F(X)$. 
Moreover, all vertices of $X$ lie in the same ${\Aut(X)}$-orbit and the vertex-defining umbrellas (cycles) of $X$ are contained in the orbit $\alpha(\sigma,F_1,F_2)^{H}$. This results in $H$ being a $(1,3)$-group of $\F(X)$ of type $1$.

\noindent
{\bf Case 2:} Since $\phi_2$ and $\phi_3$ are non-trivial, they interchange the faces $F_1$ and $F_2$, and $F_2$ and $F_3$, respectively.
Thus, define $\phi=\phi_3\circ \phi_2$ and note that, necessarily, $\phi(F_i)=F_{i+2}$ and $\phi(v)=v$. Thus, the umbrella of $v$ can be written as \[u(v)=(F_1,\ldots,F_n)=(\phi(F_1),\phi(F_2),\ldots,\phi^\ell(F_1),\phi^\ell(F_2)),\] where $\ell$ denotes the order of $\phi$.
Thus, by interpreting $F_1,\ldots,F_n$ as nodes of the face graph and considering $\sigma:=\lambda_X(\phi),$ we see that $(F_1,\ldots,F_n)=\alpha(\sigma,F_1,F_2,F_3)$ is a $\sigma$-induced $\alpha$-cycle of $\F(X)$. 

It remains to show that there exists no $\pi\in \Aut(\F(X))$ with $\alpha(\pi,F_1,F_2)=\alpha(\sigma,F_1,F_2,F_3).$ We assume that there is such an automorphism $\pi.$ This automorphism $\pi$ must be the image of an automorphism $\psi\in \Aut(X)$ under $\lambda_X$ satisfying  $(F_1,\ldots,F_n)=(\psi(F_1),\ldots,\psi^n(F_1))$ in $X$.
Hence, $\pi$ maps $F_1$ either to $F_2$ or $F_n$. Without loss of generality we assume $\psi(F_1)=F_2.$ This means that the equalities $\psi(v)=v,\psi(v_1)=v_2$ and $\psi(v_2)=v_3$ hold. Since the stabiliser of $F_2$ is cyclic of order $3$ there exists a unique automorphism $\psi'\in \stab_{\Aut(X)}(F_2)$ satisfying $\psi'(v)=v_2,\psi'(v_2)=v_3$ and $\psi'(v_3)=v$. Hence the automorphism $\gamma:=\psi'\circ \psi$ satisfies $\gamma(v)=v_2$ and $\gamma(v_2)=v.$ That means $\gamma\in \stab_{\Aut(X)}(e_2)$. 
Because of $\gamma$ being non-trivial and $\gamma\notin\stab_{\Aut(X)}(v)$ we know $\gamma\neq \phi_2.$ Hence, the stabiliser of $e_2$ contains $\{id,\phi_2,\gamma\}$ and therefore has order $4$, a contradiction because $\vf(X)=(1,3)$ implies $\vert \stab_{\Aut(X)}(e)\vert=2$ for all $e\in X_1.$

Since all the vertices of $X$ lie in one ${\Aut(X)}$-orbit, the vertex-defining umbrellas of $X$ can be translated into the orbit $\alpha(\sigma,F_1,F_2,F_3)^{H}$ and hence $H$ forms a $(1,3)$-group of $\F(X)$ of type $2$.

To complete the proof, assume that $H$ is a $(1,3)$-group of $\F(X)$ of either type.
Let $F\in X_2$ be an arbitrary face. Since $\Aut(X)\cong H$, $X$ is face-transitive, and the stabiliser $\stab_H(F)$ of the node $F$ is isomorphic to $\stab_{\Aut(X)}(F)$ of the face $F$, hence, cyclic of order $3$. We conclude that $\stab_{\Aut(X)}(F)$ is transitive on $X_0(F)$, ${\Aut(X)}$ is transitive on the vertices of $X$, and hence $X$ is a face-transitive surface with $\vf(X)=(1,3)$.
\end{proof}
As usual, we have the following corollary.
\begin{corollary}\label{corollary:constructionvf13}\label{corollary:constructionvf132}
    Let $\G$ be a cubic node-transitive graph and $H\leq \Aut(\G)$ a subgroup. If $H$ is a $(1,3)$-group of $\G$, then $\G$ is the face graph of a face-transitive surface.
\end{corollary}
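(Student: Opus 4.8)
The plan is to read the surface directly off the cycle double cover supplied by the hypothesis, and then recognise its automorphisms inside $H$. This parallels the proofs of \Cref{corollary:constructionvf22,corollary:constructionvf16,corollary:construction_vf12}, so I would keep it short.

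First I would unpack the hypothesis. Since $H$ is a $(1,3)$-group of $\G$, of some type $i\in\{1,2\}$, \Cref{def:13groupII} supplies nodes $F_1,F_2,F_3\in V$ with $\{F_1,F_2\},\{F_2,F_3\}\in E$ and an automorphism $\sigma\in H$ satisfying $\sigma(F_1)=F_3$ for which $\C:=\alpha(\sigma,F_1,F_2,F_3)^H$ is a vertex-faithful cycle double cover of $\G$. By the correspondence between vertex-faithful cycle double covers and simplicial surfaces set out in \Cref{subsection:facegraphs}, the cover $\C$ determines a simplicial surface $X$ with $\F(X)=\G$: the nodes of $\G$ become the faces of $X$, the arcs become the edges, and the cycles of $\C$ are precisely the vertex-defining umbrellas of $X$.

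It then remains to verify that $X$ is face-transitive. Here I would invoke the fact recorded in \Cref{sec:groupactions} that $\lambda_X(\Aut(X))$ is the full stabiliser of the vertex-defining cycle double cover of $X$ inside $\Aut(\F(X))$; that is, it contains every subgroup of $\Aut(\G)$ that leaves $\C$ invariant. As $\C$ is the $H$-orbit $\alpha(\sigma,F_1,F_2,F_3)^H$, it is certainly $H$-invariant, whence $H\subseteq \lambda_X(\Aut(X))$. Since $\lambda_X$ is a monomorphism, it restricts to an isomorphism $\lambda_X^{-1}(H)\to H$ under which the action of $\lambda_X^{-1}(H)\leq \Aut(X)$ on the faces $X_2$ matches the action of $H$ on $V$. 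The first axiom in \Cref{def:13groupII} makes $H$ transitive on $V$, so $\lambda_X^{-1}(H)$ is transitive on $X_2$, and therefore $\Aut(X)$ acts transitively on the faces of $X$; that is, $X$ is face-transitive.

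I expect the only delicate point to be the passage from $\C$ to the surface $X$ in the second step: it hinges on $\C$ being vertex-faithful, i.e.\ on its cycles being chordless and pairwise meeting in at most one arc. This property is built into the definition of a $(1,3)$-group and so need not be re-established here; once it is invoked, the existence of $X$ together with $\F(X)=\G$ is immediate from the general construction in \Cref{subsection:facegraphs}, and the remaining steps are routine bookkeeping with $\lambda_X$. (The preceding theorem in fact pins down $\vf(X)=(1,3)$, but for the corollary plain face-transitivity suffices.)
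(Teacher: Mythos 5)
Your proposal is correct and follows essentially the route the paper intends: the paper states this corollary without proof (``As usual\dots''), as an immediate consequence of the converse direction of the preceding theorem, which likewise reads the surface $X$ off the vertex-faithful cycle double cover $\C_i^{(1,3)}(H)$ and transfers node-transitivity of $H$ to face-transitivity of $X$. If anything, you are slightly more careful than the paper: the theorem's hypothesis is $H=\lambda_X(\Aut(X))$, whereas the corollary allows an arbitrary $(1,3)$-group $H\leq\Aut(\G)$, and your observation that $H$-invariance of the cover only yields the containment $H\subseteq\lambda_X(\Aut(X))$ --- which nevertheless suffices, since a transitive subgroup of $\Aut(X)$ acting on $X_2$ already makes $X$ face-transitive --- is exactly the point needed to bridge that gap.
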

A face-transitive surface of vertex-face type $(1,3)$ constructed by a $(1,3)$-group of type $1$ is often referred to as a \emph{chiral map}. 
An example of a simplicial surface that forms a chiral map is the $7$-vertex triangulation of the torus, here denoted by  $X^{(1,3)}$. This surface, given in \Cref{example131}, forms a simplicial torus with $7$ vertices, $21$ edges, and $14$ faces. Its automorphism group satisfies
\[
C_7:C_6\cong \Aut(X^{(1,3)})\hookrightarrow \Aut(\F(X^{(1,3)}))\cong \PSL(3,2) \rtimes C_2.
\]

Face-transitive surfaces constructed from $(1,3)$-groups of type $2$ are somewhat rarer, with the smallest example of such a surface being $Y^{(1,3)}$, presented in \Cref{example132}. This surface is orientable and consists of $36$ vertices, $216$ edges, and $144$ faces. Hence, the Euler-Characteristic of this simplicial surface is $\chi(Y^{(1,3)})=-36$. Note that the automorphism group of this simplicial surface satisfies $\Aut(Y^{(1,3)})\cong (((C_3 \times C_3) \rtimes Q_8) \rtimes C_3) \rtimes C_2$. This example demonstrates that a face-transitive surface with $\vert \Aut(X)\vert=3\vert X_2 \vert $ is not necessarily chiral.

\subsection{Face-transitive surfaces with vertex-face type (1,1)}\label{vf11}

In this section we classify face-transitive surfaces with trivial face-stabilisers whose automorphism groups act 
transitively on their vertices. We prove that the umbrellas representing their vertices can be either tri-coloured, or correspond to automorphism induced $\alpha$-cycles. 

\begin{definition}
  Let $\G=(V,E)$ be a cubic node-transitive graph and $H\leq \Aut(\G)$ such that
  \begin{enumerate}
    \item $H$ acts transitively on the nodes $V$ with $\vert  H\vert =1\cdot\vert V\vert,$ and
    \item $E$ partitions into three $H$-orbits $E_1$, $E_2$, and $E_3$, all of cardinality $\frac{1}{2}\vert V \vert$.
  \end{enumerate}
  The partition $E=E_1 \cup E_2 \cup E_3$ defines the arc-colouring 
  $\kappa:E\to \{1,2,3\},$ by $\kappa(\{F,F'\}):=i$ for $\{F,F'\}\in E_i$. 
  If the (unique) set consisting of all the $(1,2,3)$-cycles with respect to $\kappa$ forms a vertex-faithful cycle double cover of $\G$, then we say that
  $H$ is a \emph{\bf$(1,1)$-group of $\G$ of type $1$}, with the cycle double cover denoted by $\C_1^{(1,1)}(H)$. If the (unique) set consisting of all the $(1,2,3,1,3,2)$-cycles with respect to $\kappa$ forms a vertex-faithful cycle double cover of $\G$, then we say that $H$ is a \emph{\bf$(1,1)$-group of $\G$ of type $2$,} with the cycle double cover denoted by $\C_2^{(1,1)}(H)$.
\end{definition}
\begin{definition}
Let $\G=(V,E)$ be a cubic node-transitive graph and $H\leq \Aut(\G)$ such that
  \begin{enumerate}
    \item $H$ acts transitively on the nodes $V$ with $\vert  H\vert =1\cdot\vert V \vert,$
    \item $E$ partitions into two $H$-orbits $E_1$ and $E_2$ with 
    $\vert E_1\vert =\vert V \vert $ and $\vert E_2\vert =\tfrac{1}{2}\vert V \vert,$ and
    \item there exist nodes $F_1,\ldots,F_7\in V $  satisfying $\{F_2,F_3\},\{F_3,F_4\},\{F_5,F_6\},\{F_6,F_7\}\in E_1$ and 
    $\{F_1,F_2\},\{F_4,F_5\}\in E_2,$ and an automorphism 
    $\sigma\in \Aut (\G)$ such that $\sigma(F_1)=F_7$.
  \end{enumerate}
  The partition $E = E_1 \cup E_2$ defines an arc-colouring $\kappa:E\to \{1,2\}$ of $\G$ by 
  $\kappa(\{F,F'\}):=i$ for $\{F,F'\}\in E_i$. Let $u$ be a mono-coloured cycle in $\G$ with respect to $\kappa$ and $u^H \cup \alpha(\sigma,F_1,\ldots,F_7)^H$ a vertex-faithful cycle double cover of $\G$.
  We say that $H$ is a \emph{\bf$(1,1)$-group of $\G$ of type $3$} if there exists an automorphism $\pi\in \Aut(\G)$ with $\alpha(\pi,F_1,\ldots,F_4)=\alpha(\sigma,F_1,\ldots,F_7)$. We denote the above cycle double cover by $\C_3^{(1,1)}(H)$. If there exists no such automorphism $\pi$, then we say that $H$ is a \emph{\bf$(1,1)$-group of $\G$ of type $4$} and denote the above cycle double cover by $\C_4^{(1,1)}(H)$.
\end{definition}

In the following we refer to a subgroup $H$ of the automorphism group of a cubic 
graph as a $(1,1)$-group if the given subgroup is a $(1,1)$-group of any type.
Additionally, we denote the corresponding cycle double cover by $\C^{(1,1)}(H)$. The main result of this subsection is detailed in the below theorem.

\begin{theorem}
 Let $X$ be a simplicial surface. Then $X$ forms a face-transitive surface satisfying 
 $\vf(X)=(1,1)$ if and only if $H:=\lambda_X(\Aut(X))$ is a $(1,1)$-group of $\F(X)$ and the cycle double cover $\C^{(1,1)}(H)$ contains exactly the vertex-defining cycles of $X$.
\end{theorem}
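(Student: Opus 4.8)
The plan is to prove both implications by extending the techniques of the proof of \Cref{theorem22} (edge-orbit colourings) and of the classification in \Cref{vf13} (the period-halving dichotomy), now complicated by the fact that $\vf(X)=(1,1)$ splits into four sub-types. The common starting point is that $\vf(X)=(1,1)$ forces $|\Aut(X)|=|X_2|$ with trivial face-stabilisers, so $H:=\lambda_X(\Aut(X))$ acts regularly on $X_2$ and transitively on $X_0$. Arguing as in \Cref{lem:subgroupS3}, an element fixing an edge together with one incident face must be the identity; since face-stabilisers are trivial, every edge-stabiliser has order $1$ or $2$. By the orbit-stabiliser theorem and \Cref{lemma:3edgeorbits}, the edge orbits therefore have size $|X_2|$ or $\tfrac12|X_2|$, and since $|X_1|=\tfrac32|X_2|$, the set $X_1$ splits in exactly one of two ways: into three orbits each of size $\tfrac12|X_2|$, or into two orbits of sizes $|X_2|$ and $\tfrac12|X_2|$. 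These two cases will produce types $1$--$2$ and types $3$--$4$, respectively.

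In the three-orbit case I would colour $X_1$ by its three $\Aut(X)$-orbits. Face-transitivity together with the fact that automorphisms preserve orbits (hence colours) shows that every face carries the same colour-multiset; counting edge--face incidences then forces each face to carry exactly one edge of each colour, so the orbit-colouring is a Gr\"unbaum-colouring and the induced arc-colouring $\kappa$ of $\F(X)$ has three classes of size $\tfrac12|V|$. Reading the colour of an umbrella $u(v)=(F_1,\dots,F_n)$ as the cyclic sequence of its spoke-colours, the Gr\"unbaum condition forces consecutive spokes to differ, while vertex-transitivity forces all umbrellas to share one cyclic colour pattern up to rotation and reflection (and with no colour permutation, since $\Aut(X)$ fixes each orbit). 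A short neighbour-consistency argument rules out a merely bi-coloured pattern, and a case analysis on the rotational-versus-mirror dichotomy of \Cref{fig:wildedges} pins the pattern down to either $(1,2,3)$ or $(1,2,3,1,3,2)$ as in \Cref{def:typesOfColours}. This identifies $H$ as a $(1,1)$-group of type $1$ or $2$ and exhibits $\C^{(1,1)}(H)$ as exactly the set of umbrellas.

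In the two-orbit case the same incidence count gives the multiset $\{1,1,2\}$ on every face, with the smaller orbit $E_2$ forming a perfect matching of $\F(X)$. Vertex-transitivity together with the impossibility of two consecutive colour-$2$ spokes then forces every umbrella to be a $(1,1,2)$-umbrella. Following the proof of \Cref{thm:12}, I would compose the appropriate edge- and face-stabiliser elements into an automorphism $\phi$ that fixes $v$ and shifts its umbrella by one $(1,1,2)$-block, so that with $\sigma:=\lambda_X(\phi)$ the umbrella is the $\sigma$-induced $\alpha$-cycle $\alpha(\sigma,F_1,\dots,F_7)$ and $\C^{(1,1)}(H)$ is the orbit of umbrellas. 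Whether $H$ is of type $3$ or $4$ is then decided, exactly as in the type-$2$ argument of \Cref{vf13}, by whether the generator $\sigma$ shifting by two colour-blocks can be replaced by a $\pi$ shifting by one, i.e.\ by the existence of $\pi\in\Aut(\F(X))$ with $\alpha(\pi,F_1,\dots,F_4)=\alpha(\sigma,F_1,\dots,F_7)$, the two cases being separated by an orbit-stabiliser bookkeeping of the spoke edge-stabilisers.

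For the converse I would argue uniformly: if $H$ is a $(1,1)$-group of any type, then $|H|=|V|=|X_2|$ yields a face-transitive surface with trivial face-stabilisers, and the defining feature that all cycles of $\C^{(1,1)}(H)$ share a single colour-type makes $H$ act transitively on the umbrellas, so \Cref{lemma:vertexisoumbrella} gives a single vertex orbit and hence $\vf(X)=(1,1)$. The hard part will be the fine classification in the forward direction: proving that the only tri-coloured umbrella patterns compatible with a Gr\"unbaum-colouring and vertex-transitivity are precisely $(1,2,3)$ and $(1,2,3,1,3,2)$, and, in the two-orbit case, correctly separating types $3$ and $4$ through the period-halving automorphism while tracking edge-stabiliser orders. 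This is where the rotational/mirror dichotomy and the orbit-stabiliser accounting must be carried out with the most care.
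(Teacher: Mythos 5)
Your overall route is the paper's route: split on whether $\Aut(X)$ has three or two edge orbits, observe via the orbit--stabiliser theorem that these are the only possibilities; in the three-orbit case colour edges by orbits, check the colouring is Gr\"unbaum, and run the mirror/rotational case analysis (the paper's four subcases (a)--(d), of which (a) and (b) are excluded by exactly your ``some vertex misses a colour that another vertex sees'' argument), landing on $(1,2,3)$-umbrellas (type $1$) and $(1,2,3,1,3,2)$-umbrellas (type $2$); in the two-orbit case derive $(1,1,2)$-umbrellas and realise them as automorphism-induced $\alpha$-cycles, with the type $3$/$4$ split governed by the existence of $\pi$ with $\alpha(\pi,F_1,\ldots,F_4)=\alpha(\sigma,F_1,\ldots,F_7)$; and prove the converse exactly as the paper does, via the orbit--stabiliser theorem and \Cref{lemma:vertexisoumbrella}.

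There is, however, one step in your two-orbit case that would fail as written. You propose to ``compose the appropriate edge- and face-stabiliser elements into an automorphism $\phi$ that fixes $v$ and shifts its umbrella by one $(1,1,2)$-block,'' following the proof of \Cref{thm:12}. This transplant is not available here: in \Cref{thm:12} one has $\vf(X)=(1,2)$, so the colour-$2$ edges have stabilisers of order $4$, and it is precisely such an order-$4$ stabiliser element that is composed with $\phi_1$ to force fixing of $v$. For $\vf(X)=(1,1)$ all face stabilisers are trivial and every edge stabiliser has order at most $2$, so no such correction is possible --- and indeed an automorphism fixing $v$ and shifting the umbrella by \emph{one} block (three faces) exists exactly in type $3$ and provably not in type $4$; its existence \emph{is} the dichotomy, so it cannot be assumed at the outset. (Your sentence is also internally inconsistent: a one-block shift would give $\alpha(\sigma,F_1,\ldots,F_4)$, not $\alpha(\sigma,F_1,\ldots,F_7)$.) The paper's fix is to use regularity of the $H$-action on faces: there are \emph{unique} automorphisms $\phi_1,\phi_2$ with $\phi_1(F_1)=F_4$ and $\phi_2(F_4)=F_7$, and either $u(v)=\alpha(\lambda_X(\phi_1),F_1,\ldots,F_4)$ (type $3$, where $\sigma_1=\sigma_2$ and the cycle equals $\alpha(\sigma_1^2,F_1,\ldots,F_7)$), or neither $\phi_i$ fixes $v$, in which case the composite $\phi_2\circ\phi_1$ does fix $v$, shifts by two blocks, and yields $\alpha(\gamma,F_1,\ldots,F_7)$ with no one-block $\pi$ (type $4$). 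Since your very next sentence states the correct $\pi$-criterion, this is a repairable construction error rather than a wrong strategy, but as proposed the existence claim for $\phi$ is false in type $4$ and must be replaced by the uniqueness/composition argument above.
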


\begin{proof}
Let $X$ be a face-transitive surface with $\vf(X)=(1,1)$. Then the automorphism group
$\Aut(X)$ has exactly $1\cdot \vert X_2 \vert $ elements. 
It follows that there are either {\bf(1)} exactly $3$ edge-orbits under $\Aut(X)$ or {\bf(2)} exactly $2$ edge-orbits under the action of $\Aut(X)$ on $X_1$.

\paragraph*{\bf Case 1:} Let $\E_1,\E_2,\E_3$ with 
$\vert \E_1 \vert=\vert \E_2 \vert=\vert \E_3 \vert=\tfrac{1}{2}\vert X_2\vert$ be the edge-orbits of the action of $\Aut(X)$ on $X_1$. These orbits naturally yield a Gr\"unbaum-colouring $\omega$ of $X$ by defining $\omega(e):=i$ for $e\in \E_i,$ and an arc-colouring $\kappa$ of $\F(X)$ that is given by $\kappa(\{F_1,F_2\}):=\omega(e)$, where $e\in X_1$ is an edge with $X_2(e)=\{F_1,F_2\}$. Since ${\Aut(X)}$ and $H$ are isomorphic, the colour classes of $\kappa$ are exactly the $H$-orbits on the arcs.
All edges that are contained in the same ${\Aut(X)}$-orbit have the same type with respect to $\omega$, see \Cref{lemma:conjugate}. That is, the edges in a chosen $\E_i$, $i \in \{1,2,3\}$, are either all rotational edges or all mirror edges with respect to $\omega$. 
To describe the vertex-defining umbrellas of $X,$ we give a case distinction with respect to the types of the  edges contained in the sets $\E_1,\E_2,\E_3$. Without loss of generality, this case distinction results in four cases:
\begin{enumerate}
  \item[(a)] all the edges of $X$ are mirror edges,
  \item[(b)] the edges in $\E_1\cup \E_2$ are mirror edges and the edges in $\E_3$ are rotational edges,
  \item[(c)] the edges in $\E_1$ are mirror edges and the edges in $\E_2\cup \E_3$ are rotational edges,
  \item[(d)] all the edges of $X$ are rotational edges.
\end{enumerate}
We show that only {\bf (c)} and {\bf (d)} occur as possible structures of the Gr\"unbaum-colouring $\omega$ of $X$.

\paragraph*{\bf Case 1(a):} If all edges of $X$ are mirror edges, then the vertex-defining umbrellas are bi-coloured with respect to $\omega$. More precisely, let $F=\{v_1,v_2,v_3\}\in X_2$ be a face and let the edges $\{v_1,v_2\},\{v_1,v_3\}$ and $\{v_2,v_3\}$ be contained in the orbits $\E_3,\E_2$ and $\E_1,$ respectively. Since all edges of $X$ are mirror edges, we know that the colours of the umbrellas $u(v_1)$, $u(v_2)$ and $u(v_3)$ are given by $\omega(u(v_1))=(2,3,\ldots,2,3)$, $\omega(u(v_2))=(1,3,\ldots,1,3)$ and $\omega(u(v_3))=(1,2,\ldots,1,2),$ respectively. In particular, $v_1$ is not incident to any edges in $\E_1$, but $v_2$ is.
Since the colour classes of $\omega$ are ${\Aut(X)}$-orbits on the edges, it follows that no automorphism in ${\Aut(X)}$ can map $v_1$ to $v_2$. But we have $\vf(X)=(1,1),$ and hence all vertices must be in the same ${\Aut(X)}$-orbit. A contradiction.
\paragraph*{\bf Case 1(b):} As before, let $F=\{v_1,v_2,v_3\}\in X_2$ be a face such that the edges that are incident to $F$ satisfy $\{v_2,v_3\}\in \E_1$, $\{v_1,v_3\}\in \E_2 $ and $\{v_1,v_2\}\in \E_3$. Since $\{v_2,v_3\}$ and $\{v_1,v_3\}$ are mirror edges, the umbrella of $v_3$ must be bi-coloured with $\omega(u(v_3))=(1,2,\ldots,1,2)$.
In particular, $v_3$ is not incident with any edge of colour $3$, but $v_1$ and $v_2$ are. Again, since the colour classes of $\omega$ are ${\Aut(X)}$-orbits on the edges, there exists no automorphism of $X$ mapping $v_3$ onto $v_2$ or $v_1$. This contradicts the fact that $\vf(X)=(1,1)$.

\paragraph*{\bf Case 1(c):} This case leads to tri-coloured vertex umbrellas, as defined in \Cref{def:cycleColours}: 
For a vertex $v\in X_0$ the colour of the corresponding umbrella $u(v)=(F_1,\ldots,F_n)$ is given by 
 \[\omega(u(v))=(1, 2, 3, 1, 3, 2,\ldots, 1, 2, 3, 1, 3, 2),\] 
 see \Cref{fig:112}. Take a moment to verify that, no matter the starting configuration, the colour of the umbrella is determined. In particular, vertex degrees must be multiples of six.
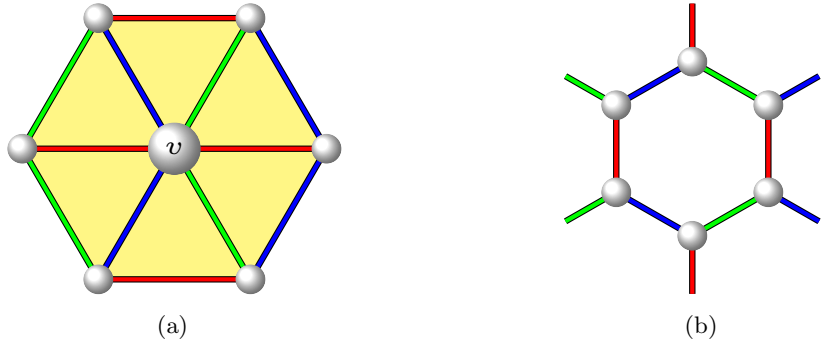
\begin{figure}[H]
  \centering
\begin{subfigure}{.45\textwidth}
 \centering \scalebox{2}{\begin{tikzpicture}[vertexBall, edgeDouble, faceStyle, scale=1]

\coordinate (V) at (0., 0.);
\coordinate (V1) at (1., 0.);
\coordinate (V2) at (0.4999999999999999, 0.8660254037844386);
\coordinate (V3) at (-0.5000000000000001, 0.8660254037844386);
\coordinate (V4) at (-1, 0.);
\coordinate (V5) at (-0.5, -0.8660);
\coordinate (V6) at (0.5, -0.8660);

\fill[face]  (V) -- (V1) -- (V2) -- cycle;
\fill[face]  (V) -- (V3) -- (V2) -- cycle;
\fill[face]  (V) -- (V3) -- (V4) -- cycle;
\fill[face]  (V) -- (V4) -- (V5) -- cycle;
\fill[face]  (V) -- (V5) -- (V6) -- cycle;
\fill[face]  (V) -- (V1) -- (V6) -- cycle;
\fill[face]  (V2) -- (V3) -- (V5) -- cycle;

\draw[edge=red] (V) --(V1);
\draw[edge=green] (V) --(V2);
\draw[edge=blue] (V) --(V3);
\draw[edge=red] (V) --(V4);
\draw[edge=blue] (V) --(V5);
\draw[edge=green] (V) --(V6);

\draw[edge=blue] (V1) --(V2);
\draw[edge=red] (V3) --(V2);
\draw[edge=green] (V3) --(V4);
\draw[edge=green] (V4) --(V5);
\draw[edge=red] (V5) --(V6);
\draw[edge=blue] (V1) --(V6);
\vertexLabelR{V1}{left}{$ $}
\vertexLabelR{V2}{left}{$ $}
\vertexLabelR{V3}{left}{$ $}
\vertexLabelR{V4}{left}{$ $}
\vertexLabelR{V5}{left}{$ $}
\vertexLabelR{V6}{left}{$ $}
\vertexLabelR{V}{left}{$v$}

\end{tikzpicture}}
  \caption{}
  \label{fig:vf112}
\end{subfigure}
\hspace{1cm}
\begin{subfigure}{.3\textwidth}
\centering  \scalebox{2}{\begin{tikzpicture}[vertexBall, edgeDouble, faceStyle, scale=1]

\coordinate (V) at (0., 0.);
\coordinate (V1) at (1., 0.);
\coordinate (V2) at (0.4999999999999999, 0.8660254037844386);
\coordinate (V3) at (-0.5000000000000001, 0.8660254037844386);
\coordinate (V4) at (-1, 0.);
\coordinate (V5) at (-0.5, -0.8660);
\coordinate (V6) at (0.5, -0.8660);

\coordinate (f1) at (barycentric cs:V=1,V1=1,V2=1);
\coordinate (f2) at (barycentric cs:V=1,V2=1,V3=1);
\coordinate (f3) at (barycentric cs:V=1,V3=1,V4=1);
\coordinate (f4) at (barycentric cs:V=1,V4=1,V5=1);
\coordinate (f5) at (barycentric cs:V=1,V5=1,V6=1);
\coordinate (f6) at (barycentric cs:V=1,V1=1,V6=1);

\coordinate(w1) at (barycentric cs:V=1,f1=-2.5);
\coordinate(w2) at (barycentric cs:V=1,f2=-2.5);
\coordinate(w3) at (barycentric cs:V=1,f3=-2.5);
\coordinate(w4) at (barycentric cs:V=1,f4=-2.5);
\coordinate(w5) at (barycentric cs:V=1,f5=-2.5);
\coordinate(w6) at (barycentric cs:V=1,f6=-2.5);

\draw[edge=green] (f1) --(f2);
\draw[edge=blue] (f3) --(f2);
\draw[edge=red] (f4) --(f3);
\draw[edge=blue] (f5) --(f4);
\draw[edge=green] (f6) --(f5);
\draw[edge=red] (f1) --(f6);
\draw[edge=blue] (f1) --(w1);
\draw[edge=red] (f2) --(w2);
\draw[edge=green] (f3) --(w3);
\draw[edge=green] (f4) --(w4);
\draw[edge=red] (f5) --(w5);
\draw[edge=blue] (f6) --(w6);

\vertexLabelR{f1}{left}{$ $}
\vertexLabelR{f2}{left}{$ $}
\vertexLabelR{f3}{left}{$ $}
\vertexLabelR{f4}{left}{$ $}
\vertexLabelR{f5}{left}{$ $}
\vertexLabelR{f6}{left}{$ $}

\end{tikzpicture}}
  \caption{}
  \label{fig:fgvf112}
\end{subfigure}
\caption{(a) A tri-coloured vertex-defining umbrella of a simplicial surface $X$ with $\vf(X)=(1,1)$ (b) corresponding arc-coloured subgraph in the face graph $\F(X)$}
  \label{fig:112}
\end{figure}
Since ${\Aut(X)}$ acts transitively on the vertices of $X$, we conclude that all vertex-defining umbrellas of $X$ are $(1,2,3,1,3,2)$-coloured.
Hence, we obtain a vertex-faithful cycle double cover of $\F(X)$ consisting of only $(1,2,3,1,3,2)$-cycles and $H$ is a $(1,1)$-group of $\F(X)$ of type $2$.

\paragraph*{\bf Case 1(d):} By using the same procedure as in the previous case, we can conclude that, for every vertex $v \in X_0$, the colour of the corresponding umbrella $u(v)=(F_1,\ldots,F_n)$ is given by 
$\omega(u(v))=(1,2,3, \ldots,1,2,3)$.
In \Cref{fig:vf11d} we give an illustration of such an umbrella.
\begin{figure}[H]
  \centering
\begin{subfigure}{.45\textwidth}
 \centering \scalebox{2}{\begin{tikzpicture}[vertexBall, edgeDouble, faceStyle, scale=1]

\coordinate (V) at (0., 0.);
\coordinate (V1) at (1., 0.);
\coordinate (V2) at (0.4999999999999999, 0.8660254037844386);
\coordinate (V3) at (-0.5000000000000001, 0.8660254037844386);
\coordinate (V4) at (-1, 0.);
\coordinate (V5) at (-0.5, -0.8660);
\coordinate (V6) at (0.5, -0.8660);

\fill[face]  (V) -- (V1) -- (V2) -- cycle;
\fill[face]  (V) -- (V3) -- (V2) -- cycle;
\fill[face]  (V) -- (V3) -- (V4) -- cycle;
\fill[face]  (V) -- (V4) -- (V5) -- cycle;
\fill[face]  (V) -- (V5) -- (V6) -- cycle;
\fill[face]  (V) -- (V1) -- (V6) -- cycle;
\fill[face]  (V2) -- (V3) -- (V5) -- cycle;

\draw[edge=blue] (V) --(V1);
\draw[edge=red] (V) --(V2);
\draw[edge=green] (V) --(V3);
\draw[edge=blue] (V) --(V4);
\draw[edge=red] (V) --(V5);
\draw[edge=green] (V) --(V6);

\draw[edge=green] (V1) --(V2);
\draw[edge=blue] (V3) --(V2);
\draw[edge=red] (V3) --(V4);
\draw[edge=green] (V4) --(V5);
\draw[edge=blue] (V5) --(V6);
\draw[edge=red] (V1) --(V6);
\vertexLabelR{V1}{left}{$ $}
\vertexLabelR{V2}{left}{$ $}
\vertexLabelR{V3}{left}{$ $}
\vertexLabelR{V4}{left}{$ $}
\vertexLabelR{V5}{left}{$ $}
\vertexLabelR{V6}{left}{$ $}
\vertexLabelR{V}{left}{$v'$}

\end{tikzpicture}}
  \caption{}
  \label{fig:vf11d}
\end{subfigure}
\hspace{1cm}
\begin{subfigure}{.3\textwidth}
 \centering \scalebox{2}{\begin{tikzpicture}[vertexBall, edgeDouble, faceStyle, scale=1]

\coordinate (V) at (0., 0.);
\coordinate (V1) at (1., 0.);
\coordinate (V2) at (0.4999999999999999, 0.8660254037844386);
\coordinate (V3) at (-0.5000000000000001, 0.8660254037844386);
\coordinate (V4) at (-1, 0.);
\coordinate (V5) at (-0.5, -0.8660);
\coordinate (V6) at (0.5, -0.8660);

\coordinate (f1) at (barycentric cs:V=1,V1=1,V2=1);
\coordinate (f2) at (barycentric cs:V=1,V2=1,V3=1);
\coordinate (f3) at (barycentric cs:V=1,V3=1,V4=1);
\coordinate (f4) at (barycentric cs:V=1,V4=1,V5=1);
\coordinate (f5) at (barycentric cs:V=1,V5=1,V6=1);
\coordinate (f6) at (barycentric cs:V=1,V1=1,V6=1);

\coordinate(w1) at (barycentric cs:V=1,f1=-2.5);
\coordinate(w2) at (barycentric cs:V=1,f2=-2.5);
\coordinate(w3) at (barycentric cs:V=1,f3=-2.5);
\coordinate(w4) at (barycentric cs:V=1,f4=-2.5);
\coordinate(w5) at (barycentric cs:V=1,f5=-2.5);
\coordinate(w6) at (barycentric cs:V=1,f6=-2.5);

\draw[edge=red] (f1) --(f2);
\draw[edge=green] (f3) --(f2);
\draw[edge=blue] (f4) --(f3);
\draw[edge=red] (f5) --(f4);
\draw[edge=green] (f6) --(f5);
\draw[edge=blue] (f1) --(f6);
\draw[edge=green] (f1) --(w1);
\draw[edge=blue] (f2) --(w2);
\draw[edge=red] (f3) --(w3);
\draw[edge=green] (f4) --(w4);
\draw[edge=blue] (f5) --(w5);
\draw[edge=blue] (f6) --(w6);

\vertexLabelR{f1}{left}{$ $}
\vertexLabelR{f2}{left}{$ $}
\vertexLabelR{f3}{left}{$ $}
\vertexLabelR{f4}{left}{$ $}
\vertexLabelR{f5}{left}{$ $}
\vertexLabelR{f6}{left}{$ $}

\end{tikzpicture}}
  \caption{}
  \label{fig:fgvf11d}
\end{subfigure}
\caption{(a) A tri-coloured vertex-defining umbrella of a simplicial surface $X$ with $\vf(X)=(1,1)$ (b) corresponding arc-coloured subgraph in the face graph $\F(X)$}
  \label{fig:11}
\end{figure}
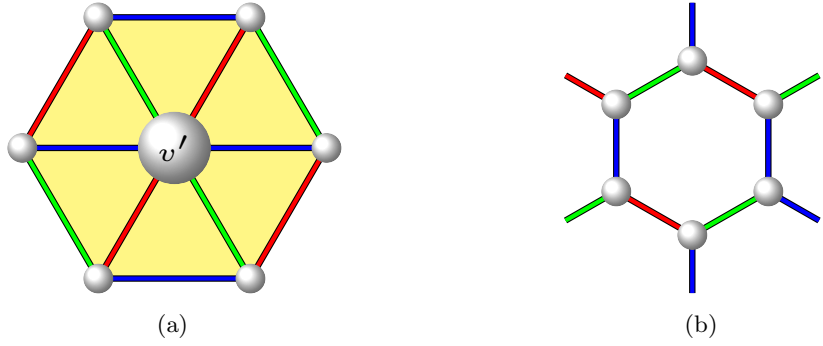
As before, these $(1,2,3)$-umbrellas are translated into $(1,2,3)$-cycles of the underlying face graph $\F(X)$, see \Cref{fig:fgvf11d}. This results in $H$ being a $(1,1)$-group of type $1$.

\medskip
\paragraph*{\bf Case 2:} 
First, we assume that the $\Aut(X)$-orbits on $X_1$ are given by $\mathcal{E}_1,\E_2$ with $\vert \E_1\vert =\vert X_2 \vert$ and 
$\vert \E_2\vert =\tfrac{1}{2}\vert X_2 \vert $.
Hence, the map $\omega:X_1\to \{1,2\} $ defined by $\omega(e):=i$ forms an edge-colouring of $X$ leading to an arc-colouring $\kappa$ of 
$\F(X)$ via $\kappa(\{F,F'\}):=\omega(e)$ for $X_2(e)=\{F,F'\}$. 
This enables us to determine the umbrellas of $X$ as follows: Let $v \in X_0$ be a vertex with corresponding umbrella $u(v)=(F_1,\ldots,F_n)$. As described in the proof of Theorem \ref{thm:12}, the umbrellas of the vertices in $X_0$ form $(1,1,2)$-cycles with respect to $\omega$.
Since ${\Aut(X)}$ acts transitively on the vertices of $X$, all the umbrellas of $X$ are $(1,1,2)$-umbrellas and must be of the same vertex-degree divisible by $3$. If the vertex-degree of any vertex of $X$ is $3$, then $X$ must be isomorphic to the simplicial tetrahedron which results in $\vf(X)=(1,6)\neq (1,1)$. Hence, all vertex-degrees of $X$ are at least $6$.

Next, we construct automorphisms of $X$ to show that the umbrella $u(v)$ is induced by an automorphism.
Therefore, for $i=1,\ldots,n$ let $F_i$ be a face with $X_0(F_i)=\{v,v_i,v_{i+1}\}$ and $e_i\in X_i$ an edge with $X_0(e_i)=\{v,v_i\}$ (subscripts read modulo $n$). Without loss of generality, we assume $e_1\in \E_1$ and $e_2\in \E_2$.  
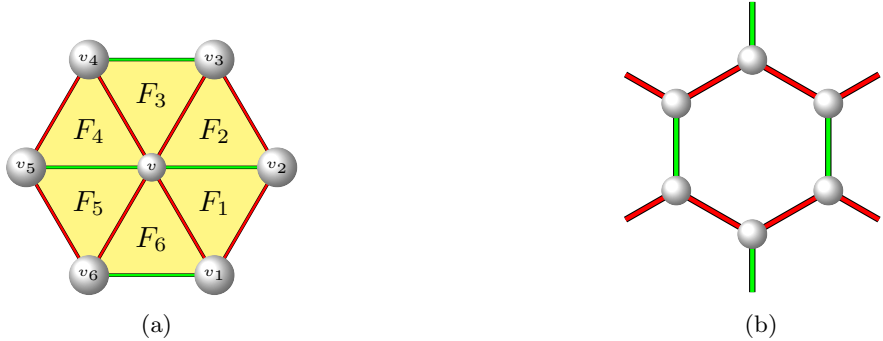
\begin{figure}[H]
  \centering
\begin{subfigure}{.45\textwidth}
  \centering\scalebox{1.1}{\begin{tikzpicture}[vertexBall, edgeDouble, faceStyle, scale=1.5]

\coordinate (V) at (0., 0.);
\coordinate (V1) at (1., 0.);
\coordinate (V2) at (0.4999999999999999, 0.8660254037844386);
\coordinate (V3) at (-0.5000000000000001, 0.8660254037844386);
\coordinate (V4) at (-1, 0.);
\coordinate (V5) at (-0.5, -0.8660);
\coordinate (V6) at (0.5, -0.8660);

\fill[face]  (V) -- (V1) -- (V2) -- cycle;
\fill[face]  (V) -- (V3) -- (V2) -- cycle;
\fill[face]  (V) -- (V3) -- (V4) -- cycle;
\fill[face]  (V) -- (V4) -- (V5) -- cycle;
\fill[face]  (V) -- (V5) -- (V6) -- cycle;
\fill[face]  (V) -- (V1) -- (V6) -- cycle;
\fill[face]  (V2) -- (V3) -- (V5) -- cycle;
\node[faceLabel] at (barycentric cs:V=1,V1=1,V2=1) {$F_2$};
\node[faceLabel] at (barycentric cs:V=1,V2=1,V3=1) {$F_3$};
\node[faceLabel] at (barycentric cs:V=1,V3=1,V4=1) {$F_4$};
\node[faceLabel] at (barycentric cs:V=1,V4=1,V5=1) {$F_5$};
\node[faceLabel] at (barycentric cs:V=1,V5=1,V6=1) {$F_6$};
\node[faceLabel] at (barycentric cs:V=1,V6=1,V1=1) {$F_1$};

\draw[edge=green] (V) --(V1);
\draw[edge=red] (V) --(V2);
\draw[edge=red] (V) --(V3);
\draw[edge=green] (V) --(V4);
\draw[edge=red] (V) --(V5);
\draw[edge=red] (V) --(V6);

\draw[edge=red] (V1) --(V2);
\draw[edge=green] (V3) --(V2);
\draw[edge=red] (V3) --(V4);
\draw[edge=red] (V4) --(V5);
\draw[edge=green] (V5) --(V6);
\draw[edge=red] (V1) --(V6);
\vertexLabelR{V1}{left}{$v_2$}
\vertexLabelR{V2}{left}{$v_3$}
\vertexLabelR{V3}{left}{$v_4 $}
\vertexLabelR{V4}{left}{$v_5 $}
\vertexLabelR{V5}{left}{$v_6$}
\vertexLabelR{V6}{left}{$v_1$}
\vertexLabelR{V}{left}{$v$}

\end{tikzpicture}}
  \caption{}
  \label{fig:vf11}
\end{subfigure}
\hspace{2cm}
\begin{subfigure}{.3\textwidth}
 \centering \scalebox{2}{\begin{tikzpicture}[vertexBall, edgeDouble, faceStyle, scale=1]

\coordinate (V) at (0., 0.);
\coordinate (V1) at (1., 0.);
\coordinate (V2) at (0.4999999999999999, 0.8660254037844386);
\coordinate (V3) at (-0.5000000000000001, 0.8660254037844386);
\coordinate (V4) at (-1, 0.);
\coordinate (V5) at (-0.5, -0.8660);
\coordinate (V6) at (0.5, -0.8660);

\coordinate (f1) at (barycentric cs:V=1,V1=1,V2=1);
\coordinate (f2) at (barycentric cs:V=1,V2=1,V3=1);
\coordinate (f3) at (barycentric cs:V=1,V3=1,V4=1);
\coordinate (f4) at (barycentric cs:V=1,V4=1,V5=1);
\coordinate (f5) at (barycentric cs:V=1,V5=1,V6=1);
\coordinate (f6) at (barycentric cs:V=1,V1=1,V6=1);

\coordinate(w1) at (barycentric cs:V=1,f1=-2.5);
\coordinate(w2) at (barycentric cs:V=1,f2=-2.5);
\coordinate(w3) at (barycentric cs:V=1,f3=-2.5);
\coordinate(w4) at (barycentric cs:V=1,f4=-2.5);
\coordinate(w5) at (barycentric cs:V=1,f5=-2.5);
\coordinate(w6) at (barycentric cs:V=1,f6=-2.5);

\draw[edge=red] (f1) --(f2);
\draw[edge=red] (f3) --(f2);
\draw[edge=green] (f4) --(f3);
\draw[edge=red] (f5) --(f4);
\draw[edge=red] (f6) --(f5);
\draw[edge=green] (f1) --(f6);
\draw[edge=red] (f1) --(w1);
\draw[edge=green] (f2) --(w2);
\draw[edge=red] (f3) --(w3);
\draw[edge=red] (f4) --(w4);
\draw[edge=green] (f5) --(w5);
\draw[edge=red] (f6) --(w6);

\vertexLabelR{f1}{left}{$ $}
\vertexLabelR{f2}{left}{$ $}
\vertexLabelR{f3}{left}{$ $}
\vertexLabelR{f4}{left}{$ $}
\vertexLabelR{f5}{left}{$ $}
\vertexLabelR{f6}{left}{$ $}

\end{tikzpicture}}
  \caption{}
  \label{fig:fgvf11}
\end{subfigure}
\caption{The edge-coloured umbrella $u(v)$ of the simplicial surface $X$ (a) and the corresponding arc-coloured subgraph in 
$\F(X)$ (b)}
\end{figure}

Let $\phi_1,\phi_2\in \Aut(X)$ be the unique automorphism that satisfy $\phi_1(F_1)=F_4$ and $\phi_2(F_4)=F_7,$ where we read the subscripts modulo $\deg(v).$ Next, we define the automorphisms $\sigma_1:=\lambda_X(\phi_1)$ and $\sigma_2:=\lambda_X(\phi_2)$. This results in two cases, namely 
\begin{itemize}
    \item[(a)] $(F_1,\ldots,F_n)=\alpha(\sigma_1,F_1,\ldots,F_4),$ and
    \item[(b)]$(F_1,\ldots,F_n)\neq \alpha(\sigma,F_1,\ldots,F_4).$
\end{itemize}

\noindent
\textbf{Case 2(a):} 
In this case, we conclude $\sigma_1=\sigma_2$ and $\alpha(\sigma_1,F_1,\ldots,F_4)=\alpha({\sigma_1}^2,F_1,\ldots,F_7)$. Since the vertex-defining cycles of $X$ are given by ${\alpha(\sigma_1,F_1,\ldots,F_7)}^H$, we obtain that $H$ is a $(1,1)$-group of type $3$.

\noindent
\textbf{Case 2(b):} 
Note that in this case the automorphisms $\phi_1$ and $\phi_2$ both do not stabilise the vertex $v.$ However, we can make use of these automorphisms to construct an automorphism that does. By defining $\phi':=\phi_2\circ \phi_1$, we observe that $\phi'(F_i)=F_{i+6}$, for all $1\leq i \leq n$.
Hence, we define $\gamma:=\lambda_X(\phi')$ and obtain the vertex-defining cycle of $v$ via $(F_1,\ldots,F_n)=\alpha(\gamma,F_1,\ldots,F_7)$. Since there exists no $\pi \in \Aut(\F(X))$ with $\alpha(\pi,F_1,\ldots,F_4)=\alpha(\sigma,F_1,\ldots,F_7)$, the subgroup $H$ is a $(1,1)$-group of type~$4$.

\medskip
To prove the ``only-if'' part of the theorem, assume that $H$ is a $(1,1)$-group of $\F(X)$, with cycle double cover $\C$. Let $F\in X_2$ be an arbitrary face. Since ${\Aut(X)}\cong H$, $X$ is face-transitive, and the stabiliser $\stab_H(F)$ of the node $F$ is isomorphic to $\stab_{\Aut(X)}(F)$ of the face $F$, and hence trivial. Moreover, if follows from \Cref{lemma:vertexisoumbrella} that ${\Aut(X)}$ acts transitively on the vertices of $X$. 
This means that $X$ is a face-transitive surface satisfying $\vf(X)=(1,1)$.
\end{proof}

The above theorem allows us to formulate the following corollary.

\begin{corollary}\label{corollary:construction_vf11}
    Let $\G$ be a cubic node-transitive graph and $H\leq \Aut(\G)$ a subgroup. If $H$ is a $(1,1)$-group of $\G$, then $\G$ is the face graph of a face-transitive surface.
\end{corollary}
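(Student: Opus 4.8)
The plan is to read the statement off directly from the preceding theorem together with the correspondence between vertex-faithful cycle double covers and simplicial surfaces set up in \Cref{subsection:facegraphs}. First I would observe that, by the very definition of a $(1,1)$-group, the data of $H$ includes a \emph{vertex-faithful} cycle double cover $\C^{(1,1)}(H)$ of $\G$ (of one of the four types). Since vertex-defining umbrellas of a simplicial surface biject with the cycles of a vertex-faithful cycle double cover of its face graph, this cover reconstructs a simplicial surface $X$ with $\F(X)=\G$ whose vertex-defining cycles are exactly the members of $\C^{(1,1)}(H)$. Vertex-faithfulness is precisely the condition guaranteeing that $X$ is a genuine simplicial surface (chordless cycles, pairwise intersecting in at most one arc), so $X$ is well defined.

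The second step is to supply enough automorphisms of $X$. By construction $\C^{(1,1)}(H)$ is a union of $H$-orbits of cycles and is therefore $H$-invariant. Hence every $\sigma\in H$ permutes the vertex-defining cycles of $X$ among themselves; as $\sigma$ also preserves the node--arc incidences of $\G$, it preserves every face--edge and vertex incidence of $X$ and thus corresponds to an automorphism of $X$ under $\lambda_X$. This yields $H\leq \lambda_X(\Aut(X))$ (indeed, by the maximality of $\lambda_X(\Aut(X))$ as the stabiliser of the cover, equality holds, but the inclusion is all that is needed here). Finally, the transitivity axiom of a $(1,1)$-group makes $H$ transitive on the nodes $V=X_2$, so the larger group $\lambda_X(\Aut(X))\cong\Aut(X)$ is transitive on $X_2$ as well; that is, $\Aut(X)$ acts transitively on the faces of $X$, and $X$ is face-transitive with $\F(X)=\G$, as claimed. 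One could additionally cite the preceding theorem to conclude $\vf(X)=(1,1)$, but this refinement is not required by the statement.

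The argument is essentially immediate, as the surrounding machinery was built for exactly this purpose; the only point deserving care is the second step, namely the verification that an arc- and cover-preserving permutation of $\G$ really descends to a \emph{simplicial} automorphism of the reconstructed surface $X$. I expect this to be the main (conceptual rather than computational) obstacle: one must match the $H$-action on cycles of $\G$ with the $\Aut(X)$-action on umbrellas of $X$ through $\lambda_X$, and lean on vertex-faithfulness to ensure that $X$ exists and that $\F(X)\cong\G$ canonically. Once this identification is in place, node-transitivity of $H$ transfers to face-transitivity of $\Aut(X)$ with no further work.
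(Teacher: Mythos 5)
Your proposal is correct and follows essentially the same route as the paper, which derives the corollary directly from the preceding theorem in exactly this spirit: the vertex-faithful cycle double cover $\C^{(1,1)}(H)$ built into the definition of a $(1,1)$-group reconstructs a simplicial surface $X$ with $\F(X)=\G$, the $H$-invariance of the cover gives $H\leq \lambda_X(\Aut(X))$, and node-transitivity of $H$ then transfers to face-transitivity of $\Aut(X)$ on $X_2$. One small caveat: your parenthetical claim that equality $H=\lambda_X(\Aut(X))$ holds ``by maximality'' does not follow --- maximality of $\lambda_X(\Aut(X))$ as the stabiliser of the cover yields only the inclusion $H\leq \lambda_X(\Aut(X))$, which, as you correctly note, is all the argument requires.
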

In Sections~\ref{example11} to \ref{example114} we provide examples for face-transitive surfaces $X$ with $\vf(X)=(1,1)$ of any type. 
We present the surface $X^{(1,1)}$ given in \Cref{example11} as an example of a face-transitive surface of vertex-face type $(1,1)$ of type 1.
The surface is a triangulated torus with $9$ vertices, $27$ edges, and $18$ triangles. Note that $\Aut(X^{(1,1)})\cong \Aut(\F(X^{(1,1)}))\cong D_{18}$.
With $Y^{(1,1)}$ (\Cref{example112}) we obtain an example of a face-transitive surface of vertex-face type $(1,1)$ of type $2$ satisfying  
$\Aut(Y^{(1,1)})\cong \Aut(Y^{(1,1)})\cong S_3\times S_3 \rtimes C_2.$ Further, the simplicial surface $\overline{X}^{(1,1)}$ is a face-transitive surface of vertex-face type $3$ consisting of $18$ vertices, $108$ edges and $72$ faces, see \Cref{example113}. Note that $\Aut(\overline{X}^{(1,1)})\cong \Aut(\F(\overline{X}^{(1,1)}))\cong(C_3\times C_3) \rtimes C_8.$ Finally, the simplicial surface $\overline{Y}^{(1,1)}$ is an example of a face-transitive surface with $\vf(\overline{Y}^{(1,1)})=(1,1)$ of type $4$, see \Cref{example114}. We observe that this surface consists of $112$ faces and that the automorphism groups of the surface and its face graph satisfy $\Aut(\overline{Y}^{(1,1)}) \cong \Aut(\F(\overline{Y}^{(1,1)}))\cong C_2 \times ((C_2 \times C_2 \times C_2) \rtimes C_7).$
\subsection{Face-transitive surfaces with vertex-face type (2,1)}\label{vf21}

In this section we discuss the structure of face-transitive surfaces with vertex-face 
type $(2,1)$.  
We show that the vertex-defining umbrellas of these surfaces are either mono- and bi-coloured umbrellas that are induced by automorphisms, or correspond to tri-coloured cycles of the 
face graph $\mathcal{F}(X)$.

\begin{definition}
Let $\G=(V,E)$ be a cubic node-transitive graph and $H\leq \Aut(\G)$ such that
  \begin{enumerate}
    \item $H$ acts transitively on the nodes $V$ with $\vert  H\vert =1\cdot\vert V \vert,$
    \item $E$ partitions into two $H$-orbits $E_1$ and $E_2$ with 
    $\vert E_1\vert =\vert V \vert $ and $\vert E_2\vert =\tfrac{1}{2}\vert V \vert,$ and
    \item there exist nodes $F_1,\ldots,F_5\in V $  with $\{F_2,F_3\},\{F_4,F_5\}\in E_1$ and 
    $\{F_1,F_2\},\{F_3,F_4\}\in E_2,$ and an automorphism 
    $\sigma\in \Aut (\G)$ such that $\sigma(F_1)=F_5$.
  \end{enumerate}
  The partition $E = E_1 \cup E_2$ yields a colouring $\kappa:E\to \{1,2\}$ of $\G$ by 
  $\kappa(\{F,F'\}):=i$ for $\{F,F'\}\in E_i$.  Let $u$ be a mono-coloured cycle in $\G$ 
  with respect to $\kappa$ and $\C:= u^H \cup {\alpha(\sigma,F_1,F_2,F_3,F_4,F_5)}^H$ be a vertex-faithful cycle double cover of $\G$.
  We say that 
  $H$ is a \emph{\bf$(2,1)$-group of $\G$ of type $1$} if there exists an automorphism $\pi\in \Aut(\G)$ with $\alpha(\pi,F_1,F_2,F_3)=\alpha(\sigma,F_1,F_2,F_3,F_4,F_5)$. We denote the above cycle double 
  cover by $\C_1^{(2,1)}(H)$. If there exists no such automorphism $\pi$, then we say that 
  $H$ is a \emph{\bf$(2,1)$-group of $\G$ of type $2$} and denote the above cycle double 
  cover by $\C_2^{(2,1)}(H)$.
\end{definition}

\begin{definition}
  Let $\G=(V,E)$ be a cubic node-transitive graph and $H\leq \Aut(\G)$ such that
  \begin{enumerate}
    \item $H$ acts transitively on the nodes $V$ with $\vert  H\vert =1\cdot\vert V\vert,$ and
    \item $E$ partitions into three $H$-orbits $E_1$, $E_2$, and $E_3$, all of cardinality $\frac{1}{2}\vert V \vert$.
  \end{enumerate}
  The partition $E=E_1 \cup E_2 \cup E_3$ defines the arc-colouring 
  $\kappa:E\to \{1,2,3\},$ by $\kappa(\{F,F'\}):=i$ for $\{F,F'\}\in E_i$. 
  If the (unique) set containing all the $(1,2)$- and $(3,1,3,2)$-cycles with respect to $\kappa$ is a 
  vertex-faithful cycle double cover of $\G$, then we say that $H$ is a 
  \emph{\bf$(2,1)$-group of $\G$ of type $3$,} with the cycle double cover denoted by 
  $\C_3^{(2,1)}(H)$.
\end{definition}
We call a subgroup $H$ of the automorphism group of a node-transitive cubic graph a $(2,1)$-group, if $H$ is a $(2,1)$ group of any type.  

\begin{theorem}\label{thm:21}
 Let $X$ be a simplicial surface. Then $X$ is face-transitive with vertex-face type $\vf(X)=(2,1)$ if and only 
 if $H:=\lambda_X(\Aut(X))$ is a $(2,1)$-group of $\F(X)$ and the cycle double cover $\C^{(2,1)}(H)$ contains exactly the vertex-defining cycles of $X$.
\end{theorem}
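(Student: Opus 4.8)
The plan is to prove both implications by following the template already established for \Cref{theorem22} and for the types $(1,6),(1,2),(1,3),(1,1)$: translate the hypothesis $\vf(X)=(2,1)$ into combinatorial data on the face graph (an edge-orbit partition, the induced arc-colouring, and explicit umbrella patterns realised as automorphism-induced $\alpha$-cycles), and conversely read off the vertex-face type from the defining data of a $(2,1)$-group using \Cref{lemma:vertexisoumbrella}.

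For the forward direction I would first note that $\vf(X)=(2,1)$ forces $\vert\Aut(X)\vert=\vert X_2\vert$, so $H=\lambda_X(\Aut(X))$ acts regularly on the faces $V=X_2$; in particular there is a \emph{unique} automorphism carrying any given face to any other. Since face stabilisers are trivial, any nontrivial automorphism fixing an edge must swap its two incident faces, so edge stabilisers have order at most two; together with $\vert X_1\vert=\tfrac32\vert X_2\vert$ and the bound of at most three edge orbits (\Cref{lemma:3edgeorbits}), the edges split either into two orbits of sizes $\vert V\vert$ and $\tfrac12\vert V\vert$, or into three orbits each of size $\tfrac12\vert V\vert$. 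By face-transitivity every face carries the same distribution of the two vertex orbits over its vertices, which after relabelling is one vertex in $V_1$ and two in $V_2$. This distinguishes two edge types, the $V_1$--$V_2$ \emph{side} edges (two per face) and the $V_2$--$V_2$ \emph{base} edges (one per face); consequently the umbrella of a $V_1$-vertex is supported entirely on side edges, while a $V_2$-vertex is a base vertex of every incident face and hence its umbrella alternates between side and base edges.

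The case split is then governed by whether the two side edges of each face lie in one orbit or in two distinct orbits (face stabilisers being trivial, nothing a priori swaps them). In the first case I colour side edges $1$ and base edges $2$: the $V_1$-umbrellas are mono-coloured and the $V_2$-umbrellas are bi-coloured and alternating, giving the same picture as \Cref{fig:221,fig:22}. To realise the $V_2$-umbrella as an $\alpha$-cycle I would follow Case~$2$ of the proof in \Cref{vf11}: taking the unique automorphisms $\phi_1,\phi_2$ with $\phi_1(F_1)=F_3$ and $\phi_2(F_3)=F_5$, either $\phi_1$ already shifts the whole umbrella by two, so that $u(v_2)=\alpha(\lambda_X(\phi_1),F_1,F_2,F_3)$ and $H$ is a $(2,1)$-group of type~$1$, or it does not, whereupon $\phi_2\circ\phi_1$ shifts by four, $u(v_2)=\alpha(\sigma,F_1,\dots,F_5)$, and $H$ is of type~$2$. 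In the second case the two side-edge orbits and the base-edge orbit form three colour classes, and since each face then has one edge of each colour this is a Gr\"unbaum-colouring; I would reuse the mirror/rotational analysis of Case~$1$ in \Cref{vf11}. Each orbit consists entirely of mirror or entirely of rotational edges (\Cref{lemma:conjugate}), and of the four configurations only ``two mirror orbits, one rotational orbit'' is compatible with exactly two vertex orbits: the all-mirror configuration forces three vertex orbits (hence $\vf=(3,1)$) and the remaining two force a single orbit (hence $\vf=(1,1)$). This surviving configuration yields one orbit of $(1,2)$-umbrellas (avoiding the rotational, i.e.\ base, colour) and one orbit of $(3,1,3,2)$-umbrellas, so $H$ is a $(2,1)$-group of type~$3$. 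In every case the resulting $H$-orbits of cycles are exactly $\C^{(2,1)}(H)$.

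For the converse, if $H=\lambda_X(\Aut(X))$ is a $(2,1)$-group of any type and $\C^{(2,1)}(H)$ consists exactly of the vertex-defining cycles, then $\vert\Aut(X)\vert=\vert H\vert=\vert V\vert$ together with node-transitivity of $H$ makes $X$ face-transitive with trivial face stabilisers, so $\vert S\vert=1$; and by \Cref{lemma:vertexisoumbrella} the vertex orbits of $X$ correspond bijectively to the $H$-orbits of cycles in $\C^{(2,1)}(H)$, which number two because the two constituent cycle families are separated by their $H$-invariant arc-colour patterns, giving $\vf(X)=(2,1)$. The step I expect to be the main obstacle is the colour bookkeeping in the forward direction's three-orbit case: confirming that precisely the two-mirror/one-rotational configuration occurs, that it produces exactly the $(1,2)$ and $(3,1,3,2)$ patterns (in particular that consecutive side-spokes around a base vertex genuinely alternate in colour), and that its two $(3,1,3,2)$-type vertices fall into a \emph{single} orbit, so that the colouring data forces exactly two vertex orbits rather than one or three; the two-orbit case and the type~$1$/type~$2$ dichotomy, turning on the existence of the half-period automorphism $\pi$, should then follow essentially verbatim from the corresponding step in \Cref{vf11}.
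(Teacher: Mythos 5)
Your overall strategy is the paper's: split on whether $\Aut(X)$ has two or three edge orbits, translate the orbits into an (arc-)colouring, realise the bi-coloured $V_2$-umbrellas as automorphism-induced $\alpha$-cycles with the type~$1$/type~$2$ dichotomy governed by the existence of a half-period shift, handle the three-orbit case by a mirror/rotational analysis yielding the $(1,2)$- and $(3,1,3,2)$-patterns of type~$3$, and prove the converse by counting vertex orbits via \Cref{lemma:vertexisoumbrella} and the colour separation of the two cycle families. The one genuine mechanical difference is in the two-orbit case: the paper does not work with the unique regular-action automorphisms $\phi_1(F_1)=F_3$, $\phi_2(F_3)=F_5$, but with the \emph{order-two stabilisers of the base edges} (the spokes in $\E_2$), splitting on whether these involutions fix the central vertex $v_2$ or swap the endpoints; by \Cref{lemma:conjugate} this behaviour is automatically uniform across the whole umbrella, which is exactly the uniformity your version has to argue by hand. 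Your route can be repaired: if $\phi_1$ does not fix $v_2$, then no element of $H$ shifts the umbrella by two (any such shift would map $F_1$ to $F_3$ and hence, by regularity of the $H$-action on faces, equal $\phi_1$), so $\phi_2$ also moves $v_2$, and since each $\phi_i$ carries base edge to base edge one checks $\phi_2\circ\phi_1$ fixes $v_2$; a reflection is excluded because it would fix $F_3$, contradicting trivial face stabilisers, so it is indeed a shift by four. This step is not automatic as you state it, but it is fixable, and your type labels (half-shift exists $\Rightarrow$ type~$1$) are the ones matching the definition. In the three-orbit case your four-configuration elimination is a mild reorganisation of the paper's argument, which instead first deduces from the bi-colouredness of the $V_1$-umbrellas that both side-edge orbits consist of mirror edges and then only has to exclude the all-mirror configuration (which forces three vertex orbits, as in \Cref{vf31}).

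The concrete gap is in your certification of type~$2$. The definition of a $(2,1)$-group of type~$2$ requires that \emph{no} automorphism $\pi\in\Aut(\F(X))$ satisfies $\alpha(\pi,F_1',F_2',F_3')=\alpha(\sigma,F_1',\ldots,F_5')$ -- the quantifier ranges over the full automorphism group of the face graph, not over $H=\lambda_X(\Aut(X))$. Your regularity argument (and your dichotomy ``$\phi_1$ shifts by two or not'') only rules out half-shifts \emph{inside $H$}; since $H$ can be a proper subgroup of $\Aut(\F(X))$, this does not yet place $H$ in type~$2$. The paper closes this with a dedicated argument: supposing such a $\pi$ exists, it composes $\pi$ with the image $\lambda_X(\phi_m)$ of a base-edge involution (a reflection of the umbrella) to produce a non-trivial automorphism stabilising the side edge $e_1$, contradicting $\vert\stab_{\Aut(X)}(e_1)\vert=1$ because side-edge stabilisers are trivial when $\vert\E_1\vert=\vert X_2\vert=\vert\Aut(X)\vert$. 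An analogous explicit exclusion appears in the $(1,3)$-type-$2$ case, so this is a step the paper treats as a real obligation rather than a formality; your proposal should include it before concluding type~$2$.
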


\begin{proof}
Let $X$ be a face-transitive surface of vertex-face type $(2,1)$ and hence with
$\vert \Aut(X) \vert =1 \cdot\vert X_2 \vert$. Furthermore, let $V_1$ and $V_2$ be the two 
$\Aut(X)$-orbits on the vertices of $X$. Without loss of generality, we assume that every face of 
$X$ is incident to exactly one vertex in $V_1$ and exactly two vertices in $V_2$. With the orbit stabiliser theorem we see that the number of orbits of $\Aut(X)$ on the edges of $X$ satisfies $\vert X_1^{\Aut(X)}\vert \in \{2,3\}$. We therefore conduct a case distinction to determine the structure of the umbrellas of $X$ with respect to the number of orbits on the edges. 
\paragraph*{Case 1:} 
First, we examine the case $\vert X_1^{\Aut(X)}\vert =2$. The vertex 
orbits $V_1,V_2$ induce a partition of the edges of $X$ by 
\begin{align*}
  \E_1:=\{\{v,v'\} \in X_1 \mid v \in V_1\text{ and }v'\in V_2\},\,
  \E_2:=\{\{v,v'\} \in X_1 \mid v\in V_2 \text{ and }v' \in V_2\}.
\end{align*}
We know that $\vert \E_1\vert =\vert X_2 \vert$ and 
$\vert \E_2\vert =\tfrac{1}{2}\vert X_2 \vert $.
Since $\vert X_1^{\Aut(X)}\vert =2$, these partition sets are $\Aut(X)$-orbits.
The map $\omega:X_1\to \{1,2\} $ defined by $\omega(e):=i$ for $e \in\E_i$ forms an edge-colouring of $X$. This colouring can be translated into an arc-colouring $\kappa$ of 
$\F(X)$ via $\kappa(\{F,F'\}):=\omega(e)$ for $X_2(e)=\{F,F'\}$. 

Let $v_i \in V_i$, 
$i=1,2$, with corresponding umbrellas $u(v_1)=(F_1,\ldots,F_n)$ and 
$u(v_2)=(F_1',\ldots,F_m')$. The umbrellas of the vertices in $v_1^{\Aut(X)}$ are 
mono-coloured and the umbrellas of the vertices in $v_2^{\Aut(X)}$ are bi-coloured 
umbrellas with respect to $\omega$. The mono-coloured cycles representing the vertices 
$v_1^{\Aut(X)}$ are directly obtained from deleting the edges $\{X_2(e)\mid e\in \E_2\}$ 
from $\F(X)$ as also described in the proof of \Cref{theorem22}. 
Hence, the set of these cycles describes $u(v_1)^{\Aut(X)}$.

For the cycles corresponding to the umbrellas in 
${v_2}^{\Aut(X)}$, note that $u(v_2)$ is bi-coloured, and hence $m$ is even. Thus, we define $k=\frac{m}{2}$. Moreover, let the edges $e_1,\ldots,e_{m},$ be determined by 
$X_2(e_1)=\{F_{1}',F_{2}'\},\ldots,X_2(e_m)=\{F_{m}',F_{1}'\},$ respectively. Without loss
of generality, we assume that $e_{2i} \in \E_2$, $i=1,\ldots,k,$ see \Cref{fig:extract21} 
for an illustration.

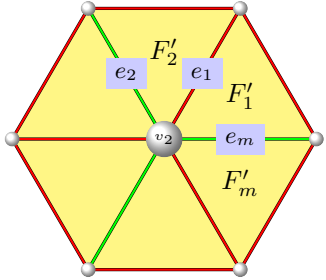
\begin{figure}[H]
 \centering
\scalebox{1}{\begin{tikzpicture}[vertexBall, edgeDouble, faceStyle, scale=2.]

\coordinate (V) at (0., 0.);
\coordinate (V1) at (1., 0.);
\coordinate (V2) at (0.4999999999999999, 0.8660254037844386);
\coordinate (V3) at (-0.5000000000000001, 0.8660254037844386);
\coordinate (V4) at (-1, 0.);
\coordinate (V5) at (-0.5, -0.8660);
\coordinate (V6) at (0.5, -0.8660);
\coordinate (V7) at (-1.5, 0.8660);
\coordinate (V8) at (1.5, 0.8660);

\fill[face]  (V) -- (V1) -- (V2) -- cycle;
\fill[face]  (V) -- (V3) -- (V2) -- cycle;
\fill[face]  (V) -- (V3) -- (V4) -- cycle;
\fill[face]  (V) -- (V4) -- (V5) -- cycle;
\fill[face]  (V) -- (V5) -- (V6) -- cycle;
\fill[face]  (V) -- (V1) -- (V6) -- cycle;
\fill[face]  (V2) -- (V3) -- (V5) -- cycle;
\node[faceLabel] at (barycentric cs:V=1,V1=1,V2=1) {$F_1'$};
\node[faceLabel] at (barycentric cs:V=1,V1=1,V6=1) {$F_m'$};
\node[faceLabel] at (barycentric cs:V=1,V2=1,V3=1) {$F_2'$};

\draw[edge=green] (V) --  node[edgeLabel] {$e_m$}  (V1);
\draw[edge=red] (V) -- node[edgeLabel] {$e_1$}(V2);
\draw[edge=green] (V) -- node[edgeLabel] {$e_2$}(V3);
\draw[edge=red] (V) -- (V4);
\draw[edge=green] (V) -- (V5);
\draw[edge=red] (V) --(V6);
\draw[edge=red] (V1) --(V2);
\draw[edge=red] (V3) --(V2);
\draw[edge=red] (V3) --(V4);
\draw[edge=red] (V4) --(V5);
\draw[edge=red] (V5) --(V6);
\draw[edge=red] (V1) --(V6);
\vertexLabelR{V1}{left}{$ $}
\vertexLabelR{V2}{left}{$ $}
\vertexLabelR{V3}{left}{$ $}
\vertexLabelR{V4}{left}{$ $}
\vertexLabelR{V5}{left}{$ $}
\vertexLabelR{V6}{left}{$ $}
\vertexLabelR{V}{left}{$v_2$}
\end{tikzpicture}}
  \caption{A bi-coloured vertex-defining umbrella of a simplicial surface $X$ with 
    $\vf(X)=(2,1)$}\label{fig:extract21}
\end{figure} 
Due to \Cref{lemma:conjugate}, the edge-stabilisers of 
$e_2,e_4,\ldots,e_m$ are all conjugate in $\Aut(X)$ and have order two. Let $\phi_{2i}$, $1\leq i\leq k$, be the
non-trivial automorphism in $\stab_{\Aut(X)}(e_{2i})$. We have the following two cases: 
\begin{itemize}
    \item[(a)] $\phi_{2i}(v_2)=v_2$ for all $1\leq i\leq k$, that is, these automorphisms stabilise the vertex $v_2$; or
    \item[(b)] $\phi_{2i}(v_2)\neq v_2$ for all $1\leq i\leq k$, that is, the vertices of the corresponding edges are interchanged.
\end{itemize}
\paragraph*{Case (1a):} If all $\phi_2,\phi_4,\ldots,\phi_m$ stabilise $v_2,$ then 
the automorphism $\phi:=\phi_2\circ\phi_m$ of order $\ell$ satisfies 
$\phi(F_i')=F_{i+4}'$. Hence, the umbrella of $v_2$ can be written as 
\begin{align*}
&(F_1',\ldots,F_m')=(\phi(F_1'),\phi(F_2'),\phi(F_3'),\phi(F_4'),\ldots,\phi^\ell(F_1'),\phi^\ell(F_2'),\phi^\ell(F_3'),\phi^\ell(F_4')).
\end{align*}
By defining $\sigma:=\lambda_X(\phi)$ we see that $u(v_2)=\alpha(\sigma,F_1',\ldots,F_5')$ is a 
$\sigma$-induced $\alpha$-cycle in $\F(X)$ which implies that the cycle double cover 
containing all the vertex-defining cycles of $X$ is given by 
$\C=(F_1,\ldots,F_n)^H \cup {\alpha(\sigma,F_1',\ldots,F_5')}^H$. If there exists an automorphism $\pi\in \Aut(\F(X))$ with $\alpha(\pi,F_1',F_2',F_3')=(F_1,\ldots,F_n)$ then without loss of generality $\pi$ maps $F_1'$ onto $F_3'.$ Thus, the automorphism $\sigma':=\pi\circ \lambda_X({\phi_m})$
 is a non-trivial element in the stabiliser of $e_1$, a contradiction to $\vert \stab_{\Aut(X)}(e_1)\vert=1$.

\paragraph*{Case (1b):} Assume that the automorphisms $\phi_2,\phi_4,\ldots,\phi_m$ 
do not stabilise the vertex $v_2$. Let $\psi\in \Aut(X)$ be the automorphism 
that maps $F_1'$ onto $F_2'$. 
Since $|\E_1|=|X_2|$, $\stab_{\Aut(X)} (e_1)$ is trivial, and the automorphism $\psi$ cannot stabilise $e_1$. 
Hence, we know that $\psi(v_2)\neq v_2$ and 
$\psi(e_m)=e_2$ hold. Let $\phi:=\phi_2\circ \psi\in \Aut(X)$. Then $\phi(v_2)=v_2$ and hence 
$\phi \in \stab_{\Aut(X)}(v_2)$. Further, we observe that 
$\phi(F_1')=\phi_2(\psi(F_1'))=\phi_2(F_2')=F_3'$ and 
$\phi(F_m')=\phi_2(\psi(F_m'))=\phi_2(F_3')=F_2'$.
Thus, we conclude $\phi(F_i')=F_{i+2}'$. This instance results in
$(F_1',F_2',\ldots,F_m')=(\phi(F_1'),\phi(F_2'),\ldots,\phi^\ell(F_1'),\phi^\ell(F_2'))$, where $\ell$ denotes the order of $\phi$.
By defining $\sigma:=\lambda_X(\phi)$ we see that $u(v_2)=\alpha(\sigma,F_1',F_2',F_3')$ is a 
$\sigma$-induced $\alpha$-cycle in $\F(X)$. Thus, we obtain the cycle double cover 
containing all the vertex-defining cycles of $X$ as 
$\C=(F_1,\ldots,F_n)^H \cup {\alpha(\sigma,F_1',F_2',F_3')}^H$. Together with $\alpha(\sigma,F_1',F_2',F_3')=\alpha(\sigma^2,F_1',F_2',F_3',F_4',F_5')$ it follows that $H$ is a $(2,1)$-group of $\F(X)$ of type $2$ satisfying $\C=\C^{(2,1)}_2(H)$.

\paragraph{Case 2}
Assume that the $\Aut(X)$-orbits of $X_1$ are given by $\mathcal{E}'_1,\mathcal{E}'_2,\mathcal{E}'_3$ with $\vert \mathcal{E}'_i\vert=\tfrac{1}{2}\vert X_2 \vert$ for $i=1,2,3$. We define an edge-colouring $\omega'$ of $X$, given by $\omega(e):=i$ for $e\in \E_i'$, $i=1,2,3$, which can be translated into a corresponding arc-colouring $\kappa$ of $\F(X)$. We obtain a partition of the edges of $X$ via  
\begin{align*}
  \mathcal{P}_1:=\{\{v,v'\} \in X_1 \mid v \in V_1\text{ and }v'\in V_2\},\,
  \mathcal{P}_2:=\{\{v,v'\} \in X_1 \mid v\in V_2 \text{ and }v' \in V_2\}.
\end{align*}
Since $\vert \mathcal{P}_2\vert =\tfrac{1}{2}\vert X_2\vert$, $\mathcal{P}_2$ must be an $\Aut(X)$-orbit, without loss of generality, $\mathcal{P}_2=\E_3$. This implies $\mathcal{P}_1=\E_1\cup \E_2$ and the umbrellas of vertices in $V_1$ must be bi-coloured. This implies that  the edges in $\mathcal{E}_1$ and $\mathcal{E}_2$ are all mirror edges with respect to $\omega$, and the following cases remain to be checked.
\begin{enumerate}
  \item[(a)] all the edges of $X$ are mirror edges,
  \item[(b)] the edges contained in $\E_1\cup \E_2$ are mirror edges and the edges in $\E_3$ are rotational edges.
\end{enumerate}

\paragraph*{Case (2a):}If all edges are mirror edges, the automorphism group $\Aut(X)$ must have three vertex orbits (as described in the proof of \cref{thm:31}), a contradiction to $\vf(X) = (2,1)$. 

\paragraph*{Case (2b):}
In this case, observe that all vertex-defining umbrellas in $v_1^{\Aut(X)}$ are $(1,2,\ldots,1,2)$-coloured and all vertex-defining umbrellas in $v_2^{\Aut(X)}$ are $(3,1,3,2)$-coloured.
These umbrellas can directly be translated into $(1,2)$- and $(3,1,3,2)$-coloured cycles of the face graph $\F(X)$ with respect to $\kappa$. Hence, we obtain a vertex-faithful cycle double cover of $\F(X)$ consisting of only $(1,2)$- and $(3,1,3,2)$-coloured cycles and $H$ is a $(1,1)$-group of $\F(X)$ of type $3$.

Next, we assume that $H$ forms a $(2,1)$-group of $\F(X)$. The arc-colouring
of $\F(X)$ defines an edge-colouring $\omega$ of $X$ by $\omega(e):=\kappa(\{F,F'\})$, 
where $e\in X_1$ and $X_2(e)=\{F,F'\}$. Since $H = \lambda_X(\Aut(X))$, and $H$ acts 
transitively on the nodes of $\F(X)$ with $\vert H \vert = 1\cdot \vert V \vert$, we have 
that $X$ is face-transitive with trivial face stabiliser.
Due to \Cref{lemma:vertexisoumbrella}, two vertices $v_1$ and $v_2$ of $X$ lie
in the same $\Aut(X)$-orbit if and only if the corresponding cycles 
$C_1,C_2 \in \C_1^{(2,1)}(H)$ in $\F(X)$ are contained in the same $H$-orbit. This is 
equivalent to saying that the corresponding vertex-defining cycles of $X$ have the same 
colour with respect to $\kappa$. Thus, there exist exactly two ${\Aut(X)}$-orbits on the vertices, and we have $\vf(X)=(2,1)$.
\end{proof}
The below corollary is a direct consequence of the above result.
\begin{corollary}\label{corollary:construction_vf21}
    Let $\G$ be a cubic node-transitive graph and $H\leq \Aut(\G)$ a subgroup. If $H$ is a $(2,1)$-group of $\G$, then $\G$ is the face graph of a face-transitive surface.
\end{corollary}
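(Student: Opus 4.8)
The plan is to read the corollary as the easy converse of \Cref{thm:21}: the definition of a $(2,1)$-group already packages up everything needed to build a surface, so the work is simply to make the construction of a simplicial surface from a vertex-faithful cycle double cover (Section~\ref{subsection:facegraphs}) explicit and to observe that node-transitivity of $H$ forces face-transitivity of that surface. Since $H$ is a $(2,1)$-group of $\G$, by definition the set $\C:=\C^{(2,1)}(H)$ is a vertex-faithful cycle double cover of $\G$. I would take this $\C$ as the source of the desired surface.

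First I would construct the candidate surface $X$ from $\C$ in the standard way: declare $X_2:=V$, $X_1:=E$, and let the vertices $X_0$ be the cycles in $\C$, reading off incidences from membership of nodes and arcs in cycles (a cycle $C\in\C$ is incident to the faces it passes through and to the arcs it traverses). The defining properties of a vertex-faithful cycle double cover --- every arc lies in exactly two cycles, the cycles are chordless, and any two cycles meet in at most one arc --- are precisely what guarantee that $X$ is a genuine simplicial surface and a simplicial complex, and that $\F(X)$ is canonically identified with $\G$ via the identity on $V$ and $E$. This identification is exactly the bijection between vertex-defining cycles and surface vertices recorded in Section~\ref{subsection:facegraphs}.

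Next I would promote the graph action of $H$ to a surface action. Each $\sigma\in H\leq\Aut(\G)$ permutes $V=X_2$ and $E=X_1$ as a graph automorphism; because $\C^{(2,1)}(H)$ is, by its very construction, a union of $H$-orbits of cycles, $\sigma$ also permutes the cycles of $\C$, i.e. the vertices $X_0$. As $\sigma$ preserves all node--arc--cycle incidences, it defines an automorphism of $X$, and $\lambda_X$ returns it to $\sigma$ on $X_2=V$. Hence $H\leq\lambda_X(\Aut(X))$ (this is the direction of the correspondence used implicitly in the ``if'' part of \Cref{thm:21}, and underlies \Cref{lemma:vertexisoumbrella}). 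Condition~(1) in the definition of a $(2,1)$-group states that $H$ acts transitively on the nodes $V$; therefore $\Aut(X)$, which contains the preimage of $H$ under $\lambda_X$, already acts transitively on the faces $X_2$. Thus $X$ is face-transitive with $\F(X)=\G$, which is the claim.

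I do not expect a genuine obstacle here, consistent with this being an immediate corollary (cf.\ \Cref{corollary:constructionvf22,corollary:constructionvf16}). The single point deserving care is the verification that $H$-invariance of the cycle double cover really does lift every element of $H$ to a surface automorphism rather than merely to a graph automorphism; but this is exactly the content of the face-graph/surface dictionary of Section~\ref{subsection:facegraphs}, and requires no new argument. Note also that one need not establish the exact equality $H=\lambda_X(\Aut(X))$ (so the precise value $\vf(X)=(2,1)$ is not part of the corollary); the containment $H\leq\lambda_X(\Aut(X))$ together with transitivity of $H$ on $X_2$ suffices to conclude face-transitivity.
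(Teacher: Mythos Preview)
Your proposal is correct and matches what the paper intends: the paper simply declares the corollary ``a direct consequence of the above result'' without further argument, and your write-up fills in exactly the construction (surface from vertex-faithful cycle double cover) and the observation ($H\leq\lambda_X(\Aut(X))$ plus node-transitivity of $H$ gives face-transitivity) that make this immediate. Your remark that only the containment $H\leq\lambda_X(\Aut(X))$ is needed---so the corollary does not assert $\vf(X)=(2,1)$---is a correct and worthwhile clarification, since the ``if'' direction of \Cref{thm:21} is stated under the stronger hypothesis $H=\lambda_X(\Aut(X))$.
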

The simplicial surface $X^{(2,1)}$, see \Cref{example211}, consisting of $10$ vertices, $30$ edges and $20$ faces is an example of a face-transitive torus of vertex-face type $(2,1)$ of type $1$. This surface satisfies $\Aut(X^{(2,1)})\cong \Aut(\F(X^{(2,1)}))\cong C_5\rtimes C_4.$
Furthermore, the non-orientable simplicial surface $Y^{(2,1)}$ (see \Cref{example212}) is an example of a face-transitive surface with vertex-face type $(2,1)$ of type $2$.
This simplicial surface has $22$ vertices, $84$ edges and $56$ faces. With GAP we are able to verify that $Y^{(2,1)}$ satisfies $\Aut(Y^{(2,1)})\cong \Aut(\F(Y^{(2,1)}))\cong (C_2\times C_2\times C_2)\rtimes C_7$.

Finally, we give an example of a face-transitive surface of vertex-face type $(2,1)$ of type $3,$ namely the simplicial surface $Z^{(2,1)},$ see \Cref{example213}.
With $18$ vertices, $72$ edges and $48$ faces it forms a non-orientable simplicial surface such that $\Aut(Z^{(2,1)})\cong C_2\times S_4$ and $\Aut(\F(Z^{(2,1)}))\cong C_2\times C_2\times S_4$.

\section{Graphs with large automorphism group}
\label{section:problems}

Enumerating vertex-faithful double cycle covers, as described in \Cref{section:construction}, requires the identification of relatively small order subgroups of what can be excessively large order automorphism groups of node-transitive cubic graphs: there are $480$ graphs in the census of node-transitive cubic graphs from \cite{cubicvertextransitive} with an automorphism group of order at least $10^{11}$. The smallest one of them has $124$ nodes. For some of these graphs, computations are infeasible. This section presents theoretical obstructions for a large family of cubic graphs with large automorphism group to be face graphs of face-transitive surfaces. The graphs in our family have the following combinatorial properties:

\begin{itemize}
    \item They have $n=2^k \cdot m$ nodes, for some $k,m \geq 2$.
    \item Their arcs fall into two orbits of sizes $n$ and $\frac{n}{2}$.
    \item The large edge orbit forms a collection of pairwise disjoint $4$-cycles, while the short edge orbit forms a complete matching of the corresponding graph.
    \item The $4$-cycles can be grouped into $m$ collections of $4$-cycles, each of cardinality $2^{k-2}$, and with the $m$ groups arranged equidistantly along a circle, and the short edges of the short orbits exclusively connecting nodes from one collection of $4$-cycles to nodes from the previous and the following collections in the circular order.
    \item For every $4$-cycle, arcs from the short orbit connecting to the previous and the following connections alternate along the $4$-cycle.
\end{itemize}

\begin{figure}[H]
    \centering
    \begin{tikzpicture}[vertexBall, edgeDouble=nolabels, faceStyle, scale=1]

\coordinate (V1) at (2.0*1.0000, 2.0*0.0000);
\coordinate (V2) at (2.0*0.9239, 2.0*0.3827);
\coordinate (V3) at (2.0*0.7071, 2.0*0.7071);
\coordinate (V4) at (2.0*0.3827, 2.0*0.9239);
\coordinate (V5) at (2.0*0.0000, 2.0*1.0000);
\coordinate (V6) at (2.0*-0.3827, 2.0*0.9239);
\coordinate (V7) at (2.0*-0.7071, 2.0*0.7071);
\coordinate (V8) at (2.0*-0.9239, 2.0*0.3827);
\coordinate (V9) at (2.0*-1.0000, 2.0*0.0000);
\coordinate (V10) at (2.0*-0.9239, 2.0*-0.3827);
\coordinate (V11) at (2.0*-0.7071, 2.0*-0.7071);
\coordinate (V12) at (2.0*-0.3827,2.0* -0.9239);
\coordinate (V13) at (2.0*0.0000, 2.0*-1.0000);
\coordinate (V14) at (2.0*0.3827, 2.0*-0.9239);
\coordinate (V15) at (2.0*0.7071, 2.0*-0.7071);
\coordinate (V16) at (2.0*0.9239,2.0* -0.3827);

\coordinate (VV1) at (3.0*1.0000, 3.0*0.0000);
\coordinate (VV2) at (3.0*0.9239, 3.0*0.3827);
\coordinate (VV3) at (3.0*0.7071, 3.0*0.7071);
\coordinate (VV4) at (3.0*0.3827, 3.0*0.9239);
\coordinate (VV5) at (3.0*0.0000, 3.0*1.0000);
\coordinate (VV6) at (3.0*-0.3827, 3.0*0.9239);
\coordinate (VV7) at (3.0*-0.7071, 3.0*0.7071);
\coordinate (VV8) at (3.0*-0.9239, 3.0*0.3827);
\coordinate (VV9) at (3.0*-1.0000, 3.0*0.0000);
\coordinate (VV10) at (3.0*-0.9239, 3.0*-0.3827);
\coordinate (VV11) at (3.0*-0.7071, 3.0*-0.7071);
\coordinate (VV12) at (3.0*-0.3827, 3.0*-0.9239);
\coordinate (VV13) at (3.0*0.0000, 3.0*-1.0000);
\coordinate (VV14) at (3.0*0.3827, 3.0*-0.9239);
\coordinate (VV15) at (3.0*0.7071, 3.0*-0.7071);
\coordinate (VV16) at (3.0*0.9239, 3.0*-0.3827);

\foreach \i/\j in {1/2,3/4,5/6,7/8,9/10,11/12,13/14,15/16} {
    \draw[red] (V\i) -- (V\j);
    \draw[red] (VV\i) -- (VV\j);
}

\foreach \i/\j in {1/4,3/6,5/8,7/10,9/12,11/14,13/16,14/1} {
    \draw (V\i) -- (V\j);

}

\foreach \i/\j in {2/3,4/5,6/7,8/9,10/11,12/13,14/15,16/1} {
    \draw (VV\i) -- (VV\j);

}

\foreach \i in {1,2,...,16} {
    \draw[red] (V\i) -- (VV\i);
}
\vertexLabelR{V1}{left} {$ $}
\vertexLabelR{V2}{left} {$ $}
\vertexLabelR{V3}{left} {$ $}
\vertexLabelR{V4}{left} {$ $}
\vertexLabelR{V5}{left} {$ $}
\vertexLabelR{V6}{left} {$ $}
\vertexLabelR{V7}{left} {$ $}
\vertexLabelR{V8}{left} {$ $}
\vertexLabelR{V9}{left} {$ $}
\vertexLabelR{V10}{left} {$ $}
\vertexLabelR{V11}{left} {$ $}
\vertexLabelR{V12}{left} {$ $}
\vertexLabelR{V13}{left} {$ $}
\vertexLabelR{V14}{left} {$ $}
\vertexLabelR{V15}{left} {$ $}
\vertexLabelR{V16}{left} {$ $}

\vertexLabelR{VV1}{left} {$ $}
\vertexLabelR{VV2}{left} {$ $}
\vertexLabelR{VV3}{left} {$ $}
\vertexLabelR{VV4}{left} {$ $}
\vertexLabelR{VV5}{left} {$ $}
\vertexLabelR{VV6}{left} {$ $}
\vertexLabelR{VV7}{left} {$ $}
\vertexLabelR{VV8}{left} {$ $}
\vertexLabelR{VV9}{left} {$ $}
\vertexLabelR{VV10}{left} {$ $}
\vertexLabelR{VV11}{left} {$ $}
\vertexLabelR{VV12}{left} {$ $}
\vertexLabelR{VV13}{left} {$ $}
\vertexLabelR{VV14}{left} {$ $}
\vertexLabelR{VV15}{left} {$ $}
\vertexLabelR{VV16}{left} {$ $}

\end{tikzpicture} \quad 
    \begin{tikzpicture}[vertexBall, edgeDouble=nolabels, faceStyle, scale=0.54]

\coordinate (V1) at (2.0*1.0000, 2.0*0.0000);
\coordinate (V2) at (2.0*0.9239, 2.0*0.3827);
\coordinate (V3) at (2.0*0.7071, 2.0*0.7071);
\coordinate (V4) at (2.0*0.3827, 2.0*0.9239);
\coordinate (V5) at (2.0*0.0000, 2.0*1.0000);
\coordinate (V6) at (2.0*-0.3827, 2.0*0.9239);
\coordinate (V7) at (2.0*-0.7071, 2.0*0.7071);
\coordinate (V8) at (2.0*-0.9239, 2.0*0.3827);
\coordinate (V9) at (2.0*-1.0000, 2.0*0.0000);
\coordinate (V10) at (2.0*-0.9239, 2.0*-0.3827);
\coordinate (V11) at (2.0*-0.7071, 2.0*-0.7071);
\coordinate (V12) at (2.0*-0.3827,2.0* -0.9239);
\coordinate (V13) at (2.0*0.0000, 2.0*-1.0000);
\coordinate (V14) at (2.0*0.3827, 2.0*-0.9239);
\coordinate (V15) at (2.0*0.7071, 2.0*-0.7071);
\coordinate (V16) at (2.0*0.9239,2.0* -0.3827);

\coordinate (VV1) at (3.0*1.0000, 3.0*0.0000);
\coordinate (VV2) at (3.0*0.9239, 3.0*0.3827);
\coordinate (VV3) at (3.0*0.7071, 3.0*0.7071);
\coordinate (VV4) at (3.0*0.3827, 3.0*0.9239);
\coordinate (VV5) at (3.0*0.0000, 3.0*1.0000);
\coordinate (VV6) at (3.0*-0.3827, 3.0*0.9239);
\coordinate (VV7) at (3.0*-0.7071, 3.0*0.7071);
\coordinate (VV8) at (3.0*-0.9239, 3.0*0.3827);
\coordinate (VV9) at (3.0*-1.0000, 3.0*0.0000);
\coordinate (VV10) at (3.0*-0.9239, 3.0*-0.3827);
\coordinate (VV11) at (3.0*-0.7071, 3.0*-0.7071);
\coordinate (VV12) at (3.0*-0.3827, 3.0*-0.9239);
\coordinate (VV13) at (3.0*0.0000, 3.0*-1.0000);
\coordinate (VV14) at (3.0*0.3827, 3.0*-0.9239);
\coordinate (VV15) at (3.0*0.7071, 3.0*-0.7071);
\coordinate (VV16) at (3.0*0.9239, 3.0*-0.3827);

\coordinate (VVV1) at (4.0*1.0000, 4.0*0.0000);
\coordinate (VVV2) at (4.0*0.9239, 4.0*0.3827);
\coordinate (VVV3) at (4.0*0.7071, 4.0*0.7071);
\coordinate (VVV4) at (4.0*0.3827, 4.0*0.9239);
\coordinate (VVV5) at (4.0*0.0000, 4.0*1.0000);
\coordinate (VVV6) at (4.0*-0.3827, 4.0*0.9239);
\coordinate (VVV7) at (4.0*-0.7071, 4.0*0.7071);
\coordinate (VVV8) at (4.0*-0.9239, 4.0*0.3827);
\coordinate (VVV9) at (4.0*-1.0000, 4.0*0.0000);
\coordinate (VVV10) at (4.0*-0.9239, 4.0*-0.3827);
\coordinate (VVV11) at (4.0*-0.7071, 4.0*-0.7071);
\coordinate (VVV12) at (4.0*-0.3827, 4.0*-0.9239);
\coordinate (VVV13) at (4.0*0.0000, 4.0*-1.0000);
\coordinate (VVV14) at (4.0*0.3827, 4.0*-0.9239);
\coordinate (VVV15) at (4.0*0.7071, 4.0*-0.7071);
\coordinate (VVV16) at (4.0*0.9239, 4.0*-0.3827);

\coordinate (VVVV1) at (5.0*1.0000, 5.0*0.0000);
\coordinate (VVVV2) at (5.0*0.9239, 5.0*0.3827);
\coordinate (VVVV3) at (5.0*0.7071, 5.0*0.7071);
\coordinate (VVVV4) at (5.0*0.3827, 5.0*0.9239);
\coordinate (VVVV5) at (5.0*0.0000, 5.0*1.0000);
\coordinate (VVVV6) at (5.0*-0.3827, 5.0*0.9239);
\coordinate (VVVV7) at (5.0*-0.7071, 5.0*0.7071);
\coordinate (VVVV8) at (5.0*-0.9239, 5.0*0.3827);
\coordinate (VVVV9) at (5.0*-1.0000, 5.0*0.0000);
\coordinate (VVVV10) at (5.0*-0.9239, 5.0*-0.3827);
\coordinate (VVVV11) at (5.0*-0.7071, 5.0*-0.7071);
\coordinate (VVVV12) at (5.0*-0.3827, 5.0*-0.9239);
\coordinate (VVVV13) at (5.0*0.0000, 5.0*-1.0000);
\coordinate (VVVV14) at (5.0*0.3827, 5.0*-0.9239);
\coordinate (VVVV15) at (5.0*0.7071, 5.0*-0.7071);
\coordinate (VVVV16) at (5.0*0.9239, 5.0*-0.3827);

\foreach \i/\j in {1/2,3/4,5/6,7/8,9/10,11/12,13/14,15/16} {
    \draw[red] (V\i) -- (V\j);
    \draw[red] (VV\i) -- (VV\j);
    \draw[red] (VVV\i) -- (VVV\j);
    \draw[red] (VVVV\i) -- (VVVV\j);
}

\foreach \i/\j in {1/4,3/6,5/8,7/10,9/12,11/14,13/16,15/2} {
    \draw[] (V\i) -- (V\j);

}

\foreach \i/\j in {2/3,4/5,6/7,8/9,10/11,12/13,14/15,16/1} {
    \draw[] (VV\i) -- (VVV\j);
    \draw[] (VV\j) -- (VVV\i);

}

\foreach \i/\j in {1/4,  3/6,  5/8, 7/10, 9/12,  11/14,  13/16,15/2} {
    \draw[-] (VVVV\i) to[bend right=55] (VVVV\j);
}

\foreach \i in {1,2,...,16} {
    \draw[red] (VVV\i) -- (VVVV\i);

        \draw[red] (V\i) -- (VV\i);
}
\vertexLabelR{V1}{left} {$ $}
\vertexLabelR{V2}{left} {$ $}
\vertexLabelR{V3}{left} {$ $}
\vertexLabelR{V4}{left} {$ $}
\vertexLabelR{V5}{left} {$ $}
\vertexLabelR{V6}{left} {$ $}
\vertexLabelR{V7}{left} {$ $}
\vertexLabelR{V8}{left} {$ $}
\vertexLabelR{V9}{left} {$ $}
\vertexLabelR{V10}{left} {$ $}
\vertexLabelR{V11}{left} {$ $}
\vertexLabelR{V12}{left} {$ $}
\vertexLabelR{V13}{left} {$ $}
\vertexLabelR{V14}{left} {$ $}
\vertexLabelR{V15}{left} {$ $}
\vertexLabelR{V16}{left} {$ $}

\vertexLabelR{VV1}{left} {$ $}
\vertexLabelR{VV2}{left} {$ $}
\vertexLabelR{VV3}{left} {$ $}
\vertexLabelR{VV4}{left} {$ $}
\vertexLabelR{VV5}{left} {$ $}
\vertexLabelR{VV6}{left} {$ $}
\vertexLabelR{VV7}{left} {$ $}
\vertexLabelR{VV8}{left} {$ $}
\vertexLabelR{VV9}{left} {$ $}
\vertexLabelR{VV10}{left} {$ $}
\vertexLabelR{VV11}{left} {$ $}
\vertexLabelR{VV12}{left} {$ $}
\vertexLabelR{VV13}{left} {$ $}
\vertexLabelR{VV14}{left} {$ $}
\vertexLabelR{VV15}{left} {$ $}
\vertexLabelR{VV16}{left} {$ $}

\vertexLabelR{VVV1}{left} {$ $}
\vertexLabelR{VVV2}{left} {$ $}
\vertexLabelR{VVV3}{left} {$ $}
\vertexLabelR{VVV4}{left} {$ $}
\vertexLabelR{VVV5}{left} {$ $}
\vertexLabelR{VVV6}{left} {$ $}
\vertexLabelR{VVV7}{left} {$ $}
\vertexLabelR{VVV8}{left} {$ $}
\vertexLabelR{VVV9}{left} {$ $}
\vertexLabelR{VVV10}{left} {$ $}
\vertexLabelR{VVV11}{left} {$ $}
\vertexLabelR{VVV12}{left} {$ $}
\vertexLabelR{VVV13}{left} {$ $}
\vertexLabelR{VVV14}{left} {$ $}
\vertexLabelR{VVV15}{left} {$ $}
\vertexLabelR{VVV16}{left} {$ $}

\vertexLabelR{VVVV1}{left} {$ $}
\vertexLabelR{VVVV2}{left} {$ $}
\vertexLabelR{VVVV3}{left} {$ $}
\vertexLabelR{VVVV4}{left} {$ $}
\vertexLabelR{VVVV5}{left} {$ $}
\vertexLabelR{VVVV6}{left} {$ $}
\vertexLabelR{VVVV7}{left} {$ $}
\vertexLabelR{VVVV8}{left} {$ $}
\vertexLabelR{VVVV9}{left} {$ $}
\vertexLabelR{VVVV10}{left} {$ $}
\vertexLabelR{VVVV11}{left} {$ $}
\vertexLabelR{VVVV12}{left} {$ $}
\vertexLabelR{VVVV13}{left} {$ $}
\vertexLabelR{VVVV14}{left} {$ $}
\vertexLabelR{VVVV15}{left} {$ $}
\vertexLabelR{VVVV16}{left} {$ $}

\end{tikzpicture}
    \caption{The generalised $m$-gon graphs $\G_{8,2}$ and $\G_{8,3}$}
    \label{fig:genmgon}
\end{figure}
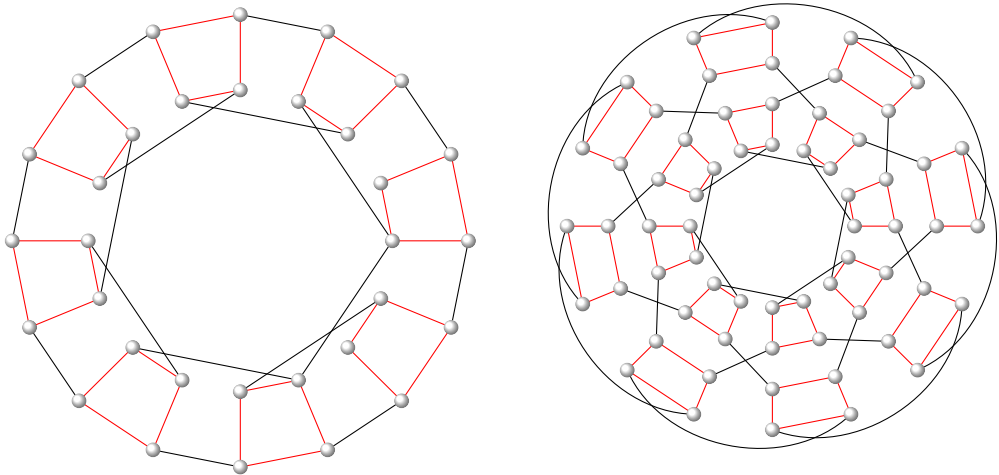
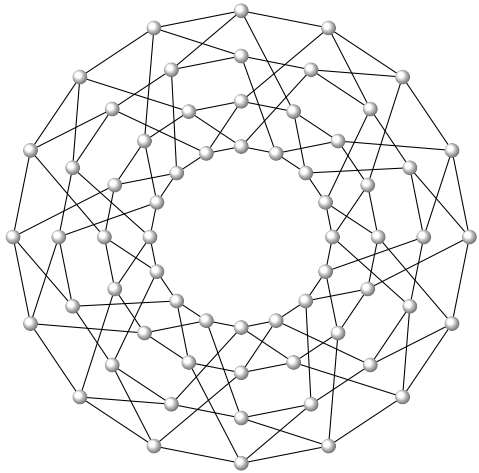
\begin{figure}[H]
    \centering
\begin{tikzpicture}[vertexBall, edgeDouble=nolabels, faceStyle, scale=0.6]

\coordinate (V1) at (2.0*1.0000, 2.0*0.0000);
\coordinate (V2) at (2.0*0.9239, 2.0*0.3827);
\coordinate (V3) at (2.0*0.7071, 2.0*0.7071);
\coordinate (V4) at (2.0*0.3827, 2.0*0.9239);
\coordinate (V5) at (2.0*0.0000, 2.0*1.0000);
\coordinate (V6) at (2.0*-0.3827, 2.0*0.9239);
\coordinate (V7) at (2.0*-0.7071, 2.0*0.7071);
\coordinate (V8) at (2.0*-0.9239, 2.0*0.3827);
\coordinate (V9) at (2.0*-1.0000, 2.0*0.0000);
\coordinate (V10) at (2.0*-0.9239, 2.0*-0.3827);
\coordinate (V11) at (2.0*-0.7071, 2.0*-0.7071);
\coordinate (V12) at (2.0*-0.3827,2.0* -0.9239);
\coordinate (V13) at (2.0*0.0000, 2.0*-1.0000);
\coordinate (V14) at (2.0*0.3827, 2.0*-0.9239);
\coordinate (V15) at (2.0*0.7071, 2.0*-0.7071);
\coordinate (V16) at (2.0*0.9239,2.0* -0.3827);

\coordinate (VV1) at (3.0*1.0000, 3.0*0.0000);
\coordinate (VV2) at (3.0*0.9239, 3.0*0.3827);
\coordinate (VV3) at (3.0*0.7071, 3.0*0.7071);
\coordinate (VV4) at (3.0*0.3827, 3.0*0.9239);
\coordinate (VV5) at (3.0*0.0000, 3.0*1.0000);
\coordinate (VV6) at (3.0*-0.3827, 3.0*0.9239);
\coordinate (VV7) at (3.0*-0.7071, 3.0*0.7071);
\coordinate (VV8) at (3.0*-0.9239, 3.0*0.3827);
\coordinate (VV9) at (3.0*-1.0000, 3.0*0.0000);
\coordinate (VV10) at (3.0*-0.9239, 3.0*-0.3827);
\coordinate (VV11) at (3.0*-0.7071, 3.0*-0.7071);
\coordinate (VV12) at (3.0*-0.3827, 3.0*-0.9239);
\coordinate (VV13) at (3.0*0.0000, 3.0*-1.0000);
\coordinate (VV14) at (3.0*0.3827, 3.0*-0.9239);
\coordinate (VV15) at (3.0*0.7071, 3.0*-0.7071);
\coordinate (VV16) at (3.0*0.9239, 3.0*-0.3827);

\coordinate (VVV1) at (4.0*1.0000, 4.0*0.0000);
\coordinate (VVV2) at (4.0*0.9239, 4.0*0.3827);
\coordinate (VVV3) at (4.0*0.7071, 4.0*0.7071);
\coordinate (VVV4) at (4.0*0.3827, 4.0*0.9239);
\coordinate (VVV5) at (4.0*0.0000, 4.0*1.0000);
\coordinate (VVV6) at (4.0*-0.3827, 4.0*0.9239);
\coordinate (VVV7) at (4.0*-0.7071, 4.0*0.7071);
\coordinate (VVV8) at (4.0*-0.9239, 4.0*0.3827);
\coordinate (VVV9) at (4.0*-1.0000, 4.0*0.0000);
\coordinate (VVV10) at (4.0*-0.9239, 4.0*-0.3827);
\coordinate (VVV11) at (4.0*-0.7071, 4.0*-0.7071);
\coordinate (VVV12) at (4.0*-0.3827, 4.0*-0.9239);
\coordinate (VVV13) at (4.0*0.0000, 4.0*-1.0000);
\coordinate (VVV14) at (4.0*0.3827, 4.0*-0.9239);
\coordinate (VVV15) at (4.0*0.7071, 4.0*-0.7071);
\coordinate (VVV16) at (4.0*0.9239, 4.0*-0.3827);

\coordinate (VVVV1) at (5.0*1.0000, 5.0*0.0000);
\coordinate (VVVV2) at (5.0*0.9239, 5.0*0.3827);
\coordinate (VVVV3) at (5.0*0.7071, 5.0*0.7071);
\coordinate (VVVV4) at (5.0*0.3827, 5.0*0.9239);
\coordinate (VVVV5) at (5.0*0.0000, 5.0*1.0000);
\coordinate (VVVV6) at (5.0*-0.3827, 5.0*0.9239);
\coordinate (VVVV7) at (5.0*-0.7071, 5.0*0.7071);
\coordinate (VVVV8) at (5.0*-0.9239, 5.0*0.3827);
\coordinate (VVVV9) at (5.0*-1.0000, 5.0*0.0000);
\coordinate (VVVV10) at (5.0*-0.9239, 5.0*-0.3827);
\coordinate (VVVV11) at (5.0*-0.7071, 5.0*-0.7071);
\coordinate (VVVV12) at (5.0*-0.3827, 5.0*-0.9239);
\coordinate (VVVV13) at (5.0*0.0000, 5.0*-1.0000);
\coordinate (VVVV14) at (5.0*0.3827, 5.0*-0.9239);
\coordinate (VVVV15) at (5.0*0.7071, 5.0*-0.7071);
\coordinate (VVVV16) at (5.0*0.9239, 5.0*-0.3827);

\foreach \i/\j in {2/3,4/5,6/7,8/9,10/11,12/13,14/15,16/1} {
    \draw (V\i) -- (V\j);
    \draw (VVVV\i) -- (VVVV\j);
    \draw (VV\i) -- (VV\j);
    \draw (VVV\i) -- (VVV\j);
    \draw (VVVV\i) -- (VV\j);
    \draw (VVV\i) -- (V\j);
    
    \draw (V\i) -- (VVV\j);    
       \draw (VV\i) -- (VVVV\j);    
}

\foreach \i/\j in {1/2,3/4,5/6,7/8,9/10,11/12,13/14,15/16} {
    \draw (V\i) -- (V\j);
        \draw (VV\i) -- (VV\j);
    \draw (VVV\i) -- (VVV\j);
    \draw (VVVV\i) -- (VVVV\j);
    \draw (VVVV\i) -- (VVV\j);
    \draw (VVV\i) -- (VVVV\j);    
   \draw (V\i) -- (VV\j);
   \draw (VV\i) -- (V\j);
}

\vertexLabelR{V1}{left} {$ $}
\vertexLabelR{V2}{left} {$ $}
\vertexLabelR{V3}{left} {$ $}
\vertexLabelR{V4}{left} {$ $}
\vertexLabelR{V5}{left} {$ $}
\vertexLabelR{V6}{left} {$ $}
\vertexLabelR{V7}{left} {$ $}
\vertexLabelR{V8}{left} {$ $}
\vertexLabelR{V9}{left} {$ $}
\vertexLabelR{V10}{left} {$ $}
\vertexLabelR{V11}{left} {$ $}
\vertexLabelR{V12}{left} {$ $}
\vertexLabelR{V13}{left} {$ $}
\vertexLabelR{V14}{left} {$ $}
\vertexLabelR{V15}{left} {$ $}
\vertexLabelR{V16}{left} {$ $}

\vertexLabelR{VV1}{left} {$ $}
\vertexLabelR{VV2}{left} {$ $}
\vertexLabelR{VV3}{left} {$ $}
\vertexLabelR{VV4}{left} {$ $}
\vertexLabelR{VV5}{left} {$ $}
\vertexLabelR{VV6}{left} {$ $}
\vertexLabelR{VV7}{left} {$ $}
\vertexLabelR{VV8}{left} {$ $}
\vertexLabelR{VV9}{left} {$ $}
\vertexLabelR{VV10}{left} {$ $}
\vertexLabelR{VV11}{left} {$ $}
\vertexLabelR{VV12}{left} {$ $}
\vertexLabelR{VV13}{left} {$ $}
\vertexLabelR{VV14}{left} {$ $}
\vertexLabelR{VV15}{left} {$ $}
\vertexLabelR{VV16}{left} {$ $}

\vertexLabelR{VVV1}{left} {$ $}
\vertexLabelR{VVV2}{left} {$ $}
\vertexLabelR{VVV3}{left} {$ $}
\vertexLabelR{VVV4}{left} {$ $}
\vertexLabelR{VVV5}{left} {$ $}
\vertexLabelR{VVV6}{left} {$ $}
\vertexLabelR{VVV7}{left} {$ $}
\vertexLabelR{VVV8}{left} {$ $}
\vertexLabelR{VVV9}{left} {$ $}
\vertexLabelR{VVV10}{left} {$ $}
\vertexLabelR{VVV11}{left} {$ $}
\vertexLabelR{VVV12}{left} {$ $}
\vertexLabelR{VVV13}{left} {$ $}
\vertexLabelR{VVV14}{left} {$ $}
\vertexLabelR{VVV15}{left} {$ $}
\vertexLabelR{VVV16}{left} {$ $}

\vertexLabelR{VVVV1}{left} {$ $}
\vertexLabelR{VVVV2}{left} {$ $}
\vertexLabelR{VVVV3}{left} {$ $}
\vertexLabelR{VVVV4}{left} {$ $}
\vertexLabelR{VVVV5}{left} {$ $}
\vertexLabelR{VVVV6}{left} {$ $}
\vertexLabelR{VVVV7}{left} {$ $}
\vertexLabelR{VVVV8}{left} {$ $}
\vertexLabelR{VVVV9}{left} {$ $}
\vertexLabelR{VVVV10}{left} {$ $}
\vertexLabelR{VVVV11}{left} {$ $}
\vertexLabelR{VVVV12}{left} {$ $}
\vertexLabelR{VVVV13}{left} {$ $}
\vertexLabelR{VVVV14}{left} {$ $}
\vertexLabelR{VVVV15}{left} {$ $}
\vertexLabelR{VVVV16}{left} {$ $}

\end{tikzpicture}
    \caption{The generalised $m$-gon graph $\G_{16,4}$ with $4$-cycles contracted to a point.}
    \label{fig:genmgon2}
\end{figure}

We call such graphs with $n=2^k \cdot m$ nodes {\bf generalised $m$-gon graphs of order $k$}, written $\G_{m,k}$. The set of $480$ graphs with automorphism groups of order at least $10^{11}$ contains $290$ $m$-gon graphs of order $2$, $130$ of order $3$, $50$ of order $4$, and $10$ additional graphs with a different structure. We prove the following statement.
\begin{lemma}
    \label{lem:genmgons}
    Let $\G_{m,k}$ be a generalised $m$-gon graph of order $k$ with $m \geq 2^k$. Then $\G_{m,k}$ is not the face graph of a vertex-transitive simplicial surface.
\end{lemma}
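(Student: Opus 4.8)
The plan is to assume for contradiction that $\G_{m,k}$ is the face graph of a vertex-transitive surface $X$ and to exploit the $\mathbb{Z}/m$ ``circular'' grading of the nodes. Write $n=2^{k}m=|X_2|$. The short orbit, i.e.\ the perfect matching $M$ of size $\tfrac n2$, is the unique $\Aut(\G_{m,k})$-orbit of arcs of that size, so it is invariant under $H:=\lambda_X(\Aut(X))\le\Aut(\G_{m,k})$; in particular $M$ is preserved by $\Aut(X)$. Collapsing each of the $m$ collections to a point gives a grading $c\colon V(\G_{m,k})\to\mathbb{Z}/m$ in which a matching arc changes $c$ by $\pm1$ while a $4$-cycle (large) arc preserves $c$, so every cycle of the cycle double cover has net displacement $0\in\mathbb{Z}/m$. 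By vertex-transitivity all umbrellas are $\Aut(X)$-equivalent, hence share the same pattern of $M$-arcs versus large arcs; moreover every umbrella must meet $M$, since an umbrella avoiding $M$ would be one of the disjoint $4$-cycles and then $M$ could not be covered. This yields a clean dichotomy: either no two large arcs occur consecutively in an umbrella, or some two do.

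First I would dispose of the \emph{alternating} case. If the large arcs of an umbrella are isolated (flanked by matching arcs on both sides), then, using that the prev/next short arcs alternate along each $4$-cycle, each matching step advances $c$ in a fixed direction; the umbrella therefore winds around the $m$-gon and contains a positive multiple of $m$, hence at least $m$, matching arcs. Since the total number of (umbrella, matching-arc) incidences equals $2|M|=n$, the number $V$ of vertices satisfies $V\le n/m=2^{k}$. On the other hand, each matching arc lies in exactly two (distinct) umbrellas, and vertex-faithfulness forces any two umbrellas to share at most one arc, so the map sending a matching arc to the unordered pair of umbrellas containing it is injective, whence $\tfrac n2=|M|\le\binom{V}{2}$. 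Combining the two bounds with $m\ge 2^{k}$ gives $2^{k-1}m\le\binom{V}{2}\le\binom{2^{k}}{2}=2^{2k-1}-2^{k-1}<2^{2k-1}\le 2^{k-1}m$, a contradiction.

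The hard part is the \emph{consecutive} case, which is exactly the one relevant to the admissible vertex-transitive types. Since $\F(X)=\G_{m,k}$ is node-transitive, \Cref{lemma:nodetransitive} and \Cref{theorem:invariants} leave $\vf(X)\in\{(1,1),(1,2),(1,3),(1,6)\}$; the types $(1,3)$ and $(1,6)$ have a single edge orbit of size $\tfrac32|X_2|$ and are excluded because $H$ refines the two-orbit arc partition of $\Aut(\G_{m,k})$, leaving the $(1,1,2)$- and $(1,2,3)$-type umbrellas of $\vf(X)=(1,2)$ and $(1,1)$. These contain two consecutive large arcs. Because large arcs occur only inside $4$-cycles and any node lies in a unique $4$-cycle, two consecutive large arcs trace a $2$-path inside one $4$-cycle; tracking the prev/next labels shows such a $2$-path enters and leaves its $4$-cycle on the \emph{same} side, a U-turn of zero net displacement. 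Iterating, every such umbrella is confined to a band of two adjacent collections and the winding bound no longer applies.

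To finish the consecutive case I would argue locally on a single $4$-cycle $Y=(a_1,a_2,a_3,a_4)$. Its four arcs must be covered twice, which forces exactly the four $2$-paths of $Y$ (one centred at each node) to appear, two of them (centred at the prev-nodes) feeding the band towards the next collection and two (centred at the next-nodes) feeding the band towards the previous collection. Closure of a band-confined umbrella at the small degree forced by vertex-transitivity then requires the two next-nodes $a_2,a_4$ of $Y$ to be matched to prev-nodes of a \emph{single} $4$-cycle of the neighbouring collection. The main obstacle is to show this gluing pattern is incompatible with the node-transitive matching of $\G_{m,k}$ when $m\ge 2^{k}$: I expect that the prev/next alternation spreads $a_2,a_4$ onto \emph{distinct} neighbouring $4$-cycles, so that the $2$-paths cannot close within a single band and instead re-assemble into genuinely winding cycles, returning us to the alternating count — or else closure reintroduces a chord or forces two umbrellas to share two matching arcs, violating vertex-faithfulness. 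Carrying out this local-to-global analysis uniformly, including the case where the large orbit splits into two $H$-orbits, is where the real work lies; the two displayed inequalities then close the argument.
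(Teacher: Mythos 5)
Your setup and your treatment of the alternating case are sound: $H=\lambda_X(\Aut(X))$ does preserve the matching $M$ (any subgroup of $\Aut(\G_{m,k})$ preserves each $\Aut(\G_{m,k})$-orbit of arcs, so no uniqueness argument is even needed), the $\mathbb{Z}/m$-grading correctly forces a strictly alternating umbrella to traverse at least $m$ matching arcs, and the two counts $V\le n/m=2^k$ and $\tfrac n2\le\binom{V}{2}$ (the latter using that vertex-faithfulness lets two umbrellas share at most one arc) do combine with $m\ge 2^k$ into a genuine contradiction. This is a legitimate variant of the paper's pigeonhole argument for the winding cases (the paper bounds the number of \emph{other} cycles that one winding cycle must meet; you count umbrella--matching-arc incidences globally), and either version works.

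The gap is the consecutive case, and it is not a detail: by your own reduction via \Cref{theorem:invariants}, the surviving types $\vf(X)\in\{(1,1),(1,2)\}$ are precisely the ones whose umbrellas contain consecutive large arcs, so this is the case that actually has to be killed, and you explicitly leave it at ``I expect''. The band-confinement and gluing analysis you sketch is both unfinished and unnecessary. The paper closes this case with a short local argument that your sketch circles but never lands on: take any matching arc $e$ and the \emph{two} cycles of the double cover containing $e$. For the $(1,1,2)$- and $(1,2,3)$-type umbrellas, each of these cycles continues from the endpoint of $e$ with exactly two large arcs inside the unique $4$-cycle at that node; both two-arc paths end at the antipodal node of that $4$-cycle, so either the two cycles run the same way and share the two large arcs, or they run opposite ways and both exit through the \emph{same} matching arc at the antipode. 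Either way they share at least two arcs, contradicting vertex-faithfulness --- no winding, bands, or closure analysis required. Moreover, your reduction omits sub-types of $\vf(X)=(1,1)$: type $2$ has $(1,2,3,1,3,2)$-umbrellas whose $M$-versus-large pattern has mixed run lengths, so your ``confined to a band of two adjacent collections'' claim fails there (the paper instead uses a cycle meeting a $4$-cycle in three arcs and derives a contradiction from the two cycles on the fourth arc); and in types $3$ and $4$, where the large orbit splits into two $H$-orbits, the cover must contain the mono-coloured $4$-cycles, forcing all cycles to have length $4$, which the structure of $\G_{m,k}$ immediately forbids. As written, your proposal proves the lemma only in the alternating case --- which, per the classification, does not even occur for the admissible vertex-transitive types --- and leaves the essential cases conjectural.
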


\begin{proof}
  Let $X$ be a simplicial surface with face graph $\F(X) =\G_{m,k}$, $m,k\geq 2$, $m \geq 2^k$, and let $H = \lambda_X(\Aut(X))$ be a subgroup of $\Aut(\G_{m,k})$.

  We go through the $13$ types of face-transitive simplicial surfaces from \Cref{section:construction}, obtaining an explicit obstruction to a vertex-faithful cycle double cover, and hence a contradiction to the existence of $X$, in every case. We group these $13$ cases into three main cases.

  \medskip

  \noindent 
  {\bf Case $\vf(X) \in \{ (1,3), (1,6)\}$}. In these two cases, $H$ acts transitive on the arcs of $\G_{m,k}$, which is a contradiction because the automorphism group of $\G_{m,k}$ splits its arcs into two orbits.

  \medskip

  \noindent 
  {\bf Case $\vf(X) \in \{ (1,1), (1,2)\}$}. For $\vf(X) = (1,2)$, $H$ splits the arcs of $\G_{m,k}$ into two orbits and the vertex defining umbrellas for $X$ are of type $(1,1,2,\ldots , 1,1,2)$, where $1$ denotes the large orbit, made out of $4$-cycles. In a double cycle cover defining $X$, choose an arc in orbit $2$, and follow the two cycles adjacent to it. Both of these cycles enter a $4$-cycle, and must exit this $4$-cycle after two arcs, and hence into the same arc. Hence the two cycles share more than a single arc and the cycle double cover is not vertex-faithful. A contradiction to the assumption that $X$ is simplicial.
  
  For $\vf(X) = (1,1)$ there are four types to distinguish. For type $1$, we have three arc orbits and vertex defining umbrellas of type $(1,2,3,\ldots , 1,2,3)$. Here, the same argument as before contradicts the double cycle cover to be vertex faithful and hence $X$ cannot be simplicial. For type 2, if the short $\Aut(\G)$ arc orbit is $1$, the same argument yields another contradiction. Otherwise, since $H$ acts transitive on vertex defining umbrellas, every cycle intersects the $4$-cycles of $\G_{m,k}$ in one or three arcs. Fix a cycle and a $4$-cycle intersecting this cycle in three arcs. Consider the two cycles intersecting the fourth arc of the this $4$-cycle. Since we are looking for a vertex-faithful double cover, each of these two cycles intersects the $4$-cycle in only one edge. But this means these cycles must intersect in at least three arcs, a contradiction.

  For $\vf(X) = (1,1)$, types $3$ and $4$, $H$ recovers the two arc orbits of $\Aut(\G_{m,k})$. The cycle double cover must contain the mono-coloured $4$-cycles, meaning that the remaining cycles must also be of length $4$. This is an immediate contradiction to the combinatorial structure of $\G_{m,k}$.

  \medskip

  \noindent 
  {\bf Case $\vf(X) \in \{ (3,1), (2,2), (2,1) \}$}. For  $\vf(X) = (3,1)$, $H$ defines three arc orbits and the double cycle cover is obtained by the cycles defined by removing one of the orbits, respectively. This yields the $4$-cycles consisting of the long $\Aut(\G_{m,k})$ arc orbit, and two other types of cycles with arcs alternating between the short and long arc orbits. Due to the combinatorial structure of $\G_{m,k}$ these cycles must run around the entire circular layout of the graph. This means, a given cycle $c$ must run through at least $m$ arcs of the short arc orbit, and that there are at most $2^k-1$ other cycles. Since at every arc, $c$ must meet a different other cycle, this is a contradiction to $m > 2^k-1$ by the pigeonhole principle. This same argument extends to the case $\vf(X) = (2,1)$ of type $3$.

  For $\vf(X) = (2,1)$ of type 1 and 2, as well as $\vf(X) = (2,2)$, the cycle double cover must contain the mono-coloured $4$-cycles. It follows that the other cycles must contain arcs alternating between the short and long arc orbits, and we can conclude as in the previous paragraph.
\end{proof}

Applying \Cref{lem:genmgons} to our list of $480$ graphs with large automorphism groups leaves $10$ leftover cases. For these remaining cubic graphs, we employ a modified version of our algorithm, with limited conjugacy tests which turns out to be powerful enough to prove that none of them admits a vertex-faithful cycle double cover, see \Cref{sec:implementation} for details.

\section{Implementation and Results}
\label{sec:implementation}

\label{section:implementation}

In this section we provide a brief description of how we use our theoretical results to compute a census of face-transitive surfaces. Moreover, we present and comment on our findings. Our implementations and the corresponding database of face-transitive surfaces are available in \cite{facetransitivesurfaces}.

\subsection{Notes on implementation}
\label{rem:multiple}

For our implementation we make use of GAP \cite{GAP4}, specifically, GAP-packages \texttt{SimplicialSurfaces} \cite{simplicialsurfacegap}, \texttt{Simpcomp} \cite{simpcomp}, \texttt{GraphSym} \cite{cubicvertextransitive} and \texttt{Digraphs} \cite{DeBeule2024aa}, and Magma \cite{magma}.  
For a given cubic node-transitive graph $\G=(V,E)$ with corresponding automorphism group $\Aut(\G)$, we apply the following steps.
\begin{enumerate}
  \item Compute the set $\mathcal{H}$ of all subgroups $H\leq \Aut(\G)$ satisfying $\vert H\vert= s \cdot \vert V\vert$, where $s\in \{1,2,3,6\}$. \\

  Classifying all subgroups of a given automorphism group and a given order (that is, classifying all subgroups of a given index) is computationally expensive. We therefore speed up computations by classifying subgroups up to conjugation. This restriction is justified, since conjugate subgroups yield isomorphic surfaces. We classify subgroups by constructing chains of maximal subgroups while checking node-transitivity and the order of the subgroups satisfy$\geq \vert V \vert$. This step is done in GAP or Magma, depending on properties of the input graph.
  \item For all $H\in \mathcal{H}$, compute all face-transitive surfaces with corresponding automorphism group and with $\G$ as a face graph (we apply the results established in \Cref{section:construction} for all cases matching the order of $H$). \\

  For each subgroup $H$ of order $\vert H\vert= s \cdot \vert V\vert$, $s\in \{1,2,3,6\}$, we construct cycle double covers of the given cubic graph with automorphism group $H$. Since not every cycle double cover is vertex-faithful, this is checked in a separate step. To speed up computations, we only construct one cycle of a cycle double cover, and generate the other cycles by the action of $H$ on the computed cycle.
  \item For graphs with large automorphism group we use \Cref{lem:genmgons}, if applicable, or run a modified version of our code with limited conjugacy tests. This strategy proved to be feasible for all cubic graphs in the census. 
\end{enumerate}

Note that the face-transitive surfaces $X$ stored in \cite{facetransitivesurfaces} are represented as lists of faces, where each face is given by the set of its incident vertices. In GAP, these vertices are encoded as integers. 

\subsection{Tests}

We perform multiple tests to ensure that our census of face-transitive surfaces \cite{facetransitivesurfaces} is complete:
\begin{enumerate}
    \item For all node-transitive cubic graphs $\G$ with fewer than $50$ nodes, we use brute-force methods to list their cycle double covers using the functionalities provided in \cite{simplicialsurfacegap}. We then compare the output with the result of our classification algorithm, up to isomorphism.
    \item For every node-transitive cubic graph $\G=(V,E)$ with $\vert \Aut(\G)\vert\leq 4000$ and $\vert V\vert \leq 600$, we compute all subgroups $H\leq \Aut(\G)$ of order $\vert H\vert =s\vert V\vert $, where $s=1,2,3,6$, using the function \texttt{ConjugacyClassesSubgroups} from \cite{GAP4}. We then compare these results to the candidate subgroups computed using our algorithm, which is based on chains of maximal subgroups, ensuring consistency up to isomorphism.
    \item For every node-transitive cubic graph $\G$ with $\vert \Aut(X)\vert \leq 2000000$ and $\vert V\vert \leq 800$ we compute a copy $\G'$ of $\G$ with randomly relabelled nodes and check whether the set of face-transitive surfaces with $\G$ as face graph is equal to the set of face-transitive surfaces with $\G'$ as face graph, up to isomorphism.
    \item We take $200$ known face-transitive surfaces from \cite{weddslist,facetransitivesurfaces,simpcomp} and check whether we can reconstruct them, by first computing their face graphs and then applying our algorithm. We further construct random copies of these simplicial surfaces, construct the face graphs of the original simplicial surfaces and their copies, and check whether the set of candidate groups is the same, up to isomorphism.
\end{enumerate}

\subsection{Results}
\label{subsection:results}

The results of our computations are summarised in the following table. In this table, we provide the numbers of different face-transitive surfaces that we have computed in our investigation. For this purpose we denote the set of all face-transitive surfaces $X$ with vertex-face type $\vf(X)$ and with up to $1280$ faces
by $M_{\vf(X)}.$  

\begin{table}[H]
    \centering
    \begin{tabular}{|*{8}{r|}}
        \hline
        $\vf(X)$ &  $\vert M_{\vf(X)}\vert$ & Type $1$ & Type $2$ & Type $3$ & Type $4$\\
        & & & &  &\vspace{-0.4cm}\\
        \hline
        $(3,1)$ & $6\, 645$& $\times\quad$& $\times\quad$ & $\times\quad$ & $\times\quad $\\
        $(2,2)$ & $4\, 373$& $\times\quad$& $\times\quad$ & $\times\quad$ & $\times \quad$\\

        $(2,1)$& $10\, 491$ & $1\,160$& $528$ & $8\,803$  & $\times\quad $\\
        $(1,1)$&$60\, 733$ &$56\,592$& $3\,376$ & $555$ & $210$ \\
        $(1,2)$& $4\, 098$ & $\times\quad$ &$\times\quad$ &$\times\quad$ &$\times\quad$  \\
        $(1,3)$& $223$& $210$& $13$ & $\times\quad$ & $\times \quad$ \\
        $(1,6)$& $239$& $\times\quad$& $\times\quad$ & $\times\quad$ & $\times \quad$\\
        \hline
    \end{tabular}
    \caption{Number of face-transitive surfaces with different vertex-face types.}
    \label{tab:ft1}
\end{table}
Here, an entry ``$\times$'' means that the corresponding type of face-transitive surfaces is not defined. Hence, up to isomorphism, there are exactly $86\,802$ face-transitive surfaces with at most $1\,280$ faces, coming from $75\,555$ cubic node-transitive graphs. Out of these face-transitive surfaces, exactly $80\,243$ are orientable and $6\,559$ are non-orientable. In particular, this proves \Cref{thm:main}.

Let $X$ be a surface in our census. Then the Euler characteristic of $X$ satisfies $-550 \leq \chi(X)\leq -2,$ if $X$ is orientable, and $-512 \leq \chi(X)\leq 1$ if $X$ is non-orientable. 

Of the $80\,243$ orientable face-transitive surfaces, $647$ are spheres (i.e., have positive Euler characteristic). Of these, $638$ are suspensions over a cycle and hence fall into the only infinite family of face-transitive spheres (note that this family contains the simplicial octahedron). The other $9$ are sporadic examples. A further $57\,405$ are of genus $1$, that is, simplicial tori.

Of the $6\,559$ non-orientable surfaces, exactly $6$ have positive Euler characteristic (that is, they triangulate the real projective plane). Since the orientable double cover of such a face-transitive projective plane must show up in the list of face-transitive spheres, we can conclude, that this is a complete classification of face-transitive projective planes.
To see this, note that the suspension over a cycle does not have a fixed-point free involution (as a set) and hence cannot be the double cover of a face-transitive projective plane.

\begin{proposition}
    There are exactly $6$ face-transitive projective planes
    with $6$, $7$, $13$, $16$, $16$ and $31$ vertices, respectively.
\end{proposition}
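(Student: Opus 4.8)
The plan is to leverage the census (\Cref{thm:main}) together with the passage to orientable double covers. The census is complete up to $1280$ faces and, as recorded in \Cref{subsection:results}, contains exactly six non-orientable face-transitive surfaces of positive Euler characteristic. Since the real projective plane is the unique non-orientable surface with $\chi = 1$, each of these six surfaces triangulates $\mathbb{RP}^2$, and their face numbers $10,12,24,30,30,60$ translate, via $\chi(X) = |X_0| - |X_1| + |X_2| = 1$ together with $2|X_1| = 3|X_2|$, into vertex numbers $|X_0| = 1 + \tfrac12|X_2| = 6,7,13,16,16,31$. Thus the entire content of the statement is to promote ``exactly six with at most $1280$ faces'' to ``exactly six in total'', that is, to exclude face-transitive projective planes with more than $1280$ faces.

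First I would show that the orientable double cover $\widetilde{P}$ of a face-transitive projective plane $P$ is itself a face-transitive sphere. Writing $\tau$ for the fixed-point-free, orientation-reversing deck involution, I use that the orientation double cover is functorial, so every automorphism of $P$ lifts to $\widetilde{P}$; together with $\tau$ these lifts act transitively on the faces of $\widetilde{P}$. Indeed, given two faces of $\widetilde{P}$, an automorphism of $P$ matching their projections lifts to an automorphism carrying the first face either to the second or to its $\tau$-image, and composing with $\tau$ removes the ambiguity. Since $\chi(\widetilde{P}) = 2$ and $|\widetilde{P}_2| = 2|P_2|$, the cover $\widetilde{P}$ is a face-transitive sphere with twice as many faces as $P$.

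Next I would invoke the structure of face-transitive spheres recorded in \Cref{subsection:results}: up to isomorphism they comprise the single infinite family of suspensions over a cycle (the bipyramids) together with finitely many sporadic examples, of which exactly nine occur. The crux, which I expect to be the main obstacle, is to exclude the infinite family, i.e.\ to prove that no bipyramid is the orientable double cover of a simplicial surface. I would argue as follows. Let $B_n$ be the bipyramid over an $n$-cycle, with apexes $a,b$ and rim $v_0,\dots,v_{n-1}$, and suppose a fixed-point-free simplicial involution $\tau$ realises $B_n$ as a double cover. For $n \neq 4$ the apexes are the unique vertices of degree $n$, so $\tau$ preserves $\{a,b\}$ and, being fixed-point-free, swaps them; the induced action on the rim is then a fixed-point-free involution of the $n$-cycle, which forces $n$ to be even. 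In every case where such a $\tau$ exists (including the octahedron $n=4$, whose only free involution is the antipodal map), the two apexes become identified in the quotient, and the link of the resulting apex-class is the rim cycle traversed non-injectively. Hence that vertex has a link that is not a simple cycle, so the quotient is not a simplicial surface, a contradiction. Therefore $\widetilde{P}$ must be one of the nine sporadic spheres.

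Finally, since the sporadic face-transitive spheres are finite in number (the nine found in the census) and each has at most $1280$ faces, any face-transitive projective plane $P$ has at most $640$ faces and so already appears in the census; this yields completeness. It then remains to check, for each sporadic sphere, which fixed-point-free involutions in its automorphism group produce a simplicial, face-transitive quotient; this finite check (consistent with the census) returns precisely the six surfaces above, whose doubles have $20,24,48,60,60,120$ faces, the smallest being the icosahedron double-covering the six-vertex $\mathbb{RP}^2$ with Petersen face graph. This establishes the asserted vertex numbers $6,7,13,16,16,31$.
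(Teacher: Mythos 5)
Your proposal follows essentially the same route as the paper: read the six examples off the census, pass to the orientable double cover of a hypothetical face-transitive projective plane, show the cover is a face-transitive sphere, and then exclude the suspensions over a cycle (the unique infinite family of face-transitive spheres noted in \Cref{subsection:results}) so that the cover must be one of the finitely many sporadic spheres, which bounds the face number and places every face-transitive projective plane within census range. Where you genuinely diverge is in the exclusion step, and your version is the more careful one: the paper dismisses the suspensions with the claim that a suspension over a cycle ``does not have a fixed-point free involution (as a set)'', which taken literally fails for even cycles --- the antipodal map of the octahedron, and more generally the automorphism of the bipyramid $B_n$, $n$ even, swapping the apexes and shifting the rim by $n/2$, is fixed-point free on all simplices. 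Your argument instead shows that any such free involution must swap the two apexes, so the quotient identifies them into a single vertex whose link is the rim traversed non-injectively (equivalently, two quotient faces share the same vertex set), hence the quotient is not a simplicial surface; this repairs the paper's one-line justification while reaching the same conclusion. One caveat applies equally to you and to the paper: the final step assumes that \emph{every} face-transitive sphere, of arbitrary size, is either a suspension or one of the nine sporadic examples, whereas the census certifies this only up to $1280$ faces; your phrase ``each has at most $1280$ faces'' inherits exactly this leap. A self-contained argument would need an a priori structural fact, e.g.\ that automorphism groups of triangulated spheres act as finite subgroups of $O(3)$, so that all sufficiently large face-transitive spheres have dihedral-type symmetry and are suspensions. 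Modulo this shared assumption, your proof is correct, and your concluding vertex counts $6,7,13,16,16,31$ agree with the paper.
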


Even more interesting, none of the non-orientable face-transitive surfaces have Euler characteristic $0$. In fact, we have the following conjecture.

\begin{conjecture}
    \label{conj:KleinBottle}
    There is no face-transitive Klein bottle.
\end{conjecture}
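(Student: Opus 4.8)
The plan is to reduce the statement to a question about face-transitive tori through the orientable double cover, in analogy with the argument used above to finish the classification of face-transitive projective planes. Assume, towards a contradiction, that $X$ is a face-transitive Klein bottle. Then $\chi(X)=0$, so the identity $\chi(X)=\vert X_0\vert-\tfrac12\vert X_2\vert$ gives $\vert X_0\vert=\tfrac12\vert X_2\vert$ and hence average vertex degree $6$. Feeding this into the orbit structure of \Cref{lemma:3vertexorbits} and the orbit--stabiliser theorem produces, in each vertex-face type, a rigid constraint on the common vertex degrees $d_i$ per orbit: one orbit forces $d=6$; two orbits (one vertex per face in $V_1$, two in $V_2$) force $\tfrac1{d_1}+\tfrac2{d_2}=\tfrac12$; and three orbits force $\tfrac1{d_1}+\tfrac1{d_2}+\tfrac1{d_3}=\tfrac12$. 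These are exactly the Euclidean vertex patterns, so $X$ carries a flat (cone-)metric whose automorphisms act isometrically.

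The heart of the approach is the double-cover reduction. Let $\pi\colon\widetilde X\to X$ be the orientable double cover; since $X$ is a non-orientable surface with $\chi=0$, the cover $\widetilde X$ is a torus with $2\vert X_2\vert$ faces, and the nontrivial deck transformation $\tau$ is a fixed-point-free, orientation-reversing simplicial involution with $\widetilde X/\langle\tau\rangle\cong X$. Every $\phi\in\Aut(X)$ lifts to $\widetilde X$; writing $\widetilde G$ for the group generated by all such lifts together with $\tau$, one checks that $\tau$ is central in $\widetilde G$ and $\widetilde G/\langle\tau\rangle\cong\Aut(X)$. As $\Aut(X)$ is transitive on $X_2$ and $\tau$ swaps the two faces over each face of $X$, the group $\widetilde G$ acts transitively on the faces of $\widetilde X$. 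Thus a face-transitive Klein bottle yields a face-transitive torus $\widetilde X$ equipped with a central, fixed-point-free, orientation-reversing involution $\tau$ whose quotient recovers $X$, and it suffices to show that no face-transitive torus admits such an involution.

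Two of the thirteen sub-types are already settled by classical results: for $\vf(X)=(1,6)$ the surface is a regular map, and the Klein bottle supports no regular map \cite{CONDER2001224}, while the type-$1$ part of $\vf(X)=(1,3)$ is a chiral map and hence orientable. For the remaining, genuinely non-orientable possibilities I would invoke the structure theory of symmetric torus triangulations (cf.\ \cite{kuehnel}): realise $\widetilde X$ as a Euclidean quotient $\mathbb{R}^2/\Lambda$ so that $\widetilde G$ acts by affine isometries, whence $\tau$ becomes a map $x\mapsto Rx+t$ with $\det R=-1$ and $R^2=\mathrm{id}$. Freeness forces $\tau$ to be a glide reflection with nonzero glide vector, and centrality of $\tau$ in $\widetilde G$ constrains its axis to a lattice direction compatible with the triangulation. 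The aim is then to prove, uniformly over the admissible degree patterns, that such a glide reflection is never simultaneously fixed-point-free and compatible with a face-transitive action, so that either $\tau$ has a fixed point modulo $\Lambda$ or $X=\widetilde X/\langle\tau\rangle$ fails to be face-transitive.

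I expect this final step to be the principal obstacle. In the projective-plane case the admissible orientable covers formed the single explicit family of suspensions over a cycle, excluded by the one-line remark that such suspensions carry no fixed-point-free involution. Here, by contrast, the face-transitive tori split into several infinite families --- including non-equivelar patterns such as $(d_1,d_2,d_3)\in\{(4,8,8),(4,6,12),(3,12,12)\}$ and two-orbit patterns like $(d_1,d_2)\in\{(4,8),(3,12),(10,5)\}$ --- and for the low-symmetry types $\vf(X)\in\{(3,1),(2,2),(2,1)\}$ the automorphism group need not act as genuine Euclidean isometries free of cone points. Turning the geometric heuristic into a uniform, family-independent proof that no compatible free glide reflection exists, rather than the finite verification the census provides up to $1280$ faces, is the crux, and is presumably why the statement is recorded here only as a conjecture.
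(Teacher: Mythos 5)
You have proposed an attack on a statement that the paper itself does not prove: \Cref{conj:KleinBottle} is recorded as a conjecture, supported only by the census computation (none of the $86\,802$ face-transitive surfaces with up to $1280$ faces is a non-orientable surface of Euler characteristic $0$) and by the remark that the classification of semi-equivelar maps on the Klein bottle \cite{Datta17SemiEquivelar} is ``a canonical starting point for a possible proof''. So there is no paper proof to compare against, and your proposal must be judged on its own terms --- where, as you acknowledge, it is a programme rather than a proof. The parts you do carry out are sound: $\chi(X)=0$ together with \Cref{lemma:3vertexorbits} and double counting gives the degree constraints $d=6$, $\tfrac1{d_1}+\tfrac2{d_2}=\tfrac12$, and $\tfrac1{d_1}+\tfrac1{d_2}+\tfrac1{d_3}=\tfrac12$; the orientation double cover $\widetilde X$ is a torus, every automorphism of $X$ lifts (the orientation cover is characteristic), the deck involution $\tau$ is central in the group $\widetilde G$ of lifts, and $\widetilde G$ acts face-transitively on $\widetilde X$; and the exclusions of $\vf(X)=(1,6)$ (the Klein bottle carries no regular map \cite{CONDER2001224}) and of the chiral sub-case of $(1,3)$ (a rotary map on a non-orientable surface is reflexible, so chiral maps are orientable) are classical and correct.

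The genuine gap is the step you flag yourself, and it is not a technicality but the entire content of the conjecture after a standard reduction: you must show that \emph{no} face-transitive torus admits a central, fixed-point-free, orientation-reversing involution whose quotient remains face-transitive, and no argument for this is given. Worse, the geometric framework in which you propose to prove it is itself unjustified outside the equivelar case: realising $\widetilde X$ as $\mathbb{R}^2/\Lambda$ with $\widetilde G$ acting by affine isometries, so that $\tau$ becomes a glide reflection, requires a genuinely flat metric, whereas for the mixed degree patterns $(4,8,8)$, $(4,6,12)$, $(3,12,12)$, $(4,8)$, $(3,12)$, $(10,5)$, \ldots\ the equilateral metric has cone points, and for the low-symmetry types $(3,1)$, $(2,2)$, $(2,1)$ there is no a priori reason the action is affine at all --- a point you concede. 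There is also a quantification issue: the involution must be excluded for \emph{every} face-transitive subgroup $\widetilde G\leq\Aut(\widetilde X)$ leaving the relevant cycle double cover invariant, not merely for the full automorphism group (cf.\ \Cref{rem:nonisomorphic}, where one graph--group pair carries several non-isomorphic surfaces). In short: your reduction is correct and aligns with the proof avenue the paper itself suggests, but the crux is missing, so the proposal does not prove the statement --- which is consistent with its status in the paper as an open conjecture.
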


Note that there are highly-symmetric triangulations of the Klein bottle: There are multiple types of equivelar and semi-equivelar maps on the Klein bottle that could, in principle, include face-transitive triangulations of Klein bottles, see \cite{Datta17SemiEquivelar} for a full classification and a canonical starting point for a possible proof of \Cref{conj:KleinBottle}. Moreover, there are numerous triangulations of the Klein bottle with vertex-transitive symmetry, see for example \cite[Table 2.13]{LutzDiss}. These latter examples, however, all consist of two face orbits. 

\medskip

Another interesting class of face-transitive surfaces are those that are minimal, i.e., contain the smallest possible number of vertices. According to \cite{mapcoloring}, such a surface must satisfy
\begin{equation}
  \label{eq:heawood}
  \vert X_0\vert = \Big\lceil\frac{7+\sqrt{49-24\chi(X)}}{2}\Big\rceil. 
\end{equation}
Here, we see that our data base contains exactly five minimal triangulations, namely
\begin{itemize}
    \item the simplicial tetrahedron,
    \item the triangulation of the torus with $7$ vertices \cite{mintorus},
    \item the triangulation of the real projective plane with $6$ vertices,
    \item a triangulation of a non-orientable surface of Euler characteristic $-5$ with $10$ vertices and
    \item a triangulation of an orientable surface of Euler characteristic $-40$ with $20$ vertices.
\end{itemize}

Notably, for all five examples, the fraction on the right hand side of \Cref{eq:heawood} is already an integer.

Moreover, we can observe three missing subclasses of face-transitive surfaces in our data:
\begin{itemize}
    \item Non-orientable face-transitive surfaces $X$ with $\vf(X)=(2,1)$, where $\lambda_X(\Aut(X))$ forms a $(2,1)$-group of $\F(X)$ of type $1$,
    \item Non-orientable face-transitive surfaces $X$ with $\vf(X)=(1,1)$, where $\lambda_X(\Aut(X))$ forms a $(1,1)$-group of $\F(X)$ of type $1$,
    \item Non-orientable face-transitive surfaces $X$ with $\vf(X)=(1,3)$, where $\lambda_X(\Aut(X))$ forms a $(1,3)$-group of $\F(X)$ of type $2.$
\end{itemize}

Our census \cite{facetransitivesurfaces} contains a file with a full list of all surfaces grouped by orientability, Euler characteristic, and vertex-face types. Note that we write $(v,s).i$ for face-transitive surfaces $X$ with vertex-face type $(v,s),$ where the groups $\lambda_X(\Aut(X))$ are $(v,s)$-groups of type $i.$ 
\Cref{tab:smallest} contains the smallest number $n(\vf(X))$ of faces of a face-transitive surface with vertex-face type $\vf(X)$. Here, we consider a face-transitive surface $X$ as smallest, if for every face-transitive surface $Y$, where (1) $X$ and $Y$ have the same vertex-face type and (2) $\lambda_X(\Aut(X))$ and $\lambda_Y(\Aut(Y))$ are both $(v,s)$-groups of the same type, the inequality $\vert X_2\vert \leq \vert Y_2\vert$ holds.
\begin{table}[htb]
    \centering
    \begin{tabular}{|*{11}{c|}}
        \hline
          \bm{$\vf(X)$}& \bm{$(3,1)$} & \bm{$(2,2)$} & \bm{$(2,1).1$} & \bm{$(2,1).2$} & \bm{$(2,1).3$} \\
        & & & &  &\vspace{-0.4cm}\\  
        \hline
  \bm{$ n({\vf(X)}) $} & $24$& $6$& $20$ & $56 $ & $32$\\

        \hline
          \bm{$\vf(X)$}& \bm{$(1,1).1$} & \bm{$(1,1).2$} & \bm{$(1,1).3$ }& \bm{$(1,1).4$} &\bm{$(1,2)$ }\\
        & & &  &\vspace{-0.4cm}&\\  
        \hline
 \bm{ $n({\vf(X)}) $ }& $18$& $72$& $72$ & $112$&$24$\\
        \hline
         \bm{ $\vf(X)$}&  \bm{$(1,3).1$} & \bm{$(1,3).2$} & \bm{$(1,6)$ }&&\\
        & & & &  \vspace{-0.4cm}&\\  
        \hline
  \bm{$n({\vf(X)}) $} &  $14$& $144$ & $4$ &&\\
        \hline
    \end{tabular}
    \caption{Smallest face numbers of face-transitive surfaces for all $13$ automorphism group types. \label{tab:smallest}}
\end{table}

\begin{remark}
  \label{rem:nonisomorphic}
  As shown in \Cref{section:construction}, we construct face-transitive surfaces from cubic node-transitive graphs by computing suitable subgroups of the automorphism groups of the given cubic graphs. Note that a given graph and subgroup of its automorphism group does not always uniquely determine an isomorphism type of a face-transitive surface.

  For instance, there is a cubic node-transitive graph $\G$ on $32$ nodes with a subgroup $H\leq \Aut(\G)$ of order $\vert H\vert =6\cdot 32$ that yields two non-isomorphic face-transitive surfaces $X$ and $Y$, both satisfying $H=\lambda_X(\Aut(X))=\lambda_Y(\Aut(Y))$ and hence both of vertex-face type $\vf(X)=\vf(Y)=(1,6)$. The surface $X$ is the $12$-vertex triangulation of the orientable surface of genus $3$, better known as the dual Dyck map with Schl\"afli symbol $\{3,8\}_6$ \cite{CONDER2001224,weddslist}. Surface $Y$ is the $16$-vertex regular triangulation of the torus with Schl\"afli symbol $\{3,6\}_8$, see \cite{kuehnel,weddslist}. See \Cref{fig:G} for a picture of $\G$ and \Cref{fig:groups} for a picture of $X$ and $Y$.

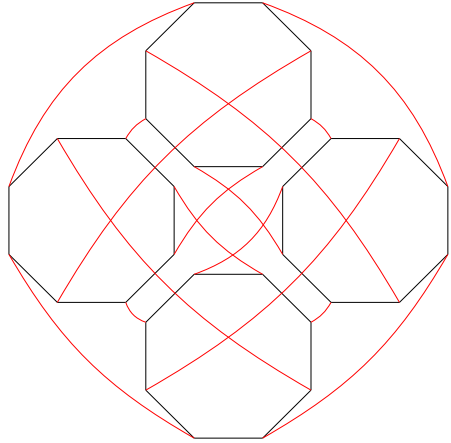
\begin{figure}[htb]
    \centering
  \scalebox{0.45}{\begin{tikzpicture}[vertexBall, edgeDouble=nolabels, faceStyle, scale=1]


\coordinate (V1) at (2.414+4.0, -1.0);
\coordinate (V2) at (2.414+4.0, 1.0);
\coordinate (V3) at (1.0+4.0, 2.414);
\coordinate (V4) at (-1.0+4.0, 2.414);
\coordinate (V5) at (-2.414+4.0, 1.0);
\coordinate (V6) at (-2.414+4.0, -1.0);
\coordinate (V7) at (-1.0+4.0, -2.414);
\coordinate (V8) at (1.0+4.0, -2.414);

\coordinate (V11) at (2.414, -1.0+4.0);
\coordinate (V12) at (2.414, 1.0+4.0);
\coordinate (V13) at (1.0, 2.414+4.0);
\coordinate (V14) at (-1.0, 2.414+4.0);
\coordinate (V15) at (-2.414, 1.0+4.0);
\coordinate (V16) at (-2.414, -1.0+4.0);
\coordinate (V17) at (-1.0, -2.414+4.0);
\coordinate (V18) at (1.0, -2.414+4.0);

\coordinate (V21) at (2.414-4.0, -1.0);
\coordinate (V22) at (2.414-4.0, 1.0);
\coordinate (V23) at (1.0-4.0, 2.414);
\coordinate (V24) at (-1.0-4.0, 2.414);
\coordinate (V25) at (-2.414-4.0, 1.0);
\coordinate (V26) at (-2.414-4.0, -1.0);
\coordinate (V27) at (-1.0-4.0, -2.414);
\coordinate (V28) at (1.0-4.0, -2.414);

\coordinate (V31) at (2.414, -1.0-4.0);
\coordinate (V32) at (2.414, 1.0-4.0);
\coordinate (V33) at (1.0, 2.414-4.0);
\coordinate (V34) at (-1.0, 2.414-4.0);
\coordinate (V35) at (-2.414, 1.0-4.0);
\coordinate (V36) at (-2.414, -1.0-4.0);
\coordinate (V37) at (-1.0, -2.414-4.0);
\coordinate (V38) at (1.0, -2.414-4.0);

\foreach \i/\j in {1/2,2/3,3/4,4/5,5/6,6/7,7/8,8/1} {
    \draw (V\i) -- (V\j);
}

\foreach \i/\j in {11/12,12/13,13/14,14/15,15/16,16/17,17/18,18/11} {
    \draw (V\i) -- (V\j);
}

\foreach \i/\j in {21/22,22/23,23/24,24/25,25/26,26/27,27/28,28/21} {
    \draw (V\i) -- (V\j);
}

\foreach \i/\j in {31/32,32/33,33/34,34/35,35/36,36/37,37/38,38/31} {
    \draw (V\i) -- (V\j);
}

\draw[red] (V2) to[bend right=25] (V13);
\draw[red] (V4) to[bend right=15] (V11);
\draw[red] (V6) to[bend right=15] (V17);
\draw[red] (V8) to[bend right=15] (V15);

\draw[red] (V16) to[bend right=15] (V23);
\draw[red] (V18) to[bend right=15] (V21);
\draw[red] (V12) to[bend right=15] (V27);
\draw[red] (V14) to[bend right=25] (V25);

\draw[red] (V22) to[bend right=15] (V33);
\draw[red] (V24) to[bend right=15] (V31);
\draw[red] (V26) to[bend right=15] (V37);
\draw[red] (V28) to[bend right=25] (V35);

\draw[red] (V36) to[bend right=15] (V3);
\draw[red] (V38) to[bend right=15] (V1);
\draw[red] (V32) to[bend right=15] (V7);
\draw[red] (V34) to[bend right=25] (V5);


\end{tikzpicture}}
     \caption{The highly-symmetric $32$-node face graph of both $\{3, 6\}_8$ and $\{3, 8\}_6$. Note the similarities to the generalised $m$-gon graphs from \Cref{section:problems}}
    \label{fig:G}
\end{figure}

  Note that condition $H=\lambda_X(\Aut(X))=\lambda_Y(\Aut(Y))$ requires that every automorphism in $H$ acting on $\F(X) = \F(Y)$ leaves both cycle double covers (the one defining $X$ and the one defining $Y$) invariant. Inspecting $\G$, the $12$ cycles of length $8$ are given by the four octagons, and eight cycles passing through all four octagons cyclically. The cycles of length $6$ are simply given by the set of shortest cycles in $\G$.

  More generally, there are $6\,813$ tuples $(\G,H)$ such that $\G=(V,E)$ is a node-transitive cubic graph with $\vert V\vert\leq 1280$ that yields multiple non-isomorphic face-transitive surfaces $X$ with $\Aut(X) = H.$

\begin{figure}[H]
    \centering
  \scalebox{0.65}{\begin{tikzpicture}[vertexBall, edgeDouble, faceStyle, scale=2]

\coordinate (V1_1) at (-2.499999999999999, -0.8660254037844395);
\coordinate (V2_1) at (-1.5, -0.866025403784439);
\coordinate (V3_1) at (-0.4999999999999998, -0.8660254037844387);
\coordinate (V4_1) at (-0.9999999999999996, -1.732050807568877);
\coordinate (V5_1) at (-1.999999999999999, -1.732050807568878);
\coordinate (V6_1) at (0., -1.732050807568877);
\coordinate (V7_1) at (-2.999999999999999, -1.732050807568878);
\coordinate (V7_2) at (1., -1.732050807568877);
\coordinate (V8_1) at (0.5000000000000001, -2.598076211353316);
\coordinate (V9_1) at (-2.499999999999998, -2.598076211353316);
\coordinate (V9_2) at (1.5, -2.598076211353316);
\coordinate (V10_1) at (-1., 0.);
\coordinate (V10_2) at (1., -3.464101615137754);
\coordinate (V11_1) at (0.5000000000000001, -0.8660254037844386);
\coordinate (V11_2) at (-3.499999999999999, -0.8660254037844399);
\coordinate (V12_1) at (-2., 0.);
\coordinate (V12_2) at (0., -3.464101615137754);
\coordinate (V13_1) at (-0.4999999999999998, -2.598076211353316);
\coordinate (V14_1) at (0., 0.);
\coordinate (V14_2) at (2., -3.464101615137754);
\coordinate (V15_1) at (-4., 0.);
\coordinate (V15_2) at (-2.999999999999999, 0.);
\coordinate (V15_3) at (-1., -3.464101615137754);
\coordinate (V16_1) at (0.4999999999999999, 0.8660254037844386);
\coordinate (V16_2) at (-1.499999999999999, -2.598076211353316);
\coordinate (V14_3) at (-2.0, -3.464101615137754);

\fill[face]  (V15_3) -- (V16_2) -- (V14_3) -- cycle;
\node[faceLabel] at (barycentric cs:V15_3=1,V16_2=1,V14_3=1) {$1$};

\draw[edge] (V16_2) -- node[edgeLabel] {$47$} (V14_3);
\draw[edge] (V15_3) -- node[edgeLabel] {$46$} (V14_3);

\fill[face]  (V13_1) -- (V15_3) -- (V12_2) -- cycle;
\node[faceLabel] at (barycentric cs:V13_1=1,V15_3=1,V12_2=1) {$2$};
\fill[face]  (V15_2) -- (V11_2) -- (V15_1) -- cycle;
\node[faceLabel] at (barycentric cs:V15_1=1,V11_2=1,V15_2=1) {$3$};
\fill[face]  (V8_1) -- (V10_2) -- (V9_2) -- cycle;
\node[faceLabel] at (barycentric cs:V8_1=1,V10_2=1,V9_2=1) {$4$};
\fill[face]  (V6_1) -- (V7_2) -- (V11_1) -- cycle;
\node[faceLabel] at (barycentric cs:V6_1=1,V7_2=1,V11_1=1) {$5$};
\fill[face]  (V7_1) -- (V9_1) -- (V5_1) -- cycle;
\node[faceLabel] at (barycentric cs:V7_1=1,V9_1=1,V5_1=1) {$6$};
\fill[face]  (V5_1) -- (V16_2) -- (V4_1) -- cycle;
\node[faceLabel] at (barycentric cs:V5_1=1,V16_2=1,V4_1=1) {$7$};
\fill[face]  (V6_1) -- (V13_1) -- (V8_1) -- cycle;
\node[faceLabel] at (barycentric cs:V6_1=1,V13_1=1,V8_1=1) {$8$};
\fill[face]  (V6_1) -- (V8_1) -- (V7_2) -- cycle;
\node[faceLabel] at (barycentric cs:V6_1=1,V8_1=1,V7_2=1) {$9$};
\fill[face]  (V3_1) -- (V14_1) -- (V10_1) -- cycle;
\node[faceLabel] at (barycentric cs:V3_1=1,V14_1=1,V10_1=1) {$10$};
\fill[face]  (V10_1) -- (V12_1) -- (V2_1) -- cycle;
\node[faceLabel] at (barycentric cs:V10_1=1,V12_1=1,V2_1=1) {$11$};
\fill[face]  (V3_1) -- (V10_1) -- (V2_1) -- cycle;
\node[faceLabel] at (barycentric cs:V3_1=1,V10_1=1,V2_1=1) {$12$};
\fill[face]  (V12_1) -- (V15_2) -- (V1_1) -- cycle;
\node[faceLabel] at (barycentric cs:V12_1=1,V15_2=1,V1_1=1) {$13$};
\fill[face]  (V8_1) -- (V13_1) -- (V12_2) -- cycle;
\node[faceLabel] at (barycentric cs:V8_1=1,V13_1=1,V12_2=1) {$14$};
\fill[face]  (V3_1) -- (V6_1) -- (V11_1) -- cycle;
\node[faceLabel] at (barycentric cs:V3_1=1,V6_1=1,V11_1=1) {$15$};
\fill[face]  (V1_1) -- (V7_1) -- (V5_1) -- cycle;
\node[faceLabel] at (barycentric cs:V1_1=1,V7_1=1,V5_1=1) {$16$};
\fill[face]  (V2_1) -- (V12_1) -- (V1_1) -- cycle;
\node[faceLabel] at (barycentric cs:V2_1=1,V12_1=1,V1_1=1) {$17$};
\fill[face]  (V13_1) -- (V16_2) -- (V15_3) -- cycle;
\node[faceLabel] at (barycentric cs:V13_1=1,V16_2=1,V15_3=1) {$18$};
\fill[face]  (V3_1) -- (V4_1) -- (V6_1) -- cycle;
\node[faceLabel] at (barycentric cs:V3_1=1,V4_1=1,V6_1=1) {$19$};
\fill[face]  (V14_1) -- (V3_1) -- (V11_1) -- cycle;
\node[faceLabel] at (barycentric cs:V14_1=1,V3_1=1,V11_1=1) {$20$};
\fill[face]  (V3_1) -- (V2_1) -- (V4_1) -- cycle;
\node[faceLabel] at (barycentric cs:V3_1=1,V2_1=1,V4_1=1) {$21$};
\fill[face]  (V1_1) -- (V15_2) -- (V11_2) -- cycle;
\node[faceLabel] at (barycentric cs:V1_1=1,V15_2=1,V11_2=1) {$22$};
\fill[face]  (V8_1) -- (V9_2) -- (V7_2) -- cycle;
\node[faceLabel] at (barycentric cs:V8_1=1,V9_2=1,V7_2=1) {$23$};
\fill[face]  (V4_1) -- (V16_2) -- (V13_1) -- cycle;
\node[faceLabel] at (barycentric cs:V4_1=1,V16_2=1,V13_1=1) {$24$};
\fill[face]  (V10_2) -- (V14_2) -- (V9_2) -- cycle;
\node[faceLabel] at (barycentric cs:V10_2=1,V14_2=1,V9_2=1) {$25$};
\fill[face]  (V2_1) -- (V5_1) -- (V4_1) -- cycle;
\node[faceLabel] at (barycentric cs:V2_1=1,V5_1=1,V4_1=1) {$26$};
\fill[face]  (V14_3) -- (V16_2) -- (V9_1) -- cycle;
\node[faceLabel] at (barycentric cs:V14_3=1,V16_2=1,V9_1=1) {$27$};
\fill[face]  (V8_1) -- (V12_2) -- (V10_2) -- cycle;
\node[faceLabel] at (barycentric cs:V8_1=1,V12_2=1,V10_2=1) {$28$};
\fill[face]  (V2_1) -- (V1_1) -- (V5_1) -- cycle;
\node[faceLabel] at (barycentric cs:V2_1=1,V1_1=1,V5_1=1) {$29$};
\fill[face]  (V4_1) -- (V13_1) -- (V6_1) -- cycle;
\node[faceLabel] at (barycentric cs:V4_1=1,V13_1=1,V6_1=1) {$30$};
\fill[face]  (V1_1) -- (V11_2) -- (V7_1) -- cycle;
\node[faceLabel] at (barycentric cs:V1_1=1,V11_2=1,V7_1=1) {$31$};
\fill[face]  (V9_1) -- (V16_2) -- (V5_1) -- cycle;
\node[faceLabel] at (barycentric cs:V9_1=1,V16_2=1,V5_1=1) {$32$};

\draw[edge] (V1_1) -- node[edgeLabel] {$1$} (V2_1);
\draw[edge] (V5_1) -- node[edgeLabel] {$2$} (V1_1);
\draw[edge] (V7_1) -- node[edgeLabel] {$3$} (V1_1);
\draw[edge] (V11_2) -- node[edgeLabel] {$4$} (V1_1);
\draw[edge] (V1_1) -- node[edgeLabel] {$5$} (V12_1);
\draw[edge] (V15_2) -- node[edgeLabel] {$6$} (V1_1);
\draw[edge] (V2_1) -- node[edgeLabel] {$7$} (V3_1);
\draw[edge] (V4_1) -- node[edgeLabel] {$8$} (V2_1);
\draw[edge] (V5_1) -- node[edgeLabel] {$9$} (V2_1);
\draw[edge] (V2_1) -- node[edgeLabel] {$10$} (V10_1);
\draw[edge] (V12_1) -- node[edgeLabel] {$11$} (V2_1);
\draw[edge] (V4_1) -- node[edgeLabel] {$12$} (V3_1);
\draw[edge] (V6_1) -- node[edgeLabel] {$13$} (V3_1);
\draw[edge] (V10_1) -- node[edgeLabel] {$14$} (V3_1);
\draw[edge] (V11_1) -- node[edgeLabel] {$15$} (V3_1);
\draw[edge] (V3_1) -- node[edgeLabel] {$16$} (V14_1);
\draw[edge] (V4_1) -- node[edgeLabel] {$17$} (V5_1);
\draw[edge] (V6_1) -- node[edgeLabel] {$18$} (V4_1);
\draw[edge] (V13_1) -- node[edgeLabel] {$19$} (V4_1);
\draw[edge] (V4_1) -- node[edgeLabel] {$20$} (V16_2);
\draw[edge] (V5_1) -- node[edgeLabel] {$21$} (V7_1);
\draw[edge] (V5_1) -- node[edgeLabel] {$22$} (V9_1);
\draw[edge] (V16_2) -- node[edgeLabel] {$23$} (V5_1);
\draw[edge] (V7_2) -- node[edgeLabel] {$24$} (V6_1);
\draw[edge] (V8_1) -- node[edgeLabel] {$25$} (V6_1);
\draw[edge] (V11_1) -- node[edgeLabel] {$26$} (V6_1);
\draw[edge] (V6_1) -- node[edgeLabel] {$27$} (V13_1);
\draw[edge] (V7_2) -- node[edgeLabel] {$28$} (V8_1);
\draw[edge] (V9_1) -- node[edgeLabel] {$29$} (V7_1);
\draw[edge] (V7_2) -- node[edgeLabel] {$29$} (V9_2);
\draw[edge] (V7_1) -- node[edgeLabel] {$30$} (V11_2);
\draw[edge] (V11_1) -- node[edgeLabel] {$30$} (V7_2);
\draw[edge] (V9_2) -- node[edgeLabel] {$31$} (V8_1);
\draw[edge] (V10_2) -- node[edgeLabel] {$32$} (V8_1);
\draw[edge] (V12_2) -- node[edgeLabel] {$33$} (V8_1);
\draw[edge] (V8_1) -- node[edgeLabel] {$34$} (V13_1);
\draw[edge] (V9_2) -- node[edgeLabel] {$35$} (V10_2);
\draw[edge] (V9_2) -- node[edgeLabel] {$36$} (V14_2);
\draw[edge] (V16_2) -- node[edgeLabel] {$37$} (V9_1);
\draw[edge] (V12_1) -- node[edgeLabel] {$38$} (V10_1);
\draw[edge] (V10_2) -- node[edgeLabel] {$38$} (V12_2);
\draw[edge] (V10_1) -- node[edgeLabel] {$39$} (V14_1);
\draw[edge] (V14_2) -- node[edgeLabel] {$39$} (V10_2);
\draw[edge] (V11_1) -- node[edgeLabel] {$40$} (V14_1);
\draw[edge] (V15_1) -- node[edgeLabel] {$40$} (V11_2);
\draw[edge] (V11_2) -- node[edgeLabel] {$41$} (V15_2);
\draw[edge] (V12_2) -- node[edgeLabel] {$42$} (V13_1);
\draw[edge] (V15_2) -- node[edgeLabel] {$43$} (V12_1);
\draw[edge] (V12_2) -- node[edgeLabel] {$43$} (V15_3);
\draw[edge] (V15_3) -- node[edgeLabel] {$44$} (V13_1);
\draw[edge] (V13_1) -- node[edgeLabel] {$45$} (V16_2);
\draw[edge] (V15_1) -- node[edgeLabel] {$46$} (V15_2);
\draw[edge] (V15_3) -- node[edgeLabel] {$48$} (V16_2);
\draw[edge] (V16_2) -- node[edgeLabel] {$47$} (V14_3);
\draw[edge] (V15_3) -- node[edgeLabel] {$46$} (V14_3);
\draw[edge] (V9_1) -- node[edgeLabel] {$36$} (V14_3);

\vertexLabelR{V1_1}{left}{$1$}
\vertexLabelR{V2_1}{left}{$2$}
\vertexLabelR{V3_1}{left}{$3$}
\vertexLabelR{V4_1}{left}{$4$}
\vertexLabelR{V5_1}{left}{$5$}
\vertexLabelR{V6_1}{left}{$6$}
\vertexLabelR{V7_1}{left}{$7$}
\vertexLabelR{V7_2}{left}{$7$}
\vertexLabelR{V8_1}{left}{$8$}
\vertexLabelR{V9_1}{left}{$9$}
\vertexLabelR{V9_2}{left}{$9$}
\vertexLabelR{V10_1}{left}{$10$}
\vertexLabelR{V10_2}{left}{$10$}
\vertexLabelR{V11_1}{left}{$11$}
\vertexLabelR{V11_2}{left}{$11$}
\vertexLabelR{V12_1}{left}{$12$}
\vertexLabelR{V12_2}{left}{$12$}
\vertexLabelR{V13_1}{left}{$13$}
\vertexLabelR{V14_1}{left}{$14$}
\vertexLabelR{V14_2}{left}{$14$}
\vertexLabelR{V15_1}{left}{$14$}
\vertexLabelR{V15_2}{left}{$15$}
\vertexLabelR{V15_3}{left}{$15$}
\vertexLabelR{V16_2}{left}{$16$}
\vertexLabelR{V14_3}{left}{$14$}

\end{tikzpicture}}
  \raisebox{-1.0cm}{\scalebox{0.6}{\begin{tikzpicture}[vertexBall, edgeDouble, faceStyle, scale=2.5]


\coordinate (V0) at (0.0, 0.0);
\coordinate (V1) at (1.207, -.5);
\coordinate (V2) at (1.207, .5);
\coordinate (V3) at (.5, 1.207);
\coordinate (V4) at (-.5, 1.207);
\coordinate (V5) at (-1.207,.5);
\coordinate (V6) at (-1.207,-.5);
\coordinate (V7) at (-.5, -1.207);
\coordinate (V8) at (.5, -1.207);

\coordinate (V9) at (1.7, 0.0);
\coordinate (V10) at (1.207,1.207);
\coordinate (V11) at (0.0, 1.7);
\coordinate (V12) at (-1.207,1.207);
\coordinate (V13) at (-1.7,0.0);
\coordinate (V14) at (-1.207,-1.207);
\coordinate (V15) at (0.0, -1.7);
\coordinate (V16) at (1.207,-1.207);

\coordinate (V17) at (2.2, -.8);
\coordinate (V18) at (2.1,.85);
\coordinate (V19) at (.8, 2.2);
\coordinate (V20) at (-.85,2.1);
\coordinate (V21) at (-2.2,.8);
\coordinate (V22) at (-2.1,-.85);
\coordinate (V23) at (-.8, -2.2);
\coordinate (V24) at (.85,-2.1);

\coordinate (V25) at (1.7, -1.15);
\coordinate (V26) at (2.0, 0.3);
\coordinate (V27) at (1.15, 1.7);
\coordinate (V28) at (-.3,2.0);
\coordinate (V29) at (-1.7,1.15);
\coordinate (V30) at (-2.0,-.3);
\coordinate (V31) at (-1.15, -1.7);
\coordinate (V32) at (.3,-2.0);

\fill[face]  (V0) -- (V1) -- (V2) -- cycle;
\node[faceLabel] at (barycentric cs:V0=1,V1=1,V2=1) {$1$};
\fill[face]  (V0) -- (V2) -- (V3) -- cycle;
\node[faceLabel] at (barycentric cs:V0=1,V2=1,V3=1) {$2$};
\fill[face]  (V0) -- (V3) -- (V4) -- cycle;
\node[faceLabel] at (barycentric cs:V0=1,V3=1,V4=1) {$3$};
\fill[face]  (V0) -- (V4) -- (V5) -- cycle;
\node[faceLabel] at (barycentric cs:V0=1,V4=1,V5=1) {$4$};
\fill[face]  (V0) -- (V5) -- (V6) -- cycle;
\node[faceLabel] at (barycentric cs:V0=1,V5=1,V6=1) {$5$};
\fill[face]  (V0) -- (V6) -- (V7) -- cycle;
\node[faceLabel] at (barycentric cs:V0=1,V6=1,V7=1) {$6$};
\fill[face]  (V0) -- (V7) -- (V8) -- cycle;
\node[faceLabel] at (barycentric cs:V0=1,V7=1,V8=1) {$7$};
\fill[face]  (V0) -- (V8) -- (V1) -- cycle;
\node[faceLabel] at (barycentric cs:V0=1,V8=1,V1=1) {$8$};

\fill[face]  (V1) -- (V2) -- (V9) -- cycle;
\node[faceLabel] at (barycentric cs:V1=1,V2=1,V9=2) {$9$};
\fill[face]  (V2) -- (V3) -- (V10) -- cycle;
\node[faceLabel] at (barycentric cs:V2=1,V3=1,V10=2) {$10$};
\fill[face]  (V3) -- (V4) -- (V11) -- cycle;
\node[faceLabel] at (barycentric cs:V3=1,V4=1,V11=2) {$11$};
\fill[face]  (V4) -- (V5) -- (V12) -- cycle;
\node[faceLabel] at (barycentric cs:V4=1,V5=1,V12=2) {$12$};
\fill[face]  (V5) -- (V6) -- (V13) -- cycle;
\node[faceLabel] at (barycentric cs:V5=1,V6=1,V13=2) {$13$};
\fill[face]  (V6) -- (V7) -- (V14) -- cycle;
\node[faceLabel] at (barycentric cs:V6=1,V7=1,V14=2) {$14$};
\fill[face]  (V7) -- (V8) -- (V15) -- cycle;
\node[faceLabel] at (barycentric cs:V7=1,V8=1,V15=2) {$15$};
\fill[face]  (V8) -- (V1) -- (V16) -- cycle;
\node[faceLabel] at (barycentric cs:V8=1,V1=1,V16=2) {$16$};

\fill[face]  (V1) -- (V9) -- (V17) -- cycle;
\node[faceLabel] at (barycentric cs:V1=1,V9=1,V17=1) {$17$};
\fill[face]  (V2) -- (V10) -- (V18) -- cycle;
\node[faceLabel] at (barycentric cs:V2=1,V10=1,V18=1) {$18$};
\fill[face]  (V3) -- (V11) -- (V19) -- cycle;
\node[faceLabel] at (barycentric cs:V3=1,V11=1,V19=1) {$19$};
\fill[face]  (V4) -- (V12) -- (V20) -- cycle;
\node[faceLabel] at (barycentric cs:V4=1,V12=1,V20=1) {$20$};
\fill[face]  (V5) -- (V13) -- (V21) -- cycle;
\node[faceLabel] at (barycentric cs:V5=1,V13=1,V21=1) {$21$};
\fill[face]  (V6) -- (V14) -- (V22) -- cycle;
\node[faceLabel] at (barycentric cs:V6=1,V14=1,V22=1) {$22$};
\fill[face]  (V7) -- (V15) -- (V23) -- cycle;
\node[faceLabel] at (barycentric cs:V7=1,V15=1,V23=1) {$23$};
\fill[face]  (V8) -- (V16) -- (V24) -- cycle;
\node[faceLabel] at (barycentric cs:V8=1,V16=1,V24=1) {$24$};

\fill[face]  (V1) -- (V17) -- (V25) -- cycle;
\node[faceLabel] at (barycentric cs:V1=0.7,V25=1,V17=1) {$25$};
\fill[face]  (V2) -- (V18) -- (V26) -- cycle;
\node[faceLabel] at (barycentric cs:V2=0.7,V26=1,V18=1) {$26$};
\fill[face]  (V3) -- (V19) -- (V27) -- cycle;
\node[faceLabel] at (barycentric cs:V3=0.7,V27=1,V19=1) {$27$};
\fill[face]  (V4) -- (V20) -- (V28) -- cycle;
\node[faceLabel] at (barycentric cs:V4=0.7,V28=1,V20=1) {$28$};
\fill[face]  (V5) -- (V21) -- (V29) -- cycle;
\node[faceLabel] at (barycentric cs:V5=0.7,V29=1,V21=1) {$29$};
\fill[face]  (V6) -- (V22) -- (V30) -- cycle;
\node[faceLabel] at (barycentric cs:V6=0.7,V30=1,V22=1) {$30$};
\fill[face]  (V7) -- (V23) -- (V31) -- cycle;
\node[faceLabel] at (barycentric cs:V7=0.7,V31=1,V23=1) {$31$};
\fill[face]  (V8) -- (V24) -- (V32) -- cycle;
\node[faceLabel] at (barycentric cs:V8=0.7,V32=1,V24=1) {$32$};

\draw[edge] (V0) -- node[edgeLabel] {$1$} (V1);
\draw[edge] (V0) -- node[edgeLabel] {$2$} (V2);
\draw[edge] (V0) -- node[edgeLabel] {$3$} (V3);
\draw[edge] (V0) -- node[edgeLabel] {$4$} (V4);
\draw[edge] (V0) -- node[edgeLabel] {$5$} (V5);
\draw[edge] (V0) -- node[edgeLabel] {$6$} (V6);
\draw[edge] (V0) -- node[edgeLabel] {$7$} (V7);
\draw[edge] (V0) -- node[edgeLabel] {$8$} (V8);
\draw[edge] (V1) -- node[edgeLabel] {$9$} (V2);
\draw[edge] (V2) -- node[edgeLabel] {$10$} (V3);
\draw[edge] (V3) -- node[edgeLabel] {$11$} (V4);
\draw[edge] (V4) -- node[edgeLabel] {$12$} (V5);
\draw[edge] (V5) -- node[edgeLabel] {$13$} (V6);
\draw[edge] (V6) -- node[edgeLabel] {$14$} (V7);
\draw[edge] (V7) -- node[edgeLabel] {$15$} (V8);
\draw[edge] (V8) -- node[edgeLabel] {$16$} (V1);

\draw[edge] (V9) -- node[edgeLabel] {$17$} (V1);
\draw[edge] (V9) -- node[edgeLabel] {$18$} (V2);
\draw[edge] (V10) -- node[edgeLabel] {$19$} (V2);
\draw[edge] (V10) -- node[edgeLabel] {$20$} (V3);
\draw[edge] (V11) -- node[edgeLabel] {$21$} (V3);
\draw[edge] (V11) -- node[edgeLabel] {$22$} (V4);
\draw[edge] (V12) -- node[edgeLabel] {$23$} (V4);
\draw[edge] (V12) -- node[edgeLabel] {$24$} (V5);
\draw[edge] (V13) -- node[edgeLabel] {$25$} (V5);
\draw[edge] (V13) -- node[edgeLabel] {$26$} (V6);
\draw[edge] (V14) -- node[edgeLabel] {$27$} (V6);
\draw[edge] (V14) -- node[edgeLabel] {$28$} (V7);
\draw[edge] (V15) -- node[edgeLabel] {$29$} (V7);
\draw[edge] (V15) -- node[edgeLabel] {$30$} (V8);
\draw[edge] (V16) -- node[edgeLabel] {$31$} (V8);
\draw[edge] (V16) -- node[edgeLabel] {$32$} (V1);

\draw[edge] (V1) -- node[edgeLabel] {$33$} (V17);
\draw[edge] (V9) -- node[edgeLabel] {$22$} (V17);
\draw[edge] (V2) -- node[edgeLabel] {$34$} (V18);
\draw[edge] (V10) -- node[edgeLabel] {$24$} (V18);
\draw[edge] (V3) -- node[edgeLabel] {$35$} (V19);
\draw[edge] (V11) -- node[edgeLabel] {$26$} (V19);
\draw[edge] (V4) -- node[edgeLabel] {$36$} (V20);
\draw[edge] (V12) -- node[edgeLabel] {$28$} (V20);
\draw[edge] (V5) -- node[edgeLabel] {$37$} (V21);
\draw[edge] (V13) -- node[edgeLabel] {$30$} (V21);
\draw[edge] (V6) -- node[edgeLabel] {$38$} (V22);
\draw[edge] (V14) -- node[edgeLabel] {$32$} (V22);
\draw[edge] (V7) -- node[edgeLabel] {$39$} (V23);
\draw[edge] (V15) -- node[edgeLabel] {$18$} (V23);
\draw[edge] (V8) -- node[edgeLabel] {$40$} (V24);
\draw[edge] (V16) -- node[edgeLabel] {$20$} (V24);

\draw[edge] (V1) -- node[edgeLabel] {$41$} (V25);
\draw[edge] (V17) -- node[edgeLabel] {$44$} (V25);
\draw[edge] (V2) -- node[edgeLabel] {$42$} (V26);
\draw[edge] (V18) -- node[edgeLabel] {$45$} (V26);
\draw[edge] (V3) -- node[edgeLabel] {$43$} (V27);
\draw[edge] (V19) -- node[edgeLabel] {$46$} (V27);
\draw[edge] (V4) -- node[edgeLabel] {$44$} (V28);
\draw[edge] (V20) -- node[edgeLabel] {$47$} (V28);
\draw[edge] (V5) -- node[edgeLabel] {$45$} (V29);
\draw[edge] (V21) -- node[edgeLabel] {$48$} (V29);
\draw[edge] (V6) -- node[edgeLabel] {$46$} (V30);
\draw[edge] (V22) -- node[edgeLabel] {$41$} (V30);
\draw[edge] (V7) -- node[edgeLabel] {$47$} (V31);
\draw[edge] (V23) -- node[edgeLabel] {$42$} (V31);
\draw[edge] (V8) -- node[edgeLabel] {$48$} (V32);
\draw[edge] (V24) -- node[edgeLabel] {$43$} (V32);

\vertexLabelR{V0}{left}{$1$}
\vertexLabelR{V1}{left}{$2$}
\vertexLabelR{V2}{left}{$3$}
\vertexLabelR{V3}{left}{$4$}
\vertexLabelR{V4}{left}{$5$}
\vertexLabelR{V5}{left}{$6$}
\vertexLabelR{V6}{left}{$7$}
\vertexLabelR{V7}{left}{$8$}
\vertexLabelR{V8}{left}{$9$}

\vertexLabelR{V9}{left}{$10$}
\vertexLabelR{V10}{left}{$11$}
\vertexLabelR{V11}{left}{$10$}
\vertexLabelR{V12}{left}{$11$}
\vertexLabelR{V13}{left}{$10$}
\vertexLabelR{V14}{left}{$11$}
\vertexLabelR{V15}{left}{$10$}
\vertexLabelR{V16}{left}{$11$}

\vertexLabelR{V17}{left}{$5$}
\vertexLabelR{V18}{left}{$6$}
\vertexLabelR{V19}{left}{$7$}
\vertexLabelR{V20}{left}{$8$}
\vertexLabelR{V21}{left}{$9$}
\vertexLabelR{V22}{left}{$2$}
\vertexLabelR{V23}{left}{$3$}
\vertexLabelR{V24}{left}{$4$}

\vertexLabelR{V25}{left}{$12$}
\vertexLabelR{V26}{left}{$12$}
\vertexLabelR{V27}{left}{$12$}
\vertexLabelR{V28}{left}{$12$}
\vertexLabelR{V29}{left}{$12$}
\vertexLabelR{V30}{left}{$12$}
\vertexLabelR{V31}{left}{$12$}
\vertexLabelR{V32}{left}{$12$}

\end{tikzpicture}}}
    \caption{The $16$-vertex regular triangulation
of the torus with Schl\"afli symbol $\{3, 6\}_8$ (left) and the dual Dyck map with Schl\"afli symbol $\{3, 8\}_6$ (right)}
    \label{fig:groups}
\end{figure}
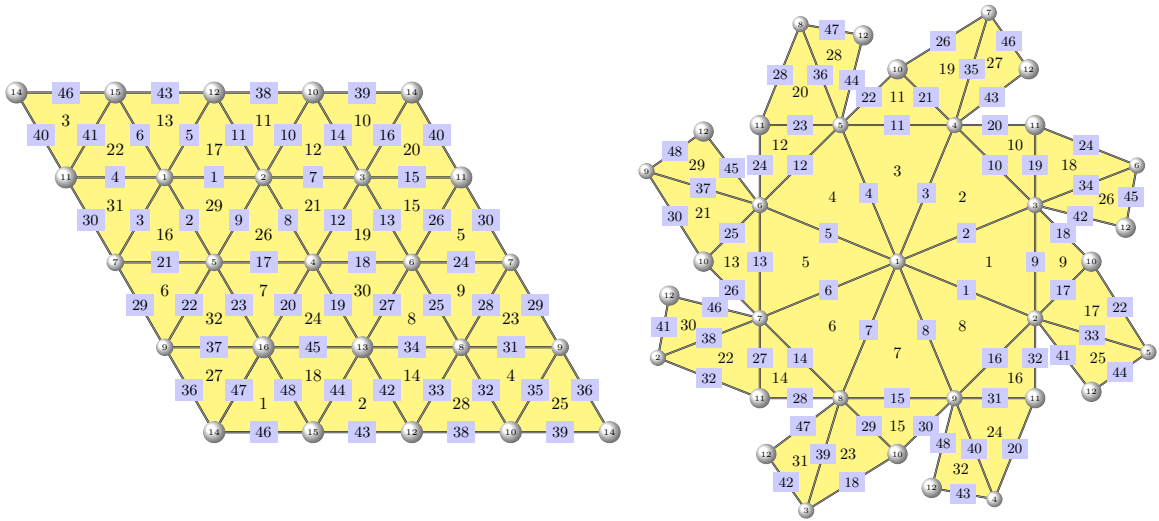

\end{remark}

\subsection{Future directions}
\label{sec:summary}

We conclude this paper with a small number of questions and remarks about possible generalisations of our work. 
\begin{enumerate}
    \item Can we exploit larger databases of node-transitive cubic graphs to construct a larger census of face-transitive surfaces?

    Theoretically, this is likely computationally feasible. However, to the best of our knowledge, there is no database containing a complete list of node-transitive cubic graphs with more than $1280$ nodes. As a partial solution, we can, for instance, use \texttt{GraphSym}'s list of arc-transitive cubic graphs with at most $10\,000$ nodes, to compute larger examples of face-transitive surfaces.  
    
    \item Can the ideas developed in this paper be adapted to construct classifications of other classes of highly-symmetric simplicial surfaces?

    This question has recently been answered in \cite{Akpanya2025} for the class of edge-transitive surfaces, i.e.\ simplicial surfaces whose automorphism groups act transitively on their edges. There are exactly four types of edge-transitive surfaces that further split into an overall total of five sub-types. Similarly to the study presented in this paper, the classification of edge-transitive surfaces is achieved by introducing an invariant to analyse the structure of edge-transitive surfaces on a theoretical level. The classification in \cite{Akpanya2025} is complete up to $5000$ faces. 

    \item For each vertex-face type $(v,s)$ in \Cref{theorem:invariants}, can we find infinite families of face-transitive surfaces whose vertex-face types are all equal to $(v,s)?$

    For some vertex-face types, this question can be answered by giving well-known examples of face-transitive surfaces. For instance, we know that the suspension over the $n$-cycle, $n\geq 3$ (also known as double-pyramid), gives rise to face-transitive spheres with vertex-face type $(2,2).$ In general, this proves to be an intriguing question. Specifically, for vertex-face types, where we only have a small number of examples. If the answer to this question is ``yes'', it can be further checked whether there exist infinite families of face-transitive surfaces for each of the thirteen subtypes of automorphism groups. Such an investigation is a first step towards a complete classification of face-transitive surfaces.
\end{enumerate}

\newpage
\bibliographystyle{plain}

\appendix
\newpage
\section{Examples}
\label{section:examples}

This section contains the vertices of faces of the face-transitive surfaces mentioned in \Cref{section:construction}.
\subsection{Set of faces of the surface \texorpdfstring{$X^{(3,1)}$}{X{(1,1)}}}
\label{example31}

\begin{align*}
  \{&\{ 1, 2, 3 \}, \{ 4, 5, 6 \}, \{ 2, 5, 7 \}, \{ 3, 6, 8 \}, \{ 2, 5, 9 \}, 
 \{ 5, 7, 10 \}, \{ 3, 4, 6 \}, \{ 5, 10, 11 \},\\
 &\{ 6, 8, 12 \}, \{ 8, 10, 12 \}, \{ 6, 9, 12 \}, \{ 10, 11, 12 \}, \{ 3, 4, 13 \}, \{ 2, 3, 7 \},
 \{ 1, 3, 13 \}, \{ 3, 8, 10 \},\\
 &\{ 3, 7, 10 \}, \{ 11, 12, 13 \}, 
 \{ 5, 11, 13 \}, \{ 4, 5, 13 \}, \{ 5, 6, 9 \},\{ 2, 9, 12 \}, \{ 1, 12, 13 \}, \{ 1, 2, 12 \} \}
\end{align*}
\subsection{Set of faces of the surface \texorpdfstring{$X^{(2,2)}$}{X(2,2)}}\label{example22}
\begin{align*}
  \{\{ 1, 4, 5 \}, \{ 1, 4, 6 \}, \{ 1, 5, 7 \}, \{ 1, 6, 7 \}, \{ 2, 3, 5 \}, 
 \{ 2, 3, 6 \}, \{ 2, 4, 5 \}, \{ 2, 4, 6 \}, \{ 3, 5, 7 \}, \{ 3, 6, 7 \}\}
\end{align*}
\subsection{Set of faces of the surface \texorpdfstring{$X^{(1,6)}$}{X(1,6)}}\label{example16}
\begin{align*}
\{\{ 1, 2, 5 \},\{ 1, 2, 6 \},\{ 1, 4, 6 \},\{ 1, 3, 4 \},\{ 1, 3, 5 \},\{ 2, 4, 5 \},\{ 2, 3, 6 \},\{ 4, 5, 6 \},\{ 2, 3, 4 \},\{ 3, 5, 6 \}\}
\end{align*}

\subsection{Set of faces of the surface \texorpdfstring{$X^{(2,1)}$}{X(2,1)}}
\label{example211}

\begin{align*}
\{&\{4,5,6\}, \{2,3,7\}, \{1,2,7\}, \{1,2,8\}, \{1,3,6\}, \{2,4,9\}, \{3,5,9\}, \{3,4,7\}, \{1,5,10\}, \{3,4,6\}, \\
&\{2,4,10\}, \{2,5,8\}, \{4,5,9\}, \{1,3,8\}, \{1,4,10\}, \{2,5,10\}, \{1,4,7\}, \{2,3,9\}, \{1,5,6\}, \{3,5,8\}\}
  \end{align*}
  
\subsection{Set of faces of the surface \texorpdfstring{$Y^{(2,1)}$}{Y(2,1)}}\label{example212}
\begin{align*}
\{&\{ 13, 14, 15 \}, \{ 11, 12, 16 \}, \{ 9, 10, 17 \}, \{ 11, 12, 18 \}, \{ 8, 14, 19 \}, \{ 9, 10, 20 \}, \{ 13, 14, 21 \},
\{ 6, 7, 17 \},\\ &
\{ 5, 9, 20 \}, \{ 4, 11, 16 \}, \{ 2, 3, 19 \}, \{ 6, 8, 19 \}, \{ 2, 3, 21 \}, \{ 1, 2, 19 \},
\{ 4, 13, 22 \}, \{ 4, 13, 15 \}, \{ 1, 2, 18 \}, \\ &
\{ 2, 11, 17 \}, \{ 5, 8, 18 \}, \{ 6, 9, 16 \}, \{ 2, 11, 18 \},
\{ 3, 4, 20 \}, \{ 5, 12, 21 \}, \{ 10, 13, 17 \}, \{ 9, 14, 16 \},
\{ 8, 10, 18 \}, \\ &\{ 3, 7, 19 \}, \{ 9, 14, 21 \},
\{ 3, 7, 20 \}, \{ 10, 13, 22 \}, \{ 5, 7, 20 \}, \{ 6, 12, 22 \}, \{ 2, 13, 21 \}, \{ 3, 12, 21 \}, \{ 8, 14, 15 \},\\ &
\{ 6, 8, 22 \}, \{ 1, 4, 16 \}, \{ 1, 10, 20 \}, \{ 5, 12, 18 \}, \{ 1, 10, 18 \}, \{ 7, 11, 15 \}, \{ 4, 11, 15 \},
\{ 6, 9, 17 \}, \{ 5, 9, 21 \},\\ &
\{ 3, 12, 22 \}, \{ 3, 4, 22 \}, \{ 1, 14, 19 \}, \{ 1, 14, 16 \}, \{ 6, 7, 19 \},
\{ 1, 4, 20 \}, \{ 8, 10, 22 \}, \{ 5, 8, 15 \}, \{ 7, 11, 17 \},
\\ &\{ 2, 13, 17 \}, \{ 5, 7, 15 \}, \{ 6, 12, 16 \}
\end{align*}

\subsection{Set of faces of the surface \texorpdfstring{$Z^{(2,1)}$}{Z(2,1)}}
\label{example213}
\begin{align*}
\{
&\{ 11, 12, 13 \}, \{ 9, 10, 13 \}, \{ 7, 8, 14 \}, \{ 6, 11, 13 \}, \{ 5, 12, 15 \}, \{ 4, 9, 15 \}, \{ 9, 11, 15 \}, 
\{ 3, 10, 16 \}, \\
&
\{ 9, 12, 13 \}, \{ 2, 8, 17 \}, \{ 3, 5, 15 \}, \{ 1, 4, 14 \}, \{ 3, 10, 17 \}, \{ 3, 12, 17 \}, 
\{ 4, 10, 15 \}, \{ 9, 12, 18 \},\\& \{ 5, 8, 18 \}, \{ 1, 2, 13 \}, \{ 3, 7, 15 \}, \{ 3, 5, 18 \}, \{ 9, 10, 16 \}, 
\{ 1, 6, 18 \}, \{ 7, 10, 17 \}, \{ 2, 4, 13 \}, \{ 5, 6, 17 \},\\& \{ 11, 12, 15 \}, \{ 6, 11, 14 \}, \{ 2, 7, 14 \}, 
\{ 1, 4, 16 \}, \{ 5, 12, 17 \}, \{ 6, 8, 17 \}, \{ 5, 8, 14 \}, \{ 3, 7, 16 \},\\& \{ 2, 8, 16 \}, \{ 2, 4, 14 \}, 
\{ 4, 10, 13 \}, \{ 1, 6, 13 \}, \{ 1, 11, 14 \}, \{ 9, 11, 18 \}, \{ 2, 7, 17 \}, \{ 1, 2, 16 \},\\& \{ 6, 8, 18 \}, 
\{ 3, 12, 18 \}, \{ 1, 11, 18 \}, \{ 7, 8, 16 \}, \{ 4, 9, 16 \}, \{ 5, 6, 14 \}, \{ 7, 10, 15 \}
\}
\end{align*}
\subsection{Set of faces of the surface \texorpdfstring{$X^{(1,2)}$}{X(1,2)}}\label{example12}

\begin{align*}
    \{&
\{ 6, 7, 8 \}, \{ 4, 5, 8 \}, \{ 4, 5, 7 \}, \{ 2, 3, 7 \}, \{ 2, 4, 7 \}, \{ 1, 2, 4 \}, \{ 3, 4, 8 \}, \{ 5, 6, 7 \}, \\&
\{ 1, 3, 4 \}, \{ 1, 2, 5 \}, \{ 1, 6, 8 \}, \{ 1, 5, 8 \}, \{ 2, 5, 6 \}, \{ 1, 3, 6 \}, \{ 3, 7, 8 \}, \{ 2, 3, 6 \}
\}
\end{align*}

\subsection{Set of faces of the surface \texorpdfstring{$X^{(1,3)}$}{X(1,3)}}\label{example131}

\begin{align*}
    \{ &\{ 1, 2, 3 \}, \{ 1, 3, 4 \}, \{ 2, 4, 6 \}, \{ 2, 3, 5 \}, \{ 3, 5, 6 \}, \{ 3, 4, 7 \}, \{ 3, 6, 7 \},\\
    & \{ 2, 4, 7 \},\{ 2, 5, 7 \}, \{ 1, 2, 6 \}, 
  \{ 1, 5, 7 \}, \{ 1, 4, 5 \}, \{ 4, 5, 6 \}, \{ 1, 6, 7 \} \}
\end{align*}
\subsection{Set of faces of the surface \texorpdfstring{$Y^{(1,3)}$}{Y(1,3)}}\label{example132}
\begin{align*}
    \{& \{ 34, 35, 36 \}, \{ 31, 32, 33 \}, \{ 28, 29, 30 \}, \{ 25, 26, 27 \}, \{ 24, 32, 35 \}, \{ 32, 34, 35 \}, \{ 23, 24, 32 \}, \{ 22, 29, 33 \},\\& \{ 20, 21, 27 \}, \{ 18, 19, 20 \}, 
  \{ 17, 28, 29 \}, \{ 15, 16, 22 \}, \{ 14, 23, 26 \}, \{ 13, 15, 22 \}, \{ 11, 12, 36 \}, \{ 10, 21, 33 \},\\& \{ 8, 9, 19 \}, \{ 21, 35, 36 \}, \{ 9, 21, 23 \}, \{ 19, 29, 30 \}, 
  \{ 8, 24, 35 \}, \{ 7, 11, 29 \}, \{ 21, 29, 35 \}, \{ 6, 23, 28 \},\\& \{ 15, 19, 36 \}, \{ 20, 22, 34 \}, \{ 4, 5, 24 \}, \{ 18, 19, 31 \}, \{ 3, 10, 21 \}, \{ 5, 28, 30 \}, 
  \{ 8, 9, 13 \}, \{ 21, 29, 33 \},\\& \{ 9, 12, 17 \}, \{ 16, 28, 31 \}, \{ 7, 25, 29 \}, \{ 8, 16, 28 \}, \{ 2, 3, 17 \}, \{ 2, 5, 15 \}, \{ 6, 12, 25 \}, \{ 17, 27, 28 \},\\& \{ 11, 13, 18 \}, 
  \{ 1, 3, 16 \}, \{ 3, 10, 24 \}, \{ 6, 10, 28 \}, \{ 6, 25, 35 \}, \{ 14, 17, 18 \}, \{ 19, 31, 36 \}, \{ 23, 31, 32 \},\\& \{ 1, 11, 29 \}, \{ 3, 16, 31 \}, \{ 12, 16, 20 \}, 
  \{ 4, 7, 31 \}, \{ 10, 15, 19 \}, \{ 1, 32, 34 \}, \{ 3, 25, 26 \}, \{ 1, 19, 29 \},\\& \{ 5, 9, 25 \}, \{ 3, 17, 24 \}, \{ 14, 26, 30 \}, \{ 18, 31, 33 \}, \{ 13, 30, 32 \}, \{ 4, 14, 30 \}, 
  \{ 6, 10, 12 \}, \{ 3, 25, 31 \},\\& \{ 8, 19, 24 \}, \{ 17, 22, 24 \}, \{ 3, 26, 30 \}, \{ 7, 25, 31 \}, \{ 10, 19, 24 \}, \{ 8, 16, 35 \}, \{ 17, 22, 29 \}, \{ 16, 20, 22 \}, \\&
  \{ 9, 12, 25 \}, \{ 18, 20, 27 \}, \{ 15, 27, 32 \}, \{ 10, 14, 15 \}, \{ 4, 5, 30 \}, \{ 3, 13, 30 \}, \{ 7, 11, 13 \}, \{ 11, 26, 36 \}, \\&\{ 6, 13, 22 \}, \{ 11, 23, 26 \}, \{ 12, 32, 33 \}, 
  \{ 23, 28, 31 \}, \{ 5, 9, 34 \}, \{ 4, 21, 36 \}, \{ 11, 12, 16 \}, \{ 2, 7, 8 \},\\& \{ 1, 2, 3 \}, \{ 4, 21, 27 \}, \{ 22, 26, 34 \}, \{ 20, 21, 23 \}, \{ 2, 6, 23 \}, 
  \{ 26, 34, 36 \}, \{ 9, 14, 17 \}, \{ 3, 13, 21 \}, \\&\{ 2, 17, 36 \}, \{ 8, 26, 33 \}, \{ 12, 30, 32 \}, \{ 12, 20, 30 \}, \{ 7, 10, 14 \}, \{ 15, 25, 27 \}, \{ 5, 28, 34 \}, \{ 14, 15, 16 \}, \\&
  \{ 10, 28, 34 \}, \{ 2, 15, 36 \}, \{ 17, 18, 27 \}, \{ 14, 16, 35 \}, \{ 6, 13, 18 \}, \{ 13, 15, 32 \}, \{ 19, 20, 30 \}, \{ 22, 26, 33 \},\\& \{ 4, 22, 24 \}, \{ 7, 8, 13 \}, 
  \{ 1, 9, 34 \}, \{ 1, 2, 6 \}, \{ 2, 5, 33 \}, \{ 1, 9, 19 \}, \{ 7, 20, 34 \}, \{ 8, 27, 28 \}, \{ 5, 18, 33 \},\\& \{ 1, 11, 16 \}, \{ 8, 26, 27 \}, \{ 7, 10, 34 \}, \{ 2, 7, 20 \}, 
  \{ 14, 18, 35 \}, \{ 2, 20, 23 \}, \{ 9, 13, 21 \}, \{ 25, 29, 35 \},\\& \{ 1, 4, 27 \}, \{ 9, 14, 23 \}, \{ 12, 17, 36 \}, \{ 5, 11, 18 \}, \{ 4, 31, 36 \}, \{ 1, 4, 6 \}, 
  \{ 11, 23, 24 \}, \{ 2, 8, 33 \}, \{ 1, 27, 32 \},\\& \{ 5, 15, 25 \}, \{ 5, 11, 24 \}, \{ 4, 7, 14 \}, \{ 6, 18, 35 \}, \{ 4, 6, 22 \}, \{ 10, 12, 33 \} \}
\end{align*}

\subsection{Set of faces of the surface \texorpdfstring{$X^{(1,1)}$}{X(1,1)}}\label{example11}
\begin{align*}
\{&\{ 7, 8, 9 \},\{ 6, 7, 8 \},\{ 3, 4, 5 \},\{ 4, 8, 9 \},\{ 2, 3, 8 \},\{ 3, 5, 7 \},\{ 1, 2, 3 \},\{ 3, 4, 8 \},\{ 1, 2, 5 \},\\
&\{ 4, 6, 9 \},\{ 2, 6, 8 \},\{ 1, 6, 7 \},\{ 1, 3, 7 \},\{ 2, 4, 6 \},\{ 2, 4, 5 \},\{ 1, 5, 9 \},\{ 1, 6, 9 \},\{ 5, 7, 9 \}\}.
\end{align*}

\subsection{Set of faces of the surface \texorpdfstring{$Y^{(1,1)}$}{Y(1,1)}}
\label{example112}
\begin{align*}
    \{& \{ 16, 17, 18 \}, \{ 13, 14, 15 \}, \{ 11, 12, 14 \}, \{ 8, 9, 10 \}, \{ 6, 7, 9 \}, \{ 5, 8, 17 \}, \{ 4, 12, 13 \}, \{ 3, 15, 16 \}, \\&\{ 5, 10, 18 \}, \{ 2, 7, 8 \}, 
  \{ 2, 9, 17 \}, \{ 7, 10, 11 \}, \{ 3, 4, 13 \}, \{ 1, 11, 16 \}, \{ 6, 12, 17 \}, \{ 13, 16, 17 \},\\& \{ 2, 3, 14 \}, \{ 6, 13, 18 \}, \{ 2, 6, 11 \}, \{ 1, 8, 9 \}, 
  \{ 3, 5, 8 \}, \{ 12, 13, 18 \}, \{ 14, 16, 18 \}, \{ 1, 5, 7 \},\\& \{ 4, 9, 17 \}, \{ 2, 11, 15 \}, \{ 6, 12, 16 \}, \{ 1, 4, 11 \}, \{ 6, 10, 15 \}, \{ 3, 13, 16 \}, 
  \{ 8, 12, 17 \}, \{ 1, 14, 18 \},\\& \{ 5, 11, 14 \}, \{ 2, 7, 13 \}, \{ 2, 10, 16 \}, \{ 4, 8, 12 \}, \{ 4, 5, 14 \}, \{ 7, 8, 18 \}, \{ 1, 5, 15 \}, \{ 4, 11, 15 \}, \\&
  \{ 1, 12, 16 \}, \{ 5, 6, 7 \}, \{ 3, 9, 12 \}, \{ 8, 10, 15 \}, \{ 2, 3, 8 \}, \{ 1, 7, 17 \}, \{ 5, 6, 11 \}, \{ 9, 13, 14 \},\\& \{ 2, 13, 15 \}, \{ 3, 4, 10 \}, 
  \{ 2, 9, 10 \}, \{ 11, 12, 18 \}, \{ 5, 17, 18 \}, \{ 1, 14, 15 \}, \{ 3, 12, 14 \}, \{ 6, 10, 18 \},\\& \{ 1, 9, 12 \}, \{ 6, 15, 16 \}, \{ 2, 6, 17 \}, \{ 3, 7, 9 \}, 
  \{ 4, 5, 10 \}, \{ 3, 5, 15 \}, \{ 1, 8, 18 \}, \{ 2, 14, 16 \}, \\&\{ 7, 13, 17 \}, \{ 6, 9, 13 \}, \{ 3, 7, 10 \}, \{ 4, 8, 15 \}, \{ 1, 4, 17 \}, \{ 10, 11, 16 \}, 
  \{ 4, 9, 14 \}, \{ 7, 11, 18 \} \}
\end{align*}
\subsection{Set for faces of the surface \texorpdfstring{$\overline{X}^{(1,1)}$}{X(1,1)} }\label{example113}
\begin{align*}
\{& \{ 16, 17, 18 \}, \{ 14, 15, 17 \}, \{ 13, 14, 17 \}, \{ 11, 12, 13 \}, \{ 10, 16, 17 \}, \{ 9, 12, 18 \}, \{ 8, 10, 18 \},\\& \{ 6, 7, 13 \}, \{ 5, 10, 18 \}, \{ 4, 6, 14 \}, 
  \{ 3, 11, 15 \}, \{ 4, 6, 13 \}, \{ 2, 11, 13 \}, \{ 4, 9, 13 \}, \{ 1, 17, 18 \},\\& \{ 1, 3, 15 \}, \{ 9, 15, 17 \}, \{ 1, 6, 16 \}, \{ 9, 16, 18 \}, \{ 1, 2, 5 \}, 
  \{ 8, 10, 16 \}, \{ 1, 8, 11 \}, \{ 2, 9, 16 \},\\& \{ 4, 8, 16 \}, \{ 2, 11, 18 \}, \{ 1, 6, 8 \}, \{ 2, 10, 12 \}, \{ 6, 7, 12 \}, \{ 3, 5, 11 \}, \{ 3, 12, 14 \}, 
  \{ 6, 11, 12 \},\\& \{ 1, 2, 4 \}, \{ 12, 13, 18 \}, \{ 7, 11, 15 \}, \{ 2, 7, 10 \}, \{ 2, 7, 15 \}, \{ 2, 5, 18 \}, \{ 3, 10, 14 \}, \{ 8, 13, 17 \},\\& \{ 4, 7, 8 \}, 
  \{ 4, 5, 15 \}, \{ 8, 11, 17 \}, \{ 7, 8, 13 \}, \{ 5, 8, 14 \}, \{ 2, 13, 15 \}, \{ 3, 6, 16 \}, \{ 10, 14, 15 \},\\& \{ 3, 6, 9 \}, \{ 13, 14, 18 \}, \{ 6, 9, 14 \}, 
  \{ 9, 12, 14 \}, \{ 3, 5, 12 \}, \{ 5, 10, 12 \}, \{ 3, 7, 17 \}, \{ 7, 11, 17 \},\\& \{ 1, 11, 18 \}, \{ 5, 6, 11 \}, \{ 7, 12, 16 \}, \{ 4, 5, 14 \}, \{ 2, 4, 9 \}, 
  \{ 3, 10, 17 \}, \{ 2, 12, 16 \}, \{ 9, 13, 15 \},\\& \{ 4, 10, 15 \}, \{ 1, 9, 17 \}, \{ 5, 6, 8 \}, \{ 8, 14, 18 \}, \{ 1, 3, 9 \}, \{ 3, 7, 16 \}, \{ 1, 4, 16 \}, 
  \{ 4, 7, 10 \}, \{ 1, 5, 15 \} \}
  \end{align*}

\subsection{Set of faces of the surface \texorpdfstring{$\overline{Y}^{(1,1)}$}{Y(1,1)}}
\label{example114}
\begin{align*}
\{ &\{ 26, 27, 28 \}, \{ 23, 24, 25 \}, \{ 20, 21, 22 \}, \{ 18, 19, 22 \}, \{ 17, 19, 27 \}, \{ 15, 16, 18 \}, \{ 14, 19, 25 \}, \{ 13, 14, 17 \},\\& \{ 12, 13, 18 \}, \{ 14, 22, 28 \}, 
  \{ 9, 10, 11 \}, \{ 6, 7, 8 \}, \{ 5, 10, 19 \}, \{ 8, 9, 17 \}, \{ 4, 9, 26 \}, \{ 5, 12, 19 \}, \\&\{ 3, 10, 24 \}, \{ 2, 15, 18 \}, \{ 23, 24, 28 \}, \{ 23, 25, 27 \}, \{ 3, 9, 25 \}, \{ 17, 26, 27 \}, \{ 9, 10, 23 \}, 
  \{ 5, 16, 22 \},\\& \{ 5, 6, 7 \}, \{ 5, 12, 16 \}, \{ 12, 19, 21 \}, \{ 16, 22, 23 \}, \{ 12, 20, 21 \}, \{ 1, 10, 23 \}, \{ 2, 6, 21 \}, \{ 1, 19, 25 \},\\& \{ 5, 13, 22 \}, \{ 12, 14, 28 \}, 
  \{ 2, 17, 18 \}, \{ 6, 8, 16 \}, \{ 16, 23, 27 \}, \{ 17, 18, 19 \}, \{ 13, 21, 22 \}, \{ 3, 16, 24 \},\\& \{ 11, 17, 24 \}, \{ 6, 20, 23 \}, \{ 8, 16, 27 \}, \{ 2, 5, 6 \}, \{ 4, 11, 24 \}, \{ 15, 26, 28 \}, 
  \{ 10, 13, 21 \}, \{ 12, 16, 26 \},\\& \{ 6, 23, 28 \}, \{ 11, 18, 25 \}, \{ 4, 5, 13 \}, \{ 11, 14, 17 \}, \{ 10, 13, 24 \}, \{ 6, 11, 14 \}, \{ 1, 4, 26 \}, \{ 2, 25, 27 \}, \\& \{ 4, 19, 21 \}, 
  \{ 1, 2, 4 \}, \{ 3, 12, 20 \}, \{ 3, 9, 20 \}, \{ 1, 20, 23 \}, \{ 2, 4, 21 \}, \{ 10, 11, 21 \}, \{ 4, 19, 27 \}, \\&\{ 2, 14, 25 \}, \{ 1, 10, 19 \}, \{ 7, 8, 9 \}, \{ 7, 12, 26 \}, \{ 1, 8, 17 \}, 
  \{ 7, 22, 23 \}, \{ 1, 3, 7 \}, \{ 16, 24, 26 \}, \\&\{ 6, 11, 18 \}, \{ 4, 8, 24 \}, \{ 12, 13, 14 \}, \{ 11, 15, 21 \}, \{ 3, 10, 27 \}, \{ 8, 20, 22 \}, \{ 4, 8, 27 \}, \{ 6, 16, 18 \}, \\&\{ 13, 24, 25 \}, 
  \{ 9, 11, 25 \}, \{ 9, 20, 26 \}, \{ 3, 27, 28 \}, \{ 7, 18, 22 \}, \{ 15, 20, 26 \}, \{ 13, 18, 25 \}, \{ 15, 21, 28 \}, \\&\{ 1, 2, 17 \}, \{ 6, 21, 28 \}, \{ 4, 5, 11 \}, \{ 14, 19, 22 \}, 
  \{ 7, 12, 18 \}, \{ 2, 5, 10 \}, \{ 2, 10, 27 \}, \{ 6, 14, 20 \}, \\&\{ 7, 9, 23 \}, \{ 1, 8, 20 \}, \{ 1, 7, 26 \}, \{ 9, 13, 17 \}, \{ 17, 24, 26 \}, \{ 5, 11, 15 \}, \{ 8, 24, 28 \}, \{ 4, 9, 13 \}, \\&
  \{ 3, 7, 15 \}, \{ 14, 15, 20 \}, \{ 5, 7, 15 \}, \{ 2, 14, 15 \}, \{ 8, 22, 28 \}, \{ 3, 15, 16 \}, \{ 1, 3, 25 \}, \{ 3, 12, 28 \} \}
\end{align*}

\end{document}